\tikzset{anchorbase/.style={baseline={([yshift=-0.5ex]current bounding box.center)}},tinynodes/.style={font=\tiny,text height=0.75ex,text depth=0.15ex},
  cross line/.style={preaction={draw=white,line width=4.75pt,-}},
  cross line thick/.style={preaction={draw=white,line width=6pt,-}},
  cross line thin/.style={preaction={draw=white,line width=3.5pt,-}},
  colored/.style={line width=2.25},
  uncolored/.style={thin},
  ccolored/.style={line width=1.4},
  dotbullet/.style={fill,circle,inner sep=0.5pt},
}
\newcommand\numeq[1]%
\tikzset{none/.style={thick}}
\newcommand{\mone}[1][1]{{\,\text{-}#1}}
\newtheorem{thm}{Theorem}[section]
\newtheorem{lem}[thm]{Lemma}
\newtheorem{cor}[thm]{Corollary}
\newtheorem{prop}[thm]{Proposition}
\theoremstyle{definition}
\newtheorem{ex}[thm]{Example}
\newtheorem{defn}[thm]{Definition}
\newtheorem{ques}[thm]{Question}
\newtheorem{rem}[thm]{Remark}
\numberwithin{equation}{section}
\font\sc=rsfs10
\newcommand{\cC}{{\sc\mbox{C}\hspace{1.0pt}}}
\newcommand{\cI}{\sc\mbox{I}\hspace{1.0pt}}
\newcommand{\cS}{\sc\mbox{S}\hspace{1.0pt}}
\newcommand{\cD}{{\sc\mbox{D}\hspace{1.0pt}}}
\font\scc=rsfs7
\newcommand{\ccC}{\scc\mbox{C}\hspace{1.0pt}}
\newcommand{\ccD}{\scc\mbox{D}\hspace{1.0pt}}
\newcommand{\mi}{\mathtt{i}}
\newcommand{\mj}{\mathtt{j}}
\newcommand{\mk}{\mathtt{k}}
\newcommand{\ml}{\mathtt{l}}
\newcommand{\mt}{\mathtt{t}}
\newcommand{\rF}{\mathrm{F}}
\newcommand{\rG}{\mathrm{G}}
\newcommand{\rK}{\mathrm{K}}
\newcommand{\rH}{\mathrm{H}}
\newcommand{\rv}{\mathrm{v}}
\newcommand{\rI}{\mathrm{I}}
\newcommand{\rA}{\mathrm{A}}
\newcommand{\rC}{\mathrm{C}}
\newcommand{\rD}{\mathrm{D}}
\newcommand{\rM}{\mathrm{M}}
\newcommand{\rN}{\mathrm{N}}
\newcommand{\bM}{\mathbf{M}}
\newcommand{\bP}{\mathbf{P}}
\newcommand{\bN}{\mathbf{N}}
\newcommand{\id}{\operatorname{id}}
\newcommand{\op}{^{\operatorname{op}}}
\newcommand{\Hom}{\text{\rm Hom}}
\newcommand{\End}{\operatorname{End}}
\newcommand{\Rep}{\operatorname{Rep}}
\newcommand{\add}{\operatorname{add}}
\newcommand{\ob}{\operatorname{Ob}}
\newcommand{\dash}{\text{-}}
\newcommand{\comod}{\operatorname{comod}}
\newcommand{\ca}{{\cD}_A}
\newcommand{\cj}{\cC_{\mathcal J}}
\newcommand{\ihom}{\underline{\operatorname{Hom}}}
\newcommand{\inj}{\hookrightarrow}
\newcommand{\Id}{\operatorname{Id}}
\begin{document}

\title{Adjunction in the absence of identity}
\author{Hankyung Ko, Volodymyr Mazorchuk and Xiaoting Zhang}

\begin{abstract}
We develop a bicategorical setup in which one can speak about
adjoint $1$-morphisms even in the absence of genuine identity
$1$-morphisms. We also investigate which part of
$2$-representation theory of $2$-categories extends to this
new setup.
\end{abstract}
\maketitle

\section{Introduction}\label{s1}

Let $\mathcal{A}$ and $\mathcal{B}$ be two categories and $F:\mathcal{A}\to \mathcal{B}$
and $G:\mathcal{B}\to \mathcal{A}$ two functors. One says that $(F,G)$ is an {\em adjoint
pair of functors} if there are bijections $\mathcal{B}(F\, X,Y)\cong \mathcal{A}(X,G\, Y)$ natural in $X\in \mathcal{A}$ and $Y\in \mathcal{B}$.
This latter condition is equivalent to the existence of so called {\em adjunction morphisms}
$\varepsilon:FG\to \mathrm{Id}_{\mathcal{B}}$ and $\eta:\mathrm{Id}_{\mathcal{A}}\to GF$
satisfying $\varepsilon_{F}\circ F(\eta)=\mathrm{id}_F $ and
$G(\varepsilon)\circ \eta_{G}=\mathrm{id}_G$. This reformulation allows one to define
adjoint pairs of $1$-morphisms in the general setup of bicategories, which, in particular,
covers $2$-categories, monoidal categories and strict monoidal categories.
There is a catch, though: all this requires existence of some kind of genuine identity 1-morphisms
(or unit objects), and, sometimes, an introduction of such an identity into the category %object one studies
might look very artificial.

Consider the following example. Let $A$ be a finite dimensional algebra over a
field $\Bbbk$. The category $A\text{-proj-}A$ of projective $A$-$A$-bimodules is closed
under tensoring over $A$. However, this category is not a monoidal category in general,
as the regular $A$-$A$-bimodule ${}_AA_A$ is not a projective $A$-$A$-bimodule unless
$A$ is semi-simple. So, if one wants to define a monoidal category, one needs to
artificially add ${}_AA_A$ to $A\text{-proj-}A$, cf. \cite[Subsection~7.3]{MM1}. If
$A$ is self-injective, it turns out that, in this extended monoidal category,
every object has both a left and a right adjoint, cf. \cite[Lemma~45]{MM1}
and \cite[Section~7]{MM6}. And the presence of ${}_AA_A$
seems crucial to be able to establish this fundamental property.
The same phenomenon, i.e., the necessity of adding an identity,
motivated the notion of $\mathcal{J}$-simple
finitary $2$-category, see \cite[Subsection~6.2]{MM2} and the construction
of a $2$-category associated to a two-sided cell of a fiat
$2$-category, see \cite[Section~5]{MM1}.

The main motivation for the present paper is to formulate a setup in which
one could work with dual objects (or adjoint $1$-morphisms) of monoidal (resp., bi-) categories
in the absence of a genuine unit object (resp., identity 1-morphisms).
The main idea behind our answer
is to require certain lax and oplax units to exist
(this idea appeared during a discussion with Marco Mackaay when working on \cite{MMMTZ}).
At first glance, this might sound as cheating as we do still require
some kinds of units. However, we do think that the idea is nontrivial.
The main reason for this claim is the fact that, in module categories (resp., $2$-representations), our
lax and oplax units are usually {\em not} represented by the identity functors.
In the setup of fiat $2$-categories as described in \cite{MM1,MM3}, our lax units correspond to the so-called {\em Duflo involutions}
as defined in \cite[Subsection~4.5]{MM1}.

Another inspiration for the present paper stems from the comparison of the
theory of finite groups or monoids versus the theory of finite semigroups.
One might think that the difference between semigroups and monoids is very small, since
given any semigroup which is not yet a monoid, one can always add an external
identity element to turn it into a monoid.  However, the study of {\em simple}
groups, monoids, and semigroups reveals a striking difference between the theories. A simple
group is a group which does not have any non-trivial quotients. Similarly,
define a simple monoid (or semigroup)  as the one which does not have any
non-trivial quotients (in semigroup theory the term {\em congruence free}
is usually used instead of the term simple
in this context). It turns out that finite simple monoids
are, essentially, finite simple groups (up to possible addition of an external
zero element). At the same time, the theory of finite simple semigroups is
significantly richer, see e.g. \cite[Theorem~3.7.1]{Ho}. A major issue
in the case of monoids is the fact that all non-invertible elements of
a finite monoid form an ideal: if this latter ideal is not a singleton,
one can factor it out to get a proper quotient of the original monoid.
So, again, we see that presence of a ``global unit'' causes ``problems'' which
can be ``resolved'' by withdrawing the requirement on its existence.

We emphasize that our new setup for the study of adjoint pairs
which we define in this paper is not completely independent from the
classical one. The notion of an adjunction we define is equivalent to the
classical one (compared, e.g., to a {\em different} notion of adjunction of
semifunctors studied in \cite{Ha}) and hence admits reformulation in terms
of proper identities. This is explained by the property that the (op)lax
identities we work with, although being themselves not necessarily represented by the
identity functor in a $2$-representation, are always represented by a functor
which has a natural transformation to (or from) the identity functor, thus
giving a possibility to relate our adjunction to the classical one.

Let us now briefly describe the structure of the paper. Section~\ref{s2} is
devoted to the introduction and preliminary analysis of our setup.
In Subsection~\ref{s2.1}, we recall the definitions of a $2$-semicategory
and various versions of (op)lax units in $2$-semicategories and formulate
our main definition: Definition~\ref{defxxx} of a bilax-unital $2$-category.
To simplify arguments and proofs throughout the paper, we often use
diagrammatic calculus for computations with $2$-morphisms. The main
ingredients of this diagrammatic calculus are also set up in Subsection~\ref{s2.1}.
In Subsection~\ref{sec:abelianization} we recall diagrammatic abelianizations
of additive categories and its analogues for bi- and $2$-categories in our setup.
Subsection~\ref{s2.3} describes the notion of a $2$-representation
and the $2$-category which these $2$-representations form, adapted to
our setup. In Subsection~\ref{s2.4} we adapt the notions of algebra and coalgebra
object to the setup of bilax-unital $2$-categories and further,
in Subsection~\ref{s2.5} we adapt the corresponding notions of (co)modules.
Subsection~\ref{s2.6} contains our main definitions: there we define the notion of
adjoint $1$-morphisms for  bilax-unital $2$-categories and establish their basic
properties: uniqueness of the adjoint, adjoint of composition
and how adjunction gives rise to certain algebra and coalgebra $1$-morphisms.

Section~\ref{s3} is devoted to a detailed study of our
protagonist in this paper: fiax categories. These are defined in
Subsection~\ref{ss:fiax}. In Subsection~\ref{s3.2}, with each
fiax category $\cC$ we associate a fiat $2$-category $\hat{\cC}$ with weak units
(i.e., weak identity $1$-morphisms). In Subsection~\ref{sec:lifted-2-rep}
we describe how certain $2$-representations of $\cC$  can be
lifted to $2$-representations of $\hat{\cC}$. In Subsection~\ref{s3.4}
we describe how (co)algebra $1$-morphisms can be lifted from $\cC$ to $\hat{\cC}$.
The rest of the subsection is devoted to the study of the connection between
$2$-representations and (co)algebra $1$-morphisms, following \cite{EGNO}
and \cite{MMMT}. The main technical problem in our case seems to be the check of
unitality axioms, as the main difference between our and the classical setups is
exactly the absence of genuine identity $1$-morphisms. We discuss
internal homs in Subsection~\ref{s3.6}, apply internal homs to connect
$2$-representations of $\hat{\cC}$ with (co)modules over
(co)algebra $1$-morphisms in Subsection~\ref{sec:coalgcomod}, and develop
an analogue of Morita-Takeuchi theory in Subsection~\ref{s3.8}.
In the latter, an important role is played by (co)tensor products,
which are discussed in Subsection~\ref{s3.5}. In
Subsection~\ref{s3.new} we describe correspondence between coalgebras
and 2-representations. Finally, in
Subsection~\ref{s3.9} we recall the basic combinatorial notions
for additive bicategories and adjust them to our setup.

In Section~\ref{s4} we provide a number of examples. Our motivating example
is the category $\ca$ of projective bimodules for a
finite dimensional self-injective algebra $A$ which is
given in detail in Subsection~\ref{sec:example1}. The corresponding
$2$-category $\cC_A$ was defined in \cite[Subsection~7.3]{MM1}. A major
difference between our $\ca$ and the $2$-category $\cC_A$ from
\cite[Subsection~7.3]{MM1} is the absence, in $\ca$, of the regular
bimodule (which is the unit object in the monoidal category of bimodules).
Instead, we have lax and oplax units, given by Duflo 1-morphisms in the
corresponding left cells, which are projective bimodules
(the regular bimodule is, usually, not projective).
This emphasizes our main point: compared to our $\ca$, the $2$-category
$\cC_A$ from \cite[Subsection~7.3]{MM1} does look artificial.

Some further examples, in particular, fiax categories associated to
two-sided cells of fiat $2$-categories and the category of projective $G$-modules for a finite group $G$,
are briefly discussed in the
remaining subsection of Section~\ref{s4}.

\subsection*{Acknowledgments}
This research was partially supported by
the Swedish Research Council and G{\"o}ran Gustafsson Stiftelse.
We especially thank Marco Mackaay in conversation with whom the
main idea behind this paper crystalized.
We thank Gustavo Jasso for information about adjunction of semifunctors.

\section{Bilax 2-representation theory}\label{s2}

Let $\Bbbk$ be an algebraically closed field. We assume that all
categories (and functors, etc) are additive and $\Bbbk$-linear.
If not mentioned otherwise or clear from the context,
all categories we work with are assumed to be small.

\subsection{Bilax-unital 2-categories}\label{s2.1}
%In this subsection, w
We first recall the definition of a $2$-semicategory.

\begin{defn}\label{def}
A {\em 2-semicategory} $\cC$ consists of
\begin{itemize}
    \item a class $\ob\cC$ of objects;
    \item for each $\mi,\mj\in \ob\cC$, a category $\cC(\mi,\mj)$, whose objects are called {\em 1-morphisms}, morphisms are called {\em 2-morphisms}, and the composition is called the {\em vertical composition}, denoted by $\circ_{\rv}$;
    \item for each $\mi,\mj,\mk\in \ob\cC$, a functor (called the {\em horizontal composition})
    \[h_{\mi,\mj,\mk}:\cC(\mj,\mk)\times \cC(\mi,\mj)\to \cC(\mi,\mk);\]
    which is (strictly) associative, that is, we have
    \[ h_{\mi,\mk,\ml}\circ
    (\Id_{\ccC(\mk,\ml)}\times h_{\mi,\mj,\mk})=h_{\mi,\mj,\ml}\circ (h_{\mj,\mk,\ml}\times \Id_{\ccC (\mi,\mj)}).\]
\end{itemize}
\end{defn}
For simplicity, we write $\rG\rF:=h(\rG,\rF)$, for any $1$-morphisms $\rF\in\cC(\mi,\mj),\rG\in\cC(\mj,\mk)$, and $\beta\alpha:=h(\beta, \alpha)$, for any $2$-morphisms $\alpha:\rF\to \rH$ in $\cC(\mi,\mj)$ and $\beta:\rG\to \rK$ in $\cC(\mj,\mk)$. Here we omit the subscript of the horizontal functor $h$ when there is no confusion.
Due to the strict associativity, we could write $\rF_1\rF_2\ldots \rF_n$ (whenever it makes sense) for the composition of $1$-morphisms $\rF_1,\rF_2,\ldots,\rF_n$, where $n\geq 3$.

We depict a 2-morphism $\alpha:\rF\to \rH$ in $\cC(\mi,\mj)$ diagrammatically as follows:
\begin{equation*}
\begin{tikzpicture}[anchorbase]
      \node (j) at (-1,0) {$\mj$};
      \node (i) at (1,0) {$\mi$};
      \node at (0,0) [rectangle,draw, very thick] (d0) {$\alpha$};
      \draw[very thick,blue] (d0) to (0,.7)node[above] {$\rH$};
      \draw[very thick] (d0) to (0,-.7)node[below] {$\rF$};
    \end{tikzpicture}
\end{equation*}
In partcular, the identity 2-morphism $\id_{\rF}$ on $\rF$ is simply drawn as:
\begin{equation*}
    \begin{tikzpicture}[anchorbase]
      \node (j) at (-1,0.5) {$\mj$};
      \node (i) at (1,0.5) {$\mi$};
      \draw[very thick] (0,1) node[above] {$\rF$} to (0,0) node[below] {$\rF$};
    \end{tikzpicture}
\end{equation*}
The vertical composition can be depicted by the vertical concatenation of diagrams. The strict associativity justifies drawing horizontal composition as horizontal concatenation. Functoriality of the horizontal composition implies the interchange law, that is,
for any $1$-morphisms $\rF,\rH\in\cC(\mi,\mj), \rG,\rK\in\cC(\mj,\mk)$ and $2$-morphisms $\alpha:\rF\to \rH$ in
$\cC(\mi,\mj)$ and $\beta:\rG\to \rK$ in $\cC(\mj,\mk)$, we have:
\begin{equation}\label{diag:sliding}
    \begin{tikzpicture}[anchorbase]
      \node (j) at (-.5,0) {$\mj$};
      \node (i) at (.5,0) {$\mi$};
      \node (k) at (-1.5,0) {$\mk$};
      \node at (0,.3) [rectangle,draw,very thick] (d0) {$\alpha$};
      \node at (-1,-0.3) [rectangle,draw,very thick] (d1) {$\beta$};
      \draw[blue,very thick] (d0) to (0,1)node[above] {$\rH$};
      \draw[very thick] (d0) to (0,-1)node[below] {$\rF$};
      \draw[red,very thick] (d1) to (-1,1)node[above] {$\rK$};
      \draw[teal,very thick](d1) to (-1,-1)node[below] {$\rG$};
    \end{tikzpicture}
    \quad=\quad
        \begin{tikzpicture}[anchorbase]
      \node (j) at (-.5,0) {$\mj$};
      \node (i) at (.5,0) {$\mi$};
      \node (k) at (-1.5,0) {$\mk$};
      \node at (0,-0.3) [rectangle,draw,very thick] (d0) {$\alpha$};
      \node at (-1,.3) [rectangle,draw,very thick] (d1) {$\beta$};
      \draw[blue,very thick] (d0) to (0,1)node[above] {$\rH$};
      \draw[very thick] (d0) to (0,-1)node[below] {$\rF$};
      \draw[red,very thick] (d1) to (-1,1)node[above] {$\rK$};
      \draw[teal,very thick] (d1) to (-1,-1)node[below] {$\rG$};
    \end{tikzpicture}
\end{equation}
We drop the labels (for objects and 1-morphisms) unless they are necessary.

\begin{defn}\label{def:bilax-unit}
Let $\cC$ be a 2-semicategory and $\mi$ an object in $\cC$.
\begin{enumerate}\label{def:lax-unit}
\item\label{def:lax-unit-1} A {\em lax unit} in $\cC(\mi,\mi)$ is a triple
$(\rI_{\mi},\,l_{\mi},\,r_{\mi})$ consisting of a 1-morphism $\rI_{\mi}\in\cC(\mi,\mi)$
and collections of natural transformations $l_{\mi}=(l_{\mj,\mi})_{\mj\in\ccC}$
and $r_{\mi}=(r_{\mi,\mk})_{\mk\in\ccC}$,
called the {\em left} and {\em right lax unitors}, where
\[l_{\mj,\mi,{}_-}:h(\rI_{\mi},{}_-)\to \Id_{\ccC(\mj,\mi)}\quad
\text{and}\quad r_{\mi,\mk,{}_-}:h({}_-,\rI_{\mi})\to \Id_{\ccC(\mi,\mk)},\]
depicted as
\begin{equation*}
    \begin{tikzpicture}[anchorbase]
      \draw[very thick] (4, 8)node[above] {$\rF$} to (4, 7) node[below] {$\rF$};
      \draw[dashed, very thick ] (3.5, 7)[out=90, in=-180]node[below] {$\rI_{\mi}$} to (4, 7.5) ;
    \end{tikzpicture}
    \quad\text{and}\quad
    \begin{tikzpicture}[anchorbase]
      \draw[very thick] (4, 8)node[above] {$\rG$} to (4, 7)node[below] {$\rG$};
      \draw[dashed ,very thick] (4.5, 7)[out=90, in=0]node[below] {$\ \rI_{\mi}$} to (4, 7.5);
    \end{tikzpicture}
\end{equation*} for any $1$-morphisms
    $\rF\in \cC(\mj,\mi)$ and $\rG\in \cC(\mi,\mk)$, respectively, such that
\begin{enumerate}[(a)]
    \item\label{def:lax-unit-1-a} $\id_{\rG}l_{\mj,\mi,\rF}=r_{\mi,\mk,\rG}\id_{\rF}$, for any $1$-morphisms
    $\rF\in \cC(\mj,\mi)$ and $\rG\in \cC(\mi,\mk)$, that is:
    \begin{equation}\label{diag:lrcompatible}
    \begin{tikzpicture}[anchorbase]
      \draw[blue,very thick] (3,8)node[above] {$\rG$} to (3,7) node[below] {$\rG$};
      \draw[very thick] (4, 8)node[above] {$\rF$} to (4, 7) node[below] {$\rF$};
      \draw[dashed , very thick] (3.5, 7)[out=90, in=-180]node[below] {$\rI_{\mi}$} to (4, 7.5);
    \end{tikzpicture}
    \quad=\quad
    \begin{tikzpicture}[anchorbase]
      \draw[very thick] (5,8)node[above] {$\rF$} to (5,7) node[below] {$\rF$};
      \draw[blue,very thick] (4, 8)node[above] {$\rG$} to (4,7)node[below] {$\rG$};
     \draw[dashed ,very thick] (4.5, 7)[out=90, in=0]node[below] {$\ \rI_{\mi}$} to (4, 7.5);
    \end{tikzpicture}\
\end{equation}
    \item\label{def:lax-unit-1-b} $l_{\ml,\mi,\rF\rH}=l_{\mj,\mi,\rF}\id_{\rH}$, for any $1$-morphisms $\rF\in\cC(\mj,\mi)$ and $\rH\in\cC(\ml,\mj)$; moreover, similarly, $r_{\mi,\mt,\rK\rG}=\id_{\rK} r_{\mi,\mk,\rG}$, for any $1$-morphisms $\rG\in\cC(\mi,\mk)$ and $\rK\in\cC(\mk,\mt)$, which implies that there is no ambiguity in the following diagrams:
\begin{equation}\label{diag:lr2}
        \begin{tikzpicture}[anchorbase]
      \draw[very thick] (4, 8)node[above] {$\rF$} to (4, 7) node[below] {$\rF$};
      \draw[blue,very thick] (4.5, 8) node[above] {$\rH$}to (4.5, 7)node[below] {$\rH$};
      \draw[dashed , very thick] (3.5, 7)[out=90, in=-180]node[below] {$\rI_{\mi}$} to (4, 7.5);
    \end{tikzpicture}
    \quad\text{and}\quad
    \begin{tikzpicture}[anchorbase]
      \draw[blue,very thick] (4, 8) node[above] {$\rG$} to (4, 7)node[below] {$\rG$};
      \draw[very thick] (3.5, 8) node[above] {$\rK$}to (3.5, 7)node[below] {$\rK$};
       \draw[dashed ,very thick] (4.5, 7)[out=90, in=0]node[below] {$\ \rI_{\mi}$} to (4, 7.5);
    \end{tikzpicture}\
\end{equation}
\end{enumerate}

\item A lax unit $\rI_{\mi}=(\rI_{\mi},\,l_{\mi},\,r_{\mi})$ in $\cC(\mi,\mi)$ is said to
be {\em split}
if both $l_{\mj,\mi,\rF}$ and $r_{\mi,\mk,\rG}$ are split epic, for every 1-morphism $\rF\in\cC(\mj,\mi)$ and
every 1-morphism $\rG\in\cC(\mi,\mk)$, respectively, i.e., there exsit 2-morphisms $\alpha_{\rF},\beta_{\rG}$
(not necessarily natural in $\rF$ and $\rG$) such that $l_{\mj,\mi,\rF}\circ_{\rv}\alpha_{\rF}= \id_{\rF}$
and $r_{\mi,\mk,\rG}\circ_{\rv}\beta_{\rG}=\id_{\rG}$, respectively.

\item An {\em oplax unit} in $\cC(\mi,\mi)$ is a triple $(\rI'_{\mi},\,l'_{\mi},\,
r'_{\mi})$ consisting of a 1-morphism $\rI'_{\mi}\in\cC(\mi,\mi)$ and collections of
natural transformations, $l'_{\mi}=(l'_{\mj,\mi})_{\mj\in\ccC}$
and $r'_{\mi}=(r'_{\mi,\mk})_{\mk\in\ccC}$, called the {\em left} and {\em right oplax unitors}, where
\[l'_{\mj,\mi,{}_-}:\Id_{\ccC(\mj,\mi)}\to h(\rI'_{\mi},{}_-)\quad\text{and}\quad
r'_{\mi,\mk,{}_-}:\Id_{\ccC(\mi,\mk)}\to h({}_-,\rI'_{\mi}),\] depicted as
\begin{equation*}
    \begin{tikzpicture}[anchorbase]
      \draw[very thick] (4, 8.09)node[above]{$\rF$} (4,8)  to (4, 7) node[below] {$\rF$};
      \draw[dashed , very thick] (3.5, 8)[out=-90,in=-180] node[above] {$\rI'_{\mi}$} to (4, 7.5);
    \end{tikzpicture}
    \quad\text{and}\quad
    \begin{tikzpicture}[anchorbase]
      \draw[very thick] (4, 8) to (4, 7)node[below] {$\rG$} (4,8.09)node[above] {$\rG$};
      \draw[dashed ,very thick] (4.5, 8)[out=-90,in=0]node[above] {$\ \rI'_{\mi}$}
      to (4, 7.5);
    \end{tikzpicture}
\end{equation*}  for any $1$-morphisms
    $\rF\in \cC(\mj,\mi)$ and $\rG\in \cC(\mi,\mk)$, respectively, which satisfy the duals of the respective coherence axioms for lax unitors, that is,
     \begin{equation}\label{diag:oplaxlrcompatible}
    \begin{tikzpicture}[anchorbase]
      \draw[blue,very thick] (3,8.09)node[above] {$\rG$} to (3,7) node[below] {$\rG$};
      \draw[very thick] (4, 8.09)node[above] {$\rF$} to (4, 7) node[below] {$\rF$};
       \draw[dashed , very thick] (3.5, 8)[out=-90,in=-180] node[above] {$\rI'_{\mi}$} to (4, 7.5);
    \end{tikzpicture}
    \quad=\quad
    \begin{tikzpicture}[anchorbase]
      \draw[very thick] (5,8.09)node[above] {$\rF$} to (5,7) node[below] {$\rF$};
      \draw[blue,very thick] (4, 8.09)node[above] {$\rG$} to (4,7)node[below] {$\rG$};
      \draw[dashed ,very thick] (4.5, 8)[out=-90,in=0]node[above] {$\ \rI'_{\mi}$}
      to (4, 7.5);
    \end{tikzpicture}
\end{equation}
where $\rF\in\cC(\mj,\mi)$ and $\rG\in\cC(\mi,\mk)$, and there is no ambiguity in the diagrams
\begin{equation}\label{diag:oplaxlr2}
        \begin{tikzpicture}[anchorbase]
      \draw[very thick] (4, 8.09)node[above] {$\rF$} to (4, 7) node[below] {$\rF$};
      \draw[blue,very thick] (4.5, 8.09) node[above] {$\rH$}to (4.5, 7)node[below] {$\rH$};
      \draw[dashed , very thick] (3.5, 8)[out=-90,in=-180] node[above] {$\rI'_{\mi}$} to (4, 7.5);
    \end{tikzpicture}
    \quad\text{and}\quad
    \begin{tikzpicture}[anchorbase]
      \draw[blue,very thick] (4, 8.09) node[above] {$\rG$} to (4, 7)node[below] {$\rG$};
      \draw[very thick] (3.5, 8.09) node[above] {$\rK$}to (3.5, 7)node[below] {$\rK$};
      \draw[dashed ,very thick] (4.5, 8)[out=-90,in=0]node[above] {$\ \rI'_{\mi}$}
      to (4, 7.5);
    \end{tikzpicture}
\end{equation}
where $\rF\in\cC(\mj,\mi),\rH\in\cC(\ml,\mj),\rG\in\cC(\mi,\mk)$ and $\rH\in\cC(\mk,\mt)$.
\item An oplax unit $\rI'_{\mi}=(\rI'_{\mi},\,l'_{\mi},\,r'_{\mi})$ in $\cC(\mi,\mi)$ is said to be {\em split} if both $l'_{\mj,\mi,\rF}$ and $r'_{\mi,\mk,\rG}$ are split monic for every 1-morphism $\rF\in\cC(\mj,\mi)$ and $\rG\in\cC(\mi,\mk)$, respectively.
\end{enumerate}
\end{defn}

The naturality of the left and right lax unitors $l_{\mi},r_{\mi}$ can be depicted by the followig diagrams:
\begin{equation}\label{diag:unitornatural}
    \begin{tikzpicture}[anchorbase]
      \node at (0,0) [rectangle,draw,very thick] (d0) {$\alpha$};
      \draw[blue,very thick] (d0) to (0,1) node[above] {$\rH$};
      \draw[very thick] (d0) to (0,-1) node[below] {$\rF$};
      \draw[dashed , very thick] (-.9, -1)[out=90, in=-180]node[below] {$\ \rI_{\mi}$} to (0, .55) ;
    \end{tikzpicture}
    \quad =\quad
    \begin{tikzpicture}[anchorbase]
      \node at (0,0) [rectangle,draw,very thick] (d0) {$\alpha$};
      \draw[blue,very thick] (d0) to (0,1)node[above] {$\rH$};
      \draw[very thick] (d0) to (0,-1)node[below] {$\rF$};
      \draw[dashed , very thick] (-.45, -1)[out=90, in=-180]node[below] {$\ \rI_{\mi}$} to (0, -.5) ;
    \end{tikzpicture}
    \quad\text{and}\quad
        \begin{tikzpicture}[anchorbase]
      \node at (0,0) [rectangle,draw,very thick] (d0) {$\beta$};
      \draw[blue,very thick] (d0) to (0,1)node[above] {$\rK$};
      \draw[very thick] (d0) to (0,-1)node[below] {$\rG$};
      \draw[dashed ,very thick] (.9, -1)[out=90, in=0]node[below] {$\ \rI_{\mi}$} to (0, .55);
    \end{tikzpicture}
    \quad=\quad
    \begin{tikzpicture}[anchorbase]
      \node at (0,0) [rectangle,draw,very thick] (d0) {$\beta$};
      \draw[blue,very thick] (d0) to (0,1)node[above] {$\rK$};
      \draw[very thick] (d0) to (0,-1)node[below] {$\rG$};
      \draw[dashed ,very thick] (.45, -1)[out=90, in=0]node[below] {$\ \rI_{\mi}$} to (0,-.5);
    \end{tikzpicture}
\end{equation}
for any $2$-morphisms $\alpha:\rF\to \rH$ and $\beta:\rG\to \rK$, respectively, where $\rF,\rH\in\cC(\mj,\mi)$
and $\rG,\rK\in\cC(\mi,\mk)$. For instance, if we let $\alpha$ be the map $r_{\mj,\mi,\rF}: \rF\rI_{\mj}\to\rF$, then the left diagram in \eqref{diag:unitornatural}
implies that
\begin{equation}\label{diag:unitornatural-1}\begin{tikzpicture}[anchorbase]
\draw[very thick] (4, 8)node[above] {$\rF$} to (4, 6)node[below] {$\rF$};
\draw[dashed , very thick] (3.3, 6)[out=90, in=-180]node[below] {$\ \rI_{\mi}$} to (4, 7.5);
      \draw[red,dashed , very thick] (4.5, 6)[out=90, in=0]node[below] {$\ \rI_{\mj}$} to (4, 6.9);
\end{tikzpicture}
\quad=\quad
\begin{tikzpicture}[anchorbase]
\draw[very thick] (4, 8)node[above] {$\rF$} to (4, 6)node[below] {$\rF$};
\draw[dashed , very thick] (3.5, 6)[out=90, in=-180]node[below] {$\ \rI_{\mi}$} to (4, 6.9);
      \draw[red, dashed , very thick] (4.7, 6)[out=90, in=0]node[below] {$\ \rI_{\mj}$} to (4, 7.5);
\end{tikzpicture}
\end{equation}

Similarly, by the naturality of the left and right oplax unitors $l'_{\mi}, r'_{\mi}$,
we have
\begin{equation}\label{diag:oplaxunitornatural}
    \begin{tikzpicture}[anchorbase]
      \node at (0,0) [rectangle,draw,very thick] (d0) {$\alpha$};
      \draw[blue,very thick] (d0) to (0,1); \draw[blue]  (0,1.09) node[above] {$\rH$};
      \draw[very thick] (d0) to (0,-1) node[below] {$\rF$};
      \draw[dashed , very thick] (-.9, 1)[out=-90,in=-180] node[above] {$\rI'_{\mi}$} to (0, -.55);
    \end{tikzpicture}
    \quad =\quad
    \begin{tikzpicture}[anchorbase]
      \node at (0,0) [rectangle,draw,very thick] (d0) {$\alpha$};
      \draw[blue,very thick] (d0) to (0,1); \draw[blue] (0,1.09)node[above] {$\rH$};
      \draw[very thick] (d0) to (0,-1)node[below] {$\rF$};
      \draw[dashed , very thick] (-.5, 1)[out=-90,in=-180] node[above] {$\rI'_{\mi}$} to (0, .55);
    \end{tikzpicture}
    \quad\text{and}\quad
        \begin{tikzpicture}[anchorbase]
      \node at (0,0) [rectangle,draw,very thick] (d0) {$\beta$};
      \draw[blue,very thick] (d0) to (0,1); \draw[blue]  (0,1.09)node[above] {$\rK$};
      \draw[very thick] (d0) to (0,-1)node[below] {$\rG$};
      \draw[dashed ,very thick] (.9, 1)[out=-90,in=0]node[above] {$\ \rI'_{\mi}$}
      to (0, -.55);
    \end{tikzpicture}
    \quad=\quad
    \begin{tikzpicture}[anchorbase]
      \node at (0,0) [rectangle,draw,very thick] (d0) {$\beta$};
      \draw[blue,very thick] (d0) to (0,1); \draw[blue]  (0,1.09)node[above] {$\rK$};
      \draw[very thick] (d0) to (0,-1)node[below] {$\rG$};
        \draw[dashed ,very thick] (.45, 1)[out=-90,in=0]node[above] {$\ \rI'_{\mi}$}
      to (0, .5);
    \end{tikzpicture}
\end{equation}
for any $2$-morphisms $\alpha:\rF\to \rH$ and $\beta:\rG\to \rK$, respectively, where $\rF,\rH\in\cC(\mj,\mi)$ and $\rG,\rK\in\cC(\mi,\mk)$.
In particular, we have the diagram dual to \eqref{diag:unitornatural-1} with dashed strands labeled by the corresponding oplax units.

\begin{rem}\label{rem:coherence}
The evaluations of the left unitor and the right unitor at $\rI_{\mi}$,
namely, $l_{\mi,\mi,\rI_{\mi}}:\rI_{\mi}\rI_{\mi}\to \rI_{\mi}$ and $r_{\mi,\mi,\rI_{\mi}}:\rI_{\mi}\rI_{\mi}\to \rI_{\mi}$,
differ in general, see Subsection~\ref{sec:example1} for example.
At the same time, via left (resp. right) horizontal composition with $\id_{\rF}$,
for some $1$-morphism $\rF$, the results are equalized
by the evaluation of the left (resp. right) unitor at $\rF$, see~\eqref{eq:002}.
\end{rem}

\begin{lem} If
 $\rI_{\mi}=(\rI_{\mi},\,l_{\mi},\,r_{\mi})$ and $\tilde{\rI}_{\mi}=(\tilde{\rI}_{\mi},\,\tilde{l}_{\mi},\,\tilde{r}_{\mi})$ are two lax units in $\cC(\mi,\mi)$,
 then $\rI_{\mi}\tilde{\rI}_{\mi}$ is also a lax unit in $\cC(\mi,\mi)$. The dual statement holds for oplax units.
\end{lem}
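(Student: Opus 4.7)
I take the triple $(\rI_{\mi}\tilde{\rI}_{\mi},\hat{l},\hat{r})$ as my candidate lax unit in $\cC(\mi,\mi)$, where for $\rF\in\cC(\mj,\mi)$ and $\rG\in\cC(\mi,\mk)$ the new unitors are defined by absorbing the inner dashed strand first and the outer one second:
\[
\hat{l}_{\mj,\mi,\rF} := l_{\mj,\mi,\rF}\circ_{\rv}(\id_{\rI_{\mi}}\tilde{l}_{\mj,\mi,\rF}),\qquad
\hat{r}_{\mi,\mk,\rG} := \tilde{r}_{\mi,\mk,\rG}\circ_{\rv}(r_{\mi,\mk,\rG}\id_{\tilde{\rI}_{\mi}}).
\]
Naturality of $\hat{l}_{\mj,\mi,-}$ and $\hat{r}_{\mi,\mk,-}$ is immediate from naturality of the four given transformations together with functoriality of horizontal and vertical composition, so no genuine work is needed there.

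Axiom \eqref{def:lax-unit-1-b} for $\hat{l}$ is routine: for $\rF\in\cC(\mj,\mi)$ and $\rH\in\cC(\ml,\mj)$, apply axiom \eqref{def:lax-unit-1-b} for $l$ and for $\tilde{l}$ to each factor in the definition of $\hat{l}_{\ml,\mi,\rF\rH}$ and then pull $\id_{\rH}$ out of the vertical composite via the interchange law \eqref{diag:sliding}, obtaining $\hat{l}_{\ml,\mi,\rF\rH}=\hat{l}_{\mj,\mi,\rF}\id_{\rH}$. The mirror argument, using axiom \eqref{def:lax-unit-1-b} for $r$ and $\tilde{r}$, handles $\hat{r}$.

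The main obstacle, and the only place where axiom \eqref{def:lax-unit-1-a} for the two given lax units is used nontrivially, is the verification of \eqref{def:lax-unit-1-a} for $(\rI_{\mi}\tilde{\rI}_{\mi},\hat{l},\hat{r})$. My plan is to show that both $\id_{\rG}\hat{l}_{\mj,\mi,\rF}$ and $\hat{r}_{\mi,\mk,\rG}\id_{\rF}$ coincide with the single horizontal composite $r_{\mi,\mk,\rG}\tilde{l}_{\mj,\mi,\rF}\colon \rG\rI_{\mi}\tilde{\rI}_{\mi}\rF\to\rG\rF$. Expanding the left side via functoriality of horizontal composition and then applying axiom \eqref{def:lax-unit-1-a} for $\rI_{\mi}$ to rewrite $\id_{\rG}l_{\mj,\mi,\rF}$ as $r_{\mi,\mk,\rG}\id_{\rF}$ yields exactly one of the two interchange-law decompositions of this horizontal product. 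Expanding the right side analogously and applying axiom \eqref{def:lax-unit-1-a} for $\tilde{\rI}_{\mi}$ to rewrite $\tilde{r}_{\mi,\mk,\rG}\id_{\rF}$ as $\id_{\rG}\tilde{l}_{\mj,\mi,\rF}$ yields the other decomposition. Equality of the two decompositions is precisely \eqref{diag:sliding}, so \eqref{def:lax-unit-1-a} holds for the new triple.

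For the dual statement on oplax units, I reverse the direction of every $2$-morphism in the above argument: set $\hat{l}'_{\mj,\mi,\rF}:=(\id_{\rI'_{\mi}}\tilde{l}'_{\mj,\mi,\rF})\circ_{\rv}l'_{\mj,\mi,\rF}$ and symmetrically for $\hat{r}'$, and repeat the three steps with source and target interchanged, now using \eqref{diag:oplaxlrcompatible} and \eqref{diag:oplaxlr2} in place of \eqref{diag:lrcompatible} and \eqref{diag:lr2}.
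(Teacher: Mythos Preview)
Your proposal is correct and uses exactly the same left and right unitors as the paper; the paper simply declares these unitors and leaves the verification of the axioms as ``easy to check,'' whereas you spell out the details, including the nice observation that both sides of axiom~\eqref{def:lax-unit-1-a} equal the single horizontal composite $r_{\mi,\mk,\rG}\tilde{l}_{\mj,\mi,\rF}$.
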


\begin{proof}
It is easy to check that the composition $\rI_{\mi}\tilde{\rI}_{\mi}$ has a natural structure of lax unit with the left lax unitor  given by the composition
\[\xymatrix{\rI_{\mi}\tilde{\rI}_{\mi}\rF\ar[rr]^{\id_{\rI_{\mi}}\tilde{l}_{\mj,\mi,\rF}}&&
\rI_{\mi}\rF\ar[rr]^{l_{\mj,\mi,\rF}}&&\rF,}\] diagrammatically,
\[ \begin{tikzpicture}[anchorbase]
      \draw[very thick] (4, 8)node[above] {$\rF$} to (4, 6); \draw (4,5.91) node[below] {$\rF$};
      \draw[red, dashed,  very thick] (3.5, 6)[out=90, in=-180]node[below] {$\tilde{\rI}_{\mi}$} to (4, 6.9);
        \draw[dashed,very thick ] (3, 6)[out=90, in=-180](3,5.91)node[below] {$\rI_{\mi}$} to (4, 7.5) ;
    \end{tikzpicture}\]
for any $1$-morphism $\rF\in\cC(\mj,\mi)$. For the right unitor,
the structure is given by  the composition
\[\xymatrix{\rG\rI_{\mi}\tilde{\rI}_{\mi}\ar[rr]^{r_{\mi,\mk,\rG}\id_{\tilde{\rI}_{\mi}}}&&
\rG\tilde{\rI}_{\mi}\ar[rr]^{\tilde{r}_{\mi,\mk,\rG}}&&\rG,}\]
diagrammatically,
\[\begin{tikzpicture}[anchorbase]
      \draw[very thick] (4, 8)node[above] {$\rG$} to (4, 6); \draw (4, 5.91)node[below] {$\rG$};
      \draw[dashed, very thick] (4.5, 6)[out=90, in=0] (4.5,5.91)node[below] {$\ \rI_{\mi}$}to (4, 6.9);
      \draw[red, dashed, very thick] (5, 6)[out=90, in=0]node[below] {$\ \tilde{\rI}_{\mi}$} to (4, 7.5);
    \end{tikzpicture}\]
for any $1$-morphism $\rG\in\cC(\mi,\mk)$.
\end{proof}
In particular, for any positive integer $n$, the composition $(\rI_{\mi})^n:=\underbrace{\rI_{\mi} \rI_{\mi}\cdots\rI_{\mi}}_{n\text{ factors}}$ is a lax unit
and $(\rI'_{\mi})^n:=\underbrace{\rI'_{\mi} \rI'_{\mi}\cdots\rI'_{\mi}}_{n\text{ factors}}$ is an oplax unit.

\begin{defn}\label{defxxx}
A {\em lax-unital} 2-category $(\cC,\,\rI=\{\rI_{\mi}|\,\mi\in\ob\cC\})$ is a 2-semicategory $\cC$ with a choice of triples $(\rI_{\mi},\,l_{\mi},\,r_{\mi})_{\mi\in\ob\ccC}$, each of which is either a lax unit or an oplax unit.

A {\em bilax-unital} 2-category $(\cC,\,\rI=\{\rI_{\mi}|\,\mi\in\ob\cC\},\,\rI'=\{\rI'_{\mi}|\,\mi\in\ob\cC\})$ is a 2-semicategory $\cC$ with a choice of lax units $(\rI_{\mi},\,l_{\mi},\,r_{\mi})_{\mi\in\ob\ccC}$ and
oplax units $(\rI'_{\mi},\,l'_{\mi},\,r'_{\mi})_{\mi\in\ob\ccC}$.
\end{defn}

For a bilax-unital 2-category $(\cC,\,\rI=\{\rI_{\mi}|\,\mi\in\ob\cC\},\,\rI'=\{\rI'_{\mi}|\,\mi\in\ob\cC\})$, a natural question to ask  is the following
\begin{ques}\label{ques-compatible}
Does the equality
\begin{equation}\label{laxoplaxcompatible}
r_{\mi,\mi,\rI'_{\mi}}\circ_{\rv} l'_{\mi,\mi,\rI_{\mi}}=l_{\mi,\mi,\rI'_{\mi}}\circ_{\rv} r'_{\mi,\mi,\rI_{\mi}},
\end{equation}
or, diagrammatically,
\begin{equation}\label{diag:laxoplaxcompatible}
\begin{tikzpicture}[anchorbase]
   \draw[dashed, very thick] (4,7.96)  to (4, 7) node[below] {$\ \rI_{\mi}$};
      \draw[dashed , very thick, red] (3.5, 7.95)[out=-90,in=-180] to (4, 7.5);
   \draw[dashed, very thick, red] (3.5, 9)node[above] {$\ \rI'_{\mi}$} to (3.5, 8.03);
      \draw[dashed ,very thick] (4, 8.05)[out=90, in=0] to (3.5, 8.5);
\end{tikzpicture}
\quad=\quad\begin{tikzpicture}[anchorbase]
 \draw[dashed, very thick] (4, 7.96) to (4, 7)node[below] {$\ \rI_{\mi}$};
      \draw[dashed, very thick, red] (4.5, 7.95)[out=-90,in=0]
      to (4, 7.5);
     \draw[dashed, very thick, red] (4.5, 8.03) to (4.5, 9)node[above] {$\ \rI'_{\mi}$};
      \draw[dashed , very thick] (4, 8.05)[out=90, in=-180] to (4.5, 8.5) ;
\end{tikzpicture}
\end{equation}
hold in $\cC$ ?
\end{ques}

A 2-category $\cC$ can be viewed as a bilax-unital 2-category $(\cC,\,\rI=\{\rI_{\mi}|\,\mi\in\ob\cC\},\,\rI'=\{\rI'_{\mi}|\,\mi\in\ob\cC\})$ by setting $\rI_{\mi}=\mathbbm{1}_{\mi}=\rI'_{\mi}$, for each $\mi$, where the $1$-morphism $\mathbbm{1}_{\mi}\in\cC(\mi,\mi)$ is the strict unit associated to $\mi$, and choosing all unitors to be the identities. We denote the associated bilax-unital 2-category by $(\cC,\,\rI=\rI'=\{\mathbbm{1}_{\mi}|\,\mi\in\ob\cC\})$. In this case, the identity \eqref{laxoplaxcompatible} trivializes since $\mathbbm{1}_{\mi}\mathbbm{1}_{\mi}=\mathbbm{1}_{\mi}$.
However, $\cC$ could also be viewed as a bilax-unital 2-category with a different choice of (op)lax units.
Indeed, any $2$-morphism $\gamma_{\mi}: \rI_{\mi}\to \mathbbm{1}_{\mi}$, where $\rI_{\mi}$ is some $1$-morphism in $\cC(\mi,\mi)$, makes $\rI_{\mi}$ into
a lax unit associated to the object $\mi$ by defining the left and right lax unitors as the left and right horizontal
composition with $\gamma_{\mi}$ respectively, that is,
\[l_{\mj,\mi,\rF}:=\gamma_{\mi} \id_{\rF}\quad\text{and}\quad r_{\mi,\mk,\rG}:=\id_{\rG} \gamma_{\mi},\]
for any $1$-morphisms $\rF\in\cC(\mj,\mi)$ and $\rG\in\cC(\mi,\mk)$. Similarly, any 2-morphism
$\gamma'_{\mi}:\mathbbm{1}_{\mi}\to \rI'_{\mi}$, where $\rI'_{\mi}$ is some $1$-morphism in $\cC(\mi,\mi)$, makes $\rI'_{\mi}$ into
an oplax unit associated to the object $\mi$ by defining the left and right oplax unitors as the left and right
horizontal composition with $\gamma'_{\mi}$, respectively.
If such $2$-morphisms $\gamma_{\mi},\gamma'_{\mi}$ are chosen for every object $\mi$, then
we denote by $\cC [\gamma,\gamma']$ the obtained bilax-unital 2-category. By the interchange law, i.e. \eqref{diag:sliding}, we have
$\gamma_{\mi}\gamma'_{\mi}=\gamma'_{\mi}\gamma_{\mi}=\gamma'_{\mi}\circ_{\rv}\gamma_{\mi}$, for any object $\mi$, and hence the identity \eqref{laxoplaxcompatible} holds in $\cC [\gamma,\gamma']$.

We will answer Question~\ref{ques-compatible} positively in Proposition~\ref{propblue} for certain bilax-unital 2-categories (called fiax; see Definition~\ref{deffiax}).
We suspect the answer to Question~\ref{ques-compatible} is negative in general, so it would be
interesting to find a counterexample.

\subsection{Abelianizations}\label{sec:abelianization}
Following \cite[Section~3]{MMMT}, we define the injective
(resp. projective) abelianization of 2-semicategories.
First recall the injective abelianization of an additive
category from \cite[Subsection~3.2]{MMMT}.

\begin{defn}
The {\em injective abelianization} $\underline{\mathcal A}$ of an
additive category $\mathcal A$ is defined as a category whose
\begin{itemize}
    \item objects are tuples $(\{f_i:X\to Y_i\}_{i=1}^\infty,\,n)$, where $n\in\mathbb N$, $X, Y_i$ are objects in $\mathcal A$ and $f_i:X\to Y_i$ is a morphism in $\mathcal A$ such that $Y_i=0$, for $i>n$;
    \item morphisms from $(\{f_i:X\to Y_i\}_{i=1}^\infty,n)$ to $(\{f'_i:X'\to Y'_i\}_{i=1}^\infty,n')$ are the homotopy classes of tuples $(g:X\to X',h_{i,j}:Y_i\to Y_j')_{i,j=1}^\infty$ such that $f'_i\circ g=\sum_j h_{j,i}\circ f_j$, where $\circ$ denotes the composition in $\mathcal A$ and the homotopy relation is spanned by the tuples $(g,h_{i,j})$ for which there exist $q_i:Y_i\to X'$ such that $\sum_i q_i\circ f_i=g$;
    \item the identity morphism for the object $(\{f_i:X\to Y_i\}_{i=1}^\infty,n)$ is given by $(\id_X , h_{i,j}: Y_i\to Y_j)_{i,j=1}^\infty$, where $h_{i,j}=\delta_{i,j}\id_{Y_i}$
    \item composition is given by $(g,h_{i,j})_{i,j=1}^\infty\circ (g',h'_{i,j})_{i,j=1}^\infty=(g\circ g',\sum_k h'_{k,j} \circ h_{i,k})_{i,j=1}^\infty$.
\end{itemize}
\end{defn}

Denote by $\Rep\mathcal A$ the category of all ($\Bbbk$-linear) additive functors from $\mathcal A$ to the category
$\operatorname{Vect}_{\Bbbk}$ of $\Bbbk$-vector spaces.
The category $\Rep\mathcal A$ is an abelian category (as $\operatorname{Vect}_{\Bbbk}$
is an abelian category).
The Yoneda embedding $X\mapsto \Hom_\mathcal{A}(X,{}_-)$ realizes $\mathcal A$ as a full subcategory of $(\Rep \mathcal A)\op$.
Let $L(\mathcal A)$ be the full subcategory of $(\Rep\mathcal A)\op$ which
consists of all objects $Z$ admitting a copresentation of the form
\[0\to Z\to \Hom_{\mathcal A}(X,{}_-)\to \Hom_{\mathcal A}(Y,{}_-) \]
with $X,Y\in\mathcal A$. By definition, the category $L(\mathcal A)$ coincides with the category $A(\mathcal A\op)\op$ from
\cite[Definition 5.1.3]{Nee} (applied to an additive category).

\begin{prop}\label{toneeman}
The categories $\underline{\mathcal A}$ and $L(\mathcal A)$ are equivalent.
\end{prop}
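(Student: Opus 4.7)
The plan is to build an explicit equivalence $\Phi\colon \underline{\mathcal A}\to L(\mathcal A)$ by realising an object $(\{f_i\colon X\to Y_i\}_{i=1}^{\infty},n)$ of $\underline{\mathcal A}$ as a single morphism $f=(f_1,\dots,f_n)^{T}\colon X\to Y$ in $\mathcal A$, where $Y=Y_1\oplus\cdots\oplus Y_n$. Such a morphism determines a morphism $f\colon X\to Y$ in $\mathcal A$ and hence, via the Yoneda embedding $\mathcal A\hookrightarrow(\Rep\mathcal A)\op$, a morphism $Y_{\mathcal A}(X)\to Y_{\mathcal A}(Y)$ in $(\Rep\mathcal A)\op$; equivalently, it yields $f^{*}\colon \Hom_{\mathcal A}(Y,{}_-)\to \Hom_{\mathcal A}(X,{}_-)$ in $\Rep\mathcal A$. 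I define $\Phi(\{f_i\}_{i=1}^{\infty},n)$ to be the kernel of $Y_{\mathcal A}(X)\to Y_{\mathcal A}(Y)$ in $(\Rep\mathcal A)\op$, i.e.\ the cokernel of $f^{*}$ computed in $\Rep\mathcal A$. By the very definition of $L(\mathcal A)$, the image is indeed an object of $L(\mathcal A)$, and since every $Z\in L(\mathcal A)$ admits a copresentation $0\to Z\to \Hom_{\mathcal A}(X,{}_-)\to\Hom_{\mathcal A}(Y,{}_-)$ which by Yoneda is induced by some $f\colon X\to Y$, the functor $\Phi$ is essentially surjective (take $n=1$ and $f_1=f$).

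On morphisms, given a representative $(g\colon X\to X',\,h_{i,j}\colon Y_i\to Y'_j)$ of a morphism in $\underline{\mathcal A}$, I assemble the block morphism $h\colon Y\to Y'$ with entries $h_{j,i}$. The relation $f'_i\circ g=\sum_j h_{j,i}\circ f_j$ is then exactly the commutativity of the square $h\circ f=f'\circ g$. Applying $Y_{\mathcal A}$ to this square produces a chain map between two-term copresentations of $\Phi$ of source and target; passing to cokernels in $\Rep\mathcal A$ (equivalently, kernels in $(\Rep\mathcal A)\op$) yields the desired morphism $\Phi((\{f_i\},n))\to\Phi((\{f'_i\},n'))$. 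Functoriality is immediate from the functoriality of cokernel; the main point to verify is that the construction factors through the homotopy relation. If $(g,h)$ is spanned by the null-homotopic tuples of the definition, i.e.\ there exist $q_i\colon Y_i\to X'$ with $g=\sum_i q_i\circ f_i$ and (the block) $h=f'\circ q$, then by Yoneda the induced chain map factors through the image of $f'^{*}$, hence vanishes on the cokernel.

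For fully faithfulness I use that the representables $\Hom_{\mathcal A}(-,W)$ are projective in $\Rep\mathcal A$. Given a morphism $\varphi\colon \Phi((\{f'_i\},n'))\to\Phi((\{f_i\},n))$ in $L(\mathcal A)\subseteq(\Rep\mathcal A)\op$, I read it as a morphism of cokernels in $\Rep\mathcal A$, and lift it successively along the two copresentations using projectivity; Yoneda turns the two lifts into a morphism $g\colon X\to X'$ and a morphism $h\colon Y\to Y'$ making the required square commute, which I decompose into the components $h_{i,j}$. Any two such choices of lifts differ by a chain homotopy $q\colon Y\to X'$ that, reindexed as $q_i\colon Y_i\to X'$ with $\sum_i q_i\circ f_i$ equal to the difference of the two candidate $g$'s, is precisely the homotopy relation in $\underline{\mathcal A}$. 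This shows that $\Phi$ is bijective on $\Hom$-sets.

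The step I expect to be the main obstacle is the bookkeeping in the fully-faithful argument: matching the multi-object formulation (the families $\{Y_i\}_{i=1}^{\infty}$, the indexed morphisms $h_{i,j}$, and the sum condition $\sum_i q_i\circ f_i=g$) with the classical chain-homotopy description for two-term complexes $X\xrightarrow{f}Y$. One has to verify that the block-matrix decomposition of $h$ and $q$ is compatible with both the chain-map axiom and the homotopy axiom, and that this compatibility is insensitive to the stabilising integer $n$. Once this identification is made, the equivalence coincides with the standard realisation of $A(\mathcal A\op)\op$ via two-term copresentations of representables, which matches Neeman's description cited in the statement.
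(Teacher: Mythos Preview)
Your approach is essentially the same as the paper's: define the functor by sending $(\{f_i\},n)$ to the kernel (in $(\Rep\mathcal A)\op$) of the Yoneda image of $f=(f_1,\dots,f_n)^T$, and then verify it is an equivalence. The paper does exactly this but outsources the verification to \cite[Subsection~3.2]{MMMT} and \cite[Proposition~5.1.14]{Nee}, whereas you spell the details out.

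One small inaccuracy worth flagging: when you describe the null-homotopic tuples you add the condition $h=f'\circ q$, but the paper's homotopy relation only requires the existence of $q_i$ with $g=\sum_i q_i\circ f_i$; there is no constraint on $h$ beyond the chain-map condition. This does not create a gap, because the induced map on cokernels depends only on $g$ (indeed, any two valid tuples $(g,h)$ and $(g,h')$ differ by $(0,h-h')$, which is null-homotopic via $q_i=0$). Your well-definedness and injectivity arguments therefore go through, but you should correct the characterisation of the homotopy relation to match the definition.
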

\begin{proof}
Define a functor from $\underline{\mathcal A}$ to $L(\mathcal A)$ by sending any object $(\{f_i:X\to Y_i\}_{i=1}^\infty,n)$ in $\underline{\mathcal A}$ to
$$\ker (\oplus_i \Hom(f_i,{}_-):\Hom(X,{}_-)\to \oplus_i \Hom(Y_i,{}_-))$$
in $L(\mathcal A)$ and with the obvious assignment on morphisms. It is an equivalence by \cite[Subsection~3.2]{MMMT} and the proof of \cite[Proposition 5.1.14]{Nee} using $L(\mathcal A)=A(\mathcal A\op)\op$.
\end{proof}

By Proposition~\ref{toneeman} and $L(\mathcal A)=A(\mathcal A\op)\op$, we can apply the proof of \cite[Lemma 5.1.6]{Nee} and obtain the following:
\begin{cor}\label{kernel}
The category $\underline{\mathcal A}$ has kernels.
\end{cor}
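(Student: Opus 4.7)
The plan is to reduce the claim to a known result in the derived-category literature. By Proposition~\ref{toneeman}, there is an equivalence $\underline{\mathcal{A}} \simeq L(\mathcal{A})$, and by construction $L(\mathcal{A}) = A(\mathcal{A}\op)\op$. Under this identification, the existence of kernels in $\underline{\mathcal{A}}$ is equivalent to the existence of cokernels in $A(\mathcal{A}\op)$. Hence I would cite (the proof of) \cite[Lemma~5.1.6]{Nee}, which asserts exactly that $A(\mathcal{B})$ admits cokernels for any additive $\mathcal{B}$, and apply it with $\mathcal{B} = \mathcal{A}\op$.

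To indicate what is happening concretely, given a morphism $f : Z \to Z'$ in $L(\mathcal{A})$, both $Z$ and $Z'$ sit in two-term copresentations
\[
0 \to Z \to \Hom_{\mathcal{A}}(X,{}_-) \to \Hom_{\mathcal{A}}(Y,{}_-), \qquad 0 \to Z' \to \Hom_{\mathcal{A}}(X',{}_-) \to \Hom_{\mathcal{A}}(Y',{}_-).
\]
Using the Yoneda lemma one lifts $f$ to a morphism between these copresentations; forming the appropriate cokernel construction in the ambient abelian category $\Rep\mathcal{A}$ (equivalently, a kernel in $(\Rep\mathcal{A})\op$) produces the candidate kernel of $f$.

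The one point that needs checking, and which is the crux of Neeman's lemma, is that the resulting object again admits a two-term copresentation by representable functors, so that it genuinely lies in $L(\mathcal{A})$ rather than just in the larger category $(\Rep\mathcal{A})\op$. This is the only nontrivial step: it is handled by explicitly writing down a representable cover using the copresentations of $Z$ and $Z'$ and the chosen lift of $f$, which is precisely the content of the construction in the proof of \cite[Lemma~5.1.6]{Nee}. Once this is in place, the universality of the constructed kernel is inherited from the ambient functor category, and the proof is complete.
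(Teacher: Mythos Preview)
Your proof is correct and follows exactly the same approach as the paper: reduce to $L(\mathcal A)=A(\mathcal A\op)\op$ via Proposition~\ref{toneeman} and invoke \cite[Lemma~5.1.6]{Nee}. The paper states this in a single sentence, while you have additionally unpacked the content of Neeman's argument, but the logical route is identical.
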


In general, the category $\underline{\mathcal A}$ does not have to be abelian. But if $\mathcal A$ is finitary, e.g. see Subsection~\ref{ss:fiax},
then $\underline{\mathcal A}$ is abelian. By using the definition of $\underline{\mathcal A}$ instead of that of $L(\mathcal A)$, we can define the injective abelianization of a $2$-semicategory $\cC$ which has a strictly associative horizontal composition.

\begin{defn}\label{injective-abelianization}
The {\em injective abelianization} $\underline{\cC}$ of a 2-semicategory $\cC$ is a 2-semicategory with
\begin{itemize}
    \item $\ob\underline{\cC}=\ob\cC$;
    \item $\underline{\cC}(\mi,\mj):=\underline{\cC (\mi,\mj)}$, for any $\mi,\mj\in\ob\cC$;
    \item composition of 1-morphisms given by
    \[(\{\alpha_i:\rF\to \rG_i\}_{i=1}^\infty, n)\circ (\{\alpha'_i:\rF'\to \rG'_i\}_{i=1}^\infty,n')=(\{\beta_i:\rF\rF'\to \rH_i\},n+n'),\]
    where
    \[\rH_i=\begin{cases}
        \rF\rG'_i, & \text{for } i=1,\cdots,n'\\
        \rG_{i-n'}\rF', & \text{for } i=n'+1,\cdots,n'+n\\
        0, & \text{else},
        \end{cases}\]
    and
    \[\beta_i=\begin{cases}
        \id_{\rF} \alpha'_i, & \text{for } i=1,\cdots, n'\\
        \alpha_{i-n'} \id_{\rF'} &\text{for } i=n+1,\cdots, n'+n\\
        0, &\textrm{else};
        \end{cases};\]
    \item componentwise horizontal composition of 2-morphisms.
\end{itemize}
\end{defn}
Note that the 2-semicategory $\cC$ can be embedded into its injective abelianization $\underline{\cC}$ by sending every $1$-morphism
$\rF\in\cC(\mi,\mj)$ to $(\{0:\rF\to 0\},0)$ and with the obvious assignment on $2$-morphisms.
If $\cC$ is a bilax-unital 2-category, i.e., $(\cC,\,\rI=\{\rI_{\mi}|\,\mi\in\ob\cC\},\,\rI'=\{\rI'_{\mi}|\,\mi\in\ob\cC\})$,
then its injective abelianization $\underline{\cC}$ is also a bilax-unital $2$-category with the images of the lax units $\rI_{\mi}$ under the above embedding
being the lax units and the images of the oplax units $\rI'_{\mi}$ under the above embedding being the oplax units, with the corresponding unitors.
However, the induced (op)lax units in $\underline{\cC}$ does not need to be split even if the original (op)lax units are split in $\cC$.

Dualizing the construction above, one can similarly define the projective abelianization $\overline{\cC}$ of $\cC$.

\subsection{Representations of bilax-unital 2-categories}\label{s2.3}
Consider any two bilax-unital 2-categories
\[\cC=(\cC,\,\rI^{\ccC}=\{\rI_{\mi}^{\ccC}|\,\mi\in\ob\cC\},\,(\rI')^{\ccC}=\{(\rI'_{\mi})^{\ccC}|\,\mi\in\ob\cC\}))\]
and
\[\cD=(\cD,\,\rI^{\ccD}=\{\rI_{\mi}^{\ccD}|\,\mi\in\ob\cD\},\,(\rI')^{\ccD}=\{(\rI'_{\mi})^{\ccD}|\,\mi\in\ob\cD\}).\]
Below we adapt to our setup the notion of a $2$-representation from
\cite[Subsection~2.3]{MM3}.

\begin{defn}\label{bilax-unital 2-functor}
A {\em bilax-unital 2-functor} $\bM$ from $\cC$ to $\cD$ consists of
\begin{itemize}
    \item a function $\bM:\ob \cC\to\ob\cD$;
    \item a functor $\bM_{\mi,\mj}:\cC(\mi,\mj)\to \cD(\bM(\mi),\bM(\mj))$, for each pair of objects $\mi,\mj\in \ob\cC$;
    \item two $2$-morphisms
    \[u_{\mi}:\bM_{\mi,\mi} (\rI_{\mi}^{\ccC})\to \rI_{\mi}^{\ccD}\quad\text{and}\quad u'_{\mi}:(\rI'_{\mi})^{\ccD}\to \bM_{\mi,\mi}\big((\rI'_{\mi})^{\ccC}\big),\]
    for each object $\mi\in\ob\cC$,
\end{itemize}
such that
\begin{enumerate}
    \item\label{bilax-unital 2-functor-1} for any two $1$-morphisms $\rF\in\cC(\mi,\mj),\rG\in\cC(\mj,\mk)$ in $\cC$, we have $\bM_{\mi,\mk}(\rG\rF)=\bM_{\mj,\mk}(\rG)\bM_{\mi,\mj}(\rF)$;
    \item\label{bilax-unital 2-functor-2} for any object $\mj\in\ob\cC$, we have
    \[l_{\mi,\mj,\bM_{\mi,\mj}(\rF)}^{\ccD}\circ_{\rv} (u_{\mj} \id_{\bM_{\mi,\mj}(\rF)})=\bM_{\mi,\mj}(l_{\mi,\mj,\rF}^{\ccC})
    \quad\text{and}\quad r_{\mj,\mk,\bM_{\mj,\mk}(\rG)}^{\ccD}\circ_{\rv}(\id_{\bM_{\mj,\mk}(\rG)} u_{\mj})=\bM_{\mj,\mk}(r_{\mj,\mk,\rG}^{\ccC}),\]
    for any $1$-morphisms $\rF\in\cC(\mi,\mj), \rG\in\cC(\mj,\mk)$;
    \item\label{bilax-unital 2-functor-3} for any object $\mj\in\ob\cC$, we have
    \[(u'_{\mj}\id_{\bM_{\mi,\mj}(\rF)})\circ_{\rv} (l'_{\mi,\mj,\bM_{\mi,\mj}(\rF)})^{\ccD}=\bM_{\mi,\mj}\big((l'_{\mi,\mj,\rF})^{\ccC}\big)\quad
    \text{and}\quad (\id_{\bM_{\mj,\mk}(\rG)}u'_{\mj})\circ_{\rv} (r'_{\mj,\mk,\bM_{\mj,\mk}(\rG)})^{\ccD}=\bM_{\mj,\mk}\big((r'_{\mj,\mk,\rG})^{\ccC}\big),\]
    for any $1$-morphisms $\rF\in\cC(\mi,\mj),\rG\in\cC(\mj,\mk)$.
\end{enumerate}
\end{defn}

Note that the embedding from $\cC$ to $\underline{\cC}$ is functorial. Hence, a bilax-unital $2$-functor from $\cC$ to $\cD$ induces a bilax-unital $2$-functor from $\underline{\cC}$ to $\underline{\cD}$.

Recall that any $2$-category can be viewed as a bilax-unital 2-categories with the lax and oplax units being the identity 1-morphisms, with the identity unitors. Denote by
$\mathbf{Cat}$ the 2-category of small categories, which is also a bilax-unital 2-category. Now we give the definition of
a representation of a bilax-unital 2-category $\cC=(\cC,\,\rI=\{\rI_{\mi}|\,\mi\in\ob\cC\},\,\rI'=\{\rI'_{\mi}|\,\mi\in\ob\cC\})$

\begin{defn}\label{def:bilax-2-rep}
A {\em bilax $2$-representation} $\bM$ of $\cC$ is a bilax-unital 2-functor from $\cC$ to $\mathbf{Cat}$, i,e, it consists of
\begin{itemize}
    \item a function $\bM: \ob \cC\to \ob \mathbf{Cat}$;
     \item a functor $\bM_{\mi,\mj}:\cC(\mi,\mj)\to \mathbf{Cat}(\bM(\mi),\bM(\mj))$, for each pair of objects $\mi,\mj\in \ob\cC$;
    \item two natural transformations
    \[u_{\mi}:\bM_{\mi,\mi} (\rI_{\mi})\to \id_{\bM(\mi)}\quad\text{and}\quad u'_{\mi}:\id_{\bM(\mi)}\to \bM_{\mi,\mi}(\rI'_{\mi}),\]
    for each object $\mi\in\ob\cC$,
\end{itemize}
satisfying the conditions of Definition~\ref{bilax-unital 2-functor}.
\end{defn}
The identities in \eqref{bilax-unital 2-functor-2} and \eqref{bilax-unital 2-functor-3} of Definition~\ref{bilax-unital 2-functor} can be written as
\begin{equation}\label{eq:bilax-2-rep}
\begin{split}
u_{\mj}\id_{\bM_{\mi,\mj}(\rF)}=\bM_{\mi,\mj}(l_{\mi,\mj,\rF}),\qquad\id_{\bM_{\mj,\mk}(\rG)}u_{\mj}=\bM_{\mj,\mk}(r_{\mj,\mk,\rG}),\\
u_{\mj}'\id_{\bM_{\mi,\mj}(\rF)}=\bM_{\mi,\mj}(l'_{\mi,\mj,\rF}),\qquad\id_{\bM_{\mj,\mk}(\rG)}u'_{\mj}=\bM_{\mj,\mk}(r'_{\mj,\mk,\rG}).
\end{split}
\end{equation}
%We identify $F$ with $\mathcal M(F)$ when no confusion is expected.

\begin{defn}Let $\bM$ and $\bN$ be two bilax $2$-representations of $\cC$. A {\em bilax $2$-natural transformation} $\Phi:\bM\to\bN$ is given by
\begin{itemize}
\item a functor $\Phi_{\mi}:\bM(\mi)\to\bN(\mi)$, for each object $\mi\in\ob\cC$;
\item a natural isomorphism $\varphi_{\mi,\mj}({}_-):\bN_{\mi,\mj}({}_-)\Phi_{\mi}\to\Phi_{\mj}\bM_{\mi,\mj}({}_-)$, whose source and target are functors from $\cC(\mi,\mj)$ to $\mathbf{Cat}(\bM(\mi),\bN(\mj))$, for each pair of objects $\mi,\mj\in\ob\cC$;
\end{itemize}
such that the diagrams
\begin{gather}
\xymatrix@C=2.8pc{\bN_{\mj,\mk}(\rG)\bN_{\mi,\mj}(\rF)\Phi_{\mi}\ar@{=}[d]\ar[rr]^{\id_{\bN_{\mj,\mk}(\rG)}\varphi_{\mi,\mj}(\rF)}
&&\bN_{\mj,\mk}(\rG)\Phi_{\mj}\bM_{\mi,\mj}(\rF)\ar[rr]^{\varphi_{\mj,\mk}(\rG)\id_{\bM_{\mi,\mj}(\rF)}}&&\Phi_{\mk}\bM_{\mj,\mk}(\rG)\bM_{\mi,\mj}(\rF)\ar@{=}[d]\\
\bN_{\mi,\mk}(\rG\rF)\Phi_{\mi}\ar[rrrr]^{\varphi_{\mi,\mk}(\rG\rF)}&&&&\Phi_{\mk}\bM_{\mi,\mk}(\rG\rF)}\label{eq:0001}\\
\xymatrix@C=2.8pc{\bN_{\mi,\mi}(\rI_{\mi})\Phi_{\mi}\ar[rr]^{\varphi_{\mi,\mi}(\rI_{\mi})}\ar[d]_{u_{\mi}^{\bN}\id_{\Phi_{\mi}}}
&&\Phi_{\mi}\bM_{\mi,\mi}(\rI_{\mi})\ar[d]^{\id_{\Phi_{\mi}}u_{\mi}^{\bM}}\\
\id_{\bN(\mi)}\Phi_{\mi}\ar@{=}[r]&\Phi_{\mi}\ar@{=}[r]&\Phi_{\mi}\id_{\bM(\mi)}}\label{eq:0002}\\
\xymatrix@C=2.8pc{\id_{\bN(\mi)}\Phi_{\mi}\ar[d]_{(u'_{\mi})^{\bN}\id_{\Phi_{\mi}}}\ar@{=}[r]&
\Phi_{\mi}\ar@{=}[r]&\Phi_{\mi}\id_{\bM(\mi)}\ar[d]^{\id_{\Phi_{\mi}}(u'_{\mi})^{\bM}}\\
\bN_{\mi,\mi}(\rI'_{\mi})\Phi_{\mi}\ar[rr]^{\varphi_{\mi,\mi}(\rI'_{\mi})}
&&\Phi_{\mi}\bM_{\mi,\mi}(\rI'_{\mi})}\label{eq:0003}
\end{gather}
commute, for any objects $\mi,\mj,\mk\in\ob\cC$ and $1$-morphisms $\rF\in\cC(\mi,\mj),\rG\in\cC(\mj,\mk)$.

Moreover, if each $\Phi_{\mi}:\bM(\mi)\to\bN(\mi)$ is an equivalence of categories, for every object $\mi\in\ob\cC$,
then $\bM$ is said to be {\em equivalent} to $\bN$ as bilax $2$-representations of $\cC$.
\end{defn}

\begin{defn} Let $\Phi,\Psi:\bM\to\bN$ be two bilax $2$-natural transformations. A {\em bilax modification} $\chi:\Phi\to\Psi$ is given by
\begin{itemize}
\item a natural transformation $\chi_{\mi}:\Phi_{\mi}\to\Psi_{\mi}$ for each object $\mi\in\cC$,
\end{itemize}
such that the diagram
\[\xymatrix@C=2.8pc{\bN_{\mi,\mj}(\rF)\Phi_{\mi}\ar[d]_{\varphi_{\mi,\mj}(\rF)}\ar[rr]^{\id_{\bN_{\mi,\mj}(\rF)}\chi_{\mi}}&&
\bN_{\mi,\mj}(\rF)\Psi_{\mi}\ar[d]^{\psi_{\mi,\mj}(\rF)}\\
\Phi_{\mj}\bM_{\mi,\mj}(\rF)\ar[rr]^{\chi_{\mj}\id_{\bM_{\mi,\mj}(\rF)}}
&&\Psi_{\mj}\bM_{\mi,\mj}(\rF)}\]
commutes, for any objects $\mi,\mj\in\ob\cC$ and $1$-morphism $\rF\in\cC(\mi,\mj)$.
\end{defn}
Note that all bilax $2$-representations of a bilax-unital $2$-category, together with bilax $2$-natural transformations and bilax modifications forms a $2$-category.

For simplicity, we will omit all subscripts of functors $\bM_{\mi,\mj}$ associated to a bilax 2-representation $\bM$.

\begin{ex}
For any object $\mi$ in a bilax-unital 2-category $\cC$, the Yoneda functor $\cC(\mi,{}_-)$
is a bilax-unital 2-functor from $\cC$ to $\mathbf{Cat}$. Chosing $u_{\mi}$ and $u'_{\mi}$
to be the left (op)lax unitors $l$ and $l'$, respectively,  defines a bilax
$2$-representation of $\cC$. We call $\bP_{\mi}:=\cC(\mi,{}_-)$ the $\mi$-th principal
bilax $2$-representation. Dually, each functor $\cC({}_-,\mi)$ also defines a bilax $2$-representation of $\cC^{\,\mathrm{op},{}_-}$.
\end{ex}

For a bilax $2$-representation $\bM$ of a bilax-unital 2-category $\cC$,
we can define its injective abelianization $\underline{\bM}$ (resp. its
projective abelianization $\overline{\bM}$)
by letting $\underline{\bM}(\mi)=\underline{\bM(\mi)}$
(resp. $\overline{\bM}(\mi)=\overline{\bM(\mi)}$),
see \cite[Subsection~3.4]{MMMT} for more details. Then $\underline {\bM}$
(resp. $\overline {\bM}$) is a bilax $2$-representation of
$\underline{\cC}$ (resp. $\overline{\cC}$).

\subsection{Algebra and coalgebra 1-morphisms}\label{s2.4}
From now on, if not explcitly stated otherwise, we
let $$\cC=(\cC,\,\rI=\{\rI_{\mi}|\,\mi\in\ob\cC\},\,\rI'=\{\rI'_{\mi}|\,\mi\in\ob\cC\})$$
be a bilax-unital 2-category.

\begin{defn}\label{alg-coalg}
A 1-morphism $\rF\in\cC(\mi,\mi)$ is called an {\em algebra} if there exists a multiplication $2$-morphism $\mu_{\rF}:\rF\rF\to\rF$ and a unit $2$-morphism
$\eta_{\rF}:\rI_{\mi}\to\rF$, depicted as
\begin{equation*}
   \begin{tikzpicture}[anchorbase]
      \draw[very thick] (4, 8)node[above] {$\rF$} to (4, 7.5);
      \draw[very thick] (3.5, 6.8)[out=90, in=-180]node[below] {$\rF$} to (4, 7.5);
      \draw[very thick] (4.5, 6.8)[out=90, in=0]node[below] {$\rF$} to (4, 7.5);
    \end{tikzpicture}
    \quad\text{and}\quad
    \begin{tikzpicture}[anchorbase]
      \draw[very thick] (4, 8)node[above] {$\rF$} to (4, 7.5);
      \draw[dashed ,very thick] (4, 6.8)node[below]{$\ \rI_{\mi}$} to (4, 7.5);
    \end{tikzpicture}
\end{equation*}
such that $\mu_{\rF}$ is associative, i.e., $\mu_{\rF}\circ_{\rv}(\mu_{\rF}\id_{\rF})=\mu_{\rF}\circ_{\rv}(\id_{\rF}\mu_{\rF})$,
and we have the unitality $l_{\mi,\mi,\rF}=\mu_{\rF}\circ_{\rv} (\eta_{\rF} \id_{\rF})$ and  $r_{\mi,\mi,\rF}= \mu_{\rF}\circ_{\rv} (\id_{\rF}\eta_{\rF})$.
Diagrammatically, the associativity can be drawn as
\begin{equation}\label{diag:algassoc}
  \begin{tikzpicture}[anchorbase]
   \draw[very thick] (4.5, 8.5)node[above] {$\rF$} to (4.5, 8);
   \draw[very thick] (4,7.5)[out=90, in=-180] to (4.5, 8); \draw (4,7.7) node[above]{$\rF$};
      \draw[very thick] (3.6, 6.8)[out=90, in=-180]node[below] {$\rF$} to (4, 7.5);
      \draw[very thick] (4.4, 6.8)[out=90, in=0]node[below] {$\rF$} to (4, 7.5);
      \draw[very thick] (5, 6.8)[out=90, in=0] node[below] {$\rF$} to (4.5, 8);
    \end{tikzpicture}
\quad=\quad
  \begin{tikzpicture}[anchorbase]
   \draw[very thick] (3.5, 8.5)node[above] {$\rF$} to (3.5, 8);
   \draw[very thick] (4,7.5)[out=90, in=0] to (3.5, 8); \draw (4.1,7.7)node[above] {$\rF$};
      \draw[very thick] (3.6, 6.8)[out=90, in=-180]node[below] {$\rF$} to (4, 7.5);
      \draw[very thick] (4.4, 6.8)[out=90, in=0]node[below] {$\rF$} to (4, 7.5);
      \draw[very thick] (3, 6.8)[out=90, in=-180] node[below] {$\rF$} to (3.5, 8);
    \end{tikzpicture}
\end{equation}
and the unitality can be drawn as follows:
\begin{equation}\label{diag:algunit}
\begin{tikzpicture}[anchorbase]
\draw[very thick] (4, 8.1)node[above] {$\rF$} to (4, 6.4)node[below] {$\rF$};
\draw[dashed ,very thick] (3.3, 6.4)[out=90, in=-180]node[below] {$\ \rI_{\mi}$} to (4, 7.4);
\end{tikzpicture}
\quad=\quad
    \begin{tikzpicture}[anchorbase]
     \draw[very thick] (3.5, 8.1)node[above] {$\rF$} to (3.5, 7.5);
   \draw[very thick] (4,7) [out=90, in=0] to (3.5, 7.5); \draw (3,7.2)node[above] {$\rF$};
    \draw[very thick] (3, 7)[out=90, in=-180]  to (3.5, 7.5);
    \draw[very thick] (4, 6.4)node[below]{$\rF$} to (4, 7);
      \draw[dashed ,very thick] (3, 6.4)node[below]{$\ \rI_{\mi}$} to (3, 7);
    \end{tikzpicture}
        \quad\text{and}\quad
\begin{tikzpicture}[anchorbase]
\draw[very thick] (4, 8.1)node[above] {$\rF$} to (4, 6.4)node[below] {$\rF$};
      \draw[dashed ,very thick] (4.7, 6.4)[out=90, in=0]node[below] {$\ \rI_{\mi}$} to (4, 7.4);
    \end{tikzpicture}
    \quad=\quad
    \begin{tikzpicture}[anchorbase]
     \draw[very thick] (3.5, 8.1)node[above] {$\rF$} to (3.5, 7.5);
   \draw[very thick] (4,7)[out=90, in=0] to (3.5, 7.5); \draw (4.1,7.2)node[above] {$\rF$};
      \draw[very thick] (3, 7)[out=90, in=-180]  to (3.5, 7.5);
       \draw[very thick] (3, 6.4)node[below]{$\rF$} to (3, 7);
      \draw[dashed ,very thick] (4, 6.4)node[below]{$\ \rI_{\mi}$} to (4, 7);
    \end{tikzpicture}
\end{equation}
For any two algebra $1$-morphisms $\rF:=(\rF,\,\mu_{\rF},\,\eta_{\rF})$ and $\rG:=(\rG,\,\mu_{\rG},\,\eta_{\rG})$ in $\cC(\mi,\mi)$, an {\em algebra homomorphism} from $\rF$ to $\rG$ is a $2$-morphism $\alpha:\rF\to\rG$ such that
$\mu_{\rG}\circ_{\rv}(\alpha\alpha)=\alpha\circ_{\rv}\mu_{\rF}$ and $\eta_{\rG}=\alpha\circ_{\rv}\eta_{\rF}$, that is, we have the following diagrams:
\begin{equation}\label{diag:alghomo}
 \begin{tikzpicture}[anchorbase]
   \draw[blue,very thick] (-.5, 1.5)node[above] {$\rG$} to (-.5, 1);
      \draw[blue,very thick] (-1, .5)[out=90, in=-180] to (-.5, 1);
      \draw[blue,very thick] (0, .5)[out=90, in=0] to (-.5, 1);
      \node at (0,0) [rectangle,draw,very thick] (d0) {$\alpha$};
      \draw[blue,very thick] (d0) to (0,.5) (0.2,.5)node[above] {$\rG$};
      \draw[very thick] (d0) to (0,-1) node[below] {$\rF$};
       \node at (-1,0) [rectangle,draw,very thick] (d0) {$\alpha$};
      \draw[blue,very thick] (d0) to (-1,.5)(-1.2,.5)node[above] {$\rG$};
      \draw[very thick] (d0) to (-1,-1) node[below] {$\rF$};
    \end{tikzpicture}\quad=\quad
 \begin{tikzpicture}[anchorbase]
 \tikzstyle{box} = [minimum height=0.4cm, draw,rectangle]
   \draw[very thick] (-.5, 1.6)node[above, black, box]{$\alpha$} to (-.5, .9);
      \draw[very thick] (-1, .1)node[below]{$\rF$}[out=90, in=-180] to (-.5, .9); \draw (-.3, 1)node[above]{$\rF$};
      \draw[very thick] (0, .1)node[below]{$\rF$} [out=90, in=0] to (-.5, .9);
      \draw[blue,very thick] (-.5,2) to (-.5, 2.6)node[above] {$\rG$};
    \end{tikzpicture}
    \quad\text{and}\quad
\begin{tikzpicture}[anchorbase]
 \tikzstyle{box} = [minimum height=0.4cm, draw,rectangle]
\draw[very thick] (4, 8.1)node[above, black, box]{$\alpha$} to (4, 7.5); \draw (4.2,7.5)node[above]{$\rF$};
      \draw[dashed ,very thick] (4, 6.8)node[below]{$\ \rI_{\mi}$} to (4, 7.5);
      \draw[blue, very thick] (4,8.5) to (4, 9.3)node[above] {$\rG$};
    \end{tikzpicture}\quad=\quad
 \begin{tikzpicture}[anchorbase]
 \draw[blue, very thick] (4, 8.5)node[above] {$\rG$} to (4, 7.2);
      \draw[dashed ,very thick] (4, 6)node[below]{$\ \rI_{\mi}$} to (4, 7.2);
    \end{tikzpicture}
\end{equation}

A 1-morphism $\rF\in\cC(\mi,\mi)$ is called a {\em coalgebra} if there exists a comultiplication $2$-morphism $\Delta_{\rF}:\rF\to\rF\rF$ and
a counit $2$-morphism $\epsilon_{\rF}:\rF\to\rI'_{\mi}$, depicted as
\begin{equation*}
   \begin{tikzpicture}[anchorbase]
   \draw[very thick] (4, 7.5) to (4, 7) node[below] {$\rF$};
      \draw[very thick] (3.5, 8.2)[out=-90,in=-180] node[above] {$\rF$} to (4, 7.5);
       \draw[very thick] (4.5,8.2)[out=-90,in=0]node[above] {$\rF$} to (4,7.5);
    \end{tikzpicture}
    \quad\text{and}\quad
    \begin{tikzpicture}[anchorbase]
      \draw[dashed , very thick] (4, 8.2)node[above] {$\ \rI'_{\mi}$} to (4, 7.5);
      \draw[very thick] (4, 7)node[below]{$\rF$} to (4, 7.5);
    \end{tikzpicture}
\end{equation*}
satisfying both coassociativity  $(\Delta_{\rF}\id_{\rF})\circ_{\rv}\Delta_{\rF}=(\id_{\rF}\Delta_{\rF})\circ_{\rv}\Delta_{\rF}$
and counitality $$l'_{\mi,\mi,\rF}=(\epsilon_{\rF} \id_{\rF})\circ_{\rv}\Delta_{\rF}
\quad\text{ and }\quad r'_{\mi,\mi,\rF}=(\id_{\rF}\epsilon_{\rF}) \circ_{\rv}\Delta_{\rF}.$$
The diagrams for coassociativity and counitality are obtained by flipping
Diagrams~\eqref{diag:algassoc} and ~\eqref{diag:algunit} with the labelling of
the dashed  strand replaced by $\rI'_{\mi}$ from $\rI_{\mi}$.
Homomorphisms of coalgebras $1$-morphisms is defined in the obvious way.
\end{defn}

For a $2$-category $\cC$, the usual definition of algebra and coalgebra $1$-morphisms
is obtained from Definition~\ref{alg-coalg} by letting $\rI_{\mi}=\mathbbm{1}_{\mi}=\rI'_{\mi}$ and all unitors to be the identities. Recall that, for a $2$-category $\cC$ with any choice of $\{\gamma_{\mi}: \rI_{\mi}\to \mathbbm{1}_{\mi}|\,\mi\in\ob\cC\}$ and $\{\gamma'_{\mi}:\mathbbm{1}_{\mi}\to\rI'_{\mi}|\,\mi\in\ob\cC\}$, we have the bilax-unital 2-category $\cC[\gamma,\gamma']$.

\begin{lem}\label{algebraisalgebra}
Let $\cC$ be a 2-category and $\rF\in\cC(\mi,\mi)$ be an algebra (resp. coalgebra) $1$-morphism. Then the 1-morphism $\rF$ is also an algebra (resp. coalgebra) in $\cC[\gamma,\gamma']$.
\end{lem}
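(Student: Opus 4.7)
The plan is to exhibit the algebra (resp.\ coalgebra) structure for $\rF$ in $\cC[\gamma,\gamma']$ by keeping the old multiplication $\mu_{\rF}$ (resp.\ comultiplication $\Delta_{\rF}$) and only modifying the unit (resp.\ counit) so as to match the new source/target. Specifically, for the algebra case I take the multiplication to remain $\mu_{\rF}:\rF\rF\to \rF$ unchanged, and define the new unit to be the composite
\[
\tilde\eta_{\rF}\;:=\;\eta_{\rF}\circ_{\rv}\gamma_{\mi}\;:\;\rI_{\mi}\xrightarrow{\gamma_{\mi}}\mathbbm{1}_{\mi}\xrightarrow{\eta_{\rF}}\rF.
\]
Dually, in the coalgebra case, the comultiplication $\Delta_{\rF}$ is unchanged and the new counit is $\tilde\epsilon_{\rF}:=\gamma'_{\mi}\circ_{\rv}\epsilon_{\rF}\colon \rF\to\mathbbm{1}_{\mi}\to\rI'_{\mi}$.

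Associativity of $\mu_{\rF}$ (resp.\ coassociativity of $\Delta_{\rF}$) is automatic since these $2$-morphisms and the identity on $\rF$ are inherited from $\cC$. The only thing to check is the unitality axiom of Definition~\ref{alg-coalg} with respect to the new (op)lax unitors. By construction of $\cC[\gamma,\gamma']$, the new left unitor at $\rF$ is $l_{\mi,\mi,\rF}=\gamma_{\mi}\id_{\rF}$; hence we must verify $\gamma_{\mi}\id_{\rF}=\mu_{\rF}\circ_{\rv}(\tilde\eta_{\rF}\id_{\rF})$. Using functoriality of horizontal composition, I rewrite the right-hand side as
\[
\mu_{\rF}\circ_{\rv}\bigl((\eta_{\rF}\circ_{\rv}\gamma_{\mi})\id_{\rF}\bigr)
\;=\;\bigl(\mu_{\rF}\circ_{\rv}(\eta_{\rF}\id_{\rF})\bigr)\circ_{\rv}(\gamma_{\mi}\id_{\rF}),
\]
and the first factor is precisely the classical left unitality of $\rF$ as an algebra in $\cC$, which reduces to $\id_{\rF}$ because $\mathbbm{1}_{\mi}$ is strict. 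This leaves $\gamma_{\mi}\id_{\rF}$, matching the left-hand side. The right unitality, and the (co)unitality conditions on $\tilde\epsilon_{\rF}$, go through by the symmetric argument.

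There is no real obstacle here: the statement is essentially a naturality/functoriality computation, and the only subtle point is to be careful that the horizontal composite $\tilde\eta_{\rF}\id_{\rF}$ can be split into $(\eta_{\rF}\id_{\rF})\circ_{\rv}(\gamma_{\mi}\id_{\rF})$, which is exactly functoriality of $h_{\mi,\mi,\mi}$ in its left slot (equivalently, the interchange law \eqref{diag:sliding}). After this decomposition the original unitality in $\cC$ does the rest of the work.
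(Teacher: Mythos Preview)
Your proof is correct and follows essentially the same approach as the paper: keep $\mu_{\rF}$ (resp.\ $\Delta_{\rF}$) unchanged, set the new unit to $\eta_{\rF}\circ_{\rv}\gamma_{\mi}$ (resp.\ new counit to $\gamma'_{\mi}\circ_{\rv}\epsilon_{\rF}$), and deduce the new unitality from the old one via the definition of the unitors in $\cC[\gamma,\gamma']$. You have simply spelled out the functoriality step that the paper leaves implicit.
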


\begin{proof}
We only prove the statement for algebras. For coalgebras the proof is similar.
Assume that the structure maps for the algebra $1$-morphism $\rF\in\cC(\mi,\mi)$ are $\mu_{\rF}:\rF\rF\to \rF$ and $\eta_{\rF}:\mathbbm{1}_{\mi}\to \rF$.
Then we claim that $\rF$ is also an algebra in $\cC[\gamma,\gamma']$ with the multiplication $\mu_{\rF}$ and
the unit $\eta_{\rF}\circ_{\rv} \gamma_{\mi}:\rI_{\mi}\to\rF$. The associativity of $\mu_{\rF}$ is clear.
The unitality of $\rF$ in $\cC[\gamma,\gamma']$ follows from the unitality of $\rF$ in
$\cC$ and the fact that the left and right unitors in $\cC[\gamma,\gamma']$ are given by the left and right horizontal composition with the $\gamma_{\mi}$'s.
\end{proof}

\subsection{(Co)modules over (co)algebra $1$-morphisms} \label{s2.5}
Let $\rF=(\rF,\,\mu_{\rF},\,\eta_{\rF})\in\cC(\mi,\mi)$ be an algebra $1$-mor\-phism.

\begin{defn}
A {\em left module} over $\rF$ in $\cC$ is a $1$-morphism $\rM\in\cC(\mj,\mi)$,
for some $\mj$, together with a 2-morphism $\upsilon_{\rM}:\rF\rM\to\rM$, depicted as:
\[\begin{tikzpicture}[anchorbase]
\draw[red, very thick] (4, 8.1)node[above] {$\rM$} to (4, 6.4)node[below] {$\rM$};
\draw[very thick] (3.3, 6.4)[out=90, in=-180]node[below] {$\rF$} to (4, 7.4);
\end{tikzpicture}
\]
satisfying the following associativity and unitality axioms:
\begin{equation}\label{diag:moduleassoc-unit}
\begin{tikzpicture}[anchorbase]
\draw[red, very thick] (4, 8.1)node[above] {$\rM$} to (4, 6)node[below] {$\rM$};
\draw[very thick] (3, 6)[out=90, in=-180]node[below] {$\rF$} to (4, 7.5);
\draw[very thick] (3.5, 6)[out=90, in=-180]node[below] {$\rF$} to (4, 7);
\end{tikzpicture}
\quad=\quad
\begin{tikzpicture}[anchorbase]
      \draw[very thick] (3, 6)[out=90, in=-180]node[below] {$\rF$} to (3.3, 6.75);
      \draw[very thick] (3.6, 6)[out=90, in=0]node[below] {$\rF$} to (3.3, 6.75);
\draw[red, very thick] (4, 8.1)node[above] {$\rM$} to (4, 6)node[below] {$\rM$};
\draw[very thick] (3.3, 6.75)[out=90, in=-180] to (4, 7.5)(3.5,7.3) node[above]{$\rF$};
\end{tikzpicture}\quad\text{and}\quad
\begin{tikzpicture}[anchorbase]
 \draw[red, very thick] (4, 8.1)node[above] {$\rM$} to (4, 6)node[below] {$\rM$};
\draw[dashed , very thick] (3.4, 6)[out=90, in=-180]node[below] {$\ \rI_{\mi}$} to (4, 7.3);
\end{tikzpicture}
\quad=\quad
\begin{tikzpicture}[anchorbase]
 \draw[red, very thick] (4, 8.1)node[above] {$\rM$} to (4, 6)node[below] {$\rM$};
\draw[very thick] (3.4, 6.7)[out=90, in=-180] to (4, 7.4); \draw (3.5,7.2) node[above]{$\rF$};
\draw[dashed  ,very thick] (3.4, 6)node[below]{$\ \rI_{\mi}$} to (3.4, 6.8);
\end{tikzpicture}
\end{equation}
For any two left $\rF$-modules $\rM=(\rM,\upsilon_{\rM})$ and $\rN=(\rN,\upsilon_{\rN})$, an
{\em $\rF$-module homomorphism} from $\rM$ to $\rN$ is a
$2$-morphism $\alpha:\rM\to\rN$ such that:
\begin{equation}\label{diag:mod-homo}
\begin{tikzpicture}[anchorbase]
 \tikzstyle{box} = [minimum height=0.4cm, draw,rectangle]
      \draw[blue,very thick] (0,.3) to (0,1) node[above] {$\rN$};
      \draw[red, very thick] (0,-.1)node[above, black, box]{$\alpha$} to (0,-1)node[below] {$\rM$};
      \draw[very thick] (-.6, -1)[out=90, in=-180]node[below] {$\rF$} to (0, -.44) ;
    \end{tikzpicture}
    \quad =\quad
     \begin{tikzpicture}[anchorbase]
 \tikzstyle{box} = [minimum height=0.4cm, draw,rectangle]
      \draw[blue,very thick] (0,.2) to (0,1) node[above] {$\rN$};
      \draw[red, very thick] (0,-.2)node[above, black, box]{$\alpha$} to (0,-1)node[below] {$\rM$};
      \draw[very thick] (-.7, -1)[out=90, in=-180]node[below] {$\rF$} to (0, .49) ;
    \end{tikzpicture}
\end{equation}
\end{defn}
One can define {\em right $\rF$-modules} and the corresponding {\em $\rF$-module homomorphisms}
similarly using the mirror images of  Diagrams~\eqref{diag:moduleassoc-unit} and \eqref{diag:mod-homo}
with respect to a vertical mirror.

Let $\rG=(\rG,\,\mu_{\rG},\,\eta_{\rG})\in\cC(\mj,\mj)$ be another algebra $1$-morphism.
\begin{defn}
An  {\em $\rF$-$\rG$-bimodule} in $\cC$ is a $1$-morphism $\rM\in\cC(\mj,\mi)$ with two 2-morphisms
\[\upsilon_{\rM}:\rF\rM\to\rM\quad\text{and}\quad\tau_{\rM}:\rM\rG\to\rM,\] such that $(\rM,\upsilon_{\rM})$ is a left $\rF$-module and $(\rM,\tau_{\rM})$ is a right $\rG$-module which is compatible in the following sense:
\begin{equation}\label{diag:bimod}\begin{tikzpicture}[anchorbase]
\draw[red, very thick] (4, 8.1)node[above] {$\rM$} to (4, 6.4)node[below] {$\rM$};
\draw[very thick] (3.5, 6.4)[out=90, in=-180]node[below] {$\rF$} to (4, 7.4);
      \draw[teal,very thick] (4.5, 6.4)[out=90, in=0]node[below] {$\rG$} to (4, 7);
\end{tikzpicture}
\quad=\quad
\begin{tikzpicture}[anchorbase]
\draw[red, very thick] (4, 8.1)node[above] {$\rM$} to (4, 6.4)node[below] {$\rM$};
\draw[very thick] (3.5, 6.4)[out=90, in=-180]node[below] {$\rF$} to (4, 7);
      \draw[teal,very thick] (4.5, 6.4)[out=90, in=0]node[below] {$\rG$} to (4, 7.4);
\end{tikzpicture}
\end{equation}
We can simply denote the two above diagrams as follows:
\[\begin{tikzpicture}[anchorbase]
\draw[red, very thick] (4, 8.1)node[above] {$\rM$} to (4, 6.4)node[below] {$\rM$};
\draw[very thick] (3.5, 6.4)[out=90, in=-180]node[below] {$\rF$} to (4, 7.2);
      \draw[teal, very thick] (4.5, 6.4)[out=90, in=0]node[below] {$\rG$} to (4, 7.2);
\end{tikzpicture}\]

For any two $\rF$-$\rG$-bimodules $\rM=(\rM,\upsilon_{\rM},\tau_{\rM})$ and
$\rN=(\rN,\upsilon_{\rN},\tau_{\rN})$, a {\em $\rF$-$\rG$-bimodule homomorphism} from $\rM$ to $\rN$ is a
$2$-morphism $\alpha:\rM\to\rN$ such that we have:
\begin{equation}\label{diag:bimod-homo}
\begin{tikzpicture}[anchorbase]
 \tikzstyle{box} = [minimum height=0.4cm, draw,rectangle]
      \draw[blue,very thick] (0,.3) to (0,1) node[above] {$\rN$};
      \draw[red, very thick] (0,-.1)node[above, black, box]{$\alpha$} to (0,-1)node[below] {$\rM$};
      \draw[very thick] (-.6, -1)[out=90, in=-180]node[below] {$\rF$} to (0, -.44) ;
         \draw[teal,very thick] (.6, -1)[out=90, in=0]node[below] {$\rG$} to (0, -.44);
    \end{tikzpicture}
    \quad =\quad
     \begin{tikzpicture}[anchorbase]
 \tikzstyle{box} = [minimum height=0.4cm, draw,rectangle]
      \draw[blue,very thick] (0,.2) to (0,1) node[above] {$\rN$};
      \draw[red, very thick] (0,-.2)node[above, black, box]{$\alpha$} to (0,-1)node[below] {$\rM$};
      \draw[very thick] (-.6, -1)[out=90, in=-180]node[below] {$\rF$} to (0, .49) ;
      \draw[teal,very thick] (.6, -1)[out=90, in=0]node[below] {$\rG$} to (0, .49);
    \end{tikzpicture}
\end{equation}

Dually, one can define a left (right or bi-) comodule over a coalgebra and homomorphisms of left (right or bi-) comodules by reversing the arrows of morphisms and fliping all diagrams above with the dashed strand labeled by the oplax unit $\rI'_{\mi}$ instead of the lax unit $\rI_{\mi}$.
\end{defn}

For any algebra $1$-morphism $\rF\in\cC(\mi,\mi)$ and every object $\mj\in\ob\cC$, we
denote by $\rF\text{-}\mathrm{mod}_{\ccC}(\mj)$ (resp. $\big(\mathrm{mod}_{\ccC}\text{-}\rF\big)(\mj)$),
the (additive and $\Bbbk$-linear) category of left (resp. right) $\rF$-modules in $\cC(\mj,\mi)$
(resp. $\cC(\mi,\mj)$) and with the corresponding $\rF$-modules homomorphisms as morphisms.
With such notation, $\rF\text{-}\mathrm{mod}_{\ccC}:\cC\to\mathbf{Cat}$ (resp. $\mathrm{mod}_{\ccC}\text{-}\rF:\cC\to\mathbf{Cat}$) defines a bilax $2$-representations of $\cC$ where the action of $1$-morphisms is given by right (resp. left) composition with the corresponding $1$-morphisms and the action of $2$-morphisms is given by right (resp. left) horizontal composition with the corresponding $2$-morphisms.
Indeed, for the bilax $2$-representation $\rF\text{-}\mathrm{mod}_{\ccC}$ (resp. $\mathrm{mod}_{\ccC}\text{-}\rF$),
we have $u_{\mi}=r_{\mi}$ (resp. $u_{\mi}=l_{\mi}$) and $u'_{\mi}=r'_{\mi}$ (resp. $u'_{\mi}=l'_{\mi}$).

Similarly, one can define $\rF\text{-}\comod_{\ccC}$ and
$\comod_{\ccC}\text{-}\rF$, for a coalgebra $1$-morphism $\rF$, and, for two (co)algebra $1$-morphisms $\rF\in\cC(\mi,\mi)$ and $\rG\in\cC(\mj,\mj)$, the category of $\rF$-$\rG$-bi(co)modules in $\cC(\mj,\mi)$.
As above, $\rF\text{-}\comod_{\ccC}$ and
$\comod_{\ccC}\text{-}\rF$ give bilax $2$-representations of $\cC$.

\subsection{Adjoint pairs of $1$-morphisms}\label{s2.6}

Recall that $\cC=(\cC,\,\rI=\{\rI_{\mi}|\,\mi\in\ob\cC\},\,\rI'=\{\rI'_{\mi}|\,\mi\in\ob\cC\})$
is a bilax-unital 2-category.

\begin{defn}\label{def:ad-pair}
For any $\rF\in\cC(\mi,\mj)$ and $\rG\in\cC(\mj,\mi)$, we say that $(\rF,\rG)$ is an {\em adjoint pair}
of $1$-morphisms (or, alternatively, that $\rF$ is a {\em left adjoint} of $\rG$ and
$\rG$ is a {\em right adjoint} of $\rF$), if there exist two $2$-morphisms
\begin{equation*}\label{alphabet}
    \alpha:\rI'_{\mi}\to \rG\rF\quad\text{and}\quad
    \beta:\rF\rG \to \rI_{\mj},
\end{equation*}
such that the composite
\begin{equation*}\label{eq:defadjoint1}
    \xymatrix@C=1.2pc{\rF\ar[rr]^{r'_{\mi,\mj,\rF}}&& \rF\rI'_{\mi}\ar[rr]^{\id_{\rF}\alpha}&&\rF\rG\rF\ar[rr]^{\beta\id_{\rF}}&& \rI_{\mj}\rF\ar[rr]^{l_{\mi,\mj,\rF}}&& \rF}
\end{equation*}
equals $\id_{\rF}$ and the composite
\begin{equation*}\label{eq:defadjoint2}
\xymatrix@C=1.2pc{\rG\ar[rr]^{l'_{\mj,\mi,\rG}}&& \rI'_{\mi}\rG\ar[rr]^{\alpha\id_{\rG}}&&\rG\rF\rG\ar[rr]^{\id_{\rG}\beta}&& \rG\rI_{\mj}\ar[rr]^{r_{\mj,\mi,\rG}}&& \rG}
\end{equation*}
equals $\id_{\rG}$.
\end{defn}

Let us depict $\alpha$ and $\beta$ by the following diagrams:
\begin{equation*}
\begin{tikzpicture}[anchorbase]
   \draw[dashed , red, very thick] (4, 7.5) to (4, 6.9) node[below] {$\ \rI'_{\mi}$};
      \draw[teal, very thick] (3.5, 8)[out=-90,in=-180] node[above] {$\rG$} to (4, 7.5);
       \draw[very thick] (4.5,8)[out=-90,in=0]node[above] {$\rF$} to (4,7.5);
\end{tikzpicture}
\quad\text{and}\quad
  \begin{tikzpicture}[anchorbase]
      \draw[dashed , blue, very thick] (4, 8.1) node[above] {$\rI_{\mj}$} to (4, 7.5);
      \draw[very thick] (3.5, 7)[out=90, in=-180]node[below] {$\rF$} to (4, 7.5);
      \draw[teal, very thick] (4.5, 7)[out=90, in=0]node[below] {$\rG$} to (4, 7.5);
    \end{tikzpicture}
\end{equation*}
Then the diagrams for the two conditions in Definition~\ref{def:ad-pair} are as follows:
\begin{equation}\label{diag:adjunction conditions}
\begin{tikzpicture}[anchorbase,scale=.7]
      \draw[very thick] (3, 7.5) to (3, 5.3)node[below] {$\rF$};
      \draw[dashed  ,red, very thick] (4.5, 7)[out=-90,in=0]
      to (3, 6) ;
      \draw (4.2,6.3) node[below] {$\ \rI'_{\mi}$};
          \draw[very thick] (3, 7.5)[out=90, in=-180] to (3.5, 8);
      \draw[teal, very thick] (4, 7.5)[out=90, in=0] to (3.5, 8);
      \draw[teal, very thick] (4, 7.5)[out=-90,in=-180]  to (4.5, 7); \draw (4.2,7.5)node[above] {$\rG$};
             \draw[very thick] (5,7.5)[out=-90,in=0] to (4.5,7);
                \draw[very thick] (5,7.5) to (5,9.7) node[above] {$\rF$};
                \draw[dashed  , blue, very thick] (3.5, 8)[out=90, in=-180] to (5, 9);
                 \draw (4.2,8.9)node[above] {$\rI_{\mj}$};
\end{tikzpicture}
\quad=\quad
\begin{tikzpicture}[anchorbase,scale=.7]
      \draw[very thick] (3, 9.7)node[above] {$\rF$} to (3, 5.3)node[below] {$\rF$};
    \end{tikzpicture}\quad\text{and}\quad
\begin{tikzpicture}[anchorbase,scale=.7]
      \draw[teal,very thick] (5, 7.5) to (5, 5.3)node[below] {$\rG$};
      \draw[dashed  ,red, very thick] (3.5, 7)[out=-90,in=-180]
      to (5, 6);\draw (3.9,6.2)node[below] {$\ \rI'_{\mi}$};
          \draw[very thick] (4, 7.5)[out=90, in=-180] to (4.5, 8);
      \draw[teal, very thick] (5, 7.5)[out=90, in=0] to (4.5, 8);
      \draw[teal, very thick] (3, 7.5)[out=-90,in=-180]  to (3.5, 7);
             \draw[very thick] (4,7.5)[out=-90,in=0] to (3.5,7); \draw (3.9,7.5)node[above] {$\rF$};
                \draw[teal, very thick] (3,7.5) to (3,9.7) node[above] {$\rG$};
               \draw[dashed  , blue, very thick] (4.5, 8)[out=90, in=0] to (3, 9); \draw (3.9,8.8)node[above] {$\rI_{\mj}$};
    \end{tikzpicture}
\quad=\quad
\begin{tikzpicture}[anchorbase,scale=.7]
      \draw[teal,very thick] (3, 9.7)node[above] {$\rG$} to (3, 5.3)node[below] {$\rG$};
    \end{tikzpicture}
\end{equation}

Let us now establish some basic properties of adjuctions.

\begin{prop}\label{adjunction and horizontal}
Let $\rF\in\cC(\mi,\mj)$, $\rG\in\cC(\mj,\mi)$, $\rF'\in\cC(\mk,\mi)$ and $\rG'\in\cC(\mi,\mk)$.
If $(\rF,\rG)$ and $(\rF',\rG')$ are two adjoint pairs in $\cC$, then $(\rF\rF',\rG'\rG)$ is also an adjoint pair in $\cC$.
\end{prop}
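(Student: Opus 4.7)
The plan is to exhibit unit and counit $2$-morphisms for the composite pair $(\rF\rF',\rG'\rG)$ by nesting those of the two given adjunctions, and then to reduce each snake identity of Definition~\ref{def:ad-pair} to the snake identities for $(\rF,\rG)$ and $(\rF',\rG')$ using the interchange law and the coherence axioms for (op)lax unitors.

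Denoting the unit-counit data of the given adjunctions by $\alpha_1\colon\rI'_{\mi}\to\rG\rF$, $\beta_1\colon\rF\rG\to\rI_{\mj}$, $\alpha_2\colon\rI'_{\mk}\to\rG'\rF'$ and $\beta_2\colon\rF'\rG'\to\rI_{\mi}$, I would set
\[
\alpha''\;:=\;(\id_{\rG'}\alpha_1\id_{\rF'})\circ_\rv(\id_{\rG'}l'_{\mk,\mi,\rF'})\circ_\rv\alpha_2\quad\text{and}\quad \beta''\;:=\;\beta_1\circ_\rv(r_{\mi,\mj,\rF}\id_\rG)\circ_\rv(\id_\rF\beta_2\id_\rG).
\]
Diagrammatically, $\alpha''$ opens the cup $\alpha_2$, inserts an $\rI'_{\mi}$ between $\rG'$ and $\rF'$ via the left oplax unitor on $\rF'$ (equivalently the right one on $\rG'$, by the compatibility~\eqref{diag:oplaxlrcompatible}), and nests the cup $\alpha_1$ inside; dually, $\beta''$ caps $\rF'\rG'$ by $\beta_2$, absorbs the resulting $\rI_{\mi}$ via the right lax unitor on $\rF$ (equivalently the left one on $\rG$, by~\eqref{diag:lrcompatible}), and closes with $\beta_1$.

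To verify the first snake identity $l_{\mk,\mj,\rF\rF'}\circ_\rv(\beta''\id_{\rF\rF'})\circ_\rv(\id_{\rF\rF'}\alpha'')\circ_\rv r'_{\mk,\mj,\rF\rF'}=\id_{\rF\rF'}$, I would first apply the coherence axiom~\eqref{def:lax-unit-1-b} together with its right-handed and oplax counterparts to distribute the outer unitors as $r'_{\mk,\mj,\rF\rF'}=\id_\rF\, r'_{\mk,\mi,\rF'}$ and $l_{\mk,\mj,\rF\rF'}=l_{\mi,\mj,\rF}\id_{\rF'}$. Next, the interchange law~\eqref{diag:sliding} allows sliding $\id_\rF\beta_2\id_{\rG\rF\rF'}$ upward past the disjoint operations $\id_{\rF\rF'\rG'}l'_{\mk,\mi,\rF'}$ and $\id_{\rF\rF'\rG'}\alpha_1\id_{\rF'}$ that form the inner cup, thereby isolating the outer half-snake $\tau:=(\beta_2\id_{\rF'})\circ_\rv(\id_{\rF'}\alpha_2)\circ_\rv r'_{\mk,\mi,\rF'}$ on the right $\rF'$-strand. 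Invoking~\eqref{diag:lrcompatible} to trade the ensuing right lax unitor on $\rF$ for the left lax unitor on $\rF'$ produces the composite $l_{\mk,\mi,\rF'}\circ_\rv\tau$, which equals $\id_{\rF'}$ by the snake identity for $(\rF',\rG')$. A dual application of the oplax analogue of~\eqref{diag:lrcompatible} (namely $\id_\rF l'_{\mk,\mi,\rF'}=r'_{\mi,\mj,\rF}\id_{\rF'}$) and a final use of interchange exhibit what remains as $[S_1]\id_{\rF'}$, where $S_1=l_{\mi,\mj,\rF}\circ_\rv(\beta_1\id_\rF)\circ_\rv(\id_\rF\alpha_1)\circ_\rv r'_{\mi,\mj,\rF}$ is the snake composite for $(\rF,\rG)$; the snake identity for $(\rF,\rG)$ gives $S_1=\id_\rF$, so the overall composite equals $\id_\rF\id_{\rF'}=\id_{\rF\rF'}$.

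The second snake identity on $\rG'\rG$ will be proved by the mirror computation, distributing via $l'_{\mj,\mk,\rG'\rG}=l'_{\mi,\mk,\rG'}\id_\rG$ and $r_{\mj,\mk,\rG'\rG}=\id_{\rG'}r_{\mj,\mi,\rG}$ and sliding $\id_{\rG'}\alpha_1\id_\rG$ in place of $\beta_2$. The main technical obstacle throughout is the bookkeeping of the unitors that arise on composite $1$-morphisms and the exchange of left and right unitors across a shared $\rI$- or $\rI'$-strand; both are handled by the coherence axioms of Definition~\ref{def:bilax-unit} together with~\eqref{diag:lrcompatible} and~\eqref{diag:oplaxlrcompatible}. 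Notably, no compatibility between the lax and oplax units of the kind raised in Question~\ref{ques-compatible} is required for this argument.
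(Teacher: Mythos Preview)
Your proposal is correct and follows essentially the same route as the paper's proof: you define the composite unit and counit by nesting (exactly the paper's $\tilde{\alpha}$ and $\tilde{\beta}$ in \eqref{eq:01}--\eqref{eq:02}), and then reduce the snake identity to the two given snakes via interchange together with the left/right unitor exchange rules \eqref{diag:lrcompatible} and \eqref{diag:oplaxlrcompatible}. This is precisely the diagrammatic computation carried out in Figure~\ref{fig2}, rewritten in prose. One small expositional point: when you ``isolate the outer half-snake $\tau$'' and then invoke \eqref{diag:lrcompatible} on the ``ensuing'' $r_{\mi,\mj,\rF}$, you are implicitly also sliding $r_{\mi,\mj,\rF}$ past the inner operations $l'_{\mk,\mi,\rF'}$ and $\alpha_1$ (which is legitimate by interchange, since these act on disjoint strands); making that slide explicit would remove any ambiguity.
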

\begin{proof}
We define two $2$-morphisms:
   \[ \tilde{\alpha}:\rI'_{\mk}\to \rG'\rG\rF\rF'\quad\text{and}\quad
    \tilde{\beta}:\rF\rF'\rG'\rG \to \rI_{\mj},\]
    respectively, as follows:
\begin{equation}\label{eq:01}
\tilde{\alpha}:\quad
\begin{tikzpicture}[anchorbase]
   \draw[dashed  , brown!70!red, very thick] (4, 7.7) to (4, 7) node[below] {$\ \rI'_{\mk}$};
      \draw[blue!45, very thick] (2.8, 9)[out=-90,in=-180] node[above] {$\rG'$} to (4, 7.7);
       \draw[green!50!orange, very thick] (5.2,9)[out=-90,in=0]node[above] {$\rF'$} to (4,7.7);
       \draw[dashed  , red, very thick] (4, 8.5)[out=-90,in=0] to (3.3, 7.9); \draw (4.2, 7.74)node[above,red] {$\ \rI'_{\mi}$};
      \draw[teal, very thick] (3.5, 9)[out=-90,in=-180] node[above]{$\rG$} to (4, 8.5);
       \draw[very thick] (4.5, 9)[out=-90,in=0]node[above] {$\rF$} to (4,8.5);
\end{tikzpicture}
\stackrel{\eqref{diag:oplaxlrcompatible}}{=\joinrel=\joinrel=}
\begin{tikzpicture}[anchorbase]
   \draw[dashed  , brown!70!red, very thick] (4, 7.7) to (4, 7) node[below] {$\ \rI'_{\mk}$};
      \draw[blue!45, very thick] (2.8, 9)[out=-90,in=-180] node[above] {$\rG'$} to (4, 7.7);
       \draw[green!50!orange, very thick] (5.2,9)[out=-90,in=0]node[above] {$\rF'$} to (4,7.7);
       \draw[dashed  , red, very thick] (4, 8.5)[out=-90,in=-180] to (4.7, 7.9); \draw  (3.8, 7.74) node[above,red] {$\ \rI'_{\mi}$};
      \draw[teal, very thick] (3.5, 9)[out=-90,in=-180] node[above] {$\rG$} to (4, 8.5);
       \draw[very thick] (4.5, 9)[out=-90,in=0]node[above] {$\rF$} to (4,8.5);
\end{tikzpicture}
\end{equation}
and
\begin{equation}\label{eq:02}
\tilde{\beta}:\quad
\begin{tikzpicture}[anchorbase]
   \draw[dashed  , blue, very thick] (4, 8.3) to (4, 9) node[above] {$\ \rI_{\mj}$};
      \draw[very thick] (2.8, 7)[out=90, in=-180]node[below] {$\rF$} to (4, 8.3);
       \draw[teal, very thick] (5.2, 7)[out=90, in=0]node[below] {$\rG$} to (4,8.3);
       \draw[dashed  , orange!80, very thick] (4, 7.5)[out=90, in=0] to (3.3, 8.1) ; \draw (4.3, 8.2)node[below,orange] {$\rI_{\mi}$};
      \draw[green!50!orange,very thick] (3.5, 7)[out=90, in=-180]node[below] {$\rF'$} to (4, 7.5);
      \draw[blue!45, very thick] (4.5, 7)[out=90, in=0]node[below] {$\rG'$} to (4, 7.5);
\end{tikzpicture}
\stackrel{\eqref{diag:lrcompatible}}{=\joinrel=\joinrel=}
\begin{tikzpicture}[anchorbase]
   \draw[dashed  , blue, very thick] (4, 8.3) to (4, 9) node[above] {$\ \rI_{\mj}$};
      \draw[very thick] (2.8, 7)[out=90, in=-180]node[below] {$\rF$} to (4, 8.3);
       \draw[teal, very thick] (5.2, 7)[out=90, in=0]node[below] {$\rG$} to (4,8.3);
       \draw[dashed  , orange!80, very thick] (4, 7.5)[out=90, in=-180] to (4.7, 8.1) ; \draw (3.8, 8.2)node[below,orange] {$\rI_{\mi}$};
      \draw[green!50!orange,very thick] (3.5, 7)[out=90, in=-180]node[below] {$\rF'$} to (4, 7.5);
      \draw[blue!45, very thick] (4.5, 7)[out=90, in=0]node[below] {$\rG'$} to (4, 7.5);
\end{tikzpicture}
\end{equation}
Now we claim that $\tilde{\alpha}$ and $\tilde{\beta}$ make $(\rF\rF',\rG'\rG)$ into an adjoint pair.
Taking \eqref{eq:01} and \eqref{eq:02} into account, the left diagram in \eqref{diag:adjunction conditions}
is checked diagrammatically, see Figure~\ref{fig2}. The right diagram
in \eqref{diag:adjunction conditions} is checked similarly.
\end{proof}

\begin{figure}
\begin{equation*}
\begin{split}
\begin{tikzpicture}[anchorbase,scale=.75]
   \draw[dashed  , blue, very thick] (4, 8.3)[out=90, in=-180] to (6.2, 9.7) ; \draw (4.3,9.2)node[above] {$\rI_{\mj}$};
      \draw[very thick] (2.8, 7)[out=90, in=-180] to (4, 8.3);
       \draw[teal, very thick] (5.2, 7)[out=90, in=0] to (4,8.3);
     \draw[dashed  , orange!80, very thick] (4, 7.5)[out=90, in=0] to (3.4, 8.1); \draw  (4.3, 8.2)node[below] {$\rI_{\mi}$};
      \draw[green!50!orange,very thick] (3.5, 7)[out=90, in=-180] to (4, 7.5);
       \draw[green!50!orange,very thick] (3.5,7) to (3.5,3.3)node[below] {$\rF'$};
       \draw[very thick] (2.8,7) to (2.8,3.3)node[below] {$\rF$};
      \draw[blue!45, very thick] (4.5, 7)[out=90, in=0] to (4, 7.5);
      \draw[dashed  , brown!70!red, very thick] (5.7, 5.5)[out=-90,in=0]
      to (3.5, 4.1) ; \draw (4.9, 3.5)node[above] {$\ \rI'_{\mk}$};
      \draw[blue!45, very thick] (4.5, 6.8)[out=-90,in=-180] to (5.7, 5.5); \draw (4.3, 5.7)node[above] {$\rG'$};
       \draw[green!50!orange, very thick] (6.9,6.8)[out=-90,in=0] to (5.7,5.5);
       \draw[dashed  , red, very thick] (5.7, 6.3)[out=-90,in=-180] to (6.3, 5.7) ; \draw (5.4, 5.4) node[above] {$\ \rI'_{\mi}$};
      \draw[teal, very thick] (5.2, 6.8)[out=-90,in=-180] to (5.7, 6.3); \draw (5.3, 7.5)node[above] {$\rG$};
      \draw[teal, very thick] (5.2, 7)to (5.2, 6.8);
      \draw[blue!45, very thick] (4.5, 7)to (4.5, 6.8);
       \draw[very thick] (6.2, 6.8)[out=-90,in=0] to (5.7,6.3);
           \draw[green!50!orange, very thick] (6.9,10.5)node[above] {$\rF'$} to (6.9,6.8);
        \draw[very thick] (6.2, 10.5)node[above] {$\rF$} to (6.2,6.8);
\end{tikzpicture}&
\stackrel{\eqref{diag:sliding}}{=\joinrel=\joinrel=}
\begin{tikzpicture}[anchorbase,scale=.8]
   \draw[dashed  , blue, very thick] (4, 8.8)[out=80, in=-180] to (6.2, 9.7) ; \draw (4.4,9.5)node[above] {$\rI_{\mj}$};
      \draw[very thick] (2.8, 7.9)[out=90, in=-180] to (4, 8.8);
       \draw[teal, very thick] (5.2, 7.9)[out=90, in=0] to (4,8.8);
 \draw[dashed  , orange!80, very thick] (4, 6.2)[out=90, in=0] to (2.8, 6.6); \draw  (3.7, 7.3)node[below] {$\rI_{\mi}$};
      \draw[green!50!orange,very thick] (3.5, 5.7)[out=90, in=-180] to (4, 6.2);
       \draw[green!50!orange,very thick] (3.5,5.7) to (3.5,3.3)node[below] {$\rF'$};
       \draw[very thick] (2.8,7.9) to (2.8,3.3)node[below] {$\rF$};
      \draw[blue!45, very thick] (4.5, 5.7)[out=90, in=0] to (4, 6.2);
      \draw[dashed  , brown!70!red, very thick] (5.7, 5)[out=-100,in=0]
      to (3.5, 4.1) ; \draw (5, 3.4)node[above] {$\ \rI'_{\mk}$};
      \draw[blue!45, very thick] (4.5, 5.8)[out=-90,in=-180] to (5.7, 5); \draw (4.7, 4.5)node[above] {$\rG'$};
       \draw[green!50!orange, very thick] (6.9,6)[out=-90,in=0] to (5.7,5);
       \draw[dashed  , red, very thick] (5.7, 7.5)[out=-90,in=-180] to (6.9, 7.1) ; \draw (6, 6.3) node[above] {$\ \rI'_{\mi}$};
      \draw[teal, very thick] (5.2, 7.9)[out=-90,in=-180] to (5.7, 7.5); \draw (5.4, 8.1)node[above] {$\rG$};
       \draw[very thick] (6.2, 7.9)[out=-90,in=0] to (5.7,7.5);
           \draw[green!50!orange, very thick] (6.9,10.5)node[above] {$\rF'$} to (6.9,6);
        \draw[very thick] (6.2, 10.5)node[above] {$\rF$} to (6.2,7.9);
\end{tikzpicture}
\stackrel{\eqref{diag:oplaxlrcompatible}}{=\joinrel=\joinrel=}
\begin{tikzpicture}[anchorbase,scale=.8]
   \draw[dashed  , blue, very thick] (4, 8.8)[out=80, in=-180] to (6.2, 9.7) ; \draw (4.4,9.5)node[above] {$\rI_{\mj}$};
      \draw[very thick] (2.8, 7.9)[out=90, in=-180] to (4, 8.8);
       \draw[teal, very thick] (5.2, 7.9)[out=90, in=0] to (4,8.8);
 \draw[dashed  , orange!80, very thick] (4, 6.2)[out=90, in=0] to (2.8, 6.6) ; \draw (3.33, 6.5)node[below] {$\rI_{\mi}$};
      \draw[green!50!orange,very thick] (3.5, 5.7)[out=90, in=-180] to (4, 6.2);
       \draw[green!50!orange,very thick] (3.5,5.7) to (3.5,3.3)node[below] {$\rF'$};
       \draw[very thick] (2.8,7.9) to (2.8,3.3)node[below] {$\rF$};
      \draw[blue!45, very thick] (4.5, 5.7)[out=90, in=0] to (4, 6.2);
      \draw[dashed  , brown!70!red, very thick] (5.7, 5)[out=-100,in=0]
      to (3.5, 4.1) ; \draw (5, 3.4)node[above] {$\ \rI'_{\mk}$};
      \draw[blue!45, very thick] (4.5, 5.8)[out=-90,in=-180] to (5.7, 5); \draw (4.7, 4.5)node[above] {$\rG'$};
       \draw[green!50!orange, very thick] (6.9,6)[out=-90,in=0] to (5.7,5);
       \draw[dashed  , red, very thick] (5.7, 7.5)[out=-120,in=0] to (2.8, 7) ; \draw (6, 6.3) node[above] {$\ \rI'_{\mi}$};
      \draw[teal, very thick] (5.2, 7.9)[out=-90,in=-180] to (5.7, 7.5); \draw (5.4, 8.1)node[above] {$\rG$};
       \draw[very thick] (6.2, 7.9)[out=-90,in=0] to (5.7,7.5);
           \draw[green!50!orange, very thick] (6.9,10.5)node[above] {$\rF'$} to (6.9,6);
        \draw[very thick] (6.2, 10.5)node[above] {$\rF$} to (6.2,7.9);
\end{tikzpicture}\\
&\stackrel{\eqref{diag:lrcompatible}}{=\joinrel=\joinrel=}
\begin{tikzpicture}[anchorbase,scale=.8]
   \draw[dashed  , blue, very thick] (4, 8.8)[out=80, in=-180] to (6.2, 9.7); \draw (4.4,9.5)node[above] {$\rI_{\mj}$};
      \draw[very thick] (2.8, 7.9)[out=90, in=-180] to (4, 8.8);
       \draw[teal, very thick] (5.2, 7.9)[out=90, in=0] to (4,8.8);
 \draw[dashed  , orange!80, very thick] (4, 5.5)[out=50, in=-180] to (6.9, 6.3) ; \draw (6.3, 7)node[below] {$\rI_{\mi}$};
      \draw[green!50!orange,very thick] (3.5, 5)[out=90, in=-180] to (4, 5.5);
       \draw[green!50!orange,very thick] (3.5,5) to (3.5,2.5)node[below] {$\rF'$};
       \draw[very thick] (2.8,7.9) to (2.8,2.5)node[below] {$\rF$};
      \draw[blue!45, very thick] (4.5, 5)[out=90, in=0] to (4, 5.5);
      \draw[dashed  , brown!70!red, very thick] (5.7, 4.3)[out=-100,in=0]
      to (3.5, 3.4) ; \draw (4.8, 2.65)node[above] {$\ \rI'_{\mk}$};
      \draw[blue!45, very thick] (4.5, 5)[out=-90,in=-180] to (5.7, 4.3); \draw (4.9, 4.5)node[above] {$\rG'$};
       \draw[green!50!orange, very thick] (6.9,5)[out=-90,in=0] to (5.7,4.3);
       \draw[dashed  , red, very thick] (5.7, 7.5)[out=-130,in=0] to (2.8, 6.7) ; \draw (3.5, 5.9) node[above] {$\ \rI'_{\mi}$};
      \draw[teal, very thick] (5.2, 7.9)[out=-90,in=-180] to (5.7, 7.5); \draw (5.4, 8.1)node[above] {$\rG$};
       \draw[very thick] (6.2, 7.9)[out=-90,in=0] to (5.7,7.5);
           \draw[green!50!orange, very thick] (6.9,10.5)node[above] {$\rF'$} to (6.9,5);
        \draw[very thick] (6.2, 10.5)node[above] {$\rF$} to (6.2,7.9);
\end{tikzpicture}
\stackrel{\eqref{diag:sliding},\eqref{diag:adjunction conditions}}{=\joinrel=\joinrel=\joinrel=\joinrel=\joinrel=}
\begin{tikzpicture}[anchorbase,scale=.8]
       \draw[very thick] (3.5,10.5)node[above] {$\rF$} to (3.5,2.5)node[below] {$\rF$};
           \draw[green!50!orange, very thick] (4.2,10.5)node[above] {$\rF'$} to (4.2,2.5)node[below]{$\rF'$};
\end{tikzpicture}\quad.
\end{split}
\end{equation*}
\caption{The digram in the proof of Proposition~\ref{adjunction and horizontal}}\label{fig2}
\end{figure}

\begin{prop}\label{propadj}
Let $\bM$ be a bilax 2-representation of $\cC$.
Assume that $\rF\in\cC(\mi,\mj)$ is a left adjoint of $\rG\in\cC(\mj,\mi)$.
Then there are natural isomorphisms
\[\Hom_{\bM(\mj)}(\bM(\rF)(X),Y)\cong \Hom_{\bM(\mi)}(X,\bM(\rG)(Y)),\]
for any objects $X\in \bM(\mi),Y\in\bM(\mj)$.
\end{prop}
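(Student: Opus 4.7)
The plan is to mimic the classical construction of the adjunction bijection via unit and counit, adapted to our bilax setting. Let me define the two candidate inverse maps. Given $f\colon \bM(\rF)(X)\to Y$, define $\hat{f}\colon X\to\bM(\rG)(Y)$ as the composite
\[X\xrightarrow{(u'_{\mi})_X}\bM(\rI'_{\mi})(X)\xrightarrow{\bM(\alpha)_X}\bM(\rG)\bM(\rF)(X)\xrightarrow{\bM(\rG)(f)}\bM(\rG)(Y),\]
using the equality $\bM(\rG\rF)=\bM(\rG)\bM(\rF)$ from Definition~\ref{bilax-unital 2-functor}\eqref{bilax-unital 2-functor-1}. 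Dually, given $g\colon X\to\bM(\rG)(Y)$, define $\check{g}\colon\bM(\rF)(X)\to Y$ as
\[\bM(\rF)(X)\xrightarrow{\bM(\rF)(g)}\bM(\rF)\bM(\rG)(Y)\xrightarrow{\bM(\beta)_Y}\bM(\rI_{\mj})(Y)\xrightarrow{(u_{\mj})_Y}Y.\]
Naturality in $X$ and $Y$ is automatic from the naturality of $u_{\mi}'$, $u_{\mj}$, $\bM(\alpha)$ and $\bM(\beta)$, together with functoriality of $\bM(\rF)$ and $\bM(\rG)$.

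The key step is to verify that $\check{\hat{f}}=f$ (the dual identity $\widehat{\check{g}}=g$ is verified symmetrically). Expanding the definitions and using the naturality of $\bM(\beta)\colon\bM(\rF\rG)\to\bM(\rI_{\mj})$ and $u_{\mj}\colon\bM(\rI_{\mj})\to\id_{\bM(\mj)}$ with respect to the morphism $f\colon\bM(\rF)(X)\to Y$, one pushes $f$ to the outside and obtains
\[\check{\hat{f}}=f\circ (u_{\mj})_{\bM(\rF)(X)}\circ\bM(\beta)_{\bM(\rF)(X)}\circ\bM(\rF)(\bM(\alpha)_X)\circ\bM(\rF)((u'_{\mi})_X).\]

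Now I would rewrite the trailing composite entirely in terms of $\bM$ applied to a vertical composition of $2$-morphisms in $\cC$. Using the interchange law for horizontal composition in $\bM$, one has $\bM(\rF)(\bM(\alpha)_X)=\bM(\id_{\rF}\alpha)_X$ and $\bM(\beta)_{\bM(\rF)(X)}=\bM(\beta\id_{\rF})_X$; using the bilax $2$-representation identities \eqref{eq:bilax-2-rep} one has $\bM(\rF)((u'_{\mi})_X)=\bM(r'_{\mi,\mj,\rF})_X$ and $(u_{\mj})_{\bM(\rF)(X)}=\bM(l_{\mi,\mj,\rF})_X$. Therefore the composite equals
\[\bM\bigl(l_{\mi,\mj,\rF}\circ_{\rv}(\beta\id_{\rF})\circ_{\rv}(\id_{\rF}\alpha)\circ_{\rv}r'_{\mi,\mj,\rF}\bigr)_X,\]
which, by the first adjunction triangle identity in \eqref{diag:adjunction conditions}, equals $\bM(\id_{\rF})_X=\id_{\bM(\rF)(X)}$, so $\check{\hat{f}}=f$. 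The dual identity uses the second triangle of \eqref{diag:adjunction conditions} together with the analogues $\bM(\rG)((u_{\mj})_Y)\!=\!\bM(r_{\mj,\mi,\rG})_Y$ and $(u'_{\mi})_{\bM(\rG)(Y)}\!=\!\bM(l'_{\mj,\mi,\rG})_Y$.

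The main obstacle is not conceptual but organizational: one must carefully translate between three different viewpoints on the same $2$-morphism (components of a natural transformation in $\mathbf{Cat}$, the action of a functor in $\bM$, and the image under $\bM$ of a horizontal composite in $\cC$) in order to bring the composite into a form where the $\cC$-side triangle identity applies. Once these identifications are in place, the bijection and its naturality follow immediately from the already-established axioms; no new estimate, limit, or diagram chase beyond unpacking definitions is needed.
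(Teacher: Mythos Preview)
Your proof is correct and follows essentially the same approach as the paper: both define the same candidate inverse maps via the (op)lax unit morphisms $u'_{\mi}$, $u_{\mj}$ and the adjunction $2$-morphisms $\alpha$, $\beta$, and both verify the inverse identities by using naturality to isolate $f$ and then reducing the remaining composite to $\bM$ applied to the triangle identity \eqref{diag:adjunction conditions}, invoking \eqref{eq:bilax-2-rep} for the necessary translations. The only difference is presentational: the paper organizes the verification as a single commutative diagram (Figure~\ref{fig3}) whose subpolygons are justified one by one, while you write out the composite linearly and make the same substitutions explicitly.
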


\begin{proof}
Suppose that $ \alpha:\rI'_{\mi}\to \rG\rF$ and $\beta:\rF\rG \to \rI_{\mj}$ are the adjunction morphisms
associated to the adjoint pair $(\rF,\rG)$.
Define a map $\Phi: \Hom_{\bM(\mj)}(\bM(\rF)(X),Y)\to \Hom_{\bM(\mi)}(X,\bM(\rG)(Y))$ by sending
any morphism $f$ from $\bM(\rF)(X)$ to $Y$ in $\bM (\mj)$ to the composite
\[\xymatrix@C=1.5pc{X\ar[rr]^{(u'_{\mi})_X\qquad}&&\bM(\rI'_{\mi})(X)\ar[rr]^{\bM(\alpha)_X\ }&& \bM(\rG\rF)(X)\ar@{=}[r]&\bM(\rG)\bM(\rF)(X)
\ar[rr]^{\quad \id_{\bM(\rG)}f}&&\bM(\rG)Y}\]
in $\bM(\mi)$. Define a map $\Psi: \Hom_{\bM(\mi)}(X,\bM(\rG)(Y)) \to \Hom_{\bM(\mj)}(\bM(\rF)(X),Y)$
by sending any morphism $g$ from $X$ to $\bM(\rG)Y$ in $\bM (\mi)$ to the composite
\[  \xymatrix@C=1.5pc{\bM(\rF)(X)\ar[rr]^{\id_{\bM(\rF)}g\quad}&& \bM(\rF)\bM(\rG)(Y)\ar@{=}[r]&\bM(\rF\rG)(Y)\ar[rr]^{
\quad\bM(\beta)_Y}&&\bM(\rI_{\mj})(Y)\ar[rr]^{\quad(u_{\mj})_Y}&&Y.}\]
We claim that $\Psi\Phi=\id_{\Hom_{\bM(\mj)}(\bM(\rF)(X),Y)}$ and $\Phi\Psi=\id_{\Hom_{\bM(\mi)}(X,\bM(\rG)(Y))}$.
Now we only prove the former equality since the latter can be proved simiarly.
Consider the diagram in Figure~\ref{fig3}.
\begin{figure}
\begin{equation*}\label{eq:03}\xymatrix@C=1.5pc{\bM(\rF)(X)\ar@/_5.0pc/@{=}[ddddrrrr]_{\bM(\id_{\rF})_X}\ar[rr]^{\id_{\bM(\rF)}(u'_{\mi})_X\qquad}\ar[drr]_{\bM(r'_{\mi,\mj,\rF})_X}&&
\bM(\rF)\bM(\rI'_{\mi})(X)\ar[rr]^{\id_{\bM(\rF)}\bM(\alpha)_X\ }\ar@{=}[d]&& \bM(\rF)\bM(\rG\rF)(X)\ar@{=}[dd]\ar@{=}[r]&\bM(\rF)\bM(\rG)\bM(\rF)(X)
\ar[d]^{\id_{\bM(\rF)\bM(\rG)}f}\\
&&\bM(\rF\rI'_{\mi})(X)\ar[dr]_{\bM(\id_{\rF}\alpha)_X\ }&&&\bM(\rF)\bM(\rG)(Y)\ar@{=}[d]\\
&&&\bM(\rF\rG\rF)(X)\ar[d]_{\bM(\beta\id_{\rF})}\ar@{=}[r]&
\bM(\rF\rG)\bM(\rF)(X)\ar[r]^{\id_{\bM(\rF\rG)}f}\ar[d]^{\bM(\beta)_{\bM(\rF)(X)}}&\bM(\rF\rG)(Y)\ar[d]^{\bM(\beta)_Y}\\
&&&\bM(\rI_{\mj}\rF)(X)\ar@{=}[r]\ar[dr]_{\bM(l_{\mi,\mj,\rF})_X}&\bM(\rI_{\mj})\bM(\rF)(X)\ar[r]^{\bM(\rI_{\mj})(f)}\ar[d]^{(u_{\mj})_{\bM(\rF)(X)}}&
\bM(\rI_{\mj})(Y)\ar[d]^{(u_{\mj})_Y}\\
&&&&\bM(\rF)(X)\ar[r]^{f}&Y.}
\end{equation*}
\caption{The diagram in the proof of Proposition~\ref{propadj}}\label{fig3}
\end{figure}
In this digram, the top and bottom left triangles commute by \eqref{eq:bilax-2-rep}. The left-most pentagon commutes by the fact that $(\rF,\rG)$ is an adjoint pair.
The bottom right two squares commutes due to the naturality of $\bM(\beta)$ and $u_{\mj}$. The remaining
subploygons of the diagram in Figure~\ref{fig3} commute by definition.
Hence the two pathes from $\bM(\rF)(X)$ to $Y$ along the boundary of the diagram in Figure~\ref{fig3} coincide with each other.
This implies the statement of the proposition.
\end{proof}

Proposition~\ref{propadj} says that, in any bilax $2$-representation, the pair $(\rF,\rG)$
of adjoint  $1$-morphisms is repersented by adjoint functors (in the usual sense).
In particular, left and right adjoints are unique up to a unique isomorphism of functors
(in the underlying category of the representation).
The following statement asserts that such an isomorphism exists already in $\cC$.

\begin{prop}\label{prop:adjointunique}
For any $1$-morphisms $\rF,\rF'\in\cC(\mi,\mj)$ and $\rG,\rG'\in\cC(\mj,\mi)$, we have:

\begin{enumerate}[$($i$)$]
\item\label{prop:adjointunique-1} if $(\rF,\rG)$ and $(\rF',\rG)$ are two adjoint pairs in $\cC$, then $\rF\cong \rF'$ in $\cC$;
\item\label{prop:adjointunique-2} if $(\rF,\rG)$ and $(\rF,\rG')$ are two adjoint pairs in $\cC$, then $\rG\cong\rG'$ in $\cC$.
\end{enumerate}
\end{prop}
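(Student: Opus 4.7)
The plan is to imitate the classical proof that adjoints are unique up to canonical isomorphism, replacing the trivial unitors of a genuine $2$-category by the (op)lax unitors of $\cC$ and invoking the coherence axioms of Definition~\ref{def:bilax-unit} to absorb the extra bookkeeping.

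For part~\eqref{prop:adjointunique-1}, let $\alpha\colon\rI'_{\mi}\to\rG\rF$, $\beta\colon\rF\rG\to\rI_{\mj}$ and $\alpha'\colon\rI'_{\mi}\to\rG\rF'$, $\beta'\colon\rF'\rG\to\rI_{\mj}$ be the adjunction data for $(\rF,\rG)$ and $(\rF',\rG)$ respectively. I would define
\[\phi := l_{\mi,\mj,\rF'}\circ_\rv(\beta\,\id_{\rF'})\circ_\rv(\id_{\rF}\,\alpha')\circ_\rv r'_{\mi,\mj,\rF}\colon\rF\to\rF',\]
\[\psi := l_{\mi,\mj,\rF}\circ_\rv(\beta'\,\id_{\rF})\circ_\rv(\id_{\rF'}\,\alpha)\circ_\rv r'_{\mi,\mj,\rF'}\colon\rF'\to\rF,\]
which are the bilax-unital analogues of the usual transposes that witness uniqueness of adjoints in a strict $2$-category.

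The bulk of the argument is to verify that $\psi\circ_\rv\phi=\id_{\rF}$ (and symmetrically $\phi\circ_\rv\psi=\id_{\rF'}$), which I would do diagrammatically in the spirit of the proof of Proposition~\ref{adjunction and horizontal} (cf.\ Figure~\ref{fig2}). The diagram of $\psi\circ_\rv\phi$ contains the two cups $\alpha'$ and $\alpha$ and the two caps $\beta$ and $\beta'$ arranged as a double zigzag, together with the four unitors $r'_{\mi,\mj,\rF}$, $l_{\mi,\mj,\rF'}$, $r'_{\mi,\mj,\rF'}$, $l_{\mi,\mj,\rF}$ attached to the dashed unit strands. Using the interchange law~\eqref{diag:sliding}, one slides strands so that the middle section of the diagram becomes a canonical $\rF'$-zigzag of the form appearing in the left half of~\eqref{diag:adjunction conditions}; the compatibility axioms~\eqref{diag:lrcompatible}, \eqref{diag:oplaxlrcompatible}, \eqref{diag:lr2}, \eqref{diag:oplaxlr2} and the naturality identities~\eqref{diag:unitornatural},~\eqref{diag:oplaxunitornatural} are exactly what allow the dashed unit strands to be routed through the caps and cups so that the unitors end up in the positions required by the triangle identity. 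Applying the triangle identity for $(\rF',\rG)$ then collapses this middle zigzag, and what remains is an $\rF$-zigzag built from $\alpha$, $\beta'$ and the outer unitors; the same coherence maneuvers reassemble it into the canonical form, after which the triangle identity for $(\rF,\rG)$ collapses it to $\id_{\rF}$.

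Part~\eqref{prop:adjointunique-2} is strictly dual: one defines $\phi\colon\rG\to\rG'$ as
\[r_{\mj,\mi,\rG'}\circ_\rv(\id_{\rG'}\,\beta)\circ_\rv(\alpha'\,\id_{\rG})\circ_\rv l'_{\mj,\mi,\rG}\]
and $\psi$ symmetrically, then runs the same diagrammatic reduction with the roles of the left and right unitors (and of lax and oplax units) interchanged. I expect the main obstacle throughout to be purely notational: in a genuine $2$-category all four unitors in the formulas above are identity $2$-morphisms, so the computation collapses to a direct application of the two triangle identities, whereas in the bilax-unital setting each elementary step requires an invocation of a coherence or naturality axiom from Definition~\ref{def:bilax-unit} to route the dashed strands correctly before the triangle identities can be brought to bear.
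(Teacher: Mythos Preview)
Your proposal is correct and follows essentially the same route as the paper: the paper defines precisely your $\phi$ and $\psi$, then verifies $\psi\circ_{\rv}\phi=\id_{\rF}$ by a diagrammatic computation (Figure~\ref{fig1}) using \eqref{diag:unitornatural-1}, the naturality identities \eqref{diag:unitornatural}, \eqref{diag:oplaxunitornatural}, the interchange law \eqref{diag:sliding}, and the compatibilities \eqref{diag:lrcompatible}, \eqref{diag:oplaxlrcompatible} to nest the $(\rF',\rG)$-snake inside the $(\rF,\rG)$-snake before applying the two triangle identities in turn. One small slip: after collapsing the middle $(\rF',\rG)$-zigzag (which consumes $\alpha'$ and $\beta'$), what remains is the $\rF$-zigzag built from $\alpha$ and $\beta$, not $\alpha$ and $\beta'$ as you wrote; your final sentence invoking the $(\rF,\rG)$ triangle identity is nevertheless correct.
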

\begin{proof}
We only prove claim~\ref{prop:adjointunique-1}, claim~\ref{prop:adjointunique-2} is proved similarly.
Suppose that $ \alpha:\rI'_{\mi}\to \rG\rF$ and $\beta:\rF\rG \to \rI_{\mj}$ are the adjunction morphisms
associated to the adjoint pair $(\rF,\rG)$ and $\alpha': \rI'_{\mi}\to \rG\rF'$ and $\beta':\rF'\rG \to \rI_{\mj}$ are the adjunction morphisms associated to the adjoint pair $(\rF',\rG)$. Define two 2-morphisms:
\[\begin{split}\phi:\xymatrix@C=3pc{\rF\ar[r]^{r'_{\mi,\mj,\rF}}&\rF\rI'_{\mi}\ar[r]^{\id_{\rF}\alpha'}&\rF\rG\rF'\ar[r]^{\beta\id_{\rF'}}& \rI_{\mj}\rF'\ar[r]^{l_{\mi,\mj,\rF'}}&\rF'}\\
\psi:\xymatrix@C=3pc{\rF'\ar[r]^{r'_{\mi,\mj,\rF'}}&\rF'\rI'_{\mi}\ar[r]^{\id_{\rF'}\alpha}&\rF'\rG\rF\ar[r]^{\beta'\id_{\rF}}& \rI_{\mj}\rF\ar[r]^{l_{\mi,\mj,\rF}}&\rF}.
\end{split}\]
These can be depicted as follows:
\[\begin{tikzpicture}[anchorbase,scale=.7]
      \draw[very thick] (3, 7.5) to (3, 5.3)node[below] {$\rF$};
      \draw[dashed  ,red, very thick] (4.5, 7)[out=-90,in=0]
      to (3, 6) ; \draw (4.2,6.3)node[below] {$\ \rI'_{\mi}$};
          \draw[very thick] (3, 7.5)[out=90, in=-180] to (3.5, 8);
      \draw[teal, very thick] (4, 7.5)[out=90, in=0] to (3.5, 8);
      \draw[teal, very thick] (4, 7.5)[out=-90,in=-180]  to (4.5, 7) ; \draw (4.2,7.5)node[above] {$\rG$};
             \draw[green!50!orange, very thick] (5,7.5)[out=-90,in=0] to (4.5,7);
                \draw[green!50!orange, very thick] (5,7.5) to (5,9.7) node[above] {$\rF'$};
                \draw[dashed  , blue, very thick] (3.5, 8)[out=90, in=-180] to (5, 9); \draw  (4.2,8.9)node[above] {$\rI_{\mj}$};
    \end{tikzpicture}
    \quad\text{and}\quad
    \begin{tikzpicture}[anchorbase,scale=.7]
      \draw[green!50!orange, very thick] (3, 7.5) to (3, 5.3)node[below] {$\rF'$};
      \draw[dashed  ,red, very thick] (4.5, 7)[out=-90,in=0]
      to (3, 6) ; \draw (4.2,6.3)node[below] {$\ \rI'_{\mi}$};
          \draw[green!50!orange, very thick] (3, 7.5)[out=90, in=-180] to (3.5, 8);
      \draw[teal, very thick] (4, 7.5)[out=90, in=0] to (3.5, 8);
      \draw[teal, very thick] (4, 7.5)[out=-90,in=-180]  to (4.5, 7); \draw (4.2,7.5)node[above] {$\rG$};
             \draw[very thick] (5,7.5)[out=-90,in=0] to (4.5,7);
                \draw[very thick] (5,7.5) to (5,9.7) node[above] {$\rF$};
                \draw[dashed  , blue, very thick] (3.5, 8)[out=90, in=-180] to (5, 9) ; \draw (4.2,8.9)node[above] {$\rI_{\mj}$};
    \end{tikzpicture}\]
We only prove the equality $\psi\phi=\id_{\rF}$ since the equality $\phi\psi=\id_{\rF'}$ follows by symmetry.
The equality $\psi\phi=\id_{\rF}$ is proved diagrammatically in Figure~\ref{fig1}. There the last equality in the first row uses both the naturality of the right oplax unitor $r'_{\mi,\mj,{}_-}$ and that of the left lax unitor $l_{\mi,\mj,{}_-}$ at the same time and so does the first one in the second row.
\end{proof}

\begin{figure}
\begin{equation*}
\begin{split}
\begin{tikzpicture}[anchorbase,scale=.59]
      \draw[very thick] (3, 7.5) to (3, 5.3)node[below] {$\rF$};
      \draw[dashed  ,red, very thick] (4.5, 7)[out=-90,in=0]
      to (3, 6); \draw  (4.15,6.27)node[below] {$\ \rI'_{\mi}$};
          \draw[very thick] (3, 7.5)[out=90, in=-180] to (3.5, 8);
      \draw[teal, very thick] (4, 7.5)[out=90, in=0] to (3.5, 8); \draw (6.23, 10.7)node[above] {$\rG$};
      \draw[teal, very thick] (4, 7.5)[out=-90,in=-180]  to (4.5, 7); \draw (4.2,7.5)node[above] {$\rG$};
             \draw[green!50!orange, very thick] (5,7.5)[out=-90,in=0] to (4.5,7);
                \draw[green!50!orange, very thick] (5,7.5) to (5,9.7);
                \draw[dashed  , blue, very thick] (3.5, 8)[out=90, in=-180] to (5, 8.9) ; \draw (4.2,8.9)node[above] {$\rI_{\mj}$};
                \draw[green!50!orange, very thick] (5, 10.7) to (5, 9.7); \draw (5.45, 7.3)node[above] {$\rF'$};
      \draw[dashed  ,red, very thick] (6.5, 10.2)[out=-90,in=0]
      to (5, 9.3) ; \draw (6,9.3)node[below] {$\ \rI'_{\mi}$};
          \draw[green!50!orange, very thick] (5, 10.7)[out=90, in=-180] to (5.5, 11.2); \draw (5.45, 10)node[above] {$\rF'$};
      \draw[teal, very thick] (6, 10.7)[out=90, in=0] to (5.5, 11.2);
      \draw[teal, very thick] (6, 10.7)[out=-90,in=-180]  to (6.5, 10.2); \draw (4.2,7.5)node[above] {$\rG$};
             \draw[very thick] (7,10.7)[out=-90,in=0] to (6.5,10.2);
                \draw[very thick] (7,10.7) to (7,12.9) node[above] {$\rF$};
                \draw[dashed  , blue, very thick] (5.5, 11.2)[out=90, in=-180] to (7, 12.2) ; \draw (5.9,12)node[above] {$\rI_{\mj}$};
    \end{tikzpicture}
   & \stackrel{\eqref{diag:unitornatural-1}}{=\joinrel=\joinrel=}
    \begin{tikzpicture}[anchorbase,scale=.59]
      \draw[very thick] (3, 7.5) to (3, 5.3)node[below] {$\rF$};
      \draw[dashed  ,red, very thick] (4.5, 7)[out=-90,in=0]
      to (3, 6) ; \draw (4.15,6.27)node[below] {$\ \rI'_{\mi}$};
          \draw[very thick] (3, 7.5)[out=90, in=-180] to (3.5, 8);
      \draw[teal, very thick] (4, 7.5)[out=90, in=0] to (3.5, 8); \draw (6.23, 10.7)node[above] {$\rG$};
      \draw[teal, very thick] (4, 7.5)[out=-90,in=-180]  to (4.5, 7); \draw (4.2,7.5)node[above] {$\rG$};
             \draw[green!50!orange, very thick] (5,7.5)[out=-90,in=0] to (4.5,7);
                \draw[green!50!orange, very thick] (5,7.5) to (5,9.7);
                \draw[dashed  , blue, very thick] (3.5, 8)[out=90, in=-180] to (5, 9.4) ; \draw (4.2,9.4)node[above] {$\rI_{\mj}$};
                \draw[green!50!orange, very thick] (5, 10.7) to (5, 9.7); \draw (5.45, 7.3)node[above] {$\rF'$};
      \draw[dashed  ,red, very thick] (6.5, 10.2)[out=-90,in=0]
      to (5, 8.8) ; \draw (6.2, 9.1)node[below] {$\ \rI'_{\mi}$};
          \draw[green!50!orange, very thick] (5, 10.7)[out=90, in=-180] to (5.5, 11.2); \draw (5.45, 10)node[above] {$\rF'$};
      \draw[teal, very thick] (6, 10.7)[out=90, in=0] to (5.5, 11.2);
      \draw[teal, very thick] (6, 10.7)[out=-90,in=-180]  to (6.5, 10.2); \draw (4.2,7.5)node[above] {$\rG$};
             \draw[very thick] (7,10.7)[out=-90,in=0] to (6.5,10.2);
                \draw[very thick] (7,10.7) to (7,12.9) node[above] {$\rF$};
                \draw[dashed  , blue, very thick] (5.5, 11.2)[out=90, in=-180] to (7, 12.2) ; \draw (5.9,12)node[above] {$\rI_{\mj}$};
    \end{tikzpicture} \stackrel{\eqref{diag:unitornatural},\eqref{diag:oplaxunitornatural}}{=\joinrel=\joinrel=\joinrel=\joinrel=\joinrel=}
     \begin{tikzpicture}[anchorbase,scale=.59]
      \draw[very thick] (3, 7.5) to (3, 5.3)node[below] {$\rF$};
      \draw[dashed  ,red, very thick] (4.5, 7)[out=-90,in=0]
      to (3, 6) ; \draw (4.15,6.27)node[below] {$\ \rI'_{\mi}$};
          \draw[very thick] (3, 7.5)[out=90, in=-180] to (3.5, 8);
      \draw[teal, very thick] (4, 7.5)[out=90, in=0] to (3.5, 8); \draw (6.23, 10.7)node[above] {$\rG$};
      \draw[teal, very thick] (4, 7.5)[out=-90,in=-180]  to (4.5, 7); \draw (4.2,7.5)node[above] {$\rG$};
             \draw[green!50!orange, very thick] (5,7.5)[out=-90,in=0] to (4.5,7);
                \draw[green!50!orange, very thick] (5,7.5) to (5,9.7);
                \draw[dashed  , blue, very thick] (3.5, 8)[out=90, in=-180] to (5.6,11.7) ; \draw (3.5,10)node[above] {$\rI_{\mj}$};
                \draw[green!50!orange, very thick] (5, 10.7) to (5, 9.7); \draw (5.45, 7.3)node[above] {$\rF'$};
      \draw[dashed  ,red, very thick] (6.5, 10.2)[out=-90,in=0]
      to (4.46, 6.45) ; \draw (6.8, 8.5)node[below] {$\ \rI'_{\mi}$};
          \draw[green!50!orange, very thick] (5, 10.7)[out=90, in=-180] to (5.5, 11.2); \draw (5.45, 10)node[above] {$\rF'$};
      \draw[teal, very thick] (6, 10.7)[out=90, in=0] to (5.5, 11.2);
      \draw[teal, very thick] (6, 10.7)[out=-90,in=-180]  to (6.5, 10.2); \draw (4.2,7.5)node[above] {$\rG$};
             \draw[very thick] (7,10.7)[out=-90,in=0] to (6.5,10.2);
                \draw[very thick] (7,10.7) to (7,12.9) node[above] {$\rF$};
                \draw[dashed  , blue, very thick] (5.5, 11.2)[out=90, in=-180] to (7, 12.2) ; \draw (5.9,12)node[above] {$\rI_{\mj}$};
    \end{tikzpicture}\\
    &\stackrel{\eqref{diag:unitornatural},\eqref{diag:oplaxunitornatural}}{=\joinrel=\joinrel=\joinrel=\joinrel=\joinrel=}
    \begin{tikzpicture}[anchorbase,scale=.59]
      \draw[very thick] (3, 7.5) to (3, 4.5)node[below] {$\rF$};
      \draw[dashed  ,red, very thick] (4.5, 7)[out=-90,in=0]
      to (3, 6); \draw  (4.5,6.7)node[below] {$\ \rI'_{\mi}$};
          \draw[very thick] (3, 7.5)[out=90, in=-180] to (3.5, 8);
      \draw[teal, very thick] (4, 7.5)[out=90, in=0] to (3.5, 8); \draw (6.23, 10.7)node[above] {$\rG$};
      \draw[teal, very thick] (4, 7.5)[out=-90,in=-180]  to (4.5, 7); \draw (4.2,7.5)node[above] {$\rG$};
             \draw[green!50!orange, very thick] (5,7.5)[out=-90,in=0] to (4.5,7);
                \draw[green!50!orange, very thick] (5,7.5) to (5,9.7);
                \draw[dashed  , blue, very thick] (3.5, 8)[out=90, in=-180] to (7,13.2) ; \draw (3.5,10.5)node[above] {$\rI_{\mj}$};
                \draw[green!50!orange, very thick] (5, 10.7) to (5, 9.7); \draw (5.45, 7.3)node[above] {$\rF'$};
      \draw[dashed  ,red, very thick] (6.5, 10.2)[out=-90,in=0]
      to (3, 5) ; \draw (6.65, 8.5)node[below] {$\ \rI'_{\mi}$};
          \draw[green!50!orange, very thick] (5, 10.7)[out=90, in=-180] to (5.5, 11.2); \draw (5.45, 10)node[above] {$\rF'$};
      \draw[teal, very thick] (6, 10.7)[out=90, in=0] to (5.5, 11.2);
      \draw[teal, very thick] (6, 10.7)[out=-90,in=-180]  to (6.5, 10.2); \draw (4.2,7.5)node[above] {$\rG$};
             \draw[very thick] (7,10.7)[out=-90,in=0] to (6.5,10.2);
                \draw[very thick] (7,10.7) to (7,13.7) node[above] {$\rF$};
                \draw[dashed  , blue, very thick] (5.5, 11.2)[out=90, in=-180] to (7, 12.2) ; \draw (5.3,11.2)node[above] {$\rI_{\mj}$};
    \end{tikzpicture}\stackrel{\eqref{diag:sliding}}{=\joinrel=\joinrel=}
    \begin{tikzpicture}[anchorbase,scale=.59]
      \draw[very thick] (3, 7.5) to (3, 3.3)node[below] {$\rF$};
      \draw[dashed  ,red, very thick] (4.5, 7)[out=-90,in=0]
      to (3, 6) ; \draw (4.1,6)node[below] {$\ \rI'_{\mi}$};
          \draw[very thick] (3, 9.5)[out=90, in=-180] to (3.5, 10);
      \draw[teal, very thick] (4, 9.5)[out=90, in=0] to (3.5, 10); \draw (6.35, 7)node[above] {$\rG$};
       \draw[very thick] (3, 7.5) to (3, 9.5);
        \draw[teal, very thick] (4, 7.5) to (4, 9.5);
      \draw[teal, very thick] (4, 7.5)[out=-90,in=-180]  to (4.5, 7); \draw (4.3,8.8)node[above] {$\rG$};
             \draw[green!50!orange, very thick] (5,7.5)[out=-90,in=0] to (4.5,7);
                \draw[green!50!orange, very thick] (5,7.5) to (5,8.5);
                \draw[dashed  , blue, very thick] (3.5, 10)[out=90, in=-180] to (7,12) ; \draw (4,11.3)node[above] {$\rI_{\mj}$};
      \draw[dashed  ,red, very thick] (6.5, 6)[out=-90,in=0]
      to (3, 4); \draw  (5.1, 4.3)node[below] {$\ \rI'_{\mi}$};
          \draw[green!50!orange, very thick] (5, 8.5)[out=90, in=-180] to (5.5, 9); \draw (5.45, 7.8)node[above] {$\rF'$};
      \draw[teal, very thick] (6, 8.5)[out=90, in=0] to (5.5, 9);
      \draw[teal, very thick] (6, 6.5)[out=-90,in=-180]  to (6.5, 6);
      \draw[teal, very thick] (6, 6.5) to (6, 8.5);
             \draw[very thick] (7,6.5)[out=-90,in=0] to (6.5,6);
                \draw[very thick] (7,6.5) to (7,12.5) node[above] {$\rF$};
                \draw[dashed  , blue, very thick] (5.5, 9)[out=90, in=-180] to (7, 10) ; \draw (5.3,9.8)node[above] {$\rI_{\mj}$};
    \end{tikzpicture}
\\
&\stackrel{\eqref{diag:lrcompatible},\eqref{diag:oplaxlrcompatible}}{=\joinrel=\joinrel=\joinrel=\joinrel=\joinrel=}
    \begin{tikzpicture}[anchorbase,scale=.56]
      \draw[very thick] (3, 7.5) to (3, 3.3)node[below] {$\rF$};
      \draw[dashed  ,red, very thick] (4.5, 7)[out=-90,in=-180]
      to (6, 6.38) ; \draw (4.9,6.3)node[below] {$\ \rI'_{\mi}$};
          \draw[very thick] (3, 9.5)[out=90, in=-180] to (3.5, 10);
      \draw[teal, very thick] (4, 9.5)[out=90, in=0] to (3.5, 10); \draw (6.35, 7)node[above] {$\rG$};
       \draw[very thick] (3, 7.5) to (3, 9.5);
        \draw[teal, very thick] (4, 7.5) to (4, 9.5);
      \draw[teal, very thick] (4, 7.5)[out=-90,in=-180]  to (4.5, 7); \draw (4.35,8.3)node[above] {$\rG$};
             \draw[green!50!orange, very thick] (5,7.5)[out=-90,in=0] to (4.5,7);
                \draw[green!50!orange, very thick] (5,7.5) to (5,8.5);
                \draw[dashed  , blue, very thick] (3.5, 10)[out=90, in=-180] to (7,11.8) ; \draw (4,11.2)node[above] {$\rI_{\mj}$};
      \draw[dashed  ,red, very thick] (6.5, 6)[out=-90,in=0]
      to (3, 4.2) ; \draw (5.1, 4.4)node[below] {$\ \rI'_{\mi}$};
          \draw[green!50!orange, very thick] (5, 8.5)[out=90, in=-180] to (5.5, 9); \draw (5.45, 7.8)node[above] {$\rF'$};
      \draw[teal, very thick] (6, 8.5)[out=90, in=0] to (5.5, 9);
      \draw[teal, very thick] (6, 6.5)[out=-90,in=-180]  to (6.5, 6);
      \draw[teal, very thick] (6, 6.5) to (6, 8.5);
             \draw[very thick] (7,6.5)[out=-90,in=0] to (6.5,6);
                \draw[very thick] (7,6.5) to (7,12.5) node[above] {$\rF$};
                \draw[dashed  , blue, very thick] (5.5, 9)[out=90, in=0] to (4, 9.59)
                ; \draw (5.1,9.6)node[above] {$\rI_{\mj}$};
    \end{tikzpicture}\stackrel{\eqref{diag:adjunction conditions}}{=\joinrel=\joinrel=}
    \begin{tikzpicture}[anchorbase,scale=.56]
      \draw[very thick] (3, 7.5) to (3, 3.3)node[below] {$\rF$};
          \draw[very thick] (3, 9.5)[out=90, in=-180] to (3.5, 10);
      \draw[teal, very thick] (4, 9.5)[out=90, in=0] to (3.5, 10); \draw (4.35, 8)node[above] {$\rG$};
       \draw[very thick] (3, 7.5) to (3, 9.5);
        \draw[teal, very thick] (4, 6.5) to (4, 9.5);
      \draw[teal, very thick] (4, 6.5)[out=-90,in=-180]  to (4.5, 6);
                \draw[dashed  , blue, very thick] (3.5, 10)[out=90, in=-180] to (5,11.6) ; \draw (3.7,11.1)node[above] {$\rI_{\mj}$};
      \draw[dashed  ,red, very thick] (4.5, 6)[out=-90,in=0]
      to (3, 4.4) ; \draw (4.2, 4.6)node[below] {$\ \rI'_{\mi}$};
             \draw[very thick] (5,6.5)[out=-90,in=0] to (4.5,6);
                \draw[very thick] (5,6.5) to (5,12.5) node[above] {$\rF$};
    \end{tikzpicture}
    \stackrel{\eqref{diag:adjunction conditions}}{=\joinrel=\joinrel=}
    \begin{tikzpicture}[anchorbase,scale=.56]
      \draw[very thick] (3, 12.5)node[above] {$\rF$} to (3, 3.3)node[below] {$\rF$};
    \end{tikzpicture}\quad.
\end{split}
\end{equation*}
\caption{The diagram in the proof of Proposition~\ref{prop:adjointunique}}\label{fig1}
\end{figure}
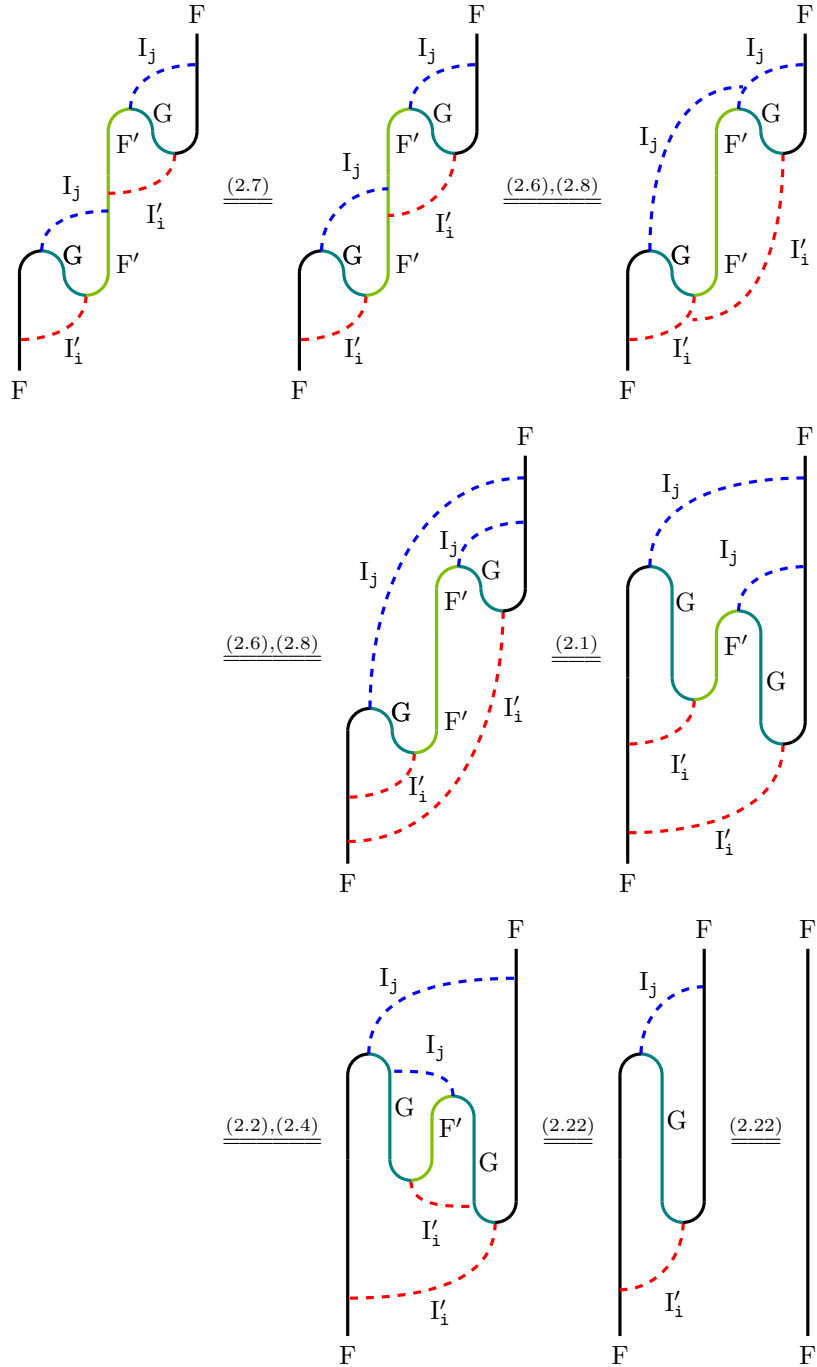

For any category $\mathcal A$, denote by $\End( \mathcal A)$ the category of
endofunctors on $\mathcal A$ and corresponding natural transformations.
Note that $\End( \mathcal A)$ can be viewed as a $2$-category with one object
(which can be identified with $\mathcal A$).

\begin{prop}\label{prop:(co)algebra}
Let $\rF\in\cC(\mi,\mj)$ be a left adjoint of $\rG\in\cC(\mj,\mi)$.
Then $h(\rG\rF,{}_-)=\rG\rF\circ{}_-\in\End\cC(\mi,\mi)$ is
an algebra in $\End (\cC(\mi,\mi))$ and $h(\rF\rG,{}_-)=\rF\rG\circ{}_-$ is a coalgebra in $\End (\cC(\mj,\mj))$.
\end{prop}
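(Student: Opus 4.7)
The plan is to exhibit the algebra structure on $h(\rG\rF,{}_-)\in\End(\cC(\mi,\mi))$ explicitly, using $\alpha\colon\rI'_{\mi}\to\rG\rF$ and $\beta\colon\rF\rG\to\rI_{\mj}$ together with the (op)lax unitors of $\cC$, and then dualize for the coalgebra structure on $h(\rF\rG,{}_-)\in\End(\cC(\mj,\mj))$. Regarding $\End(\cC(\mi,\mi))$ as a $2$-category, hence as bilax-unital with both units equal to $\Id_{\cC(\mi,\mi)}$ and strict unitors, an algebra structure consists of $\mu\colon h(\rG\rF,{}_-)\circ h(\rG\rF,{}_-)\to h(\rG\rF,{}_-)$ and $\eta\colon\Id_{\cC(\mi,\mi)}\to h(\rG\rF,{}_-)$ satisfying $\mu\circ_{\rv}(\mu\,\id)=\mu\circ_{\rv}(\id\,\mu)$ and $\mu\circ_{\rv}(\eta\,\id)=\id=\mu\circ_{\rv}(\id\,\eta)$.

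Componentwise, at $\rH\in\cC(\mi,\mi)$ I define
\[
\mu_{\rH}:=(\id_{\rG}\,l_{\mi,\mj,\rF\rH})\circ_{\rv}(\id_{\rG}\beta\,\id_{\rF\rH}),\qquad
\eta_{\rH}:=(\alpha\,\id_{\rH})\circ_{\rv}l'_{\mi,\mi,\rH};
\]
by the compatibility~\eqref{diag:lrcompatible}, the first also equals $(r_{\mj,\mi,\rG}\,\id_{\rF\rH})\circ_{\rv}(\id_{\rG}\beta\,\id_{\rF\rH})$, and naturality in $\rH$ is immediate from the naturality of the unitors. Associativity will follow directly from the interchange law~\eqref{diag:sliding}: at $\rH$, both $\mu\circ_{\rv}(\mu\,\id)$ and $\mu\circ_{\rv}(\id\,\mu)$ apply $\beta$ to the two disjoint $\rF\rG$-substrings of $\rG\rF\rG\rF\rG\rF\rH$ and then absorb the produced $\rI_{\mj}$'s via $l$, differing only in the order; interchange makes this order immaterial.

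For left unitality $\mu_{\rH}\circ_{\rv}\eta_{\rG\rF\rH}=\id_{\rG\rF\rH}$, I use the coherence axioms $l_{\mi,\mj,\rF\rH}=l_{\mi,\mj,\rF}\,\id_{\rH}$ and its oplax counterpart $l'_{\mi,\mi,\rG\rF\rH}=l'_{\mj,\mi,\rG}\,\id_{\rF\rH}$ from Definition~\ref{def:bilax-unit}, together with the compatibility $\id_{\rG}\,l_{\mi,\mj,\rF}=r_{\mj,\mi,\rG}\,\id_{\rF}$ of~\eqref{diag:lrcompatible}, to factor $\id_{\rF}\,\id_{\rH}$ out on the right via iterated interchange. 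The residual $2$-morphism on $\rG$ is exactly $r_{\mj,\mi,\rG}\circ_{\rv}(\id_{\rG}\beta)\circ_{\rv}(\alpha\,\id_{\rG})\circ_{\rv}l'_{\mj,\mi,\rG}$, i.e.\ the right-hand zigzag of~\eqref{diag:adjunction conditions}, hence equals $\id_{\rG}$. Right unitality is handled symmetrically: the dual compatibility from~\eqref{diag:oplaxlrcompatible}, $\id_{\rG\rF}\,l'_{\mi,\mi,\rH}=r'_{\mi,\mi,\rG\rF}\,\id_{\rH}$, combined with $r'_{\mi,\mi,\rG\rF}=\id_{\rG}\,r'_{\mi,\mj,\rF}$, reduces the computation to $\id_{\rG}\bigl[l_{\mi,\mj,\rF}\circ_{\rv}(\beta\,\id_{\rF})\circ_{\rv}(\id_{\rF}\alpha)\circ_{\rv}r'_{\mi,\mj,\rF}\bigr]\id_{\rH}$, where the bracketed composite is the left-hand zigzag of~\eqref{diag:adjunction conditions}.

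The coalgebra structure on $h(\rF\rG,{}_-)$ is obtained by duality: set
\[
\Delta_{\rH}:=(\id_{\rF}\alpha\,\id_{\rG\rH})\circ_{\rv}(\id_{\rF}\,l'_{\mj,\mi,\rG}\,\id_{\rH}),\qquad
\epsilon_{\rH}:=l_{\mj,\mj,\rH}\circ_{\rv}(\beta\,\id_{\rH}),
\]
so that $\alpha$ inserts $\rG\rF$ via the oplax unitor and $\beta$ eliminates $\rF\rG$ via the lax unitor; coassociativity and counitality follow by mirror arguments. The main obstacle will be the unitality verification: neither zigzag of~\eqref{diag:adjunction conditions} appears directly in the expanded composite, and only after systematic rewriting via the structural axioms on $l,l',r,r'$ does the zigzag pattern become visible. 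The matching of the direction of unitality to the side ($\rF$ or $\rG$) on which the zigzag is applied is not accidental — it reflects the built-in asymmetry between the lax unit $\rI_{\mi}$ and the oplax unit $\rI'_{\mi}$ that is intrinsic to the bilax-unital setting.
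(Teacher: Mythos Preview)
Your proposal is correct and follows essentially the same approach as the paper: you define the identical multiplication and unit (the paper writes the multiplication using $r_{\mj,\mi,\rG}$, you use $\id_{\rG}\,l_{\mi,\mj,\rF\rH}$, and you correctly note these agree by~\eqref{diag:lrcompatible}), and you verify unitality by invoking the same coherence axioms from Definition~\ref{def:bilax-unit} together with~\eqref{diag:lrcompatible} and~\eqref{diag:oplaxlrcompatible} to isolate the zigzag identities~\eqref{diag:adjunction conditions}. The only point where you are slightly looser than the paper is associativity: ``interchange makes this order immaterial'' is not quite enough on its own, since after the two $\beta$'s one must still absorb two copies of $\rI_{\mj}$ in different orders, and equality of those absorptions uses naturality of the lax unitor together with Definition~\ref{def:bilax-unit}\eqref{def:lax-unit-1}\ref{def:lax-unit-1-b} (this is exactly what the paper cites, and is the content of~\eqref{eq:002}); but this is a matter of detail rather than a gap in the argument.
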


\begin{proof}
We only prove the first part and the rest can be proved similarly.
Suppose that $ \alpha:\rI'_{\mi}\to \rG\rF$ and $\beta:\rF\rG \to \rI_{\mj}$ are the adjunction morphisms
associated to the adjoint pair $(\rF,\rG)$.
It is easy to check that $\rG\rF\circ{}_-$ has the natural structure of a monad given by the multiplication map
\begin{equation}\label{eq:multiplication}
\xymatrix@C=2.5pc{\rG\rF\rG\rF\circ{}_-\ar[rr]^{\ \id_{\rG}\beta\id_{\rF}\circ{}_-}&&\rG\rI_{\mj}\rF\circ{}_-\ar[rr]^{r_{\mj,\mi,\rG}\id_{\rF}\circ{}_-}&&\rG\rF\circ{}_-}
\end{equation}
and the unit map
\begin{equation}
\xymatrix@C=1.5pc{\Id_{\ccC(\mi,\mi)}\ar[rr]^{\
l'_{\mi,\mi,{}_-}}&&\rI'_{\mi}\circ{}_-\ar[rr]^{\alpha\circ{}_-}&&\rG\rF\circ{}_-}.
\end{equation}
The associativity follows from the naturality of the lax right unitor $r_{\mi}$ and Definition~\ref{def:bilax-unit}~\eqref{def:lax-unit-1}~\ref{def:lax-unit-1-b}.
The left unitality follows directly from~\eqref{diag:adjunction conditions} and Definition~\ref{def:bilax-unit}~\eqref{def:lax-unit-1}~\ref{def:lax-unit-1-b}. Let us check the right unitality. For any $1$-morphism $\rH\in\cC(\mi,\mi)$, we have the diagram:
\[\xymatrix{
\rG\rF\rH\ar@{-->}[drr]_{\id_{\rG}r'_{\mi,\mj,\rF}\id_{\rH}}\ar[rr]^{\id_{\rG\rF}l'_{\mi,\mi,\rH}}&&\rG\rF\rI'_{\mi}\rH\ar@{==}[d]
\ar@{-->}[rr]^{\id_{\rG\rF}\alpha\id_{\rH}}&&\rG\rF\rG\rF\rH\ar@{-->}[rr]^{\id_{\rG}\beta\id_{\rF\rH}}&&
\rG\rI_{\mj}\rF\rH\ar[rr]^{r_{\mj,\mi,\rG}\id_{\rF\rH}}\ar@{-->}[drr]_{\id_{\rG}l_{\mi,\mj,\rF}\id_{\rH}}&&\rG\rF\rH\ar@{==}[d]\\
&&\rG\rF\rI'_{\mi}\rH&&&&&&\rG\rF\rH
}\]
where the two triangles commute due to \eqref{diag:lrcompatible} and \eqref{diag:oplaxlrcompatible}. By adjunction~\eqref{diag:adjunction conditions},
the dashed path from $\rG\rF\rH$ to $\rG\rF\rH$ equals $\id_{\rG\rF\rH}$. Therefore, by the commutativity of the above diagram, the path starting along all consecutive  arrows in the first row also equals $\id_{\rG\rF\rH}$.
This implies the desired right unitality and completes the proof.
\end{proof}

\section{Fiax categories}\label{s3}

\subsection{Fiax categories}\label{ss:fiax}

By a finitary category, we mean an additive, idempotent split,
$\Bbbk$-linear category which has finitely many isomorphism classes of indecomposable objects and finite dimensional morphism spaces (between any two indecomposable objects).
Denote by $\mathfrak A ^f_{\Bbbk}$ the 2-category of finitary categories with $1$-morphisms being $\Bbbk$-linear additive functors and $2$-morphisms being natural transformations.

\begin{defn}
A bilax-unital 2-category $\cC=(\cC,\,\rI=\{\rI_{\mi}|\,\mi\in\ob\cC\},\,\rI'=\{\rI'_{\mi}|\,\mi\in\ob\cC\})$ is called {\em finitary} if it satisfies that
\begin{enumerate}
    \item $\ob\cC$ is a finite set;
    \item for each $\mi,\mj\in \ob\cC$, the category $\cC(\mi,\mj)$ is finitary;
    \item all lax and oplax unit 1-morphisms are indecomposable.
\end{enumerate}
\end{defn}

\begin{defn}
A {\em finitary} bilax $2$-representation $\bM$ of a finitary bilax-unital 2-category $\cC$ is a bilax-unital 2-functor from $\cC$ to $\mathfrak A ^f_{\Bbbk}$ (cf. Definition~\ref{def:bilax-2-rep}).
\end{defn}

For any finitary bilax-unital $2$-category $\cC=(\cC,\,\rI=\{\rI_{\mi}|\,\mi\in\ob\cC\},\,\rI'=\{\rI'_{\mi}|\,\mi\in\ob\cC\})$ and each object $\mi\in\ob\cC$,
the $\mi$-th principal bilax $2$-representation $\bP_{\mi}=\cC(\mi,{}_-)$ is finitary.

\begin{defn}
Let $\bM$ be a finitary bilax $2$-representation of finitary bilax-unital $2$-category $\cC$. For any $X\in\bM(\mi)$, we say that the object $X$ {\em generates} $\bM$ if,
for any $\mj\in\ob\cC$ and $Y\in\bM(\mj)$, there exists a $1$-morphism $\rF\in\cC(\mi,\mj)$ such that $Y$ is isomorphic to a direct summand of $\bM(\rF)X$.

A finitary bilax $2$-representation $\bM$ of $\cC$ is called {\em transitive} if every nonzero $X\in\bM(\mi)$ generates $\bM$.
\end{defn}

\begin{defn}\label{deffiax}
A finitary bilax-unital 2-category $\cC=(\cC,\,\rI=\{\rI_{\mi}|\,\mi\in\ob\cC\},\,\rI'=\{\rI'_{\mi}|\,\mi\in\ob\cC\})$ is called {\em weakly fiax} if there exists an equivalence $({}_-)^\star:\cC\to \cC^{\operatorname{\,op\,,op}}$, where the latter $2$-category has the same object as $\cC$ with both $1$- and $2$-morphisms reversed, such that the following three
conditions are satisfied (we denote by $^\star({}_-)$ the inverse of $({}_-)^\star$):
\begin{enumerate}
    \item\label{deffiax-1} $(\rI_{\mi})^\star=\rI'_{\mi}$, $(l_{\mj,\mi,\rF})^\star=r'_{\mi,\mj,\rF^{\star}}$ and $(r_{\mi,\mk,\rG})^\star=l'_{\mk,\mi,\rG^{\star}}$, for any $1$-morphisms $\rF\in\cC(\mj,\mi)$ and $\rG\in\cC(\mi,\mk)$;
    \item\label{deffiax-2} for each $1$-morphism $\rF\in\cC(\mj,\mi)$, there exists adjunction morphisms
    \[\alpha_{\rF}:\rI'_{\mj}\to \rF^{\star}\rF\quad\text{and}\quad
    \beta_{\rF}:\rF\rF^{\star} \to \rI_{\mi},\]
    such that the pair $(\rF,\rF^{\star})$ is an adjoint pair;
    \item \label{deffiax-3} for each $2$-morphism $\gamma:\rF\to\rG$ with $\rF,\rG\in\cC(\mj,\mi)$, the $2$-morphism $\gamma^{\star}$ is given by the composite
\begin{equation}\label{composit}
\xymatrix{\rG^{\star}\ar[rr]^{l'_{\mi,\mj,\rG^{\star}}}&&\rI'_{\mj}\rG^{\star}\ar[rr]^{\alpha_{\rF}\id_{\rG^{\star}}}&&
\rF^{\star}\rF\rG^{\star}\ar[rr]^{\id_{\rF^{\star}}\gamma\id_{\rG^{\star}}}&&\rF^{\star}\rG\rG^{\star}\ar[rr]^{\id_{\rF^{\star}}\beta_{\rG}}&&
\rF^{\star}\rI_{\mi}\ar[rr]^{r_{\mi,\mj,\rF^{\star}}}&&\rF^{\star}.}
\end{equation}
\end{enumerate}
Furthermore, we say that $\cC$ is {\em fiax} if, additionally, the equivalence $({}_-)^\star$ above is a weak involution, that is, if we have
\begin{enumerate}[$($4$)$]
    \item $({}_-)^{\star\star}\cong \Id_{\ccC}$.
\end{enumerate}
\end{defn}

\begin{rem}
Compare Definition~\ref{deffiax} with that of a weakly fiat (or fiat) $2$-category in \cite{MM1,MM5}: A $2$-category $\cC$ is called {\em weakly fiat} if there exists an equivalence $({}_-)^\star:\cC\to\cC^{\operatorname{\,op,\,op}}$ such that, for each 1-morphism $\rF\in \cC$, $(\rF,\rF^\star)$ is an adjoint pair in the classical sense (which agrees with our definition of an adjoint pair if we view $\cC$ as a bilax-unital 2-category $(\cC, \rI=\rI'=\{\mathbbm{1}_{\mi}|\,\mi\in\ob\cC\})$).
A requirement \eqref{deffiax-3} in Definition~\ref{deffiax} is not present in this definition of a weakly fiat $2$-category, but one can always replace $({}_-)^\star$, as long as it exists, with one that satisfies \eqref{deffiax-3}.
One can for example choose adjunction morphisms $\alpha_\rF,\beta_\rF$ for each indecomposable 1-morphism $\rF$ in $\cC$, which yields a choice of adjunction morphisms for all 1-morphisms in $\cC$ at once, and then take \eqref{composit} as the definition of $\gamma^\star$ using the chosen adjunction morphisms.
Therefore, weakly fiax categories are bilax-unital analogues of weakly fiat $2$-categories from \cite{MM5}.

Also note that both the definitions for fiax and fiat $2$-categories only require $({}_-)^{\star\star}$ to be isomorphic to the identity functor without specifying an isomorphism $({}_-)^{\star\star}\cong \Id_{\ccC}$.
This is different from the standard definition of a pivotal (monoidal) category (see \cite[Section~4.7]{EGNO}) where such a natural isomorphism, called a {\em pivot}, is an additional structure on the monoidal category.
\end{rem}

For the rest of the paper, we only discuss fiax categories and not weakly fiax categories, while we do not use that $({}_-)^\star$ is an involution in our arguments (i.e., weakly fiax is enough).
This is comparable to the theory for fiat 2-categories, most of which is valid for weakly fiat 2-categories but stated for fiat 2-categories.\footnote{This also has to do with the terminology. There has been a suggestion that `weakly fiax' should be `fax' and `weakly fiat' should be `fat', which was dismissed by a majority of the authors of the current paper.}
All examples of weakly fiax or weakly fiat 2-categories appearing in the paper are fiax (resp., fiat).

\begin{lem}\label{prop:fiaxsplit}
For any fiax category $\cC=(\cC,\,\rI=\{\rI_{\mi}|\,\mi\in\ob\cC\},\,\rI'=\{\rI'_{\mi}|\,\mi\in\ob\cC\})$, each lax unit $\rI_{\mi}$ and each oplax unit $\rI'_{\mi}$ is split.
\end{lem}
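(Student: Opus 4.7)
The plan is to read the split-ness of the (op)lax unitors directly off of the adjunction identities of Definition~\ref{def:ad-pair}, applied to the adjoint pairs $(\rF,\rF^\star)$ provided by condition~(\ref{deffiax-2}) of Definition~\ref{deffiax}. Each of the two snake equations in Definition~\ref{def:ad-pair} is of the shape ``(unitor) $\circ_\rv$ (other stuff) $=$ identity'', so it exhibits the unitor appearing in it as a split epimorphism on the nose, with the ``other stuff'' giving the explicit section.

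First I would treat the lax unit $\rI_\mi$. For any $\rF\in\cC(\mj,\mi)$, condition~(\ref{deffiax-2}) supplies $\alpha_\rF:\rI'_\mj\to\rF^\star\rF$ and $\beta_\rF:\rF\rF^\star\to\rI_\mi$ making $(\rF,\rF^\star)$ an adjoint pair, and the first defining identity of Definition~\ref{def:ad-pair} reads
\[l_{\mj,\mi,\rF}\circ_\rv(\beta_\rF\id_\rF)\circ_\rv(\id_\rF\alpha_\rF)\circ_\rv r'_{\mj,\mi,\rF}=\id_\rF,\]
exhibiting $l_{\mj,\mi,\rF}$ as split epic. For the right unitor $r_{\mi,\mk,\rG}$ with $\rG\in\cC(\mi,\mk)$, I would instead apply the \emph{second} identity of Definition~\ref{def:ad-pair}. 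To arrange that this identity involves the lax unit $\rI_\mi$ rather than $\rI_\mk$, I would use essential surjectivity of $({}_-)^\star$ to pick $\rH\in\cC(\mk,\mi)$ with $\rG\cong\rH^\star$; the second identity applied to $(\rH,\rH^\star)$ reads
\[r_{\mi,\mk,\rH^\star}\circ_\rv(\id_{\rH^\star}\beta_\rH)\circ_\rv(\alpha_\rH\id_{\rH^\star})\circ_\rv l'_{\mi,\mk,\rH^\star}=\id_{\rH^\star},\]
showing that $r_{\mi,\mk,\rH^\star}$ is split epic. Naturality of $r_{\mi,\mk,{}_-}$ together with the fact that being a split epimorphism is preserved under isomorphism of source and target then upgrades this to split-ness of $r_{\mi,\mk,\rG}$ for every $\rG$.

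For the oplax unit $\rI'_\mi$, the plan is to transport the above through $({}_-)^\star$ using condition~(\ref{deffiax-1}) of Definition~\ref{deffiax}, which identifies $(l_{\mj,\mi,\rF})^\star=r'_{\mi,\mj,\rF^\star}$ and $(r_{\mi,\mk,\rG})^\star=l'_{\mk,\mi,\rG^\star}$. Since $({}_-)^\star$ reverses the direction of $2$-morphisms, it sends split epimorphisms to split monomorphisms; combined once more with essential surjectivity of $({}_-)^\star$ on $1$-morphisms, this yields that every $r'_{\mi,\mj,{}_-}$ and every $l'_{\mk,\mi,{}_-}$ is split monic. The argument is a direct unpacking of the axioms; no serious obstacle is expected, and the main care required is in keeping track of source and target indices when invoking Definition~\ref{def:ad-pair} and the equivalence $({}_-)^\star$, and in recording that split epi/mono-ness is preserved under isomorphism (which is necessary because the sections in the fiax axioms are not required to be natural).
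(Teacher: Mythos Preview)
Your proof is correct and follows essentially the same idea as the paper's one-line argument (``the statement follows from the fact that $(\rF,\rF^{\star})$ is an adjoint pair and Definition~\ref{def:ad-pair}''); you simply spell out the details the paper suppresses, including the use of essential surjectivity of $({}_-)^\star$ to reach every right unitor $r_{\mi,\mk,\rG}$.

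One small economy: you do not actually need to transport through $({}_-)^\star$ for the oplax part. Each snake identity in Definition~\ref{def:ad-pair} is simultaneously of the form ``(lax unitor)$\circ_\rv\,\cdots=\id$'' and ``$\cdots\circ_\rv$(oplax unitor)$=\id$'', so the very same equations you used already exhibit $r'_{\mj,\mi,\rF}$ (from the first identity) and $l'_{\mi,\mk,\rH^\star}$ (from the second) as split monic. Together with the same essential-surjectivity step this covers all oplax unitors directly, without invoking condition~(\ref{deffiax-1}).
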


\begin{proof}
The statement follows from the fact that $(\rF,\rF^{\star})$ is an adjoint pair for any $1$-morphism $\rF\in\cC(\mj,\mi)$ and Definition~\ref{def:ad-pair}.
\end{proof}

\begin{prop}\label{prop:Fmodabel}
Let $\cC=(\cC,\,\rI=\{\rI_{\mi}|\,\mi\in\ob\cC\},\,\rI'=\{\rI'_{\mi}|\,\mi\in\ob\cC\})$ be a fiax category and $\rA=(\rA,\,\mu_{\rA},\,\eta_{\rA})$ an algebra $1$-morphism in $\cC(\mi,\mi)$ for some object $\mi$

If the category $\cC (\mj,\mi)$ $($resp. $\cC (\mi,\mj))$ is abelian, then $\rA\text{-}{\mathrm{mod}}_{\ccC}(\mj)$ $($resp. $({\mathrm{mod}}_{\ccC}\text{-}\rA)(\mj))$ is also abelian.
\end{prop}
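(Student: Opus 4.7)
The plan is to transfer abelianness from $\cC(\mj,\mi)$ to $\rA\text{-}\mathrm{mod}_{\ccC}(\mj)$ along the forgetful functor $U\colon \rA\text{-}\mathrm{mod}_{\ccC}(\mj)\to\cC(\mj,\mi)$. The key observation is that, since $\cC$ is fiax, $\rA\in\cC(\mi,\mi)$ has a right adjoint $\rA^\star$ by Definition~\ref{deffiax}\,(2), and the same clause applied to ${}^\star\rA$ exhibits ${}^\star\rA$ as a left adjoint of $({}^\star\rA)^\star\cong\rA$. Feeding these two adjoint pairs into Proposition~\ref{propadj} with the principal bilax $2$-representation $\bP_{\mj}=\cC(\mj,-)$, I conclude that the endofunctor $h(\rA,-)\colon \cC(\mj,\mi)\to \cC(\mj,\mi)$ is simultaneously a left and a right adjoint, and in particular it preserves kernels and cokernels.

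Given a module homomorphism $f\colon (\rM,\upsilon_\rM)\to(\rN,\upsilon_\rN)$, I would form its kernel $\iota\colon \rK\hookrightarrow \rM$ and cokernel $\pi\colon \rN\twoheadrightarrow \rC$ in the abelian category $\cC(\mj,\mi)$. By the previous paragraph, $\id_\rA\iota$ and $\id_\rA\pi$ are the kernel and cokernel of $\id_\rA f$. The identity $f\circ\upsilon_\rM\circ(\id_\rA\iota)=\upsilon_\rN\circ(\id_\rA(f\iota))=0$ then forces $\upsilon_\rM\circ(\id_\rA\iota)$ to factor uniquely through $\iota$ as $\iota\circ\upsilon_\rK$, defining a candidate module structure $\upsilon_\rK\colon \rA\rK\to\rK$; associativity of $\upsilon_\rK$ and the unitality $\upsilon_\rK\circ(\eta_\rA\id_\rK)=l_{\mj,\mi,\rK}$ are then verified by pre-composing with the monomorphism $\iota$, using the corresponding axioms for $\upsilon_\rM$ together with naturality of the lax unitor $l_\mi$. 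The structure $\upsilon_\rC$ on $\rC$ is built dually from $\pi$ using right-exactness of $h(\rA,-)$.

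Any module map $g\colon \rH\to\rM$ with $fg=0$ factors uniquely as $g=\iota g'$ in $\cC(\mj,\mi)$, and monicity of $\iota$ forces $g'$ to be a module map via the defining equation of $\upsilon_\rK$; this gives the universal property of $(\rK,\upsilon_\rK)$ in $\rA\text{-}\mathrm{mod}_{\ccC}(\mj)$, and dually for $(\rC,\upsilon_\rC)$. Finally, the canonical comparison morphism from the coimage of $f$ to its image is a module morphism whose underlying arrow in $\cC(\mj,\mi)$ is an isomorphism; since the inverse of a $\cC$-isomorphism between two $\rA$-modules is automatically an $\rA$-module map, this comparison is an isomorphism in $\rA\text{-}\mathrm{mod}_{\ccC}(\mj)$ as well. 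This establishes abelianness. The right-module case is entirely symmetric, with exactness of $h(-,\rA)$ on $\cC(\mi,\mj)$ following from the analogous adjunction argument.

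The step I expect to be most delicate is the unitality check in the second paragraph: because the algebra unit $\eta_\rA$ emanates from the lax unit $\rI_\mi$ rather than from a strict identity, one cannot appeal to a monoidal identity axiom, and must instead combine the unit axiom for $\upsilon_\rM$ with naturality of $l_\mi$ before cancelling $\iota$. This is precisely the point where the bilax-unital setting demands more care than the classical one.
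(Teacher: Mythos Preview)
Your proposal is correct and follows essentially the same route as the paper: both arguments use that, in a fiax category, horizontal composition with $\rA$ is exact (you spell this out via the two-sided adjunction and Proposition~\ref{propadj}, while the paper just asserts it), and then lift kernels and cokernels from $\cC(\mj,\mi)$ to the module category through the forgetful functor. Your write-up is in fact more complete than the paper's, which only sketches the cokernel case and omits the coimage--image comparison; one small slip is that when verifying the module axioms on $\rK$ you should \emph{post}-compose with the monomorphism $\iota$ (so as to cancel it), not pre-compose.
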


\begin{proof}
Since $\cC$ is fiax, (horizontal) composition with $\rA$ is exact and therefore preserves kernels and cokernels.
Thus, given a morphism $f:\rM\to \rN$ in $\rA\text{-}{\mathrm{mod}}_{\ccC}(\mj)$, the diagram
\begin{equation*}
    \begin{tikzcd}
        \rA\rM\arrow{r}{\id_\rA f}\arrow{d}{\upsilon_{\rM}} & \rA\rN\arrow{d}{\upsilon_{\rN}}\arrow{r}{\id_\rA g} &\rA\operatorname{coker}f \arrow{r}{}\arrow[dashed]{d}{\upsilon} & 0\\
        \rM\arrow{r}{f}& \rN \arrow{r}{g}& \operatorname{coker}f\arrow{r}{} &0
    \end{tikzcd}
\end{equation*}
in $\cC(\mj,\mi)$ is commutative and exact, where $g$ is the cokernel map in $\cC(\mj,\mi)$.
It is easy to check that the induced map $\upsilon:\rA\operatorname{coker}f\to \operatorname{coker} f$ defines a $\rA$-module structure on $\operatorname{coker} f$.
Similarly, the kernels exists in $\rA\text{-}{\mathrm{mod}}_{\ccC}(\mj)$. The proof for $({\mathrm{mod}}_{\ccC}\text{-}\rA)(\mj)$ is the same.
\end{proof}

\subsection{2-categories associated to fiax categories} \label{s3.2}

Let $\cC=(\cC,\,\rI=\{\rI_{\mi}|\,\mi\in\ob\cC\},\,\rI'=\{\rI'_{\mi}|\,\mi\in\ob\cC\})$ be a fiax  $2$-category.
For any object $\mj\in\ob\cC$, denote by $\mathcal D(\mj)$ be the full additive subcategory of $\End (\cC(\mj,\mi))$ generated by $\Id_{\ccC(\mj,\mi)}$ and $h_{\mj,\mi,\mi}(\rF,{}_-)=\rF\circ{}_-$, for any $1$-morphism $\rF\in \cC(\mi,\mi)$. Note that $h_{\mj,\mi,\mi}({}_-,{}_-)$ defines a faithful functor from $\cC(\mi,\mi)$ to $\mathcal D(\mj)$ by sending any $1$-morphism $\rF$ to $h_{\mj,\mi,\mi}(\rF,{}_-)$ and with the obvious assignment on $2$-mor\-phisms. Indeed, if there is any $2$-morphisms $\alpha:\rF\to\rG$ in $\cC(\mi,\mi)$ such that $h_{\mj,\mi,\mi}(\alpha,{}_-)=0$, then
we have $h_{\mj,\mi,\mi}(\alpha,\rI_{\mi})=\alpha\id_{\rI_{\mi}}=0$ which implies that $\alpha\circ_{\rv}r_{\mi,\mi,\rF}=r_{\mi,\mi,\rG}\circ_{\rv}(\alpha\id_{\rI_{\mi}})=0$ by the naturality of $r_{\mi,\mi,{}_-}$. Since $r_{\mi,\mi,\rF}$ is split epic, we have $\alpha=0$. Therefore for any $\mj\in\ob\cC$ we have the functor
\begin{equation}\label{embedtoD}
   \xymatrix{\cC(\mi,\mi)\ar[rr]^{h_{\mj,\mi,\mi}({}_-,{}_-)}&&\mathcal D(\mj)\, \ar@{^{(}->}[rr]&&\overline{\mathcal D(\mj)}}.
\end{equation}
Note that the functors $l_{\mi,\mi,\rI_{\mi}}\circ{}_-$ and $r_{\mi,\mi,\rI_{\mi}}\circ{}_-$ belong to $\mathcal D(\mj)$. Then we can consider
the coequalizer of the two functors $l_{\mi,\mi,\rI_{\mi}}\circ{}_-$ and $r_{\mi,\mi,\rI_{\mi}}\circ{}_-$ in the category $\overline{\mathcal D(\mj)}$ by (the dual of) Proposition~\ref{kernel} and describe it as follows.

\begin{lem}\label{lemcoeq}
Let $\cC=(\cC,\,\rI=\{\rI_{\mi}|\,\mi\in\ob\cC\},\,\rI'=\{\rI'_{\mi}|\,\mi\in\ob\cC\})$
be a finitary bilax-unital 2-category. Assume that each $2$-morphism $l_{\mj,\mi,\rF}:\rI_{\mi}\rF\to\rF$
splits, for any $1$-morphism $\rF\in\cC(\mj,\mi)$. Then the coequalizer of
\begin{equation}\label{coeq}
\xymatrix{\rI_{\mi}\rI_{\mi}\circ{}_-\ar@<.6ex>[rrr]^{l_{\mi,\mi,\rI_{\mi}}\circ{}_-}
\ar@<-.4ex>[rrr]_{r_{\mi,\mi,\rI_{\mi}}\circ{}_-}&&&\rI_{\mi}}\circ{}_-
\end{equation}
in $\overline{\mathcal D(\mj)}$
is isomorphic to the identity functor $\mathrm{Id}_{\ccC(\mj,\mi)}$ with the left lax unitor.
\end{lem}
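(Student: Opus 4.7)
The plan is to identify $l_{\mj,\mi,-}$ as the coequalizer by showing, in the abelian category $\overline{\mathcal{D}(\mj)}$, that (a) $l_{\mj,\mi,-}$ coequalizes the two parallel maps, (b) $l_{\mj,\mi,-}$ is epic (immediate because it is pointwise split epic, hence pointwise epic, hence epic in $\mathcal{D}(\mj)$ and in its abelianization), and (c) the image of the difference $l_{\mi,\mi,\rI_{\mi}}\circ{}_- - r_{\mi,\mi,\rI_{\mi}}\circ{}_-$ coincides with the kernel of $l_{\mj,\mi,-}$. For (a), abbreviate $u = l_{\mi,\mi,\rI_{\mi}}$ and $v = r_{\mi,\mi,\rI_{\mi}}$. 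Definition~\ref{def:bilax-unit}~\eqref{def:lax-unit-1}~\ref{def:lax-unit-1-b} gives $u\,\id_{\rF} = l_{\mj,\mi,\rI_{\mi}\rF}$, Definition~\ref{def:bilax-unit}~\eqref{def:lax-unit-1}~\ref{def:lax-unit-1-a} gives $v\,\id_{\rF} = \id_{\rI_{\mi}}\,l_{\mj,\mi,\rF}$, and naturality of $l_{\mj,\mi,-}$ at the $2$-morphism $l_{\mj,\mi,\rF}:\rI_{\mi}\rF\to\rF$ yields $l_{\mj,\mi,\rF}\circ l_{\mj,\mi,\rI_{\mi}\rF} = l_{\mj,\mi,\rF}\circ(\id_{\rI_{\mi}}\,l_{\mj,\mi,\rF})$, so the two composites with $l_{\mj,\mi,\rF}$ agree.

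The crucial step is (c). For each $\rF \in \cC(\mj,\mi)$, fix a splitting $s_{\rF}:\rF\to\rI_{\mi}\rF$ of $l_{\mj,\mi,\rF}$ (which exists by hypothesis; no naturality in $\rF$ is required) and set $t_{\rF} := \id_{\rI_{\mi}}\,s_{\rF}:\rI_{\mi}\rF\to\rI_{\mi}\rI_{\mi}\rF$. The interchange law yields $(v\,\id_{\rF})\circ t_{\rF} = \id_{\rI_{\mi}}(l_{\mj,\mi,\rF}\circ s_{\rF}) = \id_{\rI_{\mi}\rF}$, while naturality of $l_{\mj,\mi,-}$ at $s_{\rF}$ yields $(u\,\id_{\rF})\circ t_{\rF} = l_{\mj,\mi,\rI_{\mi}\rF}\circ(\id_{\rI_{\mi}}\,s_{\rF}) = s_{\rF}\circ l_{\mj,\mi,\rF}$. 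Subtracting, $(u\,\id_{\rF} - v\,\id_{\rF})\circ t_{\rF} = s_{\rF}\circ l_{\mj,\mi,\rF} - \id_{\rI_{\mi}\rF}$, the negative of the idempotent projecting $\rI_{\mi}\rF$ onto $\ker l_{\mj,\mi,\rF}$. Its image is $\ker l_{\mj,\mi,\rF}$, so $\ker l_{\mj,\mi,\rF}\subseteq\operatorname{image}(u\,\id_{\rF} - v\,\id_{\rF})$, and the reverse containment holds because $l\circ(u - v) = 0$ by (a).

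To assemble the universal property: any $\phi:\rI_{\mi}\circ{}_-\to H$ in $\overline{\mathcal{D}(\mj)}$ with $\phi\circ(u\circ{}_-) = \phi\circ(v\circ{}_-)$ satisfies, pointwise, $\phi_{\rF}\circ(\id-s_{\rF}\,l_{\mj,\mi,\rF}) = 0$, and so $\phi_{\rF} = (\phi_{\rF}\circ s_{\rF})\circ l_{\mj,\mi,\rF}$. Setting $\psi_{\rF} := \phi_{\rF}\circ s_{\rF}$ gives the pointwise factorization. Naturality of $\psi$ at a $2$-morphism $\alpha:\rF\to\rG$ reduces to showing that $\Delta := (\id_{\rI_{\mi}}\,\alpha)\circ s_{\rF} - s_{\rG}\circ\alpha$ is annihilated by $\phi_{\rG}$; but $l_{\mj,\mi,\rG}\circ\Delta = 0$ by naturality of $l_{\mj,\mi,-}$ at $\alpha$ together with $l\circ s = \id$, so $\Delta$ factors through $\ker l_{\mj,\mi,\rG}$ and is therefore killed by $\phi_{\rG}$ by the factorization just obtained. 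Uniqueness of $\psi$ is automatic from (b).

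The main obstacle is step (c), namely locating the right section $t_{\rF}$. The choice $t_{\rF} = \id_{\rI_{\mi}}\,s_{\rF}$ is forced by the structural fact that $v\,\id_{\rF}$ is itself the horizontal composite $\id_{\rI_{\mi}}\,l_{\mj,\mi,\rF}$; naturality of the left lax unitor then identifies $(u\,\id_{\rF})\circ t_{\rF}$ with $s_{\rF}\circ l_{\mj,\mi,\rF}$, which is precisely the idempotent complementary to the projection onto $\ker l_{\mj,\mi,\rF}$.
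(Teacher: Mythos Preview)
Your argument is correct and uses the same core computation as the paper (namely $u\,\id_{\rF}=l_{\mj,\mi,\rI_{\mi}\rF}$ via Definition~\ref{def:bilax-unit}\eqref{def:lax-unit-1}\ref{def:lax-unit-1-b}, $v\,\id_{\rF}=\id_{\rI_{\mi}}l_{\mj,\mi,\rF}$ via \ref{def:lax-unit-1-a}, and the naturality square of $l$ at the splitting $s_{\rF}$), but the organization differs. The paper first takes the abstract coequalizer $(\mathrm{U}_{\mi},\xi_{\mi})$ in $\overline{\mathcal{D}(\mj)}$, obtains a comparison $\theta_{\mi}:\mathrm{U}_{\mi}\to\Id$ from the universal property, and then shows $\theta_{\mi}$ is invertible by exhibiting $\xi_{\mi}(\rF)\circ s_{\rF}$ as its pointwise inverse; the key equality $\xi\circ s\circ\theta\circ\xi=\xi$ is established via essentially the same diagram you use in step~(c). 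You instead verify the universal property of $(\Id,l)$ directly. The paper's route buys a cleaner treatment of naturality: since $\theta_{\mi}$ comes from the universal property and $\xi_{\mi}$ is epic, naturality of $\theta_{\mi}$ is automatic, whereas you need the separate argument with $\Delta$ (and implicitly assume that a general target $H\in\overline{\mathcal{D}(\mj)}$ can be evaluated pointwise, which the paper also tacitly uses). Your route, on the other hand, makes the role of the splitting $s_{\rF}$ and the identity $(u-v)\circ(\id_{\rI_{\mi}}s_{\rF})=s_{\rF}l_{\mj,\mi,\rF}-\id$ fully explicit and never names the abstract coequalizer.
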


\begin{proof}
Denote the coequalizer of \eqref{coeq} by $\mathrm{U}_{\mi}\in\overline{\mathcal D(\mj)}$ with the natural transformation
\[\xi_{\mi}:\rI_{\mi}\circ{}_-\to \mathrm{U}_{\mi}\]
in $\overline{\mathcal D(\mj)}$.
For any $1$-morphism $\rF\in\cC(\mj,\mi)$, we have:
\begin{equation}\label{eq:002}
\begin{tikzpicture}[anchorbase]
\draw[very thick] (4, 8)node[above] {$\rF$} to (4, 6)node[below] {$\rF$};
\draw[dashed  , very thick] (3.3, 6)[out=90, in=-180]node[below] {$\ \rI_{\mi}$} to (4, 7.5);
\draw[dashed  , very thick] (2.8, 6)[out=90, in=-180]node[below] {$\ \rI_{\mi}$} to (3.37, 6.9);
\end{tikzpicture}\stackrel{\eqref{diag:unitornatural}}{=\joinrel=\joinrel=}
\begin{tikzpicture}[anchorbase]
\draw[very thick] (4, 8)node[above] {$\rF$} to (4, 6)node[below] {$\rF$};
\draw[dashed  , very thick] (3.5, 6)[out=90, in=-180]node[below] {$\ \rI_{\mi}$} to (4, 6.9);
      \draw[dashed  , very thick] (3, 6)[out=90, in=-180]node[below] {$\ \rI_{\mi}$} to (4, 7.5);
\end{tikzpicture}
\stackrel{\eqref{diag:lrcompatible}}{=\joinrel=\joinrel=}
\begin{tikzpicture}[anchorbase]
\draw[very thick] (4, 8)node[above] {$\rF$} to (4, 6)node[below] {$\rF$};
\draw[dashed  , very thick] (3.45, 6)[out=90, in=0]node[below] {$\ \rI_{\mi}$} to (2.93, 6.65);
\draw[dashed  , very thick] (2.8, 6)[out=90, in=-180]node[below] {$\ \rI_{\mi}$} to (4, 7.4);
\end{tikzpicture}
\end{equation}
Here the left equality follows by applying the naturality of $l_{\mj,\mi,{}_-}$ to the $2$-morphism $l_{\mj,\mi,\rF}: \rI_{\mi}\rF\to\rF$.
Therefore $l_{\mj,\mi,\rF}$ coequalizes $l_{\mi,\mi,\rI_{\mi}}\mathrm{id}_{\rF}$ and $r_{\mi,\mi,\rI_{\mi}}\mathrm{id}_{\rF}$.
By the universal property of coequalizers, there exists a unique $2$-morphism $\theta_{\mi}(\rF):\mathrm{U}_{\mi}(\rF)\to\rF$ such that the triangle in the diagram
\begin{equation}\label{eq:001}
\xymatrix{\rI_{\mi}\rI_{\mi}\rF\ar@<.6ex>[rrr]^{l_{\mi,\mi,\rI_{\mi}}\mathrm{id}_{\rF}}
\ar@<-.4ex>[rrr]_{r_{\mi,\mi,\rI_{\mi}}\mathrm{id}_{\rF}}&&&\rI_{\mi}\rF\ar[rr]^{\xi_{\mi}(\rF)}\ar[drr]_{l_{\mj,\mi,\rF}}&&\mathrm{U}_{\mi}(\rF)
\ar@{-->}[d]^{\theta_{\mi}(\rF)}\\
&&&&&\rF}
\end{equation}
commutes, i.e., $\theta_{\mi}(\rF)\circ_{\rv}\xi_{\mi}(\rF)=l_{\mj,\mi,\rF}$.
The naturality of $l_{\mj,\mi,{}_-}$ and $\xi_{\mi}$, and the  fact that $\xi_{\mi}$ is epic,
implies that $\{\theta_{\mi}(\rF):\mathrm{U}_{\mi}(\rF)\to\rF|\, \rF\in\cC(\mi,\mi)\}$
defines a natural transformation $\theta_{\mi}:\mathrm{U}_{\mi}\to\mathrm{Id}_{\ccC(\mj,\mi)}$.

Note that each $l_{\mj,\mi,\rF}$ is split. Denote by $\alpha_{\rF}:\rF\to\rI_{\mi}\rF$ the $2$-morphism such that $l_{\mj,\mi,\rF}\circ_{\rv}\alpha_{\rF}=\mathrm{id}_{\rF}$. Then we have
\[\theta_{\mi}(\rF)\circ_{\rv}\xi_{\mi}(\rF)\circ_{\rv}\alpha_{\rF}=l_{\mj,\mi,\rF}\circ_{\rv}\alpha_{\rF}=\mathrm{id}_{\rF}.\]
Now we claim that
$\xi_{\mi}(\rF)\circ_{\rv}\alpha_{\rF}\circ_{\rv}\theta_{\mi}(\rF)=\mathrm{id}_{\mathrm{U}_{\mi}(\rF)}$. Using the universal property of coequalizers,
it suffices to show the equality
\begin{equation}\label{eq1}
\xi_{\mi}(\rF)\circ_{\rv}\alpha_{\rF}\circ_{\rv}\theta_{\mi}(\rF)\circ_{\rv}\xi_{\mi}(\rF)=\xi_{\mi}(\rF).
\end{equation}
Consider the following diagram
\[\xymatrix@R=0.9pc{&\rI_{\mi}\rF\ar[rr]^{\mathrm{id}_{\rI_{\mi}}\alpha_{\rF}}\ar@/^2.5pc/[rrrr]^{\mathrm{id}_{\rI_{\mi}\rF}}
\ar[ddl]_{\xi_{\mi}(\rF)}\ar[dddd]_{l_{\mj,\mi,\rF}}&&\rI_{\mi}\rI_{\mi}\rF\ar[dddd]^{l_{\mj,\mi,\rI_{\mi}\rF}}
\ar[rr]^{\mathrm{id}_{\rI_{\mi}}l_{\mj,\mi,\rF}}&&\rI_{\mi}\rF\ar[dddd]^{\xi_{\mi}(\rF)}\\
&&&&&\\
\mathrm{U}_{\mi}(\rF)\ar[ddr]_{\theta_{\mi}(\rF)}&&&&&\\
&&&&&\\
&\rF\ar[rr]^{\alpha_{\rF}}&&\rI_{\mi}\rF\ar[rr]^{\xi_{\mi}(\rF)}&&\mathrm{U}_{\mi}(\rF),}\]
where the top and the left triangles commute by definition; the left square commutes by the naturality of
$l_{\mj,\mi,{}_-}$;
and the right square commutes since $\xi_{\mi}$ gives the coequalizer of \eqref{coeq} and
the equality in Definition~\ref{def:bilax-unit}~\ref{def:lax-unit-1-b}
implies $\mathrm{id}_{\rI_{\mi}}l_{\mj,\mi,\rF}=r_{\mi,\mi,\rI_{\mi}}\id_{\rF}$.
Hence, the two pathes along the boundary of the above diagram from the north-west
vertex $\rI_{\mi}\rF$ to the south-east vertex $\mathrm{U}_{\mi}(\rF)$ coincide,
establishing \eqref{eq1}. The claim follows.
\end{proof}

Let $\cC=(\cC,\,\rI=\{\rI_{\mi}|\,\mi\in\ob\cC\},\,\rI'=\{\rI'_{\mi}|\,\mi\in\ob\cC\})$ be a fiax category. For any fixed object $\mi\in\ob\cC$, we have the embedding
\begin{equation*}
    \cC(\mi,\mi)\inj \overline{\cC(\mi,\mi)}.
\end{equation*}
By the dual of Proposition~\ref{kernel}, the category $\overline{\cC(\mi,\mi)}$ contains the coequalizer of $l_{\mi,\mi,\rI_{\mi}}$ and $r_{\mi,\mi,\rI_{\mi}}$.
Since the coequalizer is defined only up to isomorphism, we pick one representative in its isomorphism class and denote it by $\mathbbm{1}_{\mi}$.
By abuse of notation, we still denote by $\xi_{\mi}$ the epimorphism $\rI_{\mi}\to\mathbbm{1}_{\mi}$ associated to the coequalizer $\mathbbm{1}_{\mi}$.
By Lemmata~\ref{prop:fiaxsplit} and \ref{lemcoeq}, together with the diagram \eqref{embedtoD},
we have a natural map $\mathbbm{1}_{\mi}\rF\xrightarrow{\cong} \rF$, for any $1$-morphism $\rF\in\cC(\mj,\mi)$.
By arguments similar to those of Lemma~\ref{lemcoeq}, one can verify that ${}_-\circ\mathbbm{1}_{\mi}$, that is, the coequalizer of ${}_-\circ l_{\mi,\mi,\rI_{\mi}}$ and
${}_-\circ r_{\mi,\mi,\rI_{\mi}}$ as functors in $\End(\cC(\mi,\mj))$, is isomorphic to $\Id_{\ccC(\mi,\mj)}$, for each $\mj\in\ob\cC$.
Therefore we have a natural map $\rG\mathbbm{1}_{\mi}\xrightarrow{\cong} \rG$, for any $1$-morphism $\rG\in \cC(\mi,\mk)$.

\begin{rem}\label{rem:oplaxside}
By an argument, dual to that of Lemma~\ref{lemcoeq}, we can deduce that the equalizer of $l'_{\mi,\mi,\rI'_{\mi}}\circ{}_-$ and $r'_{\mi,\mi,\rI'_{\mi}}\circ{}_-$, which exists in the category $\underline{\cC(\mi,\mi)}$, is isomorphic to the functor $\Id_{\ccC(\mj,\mi)}$ in $\underline{\mathcal D(\mj)}$. One also has a similar statement when considering the corresponding functors given by right compositions. We pick one representative in the isomorphism class of the equalizer of $l'_{\mi,\mi,\rI'_{\mi}}$ and $r'_{\mi,\mi,\rI'_{\mi}}$, and denote it by
$\mathbbm{1}'_{\mi}$. Denote by $\xi'_{\mi}$ the monomorphism $\mathbbm{1}'_{\mi}\to\rI'_{\mi}$. Then we have natural maps $\mathbbm{1}'_{\mi}\rF\xrightarrow{\cong}\rF$ and $\rG\mathbbm{1}'_{\mi}\xrightarrow{\cong}\rG$, for any $1$-morphisms $\rF\in\cC(\mj,\mi)$ and $\rG\in\cC(\mi,\mk)$.
If $\cC$ is a fiax category, the weak involution $({}_-)^{\star}$ on $\cC$ extends to a biequivalence $\underline{\cC}\to \overline{\cC}^{\operatorname{\,op,\,op}}$. Alternatively, one can apply $({}_-)^{\star}$ to the proof of Lemma~\ref{lemcoeq} (which needs to be modified slightly) and obtain the dual statement.
\end{rem}

Note that, for a finitary bilax-unital $2$-category $\cC$, both $\underline{\cC}(\mi,\mj)$ and $\overline{\cC}(\mi,\mj)$ are abelian, for each pair $\mi,\mj\in\ob\cC$ of objects. But $\underline{\cC}(\mi,\mj)$ and $\overline{\cC}(\mi,\mj)$ are usually not finitary.

Now, we extend the natural maps $\mathbbm{1}_{\mi}\rF\xrightarrow{\cong} \rF$
and $\rG\mathbbm{1}_{\mi}\xrightarrow{\cong} \rG$
to any $1$-morphism $\rF\in\overline{\cC(\mj,\mi)}$, $\rG\in\overline{\cC(\mi,\mk)}$.

\begin{prop}\label{prop:1 unit in abeln} Given the same assumption as in Lemma~\ref{lemcoeq},
each 1-morphism $\mathbbm{1}_{\mi}$ is a weak unit (i.e., a lax unit where both unitors are isomorphisms) on $\overline{\cC}$.
\end{prop}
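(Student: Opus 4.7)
The plan is to bootstrap the natural isomorphisms $\mathbbm{1}_{\mi}\rF\xrightarrow{\sim}\rF$ (for $\rF\in\cC(\mj,\mi)$) and $\rG\mathbbm{1}_{\mi}\xrightarrow{\sim}\rG$ (for $\rG\in\cC(\mi,\mk)$), already established in the discussion preceding the statement, from the $\cC$-level to the full abelianization $\overline{\cC}$, and then to verify that the resulting data satisfy the coherence axioms of a lax unit. The central observation is that $\mathbbm{1}_{\mi}$ is defined via the coequalizer map $\xi_{\mi}\colon\rI_{\mi}\to\mathbbm{1}_{\mi}$, which is epic, so any identity of $2$-morphisms involving $\mathbbm{1}_{\mi}$ can be tested by pre-composing with a $\xi_{\mi}$-whisker and thereby reduced to a corresponding identity for $\rI_{\mi}$.

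First, I would construct the left unitor $\tilde{l}_{\mj,\mi,\rF}\colon\mathbbm{1}_{\mi}\rF\to\rF$ for arbitrary $\rF\in\overline{\cC(\mj,\mi)}$. Choose a projective presentation $\rN_1\to\rN_0\to\rF\to 0$ with $\rN_0,\rN_1\in\cC(\mj,\mi)$, which exists by the very definition of the projective abelianization. Since $\cC$ is fiax, every $1$-morphism in $\cC$ has both adjoints and hence horizontal composition with it is exact; as a cokernel of such exact functors, the induced endofunctor $\mathbbm{1}_{\mi}\circ({}_-)\colon\overline{\cC(\mj,\mi)}\to\overline{\cC(\mj,\mi)}$ is in particular right exact. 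Applying it to the chosen presentation and combining with the already-known isomorphisms $\mathbbm{1}_{\mi}\rN_k\xrightarrow{\sim}\rN_k$ for $k=0,1$, the universal property of the cokernel determines a unique morphism $\tilde{l}_{\mj,\mi,\rF}$ making the evident square commute; independence of the presentation and naturality in $\rF$ then follow by standard homological-algebra arguments. The right unitor $\tilde{r}_{\mi,\mk,\rG}$ is constructed symmetrically on the other side.

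Next, I would show that $\tilde{l}_{\mj,\mi,\rF}$ is an isomorphism by the five lemma applied to the diagram
\[\xymatrix@C=1.5pc{\mathbbm{1}_{\mi}\rN_1\ar[r]\ar[d]^{\wr}&\mathbbm{1}_{\mi}\rN_0\ar[r]\ar[d]^{\wr}&\mathbbm{1}_{\mi}\rF\ar[r]\ar[d]^{\tilde{l}_{\mj,\mi,\rF}}&0\\ \rN_1\ar[r]&\rN_0\ar[r]&\rF\ar[r]&0}\]
in $\overline{\cC(\mj,\mi)}$, whose two rows are right exact and whose two leftmost vertical arrows are the isomorphisms constructed in the previous step.

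Finally, I would verify the coherence axioms of Definition~\ref{def:bilax-unit} for the triple $(\mathbbm{1}_{\mi},\tilde{l}_{\mi},\tilde{r}_{\mi})$. Since $\xi_{\mi}$ is epic and, by exactness of horizontal composition, remains epic after whiskering by identity $2$-morphisms, any proposed identity of $2$-morphisms with source $\mathbbm{1}_{\mi}\rF$ or $\rG\mathbbm{1}_{\mi}$ can be tested after precomposing with $\xi_{\mi}\id_{\rF}$ or $\id_{\rG}\xi_{\mi}$, whereupon it reduces to the corresponding axiom for $\rI_{\mi}$ recorded in Definition~\ref{def:bilax-unit}. The main technical obstacle I anticipate is maintaining the right-side analogues of Lemma~\ref{lemcoeq} throughout the argument, which requires the dual splitting hypothesis for the right lax unitors; this is available in the fiax setting thanks to Lemma~\ref{prop:fiaxsplit}.
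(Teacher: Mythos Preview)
Your proposal is correct and follows essentially the same route as the paper: both arguments first observe that $\mathbbm{1}_{\mi}\circ({}_-)$ is right exact (as a cokernel of a natural transformation between functors induced from $\cC$), then extend the natural isomorphisms $\mathbbm{1}_{\mi}\rN\cong\rN$ from $\cC$-objects to arbitrary objects of $\overline{\cC(\mj,\mi)}$ via projective presentations, and finally reduce the coherence axioms to those for $\rI_{\mi}$. Your version is simply more explicit---the paper compresses your five-lemma step into the phrase ``follows from the naturality of those isomorphisms'' and your $\xi_{\mi}$-precomposition argument into ``follows from the compatibility of $l_{\mi}$ and $r_{\mi}$ for $\rI_{\mi}$''---and you are right to flag that the right-side analogue of Lemma~\ref{lemcoeq} needs the split hypothesis on the right lax unitors, which the paper tacitly uses via the ambient fiax assumption.
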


\begin{proof}
Note that a $1$-morphism $(\{f_i:\rG_i\to \rF\},n)\in \overline{\cC(\mj,\mi)}$, where $\mj\in\ob\cC$, is the
cokernel of the map $\sum_i f_i:\oplus_i \rG_i\to \rF$ in $\cC(\mj,\mi)$. Also, note the fact that
$\mathbbm{1}_{\mi}\circ{}_-$ is the cokernel of
a natural transformation between the exact functors $\rI_{\mi}\rI_{\mi}\circ{}_-$ and $\rI_{\mi}\circ{}_-$ on $\overline{\cC(\mj,\mi)}$. Hence
the action of $\mathbbm{1}_{\mi}$ is right exact on $\overline{\cC(\mj,\mi)}$.
Then the statement that $\mathbbm{1}_{\mi}\rF\xrightarrow{\cong} \rF$,
for any $\rF\in\overline{\cC(\mj,\mi)}$, follows from the naturality of those isomorphisms.
Similarly one shows that the right unitor for $\mathbbm{1}_{\mi}$ is an isomorphism.
The compatibility condition between the left and the right unitors of $\mathbbm{1}_{\mi}$
follows from the compatibility of $l_{\mi}$ and $r_{\mi}$ for $\rI_{\mi}$.
\end{proof}

The above proposition implies that each $\mathbbm{1}_{\mi}$ does not depend on the choice of split lax units, up to isomorphisms.
Indeed, if we have two split lax units $\rI_{\mi}$, $\tilde{\rI}_{\mi}$ and the corresponding coequalizer diagrams
\[
\xymatrix{\rI_{\mi}\rI_{\mi}\ar@<.6ex>[rr]^{l_{\mi,\mi,\rI_{\mi}}}
\ar@<-.4ex>[rr]_{r_{\mi,\mi,\rI_{\mi}}}&&\rI_{\mi}\ar[rr]&&\mathbbm{1}_{\mi}}\quad\text{and}\quad
\xymatrix{\tilde{\rI}_{\mi}\tilde{\rI}_{\mi}\ar@<.6ex>[rr]^{\tilde{l}_{\mi,\mi,\tilde{\rI}_{\mi}}}
\ar@<-.4ex>[rr]_{\tilde{r}_{\mi,\mi,\tilde{\rI}_{\mi}}}&&\tilde{\rI}_{\mi}\ar[rr]&&\tilde{\mathbbm{1}}_{\mi}},
\]
then, by Proposition~\ref{prop:1 unit in abeln}, we have
$\mathbbm{1}_{\mi}\cong\mathbbm{1}_{\mi}\tilde{\mathbbm{1}}_{\mi}\cong\tilde{\mathbbm{1}}_{\mi}$.

\begin{rem}\label{rem0}
Recall that $\overline{\cC}$ is a bilax-unital $2$-category with the induced lax/oplax units, which we still denoted by $\rI=\{\rI_{\mi}|\,\mi\in\ob\cC\}$ and
$\rI'=\{\rI'_{\mi}|\,\mi\in\ob\cC\}$ by abuse of notation. While, if $\cC$ is a finitary bilax-unital $2$-category with split lax units, by Proposition~\ref{prop:1 unit in abeln}, $\overline{\cC}$ is also a $2$-category with weak units being those $\mathbbm{1}_{\mi}$'s. To distinguish those two structures, we would write $(\overline{\cC}, \{\mathbbm{1}_{\mi}|\,\mi\in\ob\cC\})$ for being a $2$-category with weak units instead of $\overline{\cC}$, where the latter we refer to a bilax-unital $2$-category.
\end{rem}

Define $\widehat{\cC(\mi,\mi)}$ to be the additive closure of $\cC(\mi,\mi)$ and $\mathbbm{1}_{\mi}$ in the category $\overline{\cC(\mi,\mi)}$.
Then the category $\widehat{\cC(\mi,\mi)}$ has a monoidal structure with $\mathbbm{1}_{\mi}$ as the unit object with respect to the horizontal composition.
Note that the horizontal composition is strictly associative but $\mathbbm{1}_{\mi}$ is not a strict unit object.
By the paragraph before Remark~\ref{rem0}, the category $\widehat{\cC(\mi,\mi)}$, up to equivalence, does not depend (as a subcategory of $\overline{\cC(\mi,\mi)}$) on the choice of split lax units.
For later use, we record the coherence property of monoidal categories (see for example \cite{Kel64}) as the
following diagram equality:
\begin{equation}\label{r1=l1}
 \begin{tikzpicture}[anchorbase]
      \draw[dotted, very thick] (4, 8)node[above] {$\ \mathbbm {1}_{\mi}$} to (4, 6.8) node[below] {$\ \mathbbm {1}_{\mi}$};
      \draw[dotted, very thick] (3.5, 6.8)[out=90, in=-180]node[below] {$\ \mathbbm {1}_{\mi}$} to (4, 7.5) ;
    \end{tikzpicture}
    \quad=\quad
    \begin{tikzpicture}[anchorbase]
      \draw[dotted, very thick] (4, 8)node[above] {$\ \mathbbm {1}_{\mi}$} to (4, 6.8)node[below] {$\ \mathbbm {1}_{\mi}$};
      \draw[dotted,very thick] (4.5, 6.8)[out=90, in=0]node[below] {$\ \mathbbm {1}_{\mi}$} to (4, 7.5);
    \end{tikzpicture}
\end{equation}
This means that the evaluations of the left and the right unitors on $\mathbbm{1}_{\mi}$ coincide.
We note that the unit object in a monoidal category is usually presented by the empty diagram. But since $\mathbbm {1}_{\mi}$ is not a strict unit, we draw it as a dotted line.

Let $\cC=(\cC,\,\rI=\{\rI_{\mi}|\,\mi\in\ob\cC\},\,\rI'=\{\rI'_{\mi}|\,\mi\in\ob\cC\})$ be a fiax category. Define $\widehat{\cC}$ to be the (2-full) subcategory of $\overline{\cC}$ such that
\begin{itemize}
    \item $\ob\widehat{\cC}=\ob\cC$;
    \item $\widehat{\cC}(\mi,\mi)=\widehat{\cC(\mi,\mi)}$, for any object $\mi$, and $\widehat{\cC}(\mi,\mj)=\cC(\mi,\mj)$, for any objects $\mi\neq\mj$.
\end{itemize}

By definition and Proposition~\ref{prop:1 unit in abeln}, we have the following statement:

\begin{cor}\label{cor:01}
The category $\widehat\cC$ is a $2$-category with weak units, where $\mathbbm{1}_{\mi}$ is
a weak unit at $\mi$.
\end{cor}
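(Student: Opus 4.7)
The bulk of the content is already carried by Proposition~\ref{prop:1 unit in abeln}, which shows that each $\mathbbm{1}_{\mi}$ is a weak unit in the ambient abelianization $\overline{\cC}$. The task for this corollary is therefore to check that the structure restricts to the $2$-full subcategory $\widehat{\cC}\subseteq\overline{\cC}$ and genuinely provides a $2$-category with weak units (in the sense of strict associativity of horizontal composition together with unitors that are natural isomorphisms satisfying the usual triangle).

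First, I would verify closure of horizontal composition on $\widehat{\cC}$. Given $\rF\in\widehat{\cC}(\mi,\mj)$ and $\rG\in\widehat{\cC}(\mj,\mk)$, there are three cases. If neither $\rF$ nor $\rG$ is a summand of a power of $\mathbbm{1}$, the composite $\rG\rF$ is computed in $\cC$ and lies in $\widehat{\cC}$ by construction. If $\mi=\mj=\mk$, then $\rG\rF$ lies in the additive closure $\widehat{\cC(\mi,\mi)}=\widehat{\cC}(\mi,\mi)$, again by construction. In the mixed case, say $\rG=\mathbbm{1}_{\mj}$ with $\mi\neq\mj$, Proposition~\ref{prop:1 unit in abeln} supplies a canonical isomorphism $\mathbbm{1}_{\mj}\rF\iso\rF$, and the finitariness of $\cC(\mi,\mj)$ (in particular, idempotent splitness) lets us replace $\mathbbm{1}_{\mj}\rF$ by its unique-up-to-canonical-iso representative in $\cC(\mi,\mj)=\widehat{\cC}(\mi,\mj)$; the symmetric case $\rF=\mathbbm{1}_{\mi}$ is identical.

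Second, once closure is established, the weak unit axioms are essentially free. The left and right unitors are those provided by Proposition~\ref{prop:1 unit in abeln}; they are $2$-morphisms of $\widehat{\cC}$ since both source and target are $1$-morphisms of $\widehat{\cC}$. The coherence equation~\eqref{r1=l1} already verified in $\overline{\cC}$ is the triangle identity for $\mathbbm{1}_{\mi}$ as a weak unit. Strict associativity of horizontal composition, the bifunctoriality of composition, and the interchange law transfer verbatim from $\overline{\cC}$, completing the verification. The only real subtlety---not a genuine obstacle---is the identification in the first step, where one passes from the literal object $\mathbbm{1}_{\mj}\rF\in\overline{\cC(\mi,\mj)}$ to a representative in $\cC(\mi,\mj)$; this is legitimate precisely because $\cC(\mi,\mj)$ is idempotent split and the isomorphism class of $\mathbbm{1}_{\mj}\rF$ contains such a representative canonically.
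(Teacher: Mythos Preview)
Your proposal is correct and follows the same approach as the paper, which simply states that the corollary holds ``by definition and Proposition~\ref{prop:1 unit in abeln}.'' You spell out more carefully the closure of horizontal composition on $\widehat{\cC}$ (which the paper leaves implicit), and your resolution via the canonical isomorphism $\mathbbm{1}_{\mj}\rF\cong\rF$ together with idempotent splitness is the right way to make this precise.
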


\begin{prop}\label{prop:02}
If $\cC$ is a fiax category then the 2-category $\widehat{\cC}$ is fiat (with weak units).
\end{prop}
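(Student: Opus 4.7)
The plan is to verify that $\widehat{\cC}$ inherits from $\cC$ the three defining properties of a fiat 2-category with weak units: finitariness, the existence of a weak involution, and the existence of two-sided adjoints for all $1$-morphisms.

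\emph{Finitariness.} The $2$-category $\widehat{\cC}$ has the same finite object set as $\cC$. For $\mi\neq\mj$ the hom category $\widehat{\cC}(\mi,\mj)=\cC(\mi,\mj)$ is finitary by hypothesis, and $\widehat{\cC}(\mi,\mi)$ is the additive closure of the finitary $\cC(\mi,\mi)$ together with one new indecomposable weak unit $\mathbbm{1}_{\mi}$, hence still finitary. Combined with Corollary~\ref{cor:01}, this makes $\widehat{\cC}$ a finitary $2$-category with weak units $\{\mathbbm{1}_{\mi}\,|\,\mi\in\ob\cC\}$.

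\emph{Weak involution.} I would extend $({}_-)^\star:\cC\to\cC^{\operatorname{\,op,\,op}}$ to $\widehat{\cC}$ by declaring $(\mathbbm{1}_{\mi})^\star:=\mathbbm{1}_{\mi}$. The extension on $2$-morphisms is dictated by the coequalizer presentation $\xi_{\mi}:\rI_{\mi}\to\mathbbm{1}_{\mi}$ together with Definition~\ref{deffiax}~\ref{deffiax-1}, which guarantees that $\star$ carries the coequalizer diagram defining $\mathbbm{1}_{\mi}$ in $\overline{\cC(\mi,\mi)}$ to the equalizer diagram defining $\mathbbm{1}'_{\mi}$ in $\underline{\cC(\mi,\mi)}$ (see Remark~\ref{rem:oplaxside}); since both play the role of a weak unit in their respective abelianizations and weak units in monoidal categories are unique up to unique isomorphism, they get canonically identified inside $\widehat{\cC}(\mi,\mi)$. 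The property $({}_-)^{\star\star}\cong\Id_{\widehat{\cC}}$ then follows from the corresponding property on $\cC$ together with $(\mathbbm{1}_{\mi})^{\star\star}=\mathbbm{1}_{\mi}$.

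\emph{Adjunctions.} For $\rF\in\cC(\mi,\mj)\subset\widehat{\cC}(\mi,\mj)$ with fiax adjunction data $\alpha_\rF:\rI'_{\mi}\to\rF^\star\rF$ and $\beta_\rF:\rF\rF^\star\to\rI_{\mj}$, I would define the new counit as $\tilde\beta_\rF:=\xi_{\mj}\circ_{\rv}\beta_\rF:\rF\rF^\star\to\mathbbm{1}_{\mj}$. For the new unit $\tilde\alpha_\rF:\mathbbm{1}_{\mi}\to\rF^\star\rF$, I would apply Proposition~\ref{propadj} to the abelianized principal $2$-representation $\overline{\bP_{\mi}}$: therein $\bP_{\mi}(\rF)$ is left adjoint to $\bP_{\mi}(\rF^\star)$, and the right weak unitor isomorphism $\rF\mathbbm{1}_{\mi}\cong\rF$ (furnished by Proposition~\ref{prop:1 unit in abeln}) corresponds under the Hom-adjunction to the desired morphism $\tilde\alpha_\rF$. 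The $1$-morphism $\mathbbm{1}_{\mi}$ itself is self-adjoint, with adjunction given by the weak unitor isomorphisms; adjoints for general $1$-morphisms in $\widehat{\cC}$ then follow from additivity and Proposition~\ref{adjunction and horizontal}.

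The main obstacle is verifying the triangle identities of Definition~\ref{def:ad-pair} for $(\tilde\alpha_\rF,\tilde\beta_\rF)$ in $\widehat{\cC}$. They must be translated from the original identities~\eqref{diag:adjunction conditions} for $(\alpha_\rF,\beta_\rF)$ in $\cC$ by inserting the coequalizer map $\xi$ at the appropriate points and replacing the evaluations of the (op)lax unitors $l,r,l',r'$ by the weak unitor isomorphisms of $\mathbbm{1}_{\mi}$, using the factorization $\theta_{\mi}\circ_{\rv}\xi_{\mi}=l$ from Lemma~\ref{lemcoeq} and its dual for $\mathbbm{1}'_{\mi}$.
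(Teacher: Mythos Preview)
Your outline has the right three-part structure, but two of the steps contain genuine gaps.

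\textbf{Finitariness.} You assert without argument that $\mathbbm{1}_{\mi}$ is indecomposable. A priori it is only a cokernel in $\overline{\cC(\mi,\mi)}$ and could decompose. The paper supplies the missing argument: since $\rI_{\mi}\cong\rI_{\mi}\mathbbm{1}_{\mi}$ by Proposition~\ref{prop:1 unit in abeln}, any nontrivial decomposition $\mathbbm{1}_{\mi}=e_1\oplus\cdots\oplus e_r$ would force a nontrivial decomposition of $\rI_{\mi}$, contradicting the finitary axiom that lax units are indecomposable.

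\textbf{Weak involution.} Your proposed extension does not work as written. You want to identify $\mathbbm{1}_{\mi}\in\overline{\cC(\mi,\mi)}$ with $\mathbbm{1}'_{\mi}\in\underline{\cC(\mi,\mi)}$ ``inside $\widehat{\cC}(\mi,\mi)$'', but $\widehat{\cC}(\mi,\mi)$ is by definition a subcategory of $\overline{\cC(\mi,\mi)}$, and $\mathbbm{1}'_{\mi}$ is not an object there; the uniqueness-of-weak-units argument you invoke only applies within a single monoidal category, not across two different abelianizations. The paper sidesteps this entirely: it simply declares $\mathbbm{1}_{\mi}^\star:=\mathbbm{1}_{\mi}$ and then \emph{defines} $\star$ on the new hom-spaces $\Hom_{\widehat{\ccC}}(\rF,\mathbbm{1}_{\mi})$ and $\Hom_{\widehat{\ccC}}(\mathbbm{1}_{\mi},\rG)$ via the adjunction isomorphisms
\[
\Hom(\rF,\mathbbm{1}_{\mi})\cong\Hom(\rF\mathbbm{1}_{\mi},\mathbbm{1}_{\mi})\cong\Hom(\mathbbm{1}_{\mi},\rF^\star\mathbbm{1}_{\mi})\cong\Hom(\mathbbm{1}_{\mi},\rF^\star)
\]
coming from Proposition~\ref{propadj}. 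Once $\star$ is so extended, the map $(\xi_{\mj})^\star:\mathbbm{1}_{\mj}\to\rI'_{\mj}$ exists in $\widehat{\cC}$, and the unit is simply the composite $\widehat{\alpha}_\rF=\alpha_\rF\circ_{\rv}(\xi_{\mj})^\star$. This is more direct than your route through the principal $2$-representation, and the triangle identities for $(\widehat{\alpha}_\rF,\widehat{\beta}_\rF)$ then reduce to those for $(\alpha_\rF,\beta_\rF)$ via the factorization of the (op)lax unitors through $\xi$ and $\xi^\star$ from Lemma~\ref{lemcoeq}.
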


\begin{proof}
We first show that $\widehat{\cC}$ is finitary.
The only nontrivial condition to be checked is the indecomposability of each $\mathbbm{1}_{\mi}$.
Suppose that we have the decomposition $\mathbbm{1}_{\mi}=e_1\oplus\ldots \oplus e_r$ in $\widehat{\cC}$,
where $r>1$ is a positive integer and $e_1,\ldots,e_r$ are indecomposable $1$-morphisms in $\widehat{\cC}$.
Then we obtain $\rI_{\mi}\cong \rI_{\mi}\mathbbm{1}_{\mi}=\rI_{\mi}e_1\oplus\ldots\oplus\rI_{\mi}e_r$. Taking Lemma~\ref{prop:fiaxsplit} into account, we get a contradiction with the assumption on $\cC$ to be finitary, more precicely, with
the indecomposibility of $\rI_{\mi}$.

To show that $\widehat{\cC}$ is fiat, we now extend the weak involution $({}_-)^\star:\cC\to \cC^{\operatorname{\,op,\,op}}$ to a weak involution
from $\widehat{\cC}$ to $\widehat{\cC}^{\operatorname{\,op,\,op}}$ which, by abuse of notation, we still denote by $({}_-)^{\star}$.
We only need to define $\mathbbm{1}_{\mi}^\star=\mathbbm{1}_{\mi}$ and then prescribe
what the functor $({}_-)^{\star}$
does to the hom-spaces $\Hom_{\widehat{\ccC}}(\rF,\mathbbm{1}_{\mi})$ and $\Hom_{\widehat{\ccC}}(\mathbbm{1}_{\mi},\rG)$,
for any $1$-morphism $\rF,\rG\in\cC(\mi,\mi)$, and to the hom-space $\Hom_{\widehat{\ccC}}(\mathbbm{1}_{\mi},\mathbbm{1}_{\mi})$.
Restricting the $\mi$-th principal bilax $2$-representation $\widehat{\bP}_{\mi}:=\widehat{\cC}(\mi, {}_-)$ of $\widehat{\cC}$ to its fiax subcategory $\cC$,
we obtain a bilax $2$-representation of $\cC$. Therefore, for any $\rF\in \cC(\mi,\mi)$, we have
\begin{equation}\label{eq:04}
\Hom_{\widehat{\bP}_{\mi}(\mi)}(\rF,\mathbbm{1}_{\mi})\cong\Hom_{\widehat{\bP}_{\mi}(\mi)}(\rF\mathbbm{1}_{\mi},\mathbbm{1}_{\mi})\cong \Hom_{\widehat{\bP}_{\mi}(\mi)}(\mathbbm{1}_{\mi},\rF^{\star}\mathbbm{1}_{\mi})\cong \Hom_{\widehat{\bP}_{\mi}(\mi)}(\mathbbm{1}_{\mi},\rF^{\star}),
\end{equation}
where the middle isomorphism follows from Proposition~\ref{propadj} and the other two isomorphisms are defined by precomposing with the natural isomorphism $\rF\mathbbm{1}_{\mi}\xrightarrow{\cong}\rF$ and postcomposing with the natural isomorphism $\rF^{\star}\mathbbm{1}_{\mi}\xrightarrow{\cong}\rF^{\star}$, respectively.
Using the isomorphisms in \eqref{eq:04}, we can define the action of $({}_-)^{\star}$ on $\Hom_{\widehat{\ccC}}(\rF,\mathbbm{1}_{\mi})$ and $\Hom_{\widehat{\ccC}}(\mathbbm{1}_{\mi},\rG)$, respectively.

Thanks to the proof of Lemma~\ref{lemcoeq}, Definition~\ref{deffiax}~\eqref{deffiax-1} and $\mathbbm{1}_{\mi}^{\star}=\mathbbm{1}_{\mi}$, for any $1$-morphism $\rF\in\cC(\mj,\mi)$,
the pair $(\rF,\rF^{\star})$ is an adjoint pair in $\widehat{\cC}$ with the adjunction morphisms given by
\[\widehat{\alpha}_{\rF}:\xymatrix{\mathbbm{1}_{\mj}\ar[r]^{(\xi_{\mj})^{\star}}&\rI'_{\mj}\ar[r]^{\alpha_{\rF}\ }&\rF^{\star}\rF}\quad\text{and}\quad
    \widehat{\beta}_{\rF}:\xymatrix{\rF\rF^{\star}\ar[r]^{\ \beta_{\rF}}& \rI_{\mi}\ar[r]^{\xi_{\mi}}&\mathbbm{1}_{\mi}},\]
    where $\alpha_{\rF},\beta_{\rF}$ are the adjunction morphisms for the adjoint pair $(\rF,\rF^{\star})$ in $\cC$.

It remains to show that each $1$-morphism $\mathbbm{1}_{\mi}$ is self adjoint with the adjunction morphisms given by the left (or right) unitors evaluating at $\mathbbm{1}_{\mi}$ and its inverse, i.e.,
\begin{equation}\label{eq:05}
\widehat{l}_{\mi,\mi,\mathbbm{1}_{\mi}}=\widehat{r}_{\mi,\mi,\mathbbm{1}_{\mi}}:\mathbbm{1}_{\mi}\mathbbm{1}_{\mi}\xrightarrow{\cong}\mathbbm{1}_{\mi}
\quad\text{and}\quad (\widehat{l}_{\mi,\mi,\mathbbm{1}_{\mi}})^{\mone}=(\widehat{r}_{\mi,\mi,\mathbbm{1}_{\mi}})^{\mone}:
\mathbbm{1}_{\mi}\xrightarrow{\cong}\mathbbm{1}_{\mi}\mathbbm{1}_{\mi}.
\end{equation}
We need to check \eqref{diag:adjunction conditions}. The left  condition in \eqref{diag:adjunction conditions} is satisfied since
\begin{equation*}
\begin{tikzpicture}[anchorbase,scale=.7]
      \draw[dotted,very thick] (3, 7.35) to (3, 5.3)node[below] {$\mathbbm{1}_{\mi}$};
      \draw[dotted, very thick] (4.5, 7)[out=-90,in=0]
      to (3, 6); \draw (4.2,6.3)node[below] {$\mathbbm{1}_{\mi}$};
          \draw[dotted, very thick] (3, 7.5)[out=90, in=-180] to (3.5, 8);
      \draw[dotted, very thick] (4, 7.55)[out=90, in=0] to (3.5, 8);
      \draw[dotted, very thick] (4, 7.4)[out=-90,in=-180]  to (4.5, 7); \draw (4.2,7.5)node[above] {$\mathbbm{1}_{\mi}$};
             \draw[dotted, very thick] (5,7.55)[out=-90,in=0] to (4.5,7);
                \draw[dotted, very thick] (5,7.7) to (5,9.7) node[above] {$\mathbbm{1}_{\mi}$};
                \draw[dotted, very thick] (3.5, 8)[out=90, in=-180] to (5, 9); \draw (4.2,8.9)node[above] {$\mathbbm{1}_{\mi}$};
    \end{tikzpicture}
\stackrel{\eqref{eq:05}}{{=\joinrel=\joinrel=}}
    \begin{tikzpicture}[anchorbase,scale=.7]
      \draw[dotted,very thick] (3, 7.85) to (3, 5.3)node[below] {$\mathbbm{1}_{\mi}$};
      \draw[dotted, very thick] (5, 7)[out=-90,in=0]
      to (3, 6); \draw (4.2,6)node[below] {$\mathbbm{1}_{\mi}$};
      \draw[dotted, very thick] (4, 7.55)[out=90, in=0] to (3, 8.36);
      \draw[dotted, very thick] (4, 7.4)[out=-90,in=-180]  to (4.97, 6.66); \draw (4.35,7.5)node[above] {$\mathbbm{1}_{\mi}$};
                \draw[dotted, very thick] (5,7.15) to (5,9.7) node[above] {$\mathbbm{1}_{\mi}$};
                \draw[dotted, very thick] (3, 8)[out=90, in=-180] to (5, 9); \draw (4.2, 9)node[above] {$\mathbbm{1}_{\mi}$};
    \end{tikzpicture}
       \stackrel{\eqref{diag:lrcompatible}}{=\joinrel=\joinrel=}
    \begin{tikzpicture}[anchorbase,scale=.7]
      \draw[dotted,very thick] (3, 7.85) to (3, 5.3)node[below] {$\mathbbm{1}_{\mi}$};
      \draw[dotted, very thick] (5, 7)[out=-90,in=0]
      to (3, 6); \draw (4,6)node[below] {$\mathbbm{1}_{\mi}$};
      \draw[dotted, very thick] (4, 7.55)[out=90, in=-180] to (5, 8.36);
      \draw[dotted, very thick] (4, 7.4)[out=-90,in=-180]  to (4.97, 6.66); \draw (3.6,7.15)node[above] {$\mathbbm{1}_{\mi}$};
                \draw[dotted, very thick] (5,7.15) to (5,9.7) node[above] {$\mathbbm{1}_{\mi}$};
                \draw[dotted, very thick] (3, 8)[out=90, in=-180] to (5, 9); \draw (4, 9)node[above] {$\mathbbm{1}_{\mi}$};
    \end{tikzpicture}
    \stackrel{\eqref{eq:05}}{{=\joinrel=\joinrel=}}
    \begin{tikzpicture}[anchorbase,scale=.7]
      \draw[dotted,very thick] (3, 7.85) to (3, 5.3)node[below] {$\mathbbm{1}_{\mi}$};
      \draw[dotted, very thick] (5, 7)[out=-90,in=0]
      to (3, 6); \draw (4.2,6)node[below] {$\mathbbm{1}_{\mi}$};
       \draw[dotted, very thick] (5,7.15) to (5,9.7) node[above] {$\mathbbm{1}_{\mi}$};
                \draw[dotted, very thick] (3, 8)[out=90, in=-180] to (5, 9); \draw (4.2, 9)node[above] {$\mathbbm{1}_{\mi}$};
    \end{tikzpicture}   \stackrel{\eqref{eq:05}}{{=\joinrel=\joinrel=}}
     \begin{tikzpicture}[anchorbase,scale=.7]
      \draw[dotted,very thick] (3, 9.7)node[above] {$\mathbbm{1}_{\mi}$} to (3, 5.3)node[below] {$\mathbbm{1}_{\mi}$};
      \draw[dotted, very thick] (4.5, 7.45)[out=-90,in=0]
      to (3, 6); \draw (4.8,8.8)node[below] {$\mathbbm{1}_{\mi}$};
                \draw[dotted, very thick] (3, 9)[out=0, in=90] to (4.5, 7.55);
                \end{tikzpicture}\stackrel{\eqref{eq:05}}{{=\joinrel=\joinrel=}}
                \begin{tikzpicture}[anchorbase,scale=.7]
      \draw[dotted,very thick] (3, 9.7)node[above] {$\mathbbm{1}_{\mi}$} to (3, 5.3)node[below] {$\mathbbm{1}_{\mi}$};
                \end{tikzpicture}
               \quad .
\end{equation*}
The right condition in \eqref{diag:adjunction conditions} can be checked similarly.
\end{proof}

If we view $\widehat{\cC}$ as a fiax category $(\widehat{\cC},\, \widehat{\rI}=\widehat{\rI}'=\{\mathbbm{1}_{\mi}|\,\mi\in\ob\widehat{\cC}\})$, then the embedding $\cC\hookrightarrow\widehat{\cC}$ is a bilax-unital $2$-functor. But the embedding $\widehat{\cC}\hookrightarrow\overline{\cC}$
might not be a bilax-unital $2$-functor in general.
Indeed, if the latter were true, by Definition~\ref{bilax-unital 2-functor}~\eqref{bilax-unital 2-functor-2}, we would have that the isomorphism
$\widehat{l}_{\mi,\mi,\mathbbm{1}_{\mi}}:\mathbbm{1}_{\mi}\mathbbm{1}_{\mi}\xrightarrow{\cong}\mathbbm{1}_{\mi}$ factors through the $1$-morphism $\rI_{\mi}\mathbbm{1}_{\mi}$.
The latter $1$-morphism is isomorphic to $\rI_{\mi}$ which would imply that $\mathbbm{1}_{\mi}$ is a direct summand of $\rI_{\mi}$. Due to the indecomposability of $\rI_{\mi}$, we thus would have $\mathbbm{1}_{\mi}\cong\rI_{\mi}$, which would mean that $\cC$ is a fiat $2$-category (with weak or strict identity $1$-morphisms).
Conversely, if $\cC$ is a fiat $2$-category viewed as a fiax category $(\cC,\, \rI=\rI'=\{\mathbbm{1}_{\mi}|\,\mi\in\ob\cC\})$,
then the associated fiat $2$-category $\widehat{\cC}$ satisfies $\widehat{\cC}=\cC$.

\begin{rem}\label{rem:oplaxside1}
As in Remark~\ref{rem:oplaxside}, one can do the opposite constructions by using split oplax unitors in the set-up of the injective abelianization $\underline{\cC}$ and obtain the dual statements of Proposition~\ref{prop:1 unit in abeln}, Corollary~\ref{cor:01} and Proposition~\ref{prop:02}.
This gives rise to a fiat $2$-category $\widecheck{\cC}$ with weak units $\mathbbm{1}'_{\mi}$  which is a (2-full) subcategory of the 2-category $\underline{\cC}$ with weak units $\mathbbm{1}'_{\mi}$.
We draw the canonical map $\xi'_{\mi}: \mathbbm{1}'_\mi\to \rI'_\mi$ from our construction as
follows:
\begin{equation*}
\begin{tikzpicture}[anchorbase]
      \draw[very thick, red] (4, 8.4)node[above] {$\ \rI'_{\mi}$} to (4, 7.5);
      \draw[dotted ,very thick] (4, 6.6)node[below]{$\ \mathbbm{1}'_{\mi}$} to (4, 7.5);
    \end{tikzpicture}
\end{equation*}
Note that the $1$-morphisms $\rI_\mi,\rI'_\mi$, when viewed in the fiat 2-category $\widecheck{\cC}$ with weak units $\mathbbm{1}'_{\mi}$,
lose their status of lax/oplax units and thus are presented by solid lines rather than dashed lines. As a shorthand for a
more complicated diagram,  see \eqref{eq:04}, we let:
\begin{equation}\label{diag:shorthand dual d}
\begin{tikzpicture}[anchorbase]
      \draw[dotted, very thick] (4, 8.4)node[above] {$\ \mathbbm{1}'_{\mi}$} to (4, 7.5);
      \draw[blue ,very thick] (4, 6.6)node[below]{$\ \rI_{\mi}$} to (4, 7.5);
    \end{tikzpicture}
:=\
\left(
\begin{tikzpicture}[anchorbase]
      \draw[very thick, red] (4, 8.4)node[above] {$\ \rI'_{\mi}$} to (4, 7.5);
      \draw[dotted ,very thick] (4, 6.6)node[below]{$\ \mathbbm{1}'_{\mi}$} to (4, 7.5);
    \end{tikzpicture}
\right)^\star
\end{equation}

We denote the natural isomorphisms for $\mathbbm{1}'_\mi$ acting on $\underline{\cC}$ as follows:
\[\widecheck{l}'_{\mj,\mi,\rF}:\rF\cong\mathbbm{1}'_{\mi}\rF\quad\text{and}\quad\widecheck{r}'_{\mi,\mj,\rG}:\rG\cong\rG\mathbbm{1}'_{\mi},\]
for any $1$-morphisms $\rF\in\underline{\cC}(\mj,\mi),\ \rG\in\underline{\cC}(\mi,\mj)$ and any $\mj\in\ob\cC$.
The unitors for $\mathbbm{1}'_\mi$ are the restrictions of $\widecheck{l}'_{\mi}=(\widecheck{l}'_{\mj,\mi})_{\mj\in\ob\ccC}$ and
$\widecheck{r}'_{\mi}=(\widecheck{r}'_{\mi,\mj})_{\mj\in\ob\ccC}$ to $\widecheck{\cC}$.
For later use, we list the following commutative diagrams:
\begin{equation}\label{eq:001'}
\xymatrix{\mathbbm{1}'_{\mi}\rF\ar[rr]^{\xi'_{\mi}\id_{\rF}}&&\rI_{\mi}'\rF\\
\rF\ar[urr]_{l'_{\mj,\mi,\rF}}\ar[u]^{\cong}&&}\quad\text{and}\quad
\xymatrix{\rG\mathbbm{1}'_{\mi}\ar[rr]^{\id_{\rG}\xi'_{\mi}}&&\rG\rI_{\mi}',\\
\rG\ar[urr]_{r'_{\mi,\mj,\rG}}\ar[u]^{\cong}&&}
\end{equation}
for any $1$-morphism $\rF\in\underline{\cC}(\mj,\mi),\ \rG\in\underline{\cC}(\mi,\mj)$ and any $\mj\in\ob\cC$.
Diagrammatically, we have:
\begin{equation}\label{diag:unitor from d}
\begin{tikzpicture}[anchorbase]
 \draw[very thick] (4, 8.1) to (4, 6)node[below] {$\rF$} (4, 8.19)node[above] {$\rF$};
\draw[dotted, very thick] (3.4, 7.32)[out=-90,in=-180] to (4, 6.7) (3.4,6.9) node[below]{$\mathbbm{1}'_{\mi}$};
\draw[red,very thick] (3.4, 7.4) to (3.4, 8.1)node[above]{$\ \rI'_{\mi}$};
\end{tikzpicture}
=
\begin{tikzpicture}[anchorbase]
 \draw[very thick] (4, 8.1) to (4, 6)node[below] {$\rF$} (4, 8.19)node[above] {$\rF$};
\draw[red, very thick] (3.4, 7.4)[out=-90,in=-180] to (4, 6.7);
\draw[red,very thick] (3.4, 7.4) to (3.4, 8.1)node[above]{$\ \rI'_{\mi}$};
\end{tikzpicture}
  \quad\text{and}\quad
\begin{tikzpicture}[anchorbase]
 \draw[very thick] (4, 8.1) to (4, 6)node[below] {$\rG$} (4, 8.19)node[above] {$\rG$};
\draw[dotted, very thick] (4.6, 7.32)[out=-90, in=0] to (4, 6.7)(4.6,6.9) node[below]{$\mathbbm{1}'_{\mi}$};
\draw[red,very thick] (4.6, 7.4) to (4.6, 8.1)node[above]{$\ \rI'_{\mi}$};
\end{tikzpicture}
=
\begin{tikzpicture}[anchorbase]
 \draw[very thick] (4, 8.1) to (4, 6)node[below] {$\rG$} (4, 8.19)node[above] {$\rG$};
\draw[red, very thick] (4.6, 7.4)[out=-90, in=0] to (4, 6.7);
\draw[red,very thick] (4.6, 7.4) to (4.6, 8.1)node[above]{$\ \rI'_{\mi}$};
\end{tikzpicture}
\end{equation}

Note that, if $\rF\in\cC(\mj,\mi),\ \rG\in\cC(\mi,\mj)$, we also have the following commutative diagrams involving $\rI_\mi$:
\begin{equation}\label{eq:001'dual}
\xymatrix{
\rI_{\mi}\rF \ar[d]^{l_{\mj,\mi,\rF}}\ar[rr]^{(\xi_{\mi}')^\star\id_{\rF}}&&\mathbbm{1}'_{\mi}\rF\\
\rF\ar[urr]_{\cong}&&}\qquad
\text{and}\qquad
\xymatrix{\rG\rI_{\mi} \ar[d]^{r_{\mj,\mi,\rG}}\ar[rr]^{\id_{\rG}(\xi_{\mi}')^\star}&&\rG\mathbbm{1}'_{\mi}\\
\rG\ar[urr]_{\cong}&&}.
\end{equation}

For example, the second diagram in \eqref{eq:001'dual} follows from \eqref{eq:001'} as follows:
\begin{equation}\label{eq:000-1}
\begin{tikzpicture}[anchorbase]
 \draw[very thick] (4, 8.1)node[above] {$\rF$} to (4, 6)node[below] {$\rF$};
 \draw[blue, very thick] (4.6, 6.6)[out=90, in=0]to (4, 7.3);
 \draw[blue, very thick] (4.6, 6.6) to (4.6, 6)node[below] {$\ \rI_{\mi}$};
\end{tikzpicture}
=\
\left(
\begin{tikzpicture}[anchorbase]
 \draw[very thick] (4, 8.1) to (4, 6)node[below] {${}^\star\rF$} (4, 8.19)node[above] {${}^\star\rF$};
\draw[red, very thick] (3.4, 7.4)[out=-90,in=-180] to (4, 6.7);
\draw[red,very thick] (3.4, 7.4) to (3.4, 8.1)node[above]{$\ \rI'_{\mi}$};
\end{tikzpicture}
\right)^\star
\stackrel{\eqref{diag:unitor from d}}{{=\joinrel=\joinrel=}}
\left(
\begin{tikzpicture}[anchorbase]
 \draw[very thick] (4, 8.1) to (4, 6)node[below] {${}^\star\rF$} (4, 8.19)node[above] {${}^\star\rF$};
\draw[dotted, very thick] (3.4, 7.32)[out=-90,in=-180] to (4, 6.7) (3.4,6.9) node[below]{$\mathbbm{1}'_{\mi}$};
\draw[red,very thick] (3.4, 7.4) to (3.4, 8.1)node[above]{$\ \rI'_{\mi}$};
\end{tikzpicture}
\right)^\star
=
\begin{tikzpicture}[anchorbase]
 \draw[very thick] (4, 8.1)node[above] {$\rF$} to (4, 6)node[below] {$\rF$};
\draw[dotted, very thick] (4.6, 6.67)[out=90, in=0]to (4, 7.3) (4.6, 7.1)node[above] {$\mathbbm{1}'_{\mi}$};
 \draw[blue, very thick] (4.6, 6.6) to (4.6, 6)node[below] {$\ \rI_{\mi}$};
\end{tikzpicture}
\end{equation}
Here $^\star \rF$ is the left adjoint of $\rF$ (i.e., the image under the inverse of $({}_-)^\star$). The third equality is due to the fact that $({}_-)^\star:\cC\to \cC^{\operatorname{op,op}}$ is compatible with horizontal and vertical compositions, (the dual of) Proposition~\ref{prop:02}, and \eqref{diag:shorthand dual d}.
\end{rem}

\begin{prop}\label{propblue}
Let $(\cC,\rI,\rI')$ be a fiax category. Then both \eqref{diag:laxoplaxcompatible}
and its mirror image with respect to a vertical mirror hold.
\end{prop}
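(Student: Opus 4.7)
Both sides of \eqref{laxoplaxcompatible} are $2$-morphisms $\rI_\mi\to\rI'_\mi$ in $\cC(\mi,\mi)$. My strategy is to pass to the associated fiat $2$-category $\widehat{\cC}$ from Proposition~\ref{prop:02} (with weak units $\mathbbm{1}_\mi$) and exhibit each side as the composite $(\xi_\mi)^\star\circ\xi_\mi$, where $\xi_\mi\colon\rI_\mi\to\mathbbm{1}_\mi$ is the coequalizer map from Lemma~\ref{lemcoeq} and $(\xi_\mi)^\star\colon\mathbbm{1}_\mi\to\rI'_\mi$ is its image under the extended weak involution on $\widehat{\cC}$. Since hom-spaces between $1$-morphisms of $\cC$ are unchanged by the inclusion $\cC\hookrightarrow\widehat{\cC}$, this equality will imply the desired identity in $\cC$.

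First, I unpack $(\xi_\mi)^\star$ using the isomorphism \eqref{eq:04} from the proof of Proposition~\ref{prop:02}: the middle adjunction isomorphism $\Hom(\rI_\mi\mathbbm{1}_\mi,\mathbbm{1}_\mi)\cong\Hom(\mathbbm{1}_\mi,\rI'_\mi\mathbbm{1}_\mi)$ is provided by Proposition~\ref{propadj} applied to the restricted principal bilax $2$-representation $\widehat{\bP}_\mi|_\cC$. Unwinding $\Phi$ at the element $\xi_\mi\circ\hat r_{\rI_\mi}$ and then postcomposing with $\hat r_{\rI'_\mi}$ expresses $(\xi_\mi)^\star$ as a composite of $\alpha_{\rI_\mi}$, $l'_{\mi,\mi,\mathbbm{1}_\mi}$, horizontal identities, and the weak-unit isomorphisms $\hat r_{\rI'_\mi}, \hat r_{\rI_\mi}, \hat l_{\mathbbm{1}_\mi}$. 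Simplifying using (i) naturality of $\hat r, \hat l$ with respect to $\alpha_{\rI_\mi}$ and $\xi_\mi$, (ii) the weak-unit coherence $\hat l_{\mathbbm{1}_\mi}=\hat r_{\mathbbm{1}_\mi}$ from~\eqref{r1=l1}, and (iii) the factorizations $\hat r_{\rF}\circ(\id_{\rF}\xi_\mi)=r_{\mi,\mi,\rF}$ and $\hat l_{\rF}\circ(\xi_\mi\id_{\rF})=l_{\mi,\mi,\rF}$ derived from~\eqref{eq:001}, I aim to establish
\[
(\xi_\mi)^\star\circ\xi_\mi \;=\; r_{\mi,\mi,\rI'_\mi}\circ l'_{\mi,\mi,\rI_\mi},
\]
the LHS of \eqref{laxoplaxcompatible}. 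The crucial final collapse uses the snake equation \eqref{diag:adjunction conditions} for the adjoint pair $(\rI_\mi,\rI'_\mi)$ together with the defining property $\xi_\mi\circ l_{\mi,\mi,\rI_\mi}=\xi_\mi\circ r_{\mi,\mi,\rI_\mi}$ of $\xi_\mi$ as a coequalizer~\eqref{coeq}, which absorbs the extra factor of $\alpha_{\rI_\mi}$ produced in the unfolding.

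Next, by a symmetric argument using the right-hand factorizations and the dual canonical map $(\xi'_\mi)^\star\colon\rI_\mi\to\mathbbm{1}'_\mi$ from \eqref{diag:shorthand dual d} together with $\xi'_\mi\colon\mathbbm{1}'_\mi\to\rI'_\mi$, the same canonical morphism rewrites as
\[
\xi'_\mi\circ(\xi'_\mi)^\star \;=\; l_{\mi,\mi,\rI'_\mi}\circ r'_{\mi,\mi,\rI_\mi},
\]
the RHS. The identification $(\xi_\mi)^\star\circ\xi_\mi=\xi'_\mi\circ(\xi'_\mi)^\star$ then follows from the fact that both are characterised by the same image under the adjunction isomorphisms provided by Proposition~\ref{propadj} applied to the adjoint pair $(\rI_\mi,\rI'_\mi)$, and in particular both are $\star$-invariant in the sense of Definition~\ref{deffiax}\eqref{deffiax-1}. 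The mirror image of \eqref{diag:laxoplaxcompatible} with respect to a vertical mirror is obtained by applying the fiax involution $({}_-)^\star$ throughout, using the swaps $(l)^\star=r'$ and $(r)^\star=l'$.

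The main obstacle is the first step: the explicit unfolding of $(\xi_\mi)^\star$ is mildly self-referential (the extension of $\star$ to $\widehat{\cC}$ in Proposition~\ref{prop:02} uses $(\xi_\mi)^\star$ itself in the formula $\widehat\alpha_{\rI_\mi}=\alpha_{\rI_\mi}\circ(\xi_\mi)^\star$), and one needs the resulting chain of naturality and coherence squares to close up correctly against the snake equation. A subsidiary subtlety is that $\mathbbm{1}_\mi$ lives in $\overline{\cC}$ while $\mathbbm{1}'_\mi$ lives in $\underline{\cC}$, so the two halves of the argument live in different ambient abelian categories; however, the end equation asserted is an equation in $\cC(\mi,\mi)$, which is contained in both, so this causes no genuine problem.
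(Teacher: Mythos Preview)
Your overall strategy---show that both sides of \eqref{laxoplaxcompatible} equal a single canonical composite $\rI_\mi\to(\text{weak unit})\to\rI'_\mi$---is exactly the paper's idea. The paper, however, works entirely in $\widecheck{\cC}$: using the factorizations \eqref{diag:unitor from d} and \eqref{eq:000-1}, one replaces the occurring (op)lax unitors by their factorizations through $\mathbbm{1}'_\mi$, and then a short chain of naturality and the coherence \eqref{eq:05} collapses the \emph{left-hand side} to $\xi'_\mi\circ(\xi'_\mi)^\star$. The ``symmetry'' invoked is the left/right symmetry of the same computation, which shows the \emph{right-hand side} equals the \emph{same} composite $\xi'_\mi\circ(\xi'_\mi)^\star$. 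No comparison between $\widehat{\cC}$ and $\widecheck{\cC}$ is ever needed.

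Your version has a genuine gap at the last step. By treating the LHS in $\widehat{\cC}$ and the RHS in $\widecheck{\cC}$, you end up with two a priori different composites $(\xi_\mi)^\star\circ\xi_\mi$ and $\xi'_\mi\circ(\xi'_\mi)^\star$, and your justification for their equality---that both are ``characterised by the same image under the adjunction isomorphisms'' and are ``$\star$-invariant''---does not pin down a morphism: many elements of $\Hom_{\cC}(\rI_\mi,\rI'_\mi)$ are $\star$-fixed, and the adjunction isomorphisms of Proposition~\ref{propadj} live over different representation objects ($\mathbbm{1}_\mi$ versus $\mathbbm{1}'_\mi$) so there is no common ``image'' to compare. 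Your first step is also only sketched: unfolding $(\xi_\mi)^\star$ via \eqref{eq:04} and collapsing it against the snake relation is precisely the ``main obstacle'' you flag, and you have not verified that the coherence squares close. The fix is simple: stay in one ambient $2$-category (say $\widecheck{\cC}$), use the ready-made factorizations \eqref{eq:001'} and \eqref{eq:001'dual} for all four unitors appearing, and show both sides equal $\xi'_\mi\circ(\xi'_\mi)^\star$; this is what the paper does in five diagram steps. Finally, the mirror-image statement is obtained by the left/right symmetry of that computation rather than by applying $({}_-)^\star$ globally.
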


\begin{proof}
We prove \eqref{diag:laxoplaxcompatible} and the mirror image identity
follows by symmetry.
It is enough to prove $\eqref{diag:laxoplaxcompatible}$ in the fiat 2-category $(\widecheck{\cC},\{\mathbbm{1}'_{\mi}|\,\mi\in\ob\cC\})$.
Using Remark~\ref{rem:oplaxside1} we have:
\begin{equation*}
\begin{tikzpicture}[anchorbase]
   \draw[blue, very thick] (4.6, 8)  to (4.6, 6.7) node[below] {$\ \rI_{\mi}$};
      \draw[very thick, red] (4, 8)[out=-90,in=-180] to (4.6, 7.5);
   \draw[very thick, red] (4, 9.3)node[above] {$\ \rI'_{\mi}$} to (4, 8);
      \draw[blue ,very thick] (4.6, 8)[out=90, in=0] to (4, 8.5);
\end{tikzpicture}
\stackrel{\eqref{diag:unitor from d},\eqref{eq:000-1}}{=\joinrel=\joinrel=\joinrel=\joinrel=\joinrel=\joinrel=}
\begin{tikzpicture}[anchorbase]
   \draw[blue, very thick] (4.6, 8.23)  to (4.6, 6.7) node[below] {$\ \rI_{\mi}$};
      \draw[very thick, dotted] (4, 7.7)[out=-90,in=-180] to (4.6, 7.2) (4.1, 7.3)node[below] {$\mathbbm{1}'_{\mi}$};
   \draw[very thick, red] (4, 9.3)node[above] {$\ \rI'_{\mi}$} to (4, 7.77);
      \draw[dotted ,very thick] (4.6, 8.3)[out=90, in=0] to (4, 8.8) (4.6, 8.6)node[above] {$\mathbbm{1}'_{\mi}$};
\end{tikzpicture}
\ \ \ \stackrel{\eqref{diag:sliding}}{=\joinrel=\joinrel=}\ \ \
\begin{tikzpicture}[anchorbase]
   \draw[blue, very thick] (4.6, 7.65)  to (4.6, 6.7) node[below] {$\ \rI_{\mi}$};
    \draw[dotted, very thick] (4.6, 8.23)  to (4.6, 7.72);
      \draw[very thick, dotted] (4, 7.71)[out=-90,in=-180] to (4.6, 7.2) (4.1, 7.3)node[below] {$\mathbbm{1}'_{\mi}$};
   \draw[very thick, red] (4, 9.3)node[above] {$\ \rI'_{\mi}$} to (4, 8.35);
     \draw[dotted, very thick] (4, 8.27)  to (4, 7.76);
      \draw[dotted ,very thick] (4.6, 8.3)[out=90, in=0] to (4, 8.8) (4.6, 8.6)node[above] {$\mathbbm{1}'_{\mi}$};
\end{tikzpicture}
\ \ \ \ \ =\ \ \ \ \
\begin{tikzpicture}[anchorbase]
   \draw[blue, very thick] (4.6, 7.13)  to (4.6, 6.7) node[below] {$\ \rI_{\mi}$};
      \draw[very thick, dotted] (4, 8)[out=-90,in=-180] to (4.6, 7.5)(4.1, 7.55)node[below] {$\mathbbm{1}'_{\mi}$};
       \draw[dotted, very thick] (4.6, 7.18)  to (4.6, 7.93);
        \draw[dotted, very thick] (4, 8.07)  to (4, 8.85);
   \draw[very thick, red] (4, 9.3)node[above] {$\ \rI'_{\mi}$} to (4, 8.87);
      \draw[dotted ,very thick] (4.6, 7.96)[out=90, in=0] to (4, 8.54)(4.6, 8.35)node[above] {$\mathbbm{1}'_{\mi}$};
\end{tikzpicture}
\ \ \ \stackrel{\eqref{eq:05}}{=\joinrel=\joinrel=}\ \ \
\begin{tikzpicture}[anchorbase]
   \draw[blue, very thick] (4, 7.24)  to (4, 6.7) node[below] {$\ \rI_{\mi}$};
        \draw[dotted, very thick] (4, 8.63)  to (4, 7.37) (4.4,8);
   \draw[very thick, red] (4, 9.3)node[above] {$\ \rI'_{\mi}$} to (4, 8.76);
\end{tikzpicture}
\end{equation*}
Here the third equality holds by the naturality of the unitors for $\mathbbm{1}_\mi$,
see Proposition~\ref{prop:1 unit in abeln}. For the last equality, \eqref{eq:05} is used twice, see the last part of the diagram computation in the proof of Proposition~\ref{prop:02}.
The claim follows.
\end{proof}

\subsection{Lifting bilax $2$-representations}\label{sec:lifted-2-rep}

Let $\cC=(\cC,\,\rI=\{\rI_{\mi}|\,\mi\in\ob\cC\},\,\rI'=\{\rI'_{\mi}|\,\mi\in\ob\cC\})$
be a fiax ca\-te\-gory and
$\bM$ a non-zero bilax $2$-representation of $\cC$ in the sense that
$\bM(\mi)$ is non-zero, for some $\mi\in\ob\cC$.
If $\bM(\rF)=0$, for all $1$-morphisms $\rF\in\cC$, we will call $\bM$ a {\em trivial} bilax $2$-representation of $\cC$.
None of the trivial bilax $2$-representations of $\cC$ can be lifted to a non-zero bilax $2$-representation of $\widehat{\cC}$
since, for any non-zero bilax $2$-representation $\bN$ of $\widehat{\cC}$, we have $\bN(\mathbbm{1}_{\mi})\cong \id_{\bN(\mi)}\neq 0$
due to the isomorphism $\mathbbm{1}_{\mi}\mathbbm{1}_{\mi}\xrightarrow{\cong}\mathbbm{1}_{\mi}$ and \eqref{eq:bilax-2-rep}.

For any non-zero bilax $2$-representation $\bM$ of $\cC$, define a bilax-unital $2$-functor $\cC\,\bM:\cC\to\mathbf{Cat}$ by setting
\[\cC\,\bM(\mi):=\mathrm{add}(\{\bM(\rF)X\ |\ X\in\bM(\mj),\ \rF\in\cC(\mj,\mi),\  \mj\in\ob\cC \})\subseteq \bM(\mi),\]
for every object $\mi\in\ob\cC$.
Hence $\cC\,\bM$ is a bilax $2$-subrepresentation of $\bM$. For any fiat $2$-category $\cC$,  viewed as a fiax category $(\cC,\,\rI=\rI'=\{\mathbbm{1}_{\mi}|\,\mi\in\ob\cC\})$, and for
any bilax $2$-representation $\bM$ of $\cC$, we have $\bM=\cC\,\bM$.

Let $\bM$ be a non-zero bilax $2$-representation of a fiax category $\cC=(\cC,\,\rI=\{\rI_{\mi}|\,\mi\in\ob\cC\},\,\rI'=\{\rI'_{\mi}|\,\mi\in\ob\cC\})$.
Consider the projective abelianization $\overline{\bM}$ as a bilax $2$-representation of $\overline{\cC}$.
Restricting $\overline{\bM}$ to $\widehat{\cC}$, we obtain a functor, $\overline{\bM}|_{\widehat{\ccC}}:\widehat{\cC}\to\mathbf{Cat}$,
which might not be a bilax $2$-representation of $\widehat{\cC}$ unless $\cC$ is a fiat $2$-category with weak or strict identity $1$-morphisms.
Define a weak $2$-functor $\widehat{\bM}:\widehat{\cC}\to\mathbf{Cat}$ by setting
\[\widehat{\bM}(\mi):=\mathrm{add}(\{\overline{\bM}(\rF) X\ |\ X\in\bM(\mj),\ \rF\in\cC(\mj,\mi),\ \mj\in\ob\cC  \})
\subseteq \bM(\mi)\subseteq \overline{\bM}(\mi),\]
for every object $\mi\in\ob\widehat{\cC}$. Then $\widehat{\bM}$ is a $2$-representation of $\widehat{\cC}$ since
\[\widehat{\bM}(\mathbbm{1}_{\mi})(\overline{\bM}(\rF)X)=\overline{\bM}(\mathbbm{1}_{\mi})(\overline{\bM}(\rF)X)=\overline{\bM}(\mathbbm{1}_{\mi}\rF)X\cong
\overline{\bM}(\rF)X,\]
for $X\in\bM(\mj)$ and $\rF\in\cC(\mj,\mi)$. It follows directly from the definition that $\cC\,\bM=\widehat{\bM}|_{\ccC}$.

On the other hand, for any non-zero $2$-representation $\bN$ of $\widehat{\cC}$,  precomposing $\bN$ with the embedding $\cC\hookrightarrow\widehat{\cC}$,
we obtain a non-zero bilax $2$-representation $\bN|_{\ccC}$ of $\cC$. Thanks to the fact that $\rI_{\mi}\to\mathbbm{1}_{\mi}$ is an epimorphism and $\bN(\mathbbm{1})\cong\id_{\bN(\mi)}$, we know that $\bN(\rI_{\mi})\neq0$. Therefore, we further get a non-zero bilax $2$-representation $\cC\,(\bN|_{\ccC})$
of $\cC$ given by setting
\[\cC\,(\bN|_{\ccC})(\mi):=\mathrm{add}(\{\bN(\rF)X\ |\ X\in\bN(\mj),\ \rF\in\cC(\mj,\mi),\ \mj\in\ob\cC \})\subseteq \bN(\mi),\]
for every object $\mi\in\ob\cC$. Since each $l_{\mj,\mi,\rF}$ is split, for any $1$-morphism $\rF$,
the bilax $2$-representation $\cC\,(\bN|_{\ccC})$ automatically satisfies the condition $\cC\,\cC\,(\bN|_{\ccC})=\cC\,(\bN|_{\ccC})$.
Indeed, in each category $\widehat{\cC}(\mi,\mi)$ we have an epimorphism $\xi_{\mi}:\rI_{\mi}\to\mathbbm{1}_{\mi}$ which implies that
$\cC\,(\bN|_{\ccC})(\mi)=\bN(\mi)$ as categories, for every object $\mi\in\ob\cC$.

\begin{thm}\label{thm:bijection}
Let $\cC$ be a fiax category.
There is a bijection between the set of bilax
$2$-representations $\bM$ of $\cC$ satisfying $\cC\,\bM=\bM$ and
the set of $2$-representations of the associated fiat $2$-category $\widehat{\cC}$
(with weak units).
\end{thm}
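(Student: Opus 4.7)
The plan is to verify that the two assignments $\bM\mapsto\widehat{\bM}$ and $\bN\mapsto\cC\,(\bN|_{\ccC})$ already introduced just above the theorem are mutually inverse. The paragraphs preceding the theorem establish basic well-definedness: $\widehat{\bM}$ is a $2$-representation of $\widehat{\cC}$ because $\widehat{\bM}(\mathbbm{1}_{\mi})\cong\id_{\widehat{\bM}(\mi)}$ holds on the defining generators, while $\cC\,(\bN|_{\ccC})$ is a bilax $2$-representation of $\cC$ which automatically satisfies the idempotency $\cC\,\cC\,(\bN|_{\ccC})=\cC\,(\bN|_{\ccC})$. What remains is therefore to check the two round-trips.

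For the round-trip $\bM\mapsto\widehat{\bM}\mapsto\cC\,(\widehat{\bM}|_{\ccC})$, the identity $\cC\,\bM=\widehat{\bM}|_{\ccC}$ noted in the text yields $\widehat{\bM}|_{\ccC}=\cC\,\bM=\bM$ under our hypothesis on $\bM$, and hence $\cC\,(\widehat{\bM}|_{\ccC})=\cC\,\bM=\bM$. The $2$-action data (the functors $\bM_{\mi,\mj}$ on $1$- and $2$-morphisms together with the unit transformations $u_{\mi}$ and $u'_{\mi}$) coincide with those of $\bM$ by construction, since $\widehat{\bM}$ is obtained from the projective abelianization $\overline{\bM}$ by restriction to additively generated subcategories, and this restriction preserves the original action of $\cC$.

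For the opposite round-trip $\bN\mapsto\cC\,(\bN|_{\ccC})\mapsto\widehat{\cC\,(\bN|_{\ccC})}$, the essential point is the equality $\cC\,(\bN|_{\ccC})(\mi)=\bN(\mi)$ of underlying categories recorded in the text. Granted this, the inclusion $\widehat{\cC\,(\bN|_{\ccC})}(\mi)\subseteq\bN(\mi)$ is immediate from the construction, while the reverse inclusion holds since every object of $\bN(\mi)$ already lies in $\cC\,(\bN|_{\ccC})(\mi)\subseteq\widehat{\cC\,(\bN|_{\ccC})}(\mi)$. The $\widehat{\cC}$-actions are then compared object by object: for $1$-morphisms in $\cC$ the two actions agree by definition, while the action of $\mathbbm{1}_{\mi}$ on either side is canonically isomorphic to $\id_{\bN(\mi)}$ via the corresponding unit constraint, and these unit constraints agree because both are induced by the coequalizer defining $\mathbbm{1}_{\mi}$ together with the adjunction data on $\bN$.

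The main obstacle is the identity $\cC\,(\bN|_{\ccC})(\mi)=\bN(\mi)$: starting from an arbitrary $X\in\bN(\mi)$, one must exhibit $X$ as a direct summand of some object of the form $\bN(\rF)Y$ with $\rF\in\cC(\mj,\mi)$. The plan is to combine the epimorphism $\xi_{\mi}\colon\rI_{\mi}\to\mathbbm{1}_{\mi}$ in $\widehat{\cC}(\mi,\mi)$ with the isomorphism $\bN(\mathbbm{1}_{\mi})\cong\id_{\bN(\mi)}$ coming from the $2$-representation axioms, and then use the splitness of the lax unitors (Lemma~\ref{prop:fiaxsplit}) to promote the identification $X\cong\bN(\mathbbm{1}_{\mi})X$ to a realization of $X$ as a retract of $\bN(\rI_{\mi})X\in\cC\,(\bN|_{\ccC})(\mi)$; this is exactly where the fiax structure (rather than only the bilax-unital one) intervenes.
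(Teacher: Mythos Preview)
Your proposal is correct and follows exactly the same approach as the paper. The paper's own proof is extremely terse: it simply names the two maps $\Psi\colon\bM\mapsto\widehat{\bM}$ and $\Omega\colon\bN\mapsto\cC\,(\bN|_{\ccC})$ and asserts that $\Omega\Psi=\id$ and $\Psi\Omega=\id$ ``follow from the definitions'', relying entirely on the preparatory paragraphs before the theorem (in particular the identity $\cC\,\bM=\widehat{\bM}|_{\ccC}$ and the equality $\cC\,(\bN|_{\ccC})(\mi)=\bN(\mi)$). You unpack precisely these two round-trips and point to the same facts from the text, so there is no divergence in strategy.

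One small remark on your final paragraph: the mechanism you sketch for realizing an arbitrary $X\in\bN(\mi)$ as a retract of some $\bN(\rF)Y$ via Lemma~\ref{prop:fiaxsplit} is slightly imprecise as stated, since that lemma concerns the splitness of $l_{\mj,\mi,\rF}$ and $r'_{\mi,\mk,\rG}$ for $1$-morphisms $\rF,\rG$ in $\cC$, not directly of $\bN(\xi_{\mi})_X$. The paper does not spell this out either, but what actually makes it work is that the oplax unitor $(u'_{\mi})_X\colon X\to\bN(\rI'_{\mi})X$ for the restricted bilax $2$-representation $\bN|_{\ccC}$ is split monic (equivalently, one observes that $\xi_{\mi}$ is split epic already in $\widehat{\cC}(\mi,\mi)$ via the proof of Lemma~\ref{lemcoeq}, so that $\bN(\xi_{\mi})_X$ is split epic). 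Either way, this exhibits $X$ as a summand of an object of $\cC\,(\bN|_{\ccC})(\mi)$, which is the substance of the paper's assertion that the epimorphism $\xi_{\mi}$ forces $\cC\,(\bN|_{\ccC})(\mi)=\bN(\mi)$.
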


\begin{proof}
Note that the zero bilax $2$-representation $\bM$ of a fiax category $\cC$ automatically satisfies $\cC\,\bM=\bM$. Define a
map $\Psi$ from the set of bilax
$2$-representations $\bM$ of a fiax category $\cC$ satisfying $\cC\,\bM=\bM$ to
the set of $2$-representations of the associated $\widehat{\cC}$ by sending $\bM$
to $\widehat{\bM}$. Define a
map $\Omega$ on the other direction by sending $\bN$ to $\cC\,(\bN|_{\ccC})$.
It follows from the definitions that $\Omega\Psi=\id$ and $\Psi\Omega=\id$.
\end{proof}

By the proof of~Theorem~\ref{thm:bijection}, we have:

\begin{cor}\label{cor:00}
Let $\cC$ be a fiax category.
There is a biequivalence between the $2$-category of bilax
$2$-rep\-res\-en\-ta\-ti\-ons $\bM$ of $\cC$ satisfying that $\cC\,\bM=\bM$ and
the $2$-category of $2$-representations of the associated fiat $2$-categories
$\widehat{\cC}$ (with weak units).
\end{cor}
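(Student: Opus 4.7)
The plan is to upgrade the object-level bijection of Theorem~\ref{thm:bijection} to a biequivalence by promoting the maps $\Psi:\bM\mapsto\widehat{\bM}$ and $\Omega:\bN\mapsto\cC\,(\bN|_{\ccC})$ to $2$-functors and then checking that they are mutually quasi-inverse. Throughout, I identify $\widehat{\bM}(\mi)$ with a full additive subcategory of $\overline{\bM}(\mi)$ as in Subsection~\ref{sec:lifted-2-rep}, so that all the constructions extend in a canonical way from $\cC$-actions to the projective abelianization.

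First I would define $\Psi$ on a bilax $2$-natural transformation $\Phi:\bM\to\bM'$. For each $\mi$, the functor $\Phi_{\mi}$ extends to an additive functor $\overline{\Phi}_{\mi}:\overline{\bM}(\mi)\to\overline{\bM'}(\mi)$, and since $\Phi$ intertwines the $\cC$-action up to the natural isomorphisms $\varphi_{\mi,\mj}$, the restriction $\widehat{\Phi}_{\mi}:=\overline{\Phi}_{\mi}|_{\widehat{\bM}(\mi)}$ lands in $\widehat{\bM'}(\mi)$. The $\varphi_{\mi,\mj}$ for $\rF\in\cC(\mi,\mj)$ extend by additivity to isomorphisms on $\widehat{\cC}(\mi,\mj)$; on the new $1$-morphism $\mathbbm{1}_{\mi}$ the required coherence with the weak unitors reduces, via the coequalizer presentation of Lemma~\ref{lemcoeq} and the definition $u_{\mi}^{\widehat{\bM}}=u_{\mi}^{\bM}\circ\xi_{\mi}$ implicit in $\widehat{\bM}(\mathbbm{1}_{\mi})\cong\mathrm{Id}$, to the unit coherence \eqref{eq:0002}, \eqref{eq:0003} of $\Phi$ evaluated at $\rI_{\mi}$ and $\rI'_{\mi}$. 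Bilax modifications are sent to modifications by the same componentwise extension, and compatibility with vertical and horizontal composition is automatic. Conversely, $\Omega$ on a $2$-natural transformation $\Theta:\bN\to\bN'$ is defined by restriction along the embedding $\cC\hookrightarrow\widehat{\cC}$ and further to the additive subcategory $\cC\,(\bN|_{\ccC})(\mi)$; the bilax unit axioms for the restricted data follow from the $\widehat{\cC}$-axioms together with the universal maps $\xi_{\mi}:\rI_{\mi}\to\mathbbm{1}_{\mi}$ and $\xi'_{\mi}:\mathbbm{1}'_{\mi}\to\rI'_{\mi}$ of Subsection~\ref{s3.2}.

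It remains to exhibit pseudonatural equivalences $\Omega\Psi\simeq\mathrm{Id}$ and $\Psi\Omega\simeq\mathrm{Id}$. For $\bM$ satisfying $\cC\,\bM=\bM$ the identity $\Omega\Psi(\bM)=\cC\,(\widehat{\bM}|_{\ccC})=\cC\,\bM=\bM$ holds strictly (as already used in the proof of Theorem~\ref{thm:bijection}), and the evident identity components $(\Phi_{\mi})_{\mi}$ upgrade this into a strict $2$-natural isomorphism. For a $2$-representation $\bN$ of $\widehat{\cC}$, the canonical isomorphism $\bN(\mathbbm{1}_{\mi})\cong\mathrm{Id}_{\bN(\mi)}$ together with the action $\bN(\rI_{\mi})\twoheadrightarrow\bN(\mathbbm{1}_{\mi})$ shows that $\bN(\mi)=\cC\,(\bN|_{\ccC})(\mi)$ and then $\Psi\Omega(\bN)=\widehat{\cC\,(\bN|_{\ccC})}$ is equal, as an additive subcategory of $\overline{\bN}(\mi)$, to $\bN(\mi)$ itself; this defines the components of the pseudonatural equivalence $\Psi\Omega\simeq\mathrm{Id}$, and naturality in $\Theta$ is immediate from the construction. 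The main obstacle, which is expected to be bookkeeping rather than conceptual, is verifying that the unit coherence \eqref{eq:0002} and \eqref{eq:0003} translates correctly between the $(\rI_{\mi},\rI'_{\mi})$-data on the $\cC$-side and the weak unit $\mathbbm{1}_{\mi}$ on the $\widehat{\cC}$-side; this rests precisely on Proposition~\ref{prop:1 unit in abeln} and on the factorizations \eqref{eq:001} and \eqref{eq:001'} of the (op)lax unitors through the canonical maps $\xi_{\mi}$ and $\xi'_{\mi}$.
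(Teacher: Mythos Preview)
Your proposal is correct and follows essentially the same approach as the paper. The paper's own proof of Corollary~\ref{cor:00} consists solely of the phrase ``By the proof of Theorem~\ref{thm:bijection}'', so your write-up is really a fleshing-out of what the paper leaves implicit: you promote the maps $\Psi$ and $\Omega$ from that proof to $2$-functors by extending them componentwise to bilax $2$-natural transformations and modifications, and then invoke the object-level identities $\Omega\Psi=\mathrm{Id}$ and $\Psi\Omega=\mathrm{Id}$ already established there. Your extra care in distinguishing strict equality from pseudonatural equivalence (and in tracking the unit coherences via $\xi_{\mi}$, $\xi'_{\mi}$ and Proposition~\ref{prop:1 unit in abeln}) is more precise than the paper, but the underlying strategy is identical.
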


As a bonus, it is clear that the bijection~in Theorem~\ref{thm:bijection} also induced a bijection between the corresponding sets of equivalence classes.

\begin{rem}\label{rem:oplaxside2}
In the set-up of injective abelianizations, one obtains the dual statements of Theorem~\ref{thm:bijection} and Corollary~\ref{cor:00} with respect to the associated fiat $2$-category $\widecheck{\cC}$ with weak units.
\end{rem}

\subsection{Lifting (co)algebras $1$-morphisms}\label{s3.4}

Recall that, for any fiat $2$-category $\cC$ with a choice of a family of morphisms $\gamma_{\mi}:\rI_{\mi}\to\mathbbm{1}_{\mi}, \gamma'_{\mi}:\mathbbm{1}_{\mi}\to\rI'_{\mi}$, we have the bilax-unital $2$-category $\cC[\gamma,\gamma']$. Following the construction before Corollary~\ref{cor:01}, we obtain that $\widehat{\cC[\gamma,\gamma']}$ is biequivalent to $\cC$ as a $2$-category with weak units.

From now on, we assume that $\cC=(\cC,\,\rI=\{\rI_{\mi}|\,\mi\in\ob\cC\},\,\rI'=\{\rI'_{\mi}|\,\mi\in\ob\cC\})$ is a fiax category (unless explicitly stated otherwise).
By the previous paragraph, the following proposition gives, in some sense,
a converse to Lemma~\ref{algebraisalgebra}.

\begin{prop}\label{algebra lifts to 2cat}
Any algebra $1$-morphism $\rF=(\rF,\,\mu_{\rF},\,\eta_{\rF})$ in $\cC(\mi,\mi)$ with respect to the lax unit $\rI_{\mi}$
is also an algebra $1$-morphism in $\widehat{\cC}(\mi,\mi)$ with respect to $\mathbbm{1}_{\mi}$.
\end{prop}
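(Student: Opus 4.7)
The plan is to keep the multiplication $\mu_\rF$ and to construct a new unit 2-morphism $\hat{\eta}_\rF:\mathbbm{1}_\mi\to\rF$ by factoring the original unit $\eta_\rF:\rI_\mi\to\rF$ through the canonical epimorphism $\xi_\mi:\rI_\mi\to\mathbbm{1}_\mi$. Associativity of $\mu_\rF$ is then automatic, so the only nontrivial tasks are producing the factorization and verifying the unitality axioms with respect to $\mathbbm{1}_\mi$ in $\widehat{\cC}$.

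First I would show that $\eta_\rF$ coequalizes the pair $l_{\mi,\mi,\rI_\mi},\, r_{\mi,\mi,\rI_\mi}:\rI_\mi\rI_\mi\to\rI_\mi$. The key step uses the naturality of $l_{\mi,\mi,-}$ applied to $\eta_\rF$ together with the left unitality of $\rF$ as an algebra in $\cC$:
\[
\eta_\rF\circ_\rv l_{\mi,\mi,\rI_\mi}=l_{\mi,\mi,\rF}\circ_\rv(\id_{\rI_\mi}\eta_\rF)=\mu_\rF\circ_\rv(\eta_\rF\id_\rF)\circ_\rv(\id_{\rI_\mi}\eta_\rF)=\mu_\rF\circ_\rv(\eta_\rF\eta_\rF),
\]
and the mirror argument using the naturality of $r_{\mi,\mi,-}$ and the right unitality yields the same expression for $\eta_\rF\circ_\rv r_{\mi,\mi,\rI_\mi}$. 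The universal property of the coequalizer (Lemma~\ref{lemcoeq}) then produces a unique 2-morphism $\hat{\eta}_\rF:\mathbbm{1}_\mi\to\rF$ with $\hat{\eta}_\rF\circ_\rv\xi_\mi=\eta_\rF$.

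For the unitality axioms in $\widehat{\cC}$, I would use the identity $\widehat{l}_{\mi,\mi,\rF}\circ_\rv(\xi_\mi\id_\rF)=l_{\mi,\mi,\rF}$ coming from \eqref{eq:001}. Combined with the original unitality and the factorization of $\eta_\rF$, this rewrites as
\[
\widehat{l}_{\mi,\mi,\rF}\circ_\rv(\xi_\mi\id_\rF)=\mu_\rF\circ_\rv(\hat{\eta}_\rF\id_\rF)\circ_\rv(\xi_\mi\id_\rF),
\]
and the desired equality $\widehat{l}_{\mi,\mi,\rF}=\mu_\rF\circ_\rv(\hat{\eta}_\rF\id_\rF)$ follows by cancelling the epimorphism $\xi_\mi\id_\rF$. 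The right unitality is proved by the symmetric argument using $\id_\rF\xi_\mi$.

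The main (mild) obstacle is the last cancellation: one needs $\xi_\mi\id_\rF$ to remain epic in the ambient abelian category $\overline{\cC(\mi,\mi)}$. This is where the fiax hypothesis actually enters — right composition with $\rF$ has an adjoint (left composition with $\rF^\star$) and is therefore exact on the projective abelianization, so it preserves the epimorphism $\xi_\mi$; the split-lax-unit property alone would not suffice.
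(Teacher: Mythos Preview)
Your proof is correct and follows the same overall structure as the paper: factor $\eta_\rF$ through the coequalizer $\xi_\mi$ to obtain the new unit (the paper calls it $\zeta_\rF$), keep $\mu_\rF$, and verify unitality by cancelling an epimorphism after reducing to the identity from \eqref{eq:001}.

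Your coequalization argument is in fact cleaner than the paper's. The paper first precomposes with the split epimorphism $\id_{\rI_\mi}r_{\mi,\mi,\rI_\mi}$ and then runs a longer diagrammatic computation, whereas your one-line use of naturality of $l_{\mi,\mi,-}$ (resp.\ $r_{\mi,\mi,-}$) applied to $\eta_\rF$, together with the algebra unitality and interchange, gives $\eta_\rF\circ_\rv l_{\mi,\mi,\rI_\mi}=\mu_\rF\circ_\rv(\eta_\rF\eta_\rF)=\eta_\rF\circ_\rv r_{\mi,\mi,\rI_\mi}$ directly.

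One small correction to your final remark: you do not need exactness via adjunction to see that $\xi_\mi\id_\rF$ is epic. From the commuting triangle in \eqref{eq:001} one has $\widehat{l}_{\mi,\mi,\rF}\circ_\rv(\xi_\mi\id_\rF)=l_{\mi,\mi,\rF}$, the right-hand side is split epic by Lemma~\ref{prop:fiaxsplit}, and $\widehat{l}_{\mi,\mi,\rF}$ is an isomorphism; hence $\xi_\mi\id_\rF$ is (split) epic. This is exactly the cancellation the paper performs (phrased there as cancelling the epimorphism $l_{\mi,\mi,\rF}$), so the split-lax-unit property already suffices for this step.
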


\begin{proof}
We claim that $\eta_{\rF}:\rI_{\mi}\to\rF$ equalizes $l_{\mi,\mi,\rI_{\mi}}$ and $r_{\mi,\mi,\rI_{\mi}}$. Since the $2$-morphism
$$r_{\mi,\mi,\, \rI_{\mi}\rI_{\mi}}=\id_{\rI_{\mi}}r_{\mi,\mi,\rI_{\mi}}:\rI_{\mi}\rI_{\mi}\rI_{\mi}\to\rI_{\mi}\rI_{\mi}$$ is epic, it suffices to show that
\[\eta_{\rF}\circ_{\rv}l_{\mi,\mi,\rI_{\mi}}\circ_{\rv}r_{\mi,\mi,\,\rI_{\mi}\rI_{\mi}}=\eta_{\rF}\circ_{\rv}r_{\mi,\mi,\rI_{\mi}}\circ_{\rv}r_{\mi,\mi,\,\rI_{\mi}\rI_{\mi}},\]
that is, in diagrammatic terms, that we have the following:
\begin{equation*}
\begin{tikzpicture}[anchorbase]
\draw[very thick] (4, 7.5) to (4, 8.5)node[above] {$\rF$};
\draw[dashed  , very thick] (4, 7.4) to (4, 6)node[below] {$\ \rI_{\mi}$};
\draw[dashed  , very thick] (3.3, 6)[out=90, in=-180]node[below] {$\ \rI_{\mi}$} to (4, 7.1);
      \draw[dashed  , very thick] (4.5, 6)[out=90, in=0]node[below] {$\ \rI_{\mi}$} to (4, 6.7);
\end{tikzpicture}
\quad=\quad
\begin{tikzpicture}[anchorbase]
\draw[very thick] (3, 7.5) to (3, 8.5)node[above] {$\rF$};
\draw[dashed  , very thick] (3.7, 6)node[below] {$\ \rI_{\mi}$} [out=90, in=0] to (3, 7.1);
\draw[dashed  , very thick] (3, 6)node[below] {$\ \rI_{\mi}$} to (3, 7.4);
      \draw[dashed  , very thick] (4.2, 6)[out=90, in=0]node[below] {$\ \rI_{\mi}$} to (3.6, 6.67);
\end{tikzpicture}
\end{equation*}
Indeed, by a diagrammatic computation, we have:
\begin{equation}
\begin{split}
\begin{tikzpicture}[anchorbase]
\draw[very thick] (4, 7.5) to (4, 8.5)node[above] {$\rF$};
\draw[dashed  , very thick] (4, 7.4) to (4, 6)node[below] {$\ \rI_{\mi}$};
\draw[dashed  , very thick] (3.3, 6)[out=90, in=-180]node[below] {$\ \rI_{\mi}$} to (4, 7.1);
      \draw[dashed  , very thick] (4.5, 6)[out=90, in=0]node[below] {$\ \rI_{\mi}$} to (4, 6.7);
\end{tikzpicture}
&\stackrel{\eqref{diag:unitornatural}}{=\joinrel=\joinrel=}
\begin{tikzpicture}[anchorbase]
\draw[very thick] (4, 7.5) to (4, 8.5)node[above] {$\rF$};
\draw[dashed  , very thick] (4, 7.4) to (4, 6)node[below] {$\ \rI_{\mi}$};
\draw[dashed  , very thick] (3.1, 6)[out=90, in=-180]node[below] {$\ \rI_{\mi}$} to (4, 7.8);
      \draw[dashed  , very thick] (4.5, 6)[out=90, in=0]node[below] {$\ \rI_{\mi}$} to (4, 6.7);
\end{tikzpicture}\stackrel{\eqref{diag:algunit}}{=\joinrel=\joinrel=}
\begin{tikzpicture}[anchorbase]
\draw[very thick] (3.5, 8.5)node[above] {$\rF$} to (3.5, 8.1);
   \draw[very thick] (4,7.6) [out=90, in=0] to (3.5, 8.1);
    \draw[very thick] (3, 7.65)[out=90, in=-180]  to (3.5, 8.1); \draw (2.8, 7.55) node[above]{$\rF$};
    \draw[very thick] (4, 7.25) to (4, 7.6); \draw (4.25, 7.55)node[above]{$\rF$};
\draw[dashed  , very thick] (4, 7.17) to (4, 6)node[below] {$\ \rI_{\mi}$};
\draw[dashed  , very thick] (3, 6)node[below] {$\ \rI_{\mi}$} to (3, 7.6);
    \draw[dashed  , very thick] (4.5, 6)[out=90, in=0]node[below] {$\ \rI_{\mi}$} to (4, 6.7);
\end{tikzpicture}
\stackrel{\eqref{diag:sliding}}{=\joinrel=\joinrel=}
\begin{tikzpicture}[anchorbase]
\draw[very thick] (3.5, 8.5)node[above] {$\rF$} to (3.5, 8.1);
   \draw[very thick] (4,7.65) [out=90, in=0] to (3.5, 8.1);
    \draw[very thick] (3, 7.6)[out=90, in=-180]  to (3.5, 8.1); \draw (2.8, 7.55) node[above]{$\rF$};
    \draw[very thick] (3, 7.25) to (3, 7.6); \draw (4.25, 7.55)node[above]{$\rF$};
\draw[dashed  , very thick] (3, 7.17) to (3, 6)node[below] {$\ \rI_{\mi}$};
\draw[dashed  , very thick] (4, 6)node[below] {$\ \rI_{\mi}$} to (4, 7.6);
    \draw[dashed  , very thick] (4.5, 6)[out=90, in=0]node[below] {$\ \rI_{\mi}$} to (4, 6.7);
\end{tikzpicture}\\
&\stackrel{\eqref{diag:algunit}}{=\joinrel=\joinrel=}
\begin{tikzpicture}[anchorbase]
\draw[very thick] (3, 7.5) to (3, 8.5)node[above] {$\rF$};
\draw[dashed  , very thick] (3.8, 6)node[below] {$\ \rI_{\mi}$} [out=90, in=0] to (3, 7.8);
\draw[dashed  , very thick] (3, 6)node[below] {$\ \rI_{\mi}$} to (3, 7.4);
      \draw[dashed  , very thick] (4.3, 6)[out=90, in=0]node[below] {$\ \rI_{\mi}$} to (3.73, 6.67);
\end{tikzpicture}\stackrel{\eqref{diag:unitornatural}}{=\joinrel=\joinrel=}
\begin{tikzpicture}[anchorbase]
\draw[very thick] (3, 7.5) to (3, 8.5)node[above] {$\rF$};
\draw[dashed  , very thick] (3.7, 6)node[below] {$\ \rI_{\mi}$} [out=90, in=0] to (3, 7.1);
\draw[dashed  , very thick] (3, 6)node[below] {$\ \rI_{\mi}$} to (3, 7.4);
      \draw[dashed  , very thick] (4.2, 6)[out=90, in=0]node[below] {$\ \rI_{\mi}$} to (3.6, 6.67);
\end{tikzpicture}\quad.
\end{split}
\end{equation}
Therefore, there exists a unique $2$-morphism $\zeta_{\rF}:\mathbbm{1}_{\mi}\to \rF$ such that the triangle in the diagram
\begin{equation}\label{eq:06}
\xymatrix{\rI_{\mi}\rI_{\mi}\ar@<.6ex>[rrr]^{l_{\mi,\mi,\rI_{\mi}}}
\ar@<-.4ex>[rrr]_{r_{\mi,\mi,\rI_{\mi}}}&&&\rI_{\mi}\ar[rr]^{\xi_{\mi}}\ar[drr]_{\eta_{\rF}}&&\mathbbm{1}_{\mi}
\ar@{-->}[d]^{\zeta_{\rF}}\\
&&&&&\rF}
\end{equation}
commutes. Now, we claim that $(\rF,\mu_{\rF},\zeta_{\rF})$ is an algebra in $\widehat{\cC}(\mi,\mi)$. The associativity of $\mu_{\rF}$ is clear and we only need to verify the unitality. Consider the diagram:
\[\xymatrix@R=2.5pc{&&\rF\ar[d]^{\cong}\ar@/^1.5pc/@{=}[ddrr]&&\\
&&\mathbbm{1}_{\mi}\rF\ar[d]^{\zeta_{\rF}\id_{\rF}}\ar[drr]^{\cong}&&\\
\rI_{\mi}\rF\ar[rr]^{\eta_{\rF}\id_{\rF}}\ar@/_1.5pc/[rrrr]_{l_{\mi,\mi,\rF}}\ar@/^1.5pc/[uurr]^{l_{\mi,\mi,\rF}}\ar[urr]^{\xi_{\mi}\id_{\rF}}&&\rF\rF\ar[rr]^{\mu_{\rF}}&&\rF,}\]
where the left top triangle commutes due to \eqref{eq:001} with $\theta_{\mi}(\rF):\mathrm{U}_{\mi}(\rF)\to\rF$
replaced by the $\mathbbm{1}_{\mi}\rF\xrightarrow{\cong}\rF$;
the right top triangle and the big outer triangle along the boundary of the whole diagram commute by definitions.
Note that the left unitor of $\widehat{\cC}$ gives the isomorphism $\widehat{l}_{\mi,\mi,\rF}:\mathbbm{1}_{\mi}\rF\xrightarrow{\cong}\rF$. Then we have
\[\mu_{\rF}\circ_{\rv}(\zeta_{\rF}\id_{\rF})\circ_{\rv}(\widehat{l}_{\mi,\mi,\rF})^{\mone}\circ_{\rv}l_{\mi,\mi,\rF}=
\mu_{\rF}\circ_{\rv}(\zeta_{\rF}\id_{\rF})\circ_{\rv}(\xi_{\mi}\id_{\rF})=\mu_{\rF}\circ_{\rv}(\eta_{\rF}\id_{\rF})=l_{\mi,\mi,\rF}.\]
Thanks to the fact that $l_{\mi,\mi,\rF}$ is epic, we obtain $\mu_{\rF}\circ_{\rv}(\zeta_{\rF}\id_{\rF})\circ_{\rv}(\widehat{l}_{\mi,\mi,\rF})^{\mone}=\id_{\rF}$ which yields
$\mu_{\rF}\circ_{\rv}(\zeta_{\rF}\id_{\rF})=\widehat{l}_{\mi,\mi,\rF}$.
The left middle triangle commutes by \eqref{eq:06}; the bottom triangle commutes due to the left unitality of the algebra $(\rF,\mu_{\rF},\eta_{\rF})$.
This establishes the left unitality and the right unitality can be proved by a similar argument.
\end{proof}

The dual statement of Proposition~\ref{algebra lifts to 2cat} is the following.

\begin{prop}\label{coalgebra lifts to 2cat}
Any coalgebra $1$-morphism $\rF=(\rF,\,\Delta_{\rF},\,\epsilon_{\rF})$ in $\cC(\mi,\mi)$ with respect to the oplax unit $\rI'_{\mi}$
is also a coalgebra $1$-morphism in $\widecheck{\cC}(\mi,\mi)$ with respect to $\mathbbm{1}'_{\mi}$.
\end{prop}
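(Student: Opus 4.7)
The plan is to dualize the proof of Proposition~\ref{algebra lifts to 2cat} throughout, replacing lax units by oplax units, coequalizers by equalizers, epimorphisms by monomorphisms, and the projective abelianization by the injective one. By Remark~\ref{rem:oplaxside}, $\mathbbm{1}'_{\mi}$ arises as the equalizer of $l'_{\mi,\mi,\rI'_{\mi}}$ and $r'_{\mi,\mi,\rI'_{\mi}}$ in $\underline{\cC(\mi,\mi)}$, equipped with a canonical monomorphism $\xi'_{\mi}:\mathbbm{1}'_{\mi}\to\rI'_{\mi}$. The first step is to show that the counit $\epsilon_{\rF}:\rF\to\rI'_{\mi}$ satisfies
\[
l'_{\mi,\mi,\rI'_{\mi}}\circ_{\rv}\epsilon_{\rF}\;=\;r'_{\mi,\mi,\rI'_{\mi}}\circ_{\rv}\epsilon_{\rF},
\]
so that the universal property of the equalizer produces a unique $\zeta'_{\rF}:\rF\to\mathbbm{1}'_{\mi}$ with $\xi'_{\mi}\circ_{\rv}\zeta'_{\rF}=\epsilon_{\rF}$. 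By Lemma~\ref{prop:fiaxsplit} the oplax unitors are split monic in a fiax category, so $l'_{\mi,\mi,\rI'_{\mi}\rI'_{\mi}}=\id_{\rI'_{\mi}}\,l'_{\mi,\mi,\rI'_{\mi}}$ is monic, and it suffices to verify the displayed equality after post-composing with it. The resulting diagrammatic identity is obtained from the five-step computation above by flipping every picture upside down, relabelling each dashed strand from $\rI_{\mi}$ to $\rI'_{\mi}$, and invoking, in the same order, oplax unitor naturality \eqref{diag:oplaxunitornatural}, the counitality of $(\rF,\Delta_{\rF},\epsilon_{\rF})$ (the diagram dual to \eqref{diag:algunit}), and the interchange law \eqref{diag:sliding}.

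With $\zeta'_{\rF}$ in hand, it remains to check that $(\rF,\Delta_{\rF},\zeta'_{\rF})$ is a coalgebra in $\widecheck{\cC}(\mi,\mi)$ with respect to $\mathbbm{1}'_{\mi}$. Coassociativity of $\Delta_{\rF}$ is inherited unchanged from $\cC$. For left counitality I would combine the triangle $(\xi'_{\mi}\id_{\rF})\circ_{\rv}\widecheck{l}'_{\mi,\mi,\rF}=l'_{\mi,\mi,\rF}$ from \eqref{eq:001'} with the counitality $(\epsilon_{\rF}\id_{\rF})\circ_{\rv}\Delta_{\rF}=l'_{\mi,\mi,\rF}$ in $\cC$, to deduce
\[
(\xi'_{\mi}\id_{\rF})\circ_{\rv}(\zeta'_{\rF}\id_{\rF})\circ_{\rv}\Delta_{\rF}\;=\;(\xi'_{\mi}\id_{\rF})\circ_{\rv}\widecheck{l}'_{\mi,\mi,\rF}.
\]
Since $\rF$ admits both a left and a right adjoint in a fiax category, the functor $({}_-)\rF$ is exact on $\underline{\cC(\mi,\mi)}$, so $\xi'_{\mi}\id_{\rF}$ remains monic and cancellation yields $(\zeta'_{\rF}\id_{\rF})\circ_{\rv}\Delta_{\rF}=\widecheck{l}'_{\mi,\mi,\rF}$. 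The right counitality is entirely symmetric, using the analogue of \eqref{eq:001'} on the other side.

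The only substantive obstacle is the diagrammatic verification of the equalizer condition on $\epsilon_{\rF}$, but since it is a formal upside-down transcription of a computation already carried out in the proof of Proposition~\ref{algebra lifts to 2cat}, I expect no genuine difficulty. The remainder of the argument is bookkeeping around the injective abelianization and the weak-unital cancellation step.
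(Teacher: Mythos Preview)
Your proposal is correct and is exactly the explicit dualization that the paper leaves implicit: the paper simply records this proposition as ``the dual statement of Proposition~\ref{algebra lifts to 2cat}'' and gives no separate proof. Your write-up faithfully mirrors each step of that argument (equalizer in place of coequalizer, monic cancellation in place of epic cancellation, the triangle \eqref{eq:001'} in place of its lax counterpart), so there is nothing to add.
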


For later use, we denote by $(\rF,\Delta_{\rF},\zeta'_{\rF})$ the new coalgebra $1$-morphism in $\widecheck{\cC}(\mi,\mi)$ and have the
following commutative diagram:
\begin{equation}\label{eq:06'}
\xymatrix{\mathbbm{1}'_{\mi}\ar[rr]^{\xi'_{\mi}}&&\rI'_{\mi}.\\
\rF\ar[urr]_{\epsilon_{\rF}}\ar[u]^{\zeta'_{\rF}}&&}
\end{equation}

\begin{cor}\label{modulelift}
Let $\rF=(\rF,\,\mu_{\rF},\,\eta_{\rF})$ and $\rG=(\rG,\,\mu_{\rG},\,\eta_{\rG})$ be
any algebra $1$-morphisms in $\cC(\mi,\mi)$ and $\cC(\mk,\mk)$, respectively.
\begin{enumerate}[$($i$)$]
\item\label{modulelift-1}
If $\rM=(\rM,\upsilon_{\rM})$ is a left $\rF$-module in $\cC(\mj,\mi)$, then $\rM=(\rM,\upsilon_{\rM})$ is also a left $\rF$-module in $\widehat{\cC}(\mj,\mi)$, where now $\rF$ is an algebra $1$-morphism in $\widehat{\cC}(\mi,\mi)$ given by Proposition~\ref{algebra lifts to 2cat}.
\item\label{modulelift-2}
If $\rM=(\rM,\tau_{\rM})$ is a right $\rF$-module in $\cC(\mi,\mj)$, then $\rM=(\rM,\tau_{\rM})$ is also a right $\rF$-module in $\widehat{\cC}(\mi,\mj)$, where now $\rF$ is an algebra $1$-morphism in $\widehat{\cC}(\mi,\mi)$ given by Proposition~\ref{algebra lifts to 2cat}.
\item\label{modulelift-3}
If $\rM=(\rM,\upsilon_{\rM},\tau_{\rM})$ is an $\rF$-$\rG$-bimodule in $\cC(\mk,\mi)$, then $\rM=(\rM,\upsilon_{\rM},\tau_{\rM})$ is also an $\rF$-$\rG$-bimodule in $\widehat{\cC}(\mk,\mi)$, where now $\rF$ and $\rG$ are the algebra $1$-morphisms in $\widehat{\cC}(\mi,\mi)$ and $\widehat{\cC}(\mk,\mk)$, respectively, given by Proposition~\ref{algebra lifts to 2cat}.
\end{enumerate}
\end{cor}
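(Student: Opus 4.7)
The plan is to reduce all three parts to checking the unitality axiom; associativity of the module action and the bimodule compatibility \eqref{diag:bimod} only involve $\mu_\rF$, $\mu_\rG$, $\upsilon_\rM$, $\tau_\rM$, none of which change, so these axioms are automatic.

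For part~\eqref{modulelift-1}, the algebra structure on $\rF$ in $\widehat\cC(\mi,\mi)$ has unit $\zeta_\rF:\mathbbm 1_\mi\to\rF$ characterized by $\zeta_\rF\circ_{\rv}\xi_\mi=\eta_\rF$ (diagram~\eqref{eq:06}). So I need to show the $\widehat\cC$-unitality $\widehat l_{\mj,\mi,\rM}=\upsilon_\rM\circ_{\rv}(\zeta_\rF\,\id_\rM)$, where $\widehat l_{\mj,\mi,\rM}:\mathbbm 1_\mi\rM\xrightarrow{\cong}\rM$ is the (weak) left unitor of $\widehat\cC$. The key inputs are:
\begin{enumerate}
\item[(a)] the identity $\widehat l_{\mj,\mi,\rM}\circ_{\rv}(\xi_\mi\,\id_\rM)=l_{\mj,\mi,\rM}$, coming from the coequalizer diagram~\eqref{eq:001} applied to $\rM$ (with $\theta_\mi(\rM)=\widehat l_{\mj,\mi,\rM}$);
\item[(b)] the original unitality $l_{\mj,\mi,\rM}=\upsilon_\rM\circ_{\rv}(\eta_\rF\,\id_\rM)$;
\item[(c)] the factorization $\eta_\rF=\zeta_\rF\circ_{\rv}\xi_\mi$.
\end{enumerate}
Combining (b) and (c) gives $l_{\mj,\mi,\rM}=\upsilon_\rM\circ_{\rv}(\zeta_\rF\,\id_\rM)\circ_{\rv}(\xi_\mi\,\id_\rM)$, which by (a) equals $\widehat l_{\mj,\mi,\rM}\circ_{\rv}(\xi_\mi\,\id_\rM)$. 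Cancelling the epic $2$-morphism $\xi_\mi\,\id_\rM$ (horizontal composition with $\rM$ preserves cokernels in a fiax category, since $\rM$ has a right adjoint $\rM^\star$ and hence acts exactly) yields the required unitality.

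Part~\eqref{modulelift-2} is strictly dual: one uses the analogous identity $\widehat r_{\mi,\mj,\rM}\circ_{\rv}(\id_\rM\,\xi_\mi)=r_{\mi,\mj,\rM}$ (from the mirror coequalizer, noted right after Lemma~\ref{lemcoeq}) and cancels the epic $\id_\rM\,\xi_\mi$. Part~\eqref{modulelift-3} follows by simply running the arguments of \eqref{modulelift-1} and \eqref{modulelift-2} in parallel on the two sides: the left unitality uses $\zeta_\rF$, the right unitality uses $\zeta_\rG$, and the bimodule compatibility \eqref{diag:bimod} is inherited unchanged from $\cC$.

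The only mildly delicate point is the cancellation of $\xi_\mi\,\id_\rM$ (and its right analogue). This requires that horizontal composition with $\rM$ on the right is right exact, equivalently, that $\xi_\mi$ remains epic after horizontal composition with $\id_\rM$; this is immediate because every $1$-morphism in a fiax category has both adjoints by Definition~\ref{deffiax}\eqref{deffiax-2}, so horizontal composition with any fixed $1$-morphism on either side preserves epimorphisms (as used in the proof of Proposition~\ref{prop:Fmodabel}). Once this is in hand, the whole corollary is essentially formal from Proposition~\ref{algebra lifts to 2cat} together with the coequalizer characterization of $\mathbbm 1_\mi$ from Lemma~\ref{lemcoeq}.
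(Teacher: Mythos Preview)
Your proof is correct and follows essentially the same approach as the paper: both reduce to checking unitality, use the factorization $\eta_\rF=\zeta_\rF\circ_{\rv}\xi_\mi$ from \eqref{eq:06}, the original unitality axiom, and the identity $\widehat l_{\mj,\mi,\rM}\circ_{\rv}(\xi_\mi\,\id_\rM)=l_{\mj,\mi,\rM}$ from \eqref{eq:001}, then cancel the epimorphism $\xi_\mi\,\id_\rM$. Your explicit justification that $\xi_\mi\,\id_\rM$ is epic (via exactness of horizontal composition in a fiax category) is a bit more careful than the paper, which simply asserts it.
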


\begin{proof}
Recall that $(\rF,\mu_{\rF},\zeta_{\rF})$ is an algebra in $\widehat{\cC}(\mi,\mi)$. It is clear that $\upsilon_{\rM}$ satisfies the associativity since the multiplication map $\mu_{\rF}$ is still the same. We only prove claim~\ref{modulelift-1}, as claim~\ref{modulelift-2}
is proved similarly and claim~\ref{modulelift-3} follows from claims~\ref{modulelift-1} and \ref{modulelift-2}. Consider the diagram:
\begin{equation}\label{eq:07}
\xymatrix@R=3pc{&&\rI_{\mi}\rM\ar[dll]_{\xi_{\mi}\id_{\rM}}\ar@/^3.2pc/[dd]^{l_{\mj,\mi,\rM}}\ar[d]^{\eta_{\rF}\id_{\rM}}\\
\mathbbm{1}_{\mi}\rM\ar[drr]^{\widehat{l}_{\mj,\mi,\rM}}_{\cong}\ar[rr]^{\ \zeta_{\rF}\id_{\rM}}&&\rF\rM\ar[d]^{\upsilon_{\rM}}\\
&&\rM,}
\end{equation}
where we use the same notation as in Lemma~\ref{algebra lifts to 2cat}.
The top left triangle in \eqref{eq:07} commutes due to~\eqref{eq:06} while the right triangle commutes thanks to the unitality of the left module structure of $\rM=(\rM,\upsilon_{\rM})$ in $\cC(\mj,\mi)$. The big outer triangle along the boundary of the whole diagram \eqref{eq:07} commutes due to \eqref{eq:001} with $\rF$ replaced by $\rM$ and $\theta_{\mi}(\rF):\mathrm{U}_{\mi}(\rF)\to\rF$
by $\widehat{l}_{\mj,\mi,\rM}:\mathbbm{1}_{\mi}\rM\xrightarrow{\cong}\rM$. Therefore we have
\[\upsilon_{\rM}\circ_{\rv}(\zeta_{\rF}\id_{\rM})\circ_{\rv}(\xi_{\mi}\id_{\rM})=\upsilon_{\rM}\circ_{\rv}(\eta_{\rF}\id_{\rM})
=l_{\mj,\mi,\rM}=\widehat{l}_{\mj,\mi,\rM}\circ_{\rv}(\xi_{\mi}\id_{\rM}).\]
As $\xi_{\mi}\id_{\rM}$ is epic, we obtain $\upsilon_{\rM}\circ_{\rv}(\zeta_{\rF}\id_{\rM})=\widehat{l}_{\mj,\mi,\rM}$. This proves the unitality of $\upsilon_{\rM}$.
\end{proof}

The dual statement of Corollary~\ref{modulelift} is as follows.

\begin{cor}\label{comodulelift}
Let $\rF=(\rF,\,\Delta_{\rF},\,\epsilon_{\rF})$ and $\rG=(\rG,\,\Delta_{\rG},\,\epsilon_{\rG})$ be any coalgebra $1$-morphisms in $\cC(\mi,\mi)$ and $\cC(\mk,\mk)$, respectively.
\begin{enumerate}[$($i$)$]
\item\label{comodulelift-1}
If $\rM=(\rM,\lambda_{\rM})$ is a left $\rF$-comodule in $\cC(\mj,\mi)$, then $\rM=(\rM,\lambda_{\rM})$ is also a left $\rF$-comodule in $\widecheck{\cC}(\mj,\mi)$, where now $\rF$ is a coalgebra $1$-morphism in $\widecheck{\cC}(\mi,\mi)$ given  by Proposition~\ref{coalgebra lifts to 2cat}.
\item\label{comodulelift-2}
If $\rM=(\rM,\rho_{\rM})$ is a right $\rF$-comodule in $\cC(\mi,\mj)$, then $\rM=(\rM,\rho_{\rM})$ is also a right $\rF$-comodule in $\widecheck{\cC}(\mi,\mj)$, where now $\rF$ is a coalgebra $1$-morphism in $\widecheck{\cC}(\mi,\mi)$ given by Proposition~\ref{coalgebra lifts to 2cat}.
\item\label{comodulelift-3}
If $\rM=(\rM,\lambda_{\rM},\rho_{\rM})$ is an $\rF$-$\rG$-bicomodule in $\cC(\mk,\mi)$, then $\rM=(\rM,\lambda_{\rM},\rho_{\rM})$ is also an $\rF$-$\rG$-bicomodule in $\widecheck{\cC}(\mk,\mi)$, where now  $\rF$ and  $\rG$ are coalgebra $1$-morphisms in $\widecheck{\cC}(\mi,\mi)$ and $\widecheck{\cC}(\mk,\mk)$, respectively, given by Proposition~\ref{algebra lifts to 2cat}.
\end{enumerate}
\end{cor}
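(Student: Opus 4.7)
The plan is to mirror the proof of Corollary~\ref{modulelift} by systematically dualizing every step: algebras become coalgebras, the lax unit $\rI_{\mi}$ becomes the oplax unit $\rI'_{\mi}$, the coequalizer map $\xi_{\mi}$ becomes the equalizer map $\xi'_{\mi}$ (now a monomorphism into $\rI'_{\mi}$), and the induced unit $\zeta_{\rF}$ is replaced by the induced counit $\zeta'_{\rF}:\rF\to\mathbbm{1}'_{\mi}$ from \eqref{eq:06'}. Since coassociativity only involves $\Delta_{\rF}$ (which is unchanged when passing from $\cC$ to $\widecheck{\cC}$), only counitality requires genuine work, and only claims~\ref{comodulelift-1} and~\ref{comodulelift-2} need separate treatment --- claim~\ref{comodulelift-3} then follows by combining them (the left and right comodule structures are already known to commute in $\cC$, and this compatibility carries over verbatim).

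I would prove claim~\ref{comodulelift-1}, the case of claim~\ref{comodulelift-2} being completely analogous. The key object is the commutative diagram dual to \eqref{eq:07}, with arrows reversed:
\[
\xymatrix@R=2.8pc{
\rM\ar[d]_{\lambda_{\rM}}\ar[drr]^{\widecheck{l}'_{\mj,\mi,\rM}}_{\cong}\ar@/_3.2pc/[dd]_{l'_{\mj,\mi,\rM}}&&\\
\rF\rM\ar[rr]^{\zeta'_{\rF}\id_{\rM}\ }\ar[d]_{\epsilon_{\rF}\id_{\rM}}&&\mathbbm{1}'_{\mi}\rM\ar[dll]^{\xi'_{\mi}\id_{\rM}}\\
\rI'_{\mi}\rM&&
}
\]
The lower triangle commutes by \eqref{eq:06'}, the outer path equals $l'_{\mj,\mi,\rM}$ by the counitality of $\rM$ as an $\rF$-comodule in $\cC$, and the right triangle commutes by (the comodule-version of) \eqref{eq:001'}. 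Therefore
\[
(\xi'_{\mi}\id_{\rM})\circ_{\rv}(\zeta'_{\rF}\id_{\rM})\circ_{\rv}\lambda_{\rM} = l'_{\mj,\mi,\rM} = (\xi'_{\mi}\id_{\rM})\circ_{\rv}\widecheck{l}'_{\mj,\mi,\rM}.
\]

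The main obstacle, and the step I would verify most carefully, is the cancellation of $\xi'_{\mi}\id_{\rM}$ on the left to obtain $(\zeta'_{\rF}\id_{\rM})\circ_{\rv}\lambda_{\rM}=\widecheck{l}'_{\mj,\mi,\rM}$. This requires $\xi'_{\mi}\id_{\rM}$ to be monic, which is the genuine dual of the epicness of $\xi_{\mi}\id_{\rM}$ used in Corollary~\ref{modulelift}; it follows from Remark~\ref{rem:oplaxside} (so that $\mathbbm{1}'_{\mi}\to \rI'_{\mi}$ is monic as an equalizer), together with the exactness of horizontal composition in the fiax setting (cf. the argument in Proposition~\ref{prop:Fmodabel}), which ensures that monomorphisms remain monic after whiskering by $\rM$. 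Once this monic step is in place, claim~\ref{comodulelift-2} follows by the left--right mirror argument using $\widecheck{r}'_{\mi,\mj,\rM}$ and the identity dual to the second diagram in \eqref{eq:001'}, and claim~\ref{comodulelift-3} is immediate from \ref{comodulelift-1} and \ref{comodulelift-2} since the compatibility \eqref{diag:bimod} between left and right (co)actions in $\cC$ passes unchanged to $\widecheck{\cC}$.
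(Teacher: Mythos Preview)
Your proposal is correct and follows exactly the route the paper intends: the paper gives no separate proof of Corollary~\ref{comodulelift} but simply declares it the dual of Corollary~\ref{modulelift}, and what you have written is precisely that dualization carried out in detail. Your diagram is the dual of \eqref{eq:07}, your use of \eqref{eq:06'} and \eqref{eq:001'} mirrors the use of \eqref{eq:06} and \eqref{eq:001}, and your cancellation of the monic $\xi'_{\mi}\id_{\rM}$ is the exact dual of the paper's cancellation of the epic $\xi_{\mi}\id_{\rM}$; if anything, you are more explicit than the paper about why this cancellation is legitimate.
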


\subsection{(Co)tensor products over (co)algebra 1-morphisms}\label{s3.5}

Let $\rF=(\rF,\,\mu_{\rF},\,\eta_{\rF})\in\cC(\mi,\mi)$ be an algebra $1$-morphisms, $\rM=(\rM,\tau_{\rM})$ a right $\rF$-module in $\cC(\mi,\mk)$ and $\rN=(\rN,\upsilon_{\rN})$ a left $\rF$-module in $\cC(\mj,\mi)$.

\begin{defn}
The {\em tensor product} of $\rM$ and $\rN$ over the algebra $1$-morphism $\rF$ is defined as the coequalizer of the two $2$-morphisms in the diagram
\[\xymatrix{\rM\rF\rN\ar@<.6ex>[rrr]^{\tau_{\rM}\id_{\rN}}
\ar@<-.4ex>[rrr]_{\id_{\rM}\upsilon_{\rN}}&&&\rM\rN}.\]
This coequalizer is a $1$-morphism in $\overline{\cC}(\mj,\mk)$
which will be denoted by $\rM\boxtimes_{\rF}\rN$.
\end{defn}

Below we list some basic properties of $\boxtimes$, cf. e.g. \cite[Section~3.3]{MMMZ}
for the classical setup.

\begin{prop}\label{prop:tensor-product}
Let $\rF=(\rF,\,\mu_{\rF},\,\eta_{\rF})\in\cC(\mi,\mi)$ be an algebra $1$-morphism.
\begin{enumerate}[$($i$)$]
\item\label{lem:tensor-product1} For any left $\rF$-module $\rN=(\rN,\upsilon_{\rN})$ in $\cC(\mj,\mi)$, we have $\rF\boxtimes_{\rF}\rN\cong\rN$ as left $\rF$-modules.
\item\label{lem:tensor-product2} For any right $\rF$-module $\rM=(\rM,\tau_{\rM})$ in $\cC(\mi,\mk)$, we have $\rM\boxtimes_{\rF}\rF\cong\rM$ as right $\rF$-modules.
\end{enumerate}
\end{prop}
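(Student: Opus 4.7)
My plan is to prove part~(i) directly in the abelian category $\overline{\cC}(\mj,\mk)$ and then deduce part~(ii) by the evident mirror-image argument (swapping left/right and $\widehat{l}$ for $\widehat{r}$). The coequalizer $\rF\boxtimes_{\rF}\rN$ is the cokernel of the difference $d := \mu_\rF\id_\rN - \id_\rF\upsilon_\rN : \rF\rF\rN\to\rF\rN$, and I will show that the module action $\upsilon_\rN:\rF\rN\to\rN$ realizes this cokernel. The identity $\upsilon_\rN\circ d=0$ is immediate from the associativity axiom for the left $\rF$-action; everything else rests on constructing an explicit splitting by lifting the situation to $\widehat{\cC}$.

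By Proposition~\ref{algebra lifts to 2cat} and Corollary~\ref{modulelift}, the algebra $\rF$ and the module $\rN$ lift to $\widehat{\cC}$, with unit $\zeta_\rF:\mathbbm{1}_{\mi}\to\rF$ satisfying $\mu_\rF\circ(\zeta_\rF\id_\rF)=\widehat{l}_{\mi,\mi,\rF}$ and module unitality $\upsilon_\rN\circ(\zeta_\rF\id_\rN)=\widehat{l}_{\mj,\mi,\rN}$. Since $\widehat{l}$ consists of isomorphisms in $\widehat{\cC}\subseteq\overline{\cC}$ (Corollary~\ref{cor:01}), I set
\[
s := (\zeta_\rF\id_\rN)\circ \widehat{l}_{\mj,\mi,\rN}^{-1}\colon \rN\to\rF\rN, \qquad t := (\zeta_\rF\id_{\rF\rN})\circ \widehat{l}_{\mj,\mi,\rF\rN}^{-1}\colon \rF\rN\to\rF\rF\rN.
\]
A short computation using the interchange law~\eqref{diag:sliding}, the naturality of $\widehat{l}$, and the coherence identity $\widehat{l}_{\mj,\mi,\rF\rN}=\widehat{l}_{\mi,\mi,\rF}\id_\rN$ (the weak-unit analogue of Definition~\ref{def:bilax-unit}\eqref{def:lax-unit-1}\ref{def:lax-unit-1-b}, available thanks to Proposition~\ref{prop:1 unit in abeln}) then yields
\[
\upsilon_\rN\circ s = \id_\rN,\qquad (\mu_\rF\id_\rN)\circ t = \id_{\rF\rN},\qquad (\id_\rF\upsilon_\rN)\circ t = s\circ\upsilon_\rN,
\]
and hence $d\circ t = \id_{\rF\rN}-s\circ\upsilon_\rN$.

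From these identities the conclusion follows by a diagram chase in the abelian category $\overline{\cC}(\mj,\mk)$: $\upsilon_\rN$ is a split epimorphism, and the relation $\id_{\rF\rN}-s\circ\upsilon_\rN = d\circ t$ forces $\ker(\upsilon_\rN)\subseteq \mathrm{im}(d)$; combined with $\upsilon_\rN\circ d=0$ this shows that $\upsilon_\rN$ is the cokernel of $d$, i.e.\ $\rF\boxtimes_\rF\rN\cong\rN$. The resulting map is manifestly left $\rF$-equivariant, so this is an isomorphism of left $\rF$-modules. The main delicate point is not the algebra but the bookkeeping with unitors: because $\rF$ is not a strict unit and $\mathbbm{1}_{\mi}$ is only a \emph{weak} unit, the two coherence equalities $\mu_\rF\circ(\zeta_\rF\id_\rF)=\widehat{l}_{\mi,\mi,\rF}$ and $\widehat{l}_{\mj,\mi,\rF\rN}=\widehat{l}_{\mi,\mi,\rF}\id_\rN$ must be used precisely where the classical proof would just write ``$\id$''; this is exactly why lifting to the fiat 2-category $\widehat{\cC}$ provided by Proposition~\ref{prop:02} is essential.
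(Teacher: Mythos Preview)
Your proof is correct and follows essentially the same route as the paper: both lift to $\widehat{\cC}$ via Proposition~\ref{algebra lifts to 2cat} and Corollary~\ref{modulelift}, use the section $(\zeta_\rF\id_\rN)\circ_{\rv}\widehat{l}_{\mj,\mi,\rN}^{-1}$, and rely on the same two coherence identities (your computation of $(\id_\rF\upsilon_\rN)\circ t$ and $(\mu_\rF\id_\rN)\circ t$ is exactly the content of the paper's diagram~\eqref{eq:08}). The only cosmetic difference is that you package the data as an explicit split coequalizer $(s,t)$, whereas the paper phrases the identical computation as exhibiting a two-sided inverse to the comparison map $\varphi$ produced by the universal property.
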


\begin{proof}
We only prove the statement~\ref{lem:tensor-product1} since the statement~\ref{lem:tensor-product2} can be proved similarly.
By the associativity of $\upsilon_{\rN}$, i.e., \eqref{diag:moduleassoc-unit}, we know that the $2$-morphism $\upsilon_{\rN}$ equalizes $\mu_{\rF}\id_{\rN}$ and $\id_{\rF}\upsilon_{\rN}$.
Thus, by the universal property of coequalizers, there exists a unique $2$-morphism $\varphi:\rF\boxtimes_{\rF}\rN\to \rN$ such that the right top triangle in the  diagram
\begin{equation}\label{eq:000}
\xymatrix{\rF\rF\rN\ar@<.6ex>[rrr]^{\mu_{\rF}\id_{\rN}}
\ar@<-.4ex>[rrr]_{\id_{\rF}\upsilon_{\rN}}&&&\rF\rN\ar[drr]^{\upsilon_{\rN}}\ar[rr]^{\pi}&&\rF\boxtimes_{\rF}\rN\ar@{-->}[d]^{\varphi}\\
&&&\mathbbm{1}_{\mi}\rN\ar[rr]^{\widehat{l}_{\mj,\mi,\rN}}_{\cong}\ar[u]^{\zeta_{\rF}\id_{\rN}}&&\rN,}
\end{equation}
commutes, i.e., $\varphi\circ_{\rv}\pi=\upsilon_{\rN}$.
The left bottom triangle in \eqref{eq:000} commutes due to the unitality of $\upsilon_{\rN}$, see Corollary~\ref{modulelift}~\ref{modulelift-1}.
Now we claim that $\varphi$ and $\pi\circ_{\rv}(\zeta_{\rF}\id_{\rN})\circ_{\rv}(\widehat{l}_{\mj,\mi,\rN})^{\mone}$
are mutually inverse of each other. On the one hand, we have
\[\varphi\circ_{\rv}\pi\circ_{\rv}(\zeta_{\rF}\id_{\rN})\circ_{\rv}(\widehat{l}_{\mj,\mi,\rN})^{\mone}=\upsilon_{\rN}\circ_{\rv}(\zeta_{\rF}\id_{\rN})\circ_{\rv}(\widehat{l}_{\mj,\mi,\rN})^{\mone}
=\widehat{l}_{\mj,\mi,\rN}\circ_{\rv}(\widehat{l}_{\mj,\mi,\rN})^{\mone}=\mathrm{id}_{\rN}.\]
On the other hand,
it is enough to prove
\begin{equation}\label{eq:000-0}
\pi\circ_{\rv}(\zeta_{\rF}\id_{\rN})\circ_{\rv}(\widehat{l}_{\mj,\mi,\rN})^{\mone}\circ_{\rv}\varphi\circ_{\rv}\pi=\pi,
\end{equation}
since the above equality implies that $\pi\circ_{\rv}(\zeta_{\rF}\id_{\rN})\circ_{\rv}(\widehat{l}_{\mj,\mi,\rN})^{\mone}\circ_{\rv}\varphi=\id_{\rF\boxtimes_{\rF}\rN}$ due to the universal property of coequalizers. Consider the diagram
\begin{equation}\label{eq:08}
\xymatrix@R=2.5pc{\rF\rN\ar[d]_{\upsilon_{\rN}}\ar[rr]^{(\widehat{l}_{\mj,\mi,\rF\rN})^{\mone}}_\cong
&&\mathbbm{1}_{\mi}\rF\rN\ar[d]_{\id_{\mathbbm{1}_{\mi}}\upsilon_{\rN}}\ar[rr]^{\zeta_{\rF}\id_{\rF\rN}}&&
\rF\rF\rN\ar[d]_{\id_{\rF}\upsilon_{\rN}}\ar[rr]^{\mu_{\rF}\id_{\rN}}&&\rF\rN\ar[d]^{\pi}\\
\rN\ar[rr]^{(\widehat{l}_{\mj,\mi,\rN})^{\mone}}_{\cong}&&\mathbbm{1}_{\mi}\rN\ar[rr]^{\zeta_{\rF}\id_{\rN}}&&\rF\rN\ar[rr]^{\pi}&&\rF\boxtimes_{\rF}\rN
}
\end{equation}
where the leftmost square commutes by the naturality of the left unitor $\widehat{l}_{\mj,\mi,{}_-}$;
the middle square commutes by the interchange law and the rightmost square commutes by definition of tensor products.
Therefore we have
\begin{equation*}
\begin{split}
\pi\circ_{\rv}(\zeta_{\rF}\id_{\rN})\circ_{\rv}(\widehat{l}_{\mj,\mi,\rN})^{\mone}\circ_{\rv}\varphi\circ_{\rv}\pi
&=\pi\circ_{\rv}(\zeta_{\rF}\id_{\rN})\circ_{\rv}(\widehat{l}_{\mj,\mi,\rN})^{\mone}\circ_{\rv}\upsilon_{\rN}\\
&=\pi\circ_{\rv}(\mu_{\rF}\id_{\rN})\circ_{\rv}(\zeta_{\rF}\id_{\rF\rN})\circ_{\rv}(\widehat{l}_{\mj,\mi,\rF\rN})^{\mone}\\
&=\pi\circ_{\rv}\widehat{l}_{\mj,\mi,\rF\rN}\circ_{\rv}(\widehat{l}_{\mj,\mi,\rF\rN})^{\mone}=\pi.
\end{split}
\end{equation*}
Here the first equality follows from $\varphi\circ_{\rv}\pi=\upsilon_{\rN}$; the second equality follows from the commutativity of
the diagram \eqref{eq:08} and the third equality follows from the left unitality of the algebra $1$-morphism $(\rF,\mu_{\rF},\zeta_{\rF})$ in $\widehat{\cC}(\mi,\mi)$ and $\widehat{l}_{\mj,\mi,\rF\rN}=\widehat{l}_{\mi,\mi,\rF}\id_{\rN}$. The claim follows.
\end{proof}

Let $\rF=(\rF,\,\Delta_{\rF},\,\epsilon_{\rF})\in\cC(\mi,\mi)$ be a coalgebra $1$-morphism, $\rM=(\rM,\rho_{\rM})$ a right $\rF$-comodule in $\cC(\mi,\mk)$ and $\rN=(\rN,\lambda_{\rN})$ a left $\rF$-comodule in $\cC(\mj,\mi)$.
\begin{defn}
The {\em cotensor product} of $\rM$ and $\rN$ over the coalgebra $1$-morphism $\rF$ is defined as the equalizer of the two $2$-morphisms in the diagram
\[\xymatrix{\rM\rN\ar@<.6ex>[rrr]^{\rho_{\rM}\id_{\rN}}
\ar@<-.4ex>[rrr]_{\id_{\rM}\lambda_{\rN}}&&&\rM\rF\rN}.\]
This is a $1$-morphism in $\underline{\cC}(\mj,\mk)$, which we denote by $\rM\Box_{\rF}\rN$.
\end{defn}

The dual statement of Corollary~\ref{modulelift} is the following.

\begin{prop}\label{prop:cotensor-product}
Let $\rF=(\rF,\,\Delta_{\rF},\,\epsilon_{\rF})\in\cC(\mi,\mi)$ be a coalgebra $1$-morphism.
\begin{enumerate}[$($i$)$]
\item\label{lem:cotensor-product1} For any left $\rF$-comodule $\rN=(\rN,\lambda_{\rN})$ in $\cC(\mj,\mi)$, we have $\rF\Box_{\rF}\rN\cong\rN$ as left $\rF$-comodules.
\item\label{lem:cotensor-product2} For any right $\rF$-comodule $\rM=(\rM,\rho_{\rM})$ in $\cC(\mi,\mk)$, we have $\rM\Box_{\rF}\rF\cong\rM$ as right $\rF$-comodules.
\end{enumerate}
\end{prop}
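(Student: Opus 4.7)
The plan is to dualize the argument of Proposition~\ref{prop:tensor-product} by systematically replacing the projective abelianization $\overline{\cC}$ and weak unit $\mathbbm{1}_\mi$ with the injective abelianization $\underline{\cC}$ and the weak unit $\mathbbm{1}'_\mi$ of the fiat $2$-category $\widecheck{\cC}$ furnished by Remark~\ref{rem:oplaxside1}. By Proposition~\ref{coalgebra lifts to 2cat} together with diagram~\eqref{eq:06'}, our coalgebra $1$-morphism lifts to a coalgebra $(\rF,\Delta_\rF,\zeta'_\rF)$ in $\widecheck{\cC}(\mi,\mi)$ with $\xi'_\mi\circ_\rv\zeta'_\rF=\epsilon_\rF$; by Corollary~\ref{comodulelift}, $\rN=(\rN,\lambda_\rN)$ becomes a left $\rF$-comodule in $\widecheck{\cC}(\mj,\mi)$ whose counitality is expressed through the weak unitor $\widecheck{l}'_{\mj,\mi,\rN}\colon\rN\xrightarrow{\cong}\mathbbm{1}'_\mi\rN$. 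I treat only part \ref{lem:cotensor-product1}; part \ref{lem:cotensor-product2} is symmetric.

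Coassociativity of $\lambda_\rN$ says $(\Delta_\rF\id_\rN)\circ_\rv\lambda_\rN=(\id_\rF\lambda_\rN)\circ_\rv\lambda_\rN$, so $\lambda_\rN\colon\rN\to\rF\rN$ equalizes the pair defining $\rF\Box_\rF\rN$. Writing $\iota\colon\rF\Box_\rF\rN\to\rF\rN$ for the equalizer map, the universal property produces a unique $2$-morphism $\varphi\colon\rN\to\rF\Box_\rF\rN$ with $\iota\circ_\rv\varphi=\lambda_\rN$. The candidate inverse is
\[\psi:=(\widecheck{l}'_{\mj,\mi,\rN})^{-1}\circ_\rv(\zeta'_\rF\id_\rN)\circ_\rv\iota\colon\rF\Box_\rF\rN\to\rN.\]
One composition is immediate: $\psi\circ_\rv\varphi=(\widecheck{l}'_{\mj,\mi,\rN})^{-1}\circ_\rv(\zeta'_\rF\id_\rN)\circ_\rv\lambda_\rN=\id_\rN$ by the left counitality of $\rN$ viewed as an $\rF$-comodule in $\widecheck{\cC}$. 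For the other composition, the universal property of the equalizer reduces the claim $\varphi\circ_\rv\psi=\id_{\rF\Box_\rF\rN}$ to the assertion $\lambda_\rN\circ_\rv\psi=\iota$. This I would verify by dualizing diagram~\eqref{eq:08}: naturality of $\widecheck{l}'_{\mj,\mi,-}$ lets one commute $\lambda_\rN$ past $(\widecheck{l}'_{\mj,\mi,\rN})^{-1}$, the interchange law commutes it past $\zeta'_\rF\id_\rN$, and then the equalizer identity $(\id_\rF\lambda_\rN)\circ_\rv\iota=(\Delta_\rF\id_\rN)\circ_\rv\iota$ combined with the counitality $(\zeta'_\rF\id_\rF)\circ_\rv\Delta_\rF=\widecheck{l}'_{\mi,\mi,\rF}$ for the lifted coalgebra collapses the expression back to $\iota$. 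Finally, showing that $\varphi$ is left $\rF$-colinear reduces, again by the universal property of $\rF\Box_\rF\rN$, to coassociativity of $\lambda_\rN$.

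I expect the only real obstacle to be organizational rather than conceptual. Because $\mathbbm{1}'_\mi$ is merely a weak unit in an abelian enlargement of $\cC$ and not an actual identity $1$-morphism, every invocation of counitality must pass through $\zeta'_\rF$ and $\widecheck{l}'$, and the diagram dualizing \eqref{eq:08} has to be drawn inside $\widecheck{\cC}(\mj,\mi)$ rather than in $\cC(\mj,\mi)$. Once one commits to working in $\widecheck{\cC}$ from the outset and uses Corollary~\ref{comodulelift} to transport the comodule structure there, the remaining verifications are formally dual to those of Proposition~\ref{prop:tensor-product}.
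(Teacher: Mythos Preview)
Your proposal is correct and follows exactly the approach the paper intends: the paper gives no separate proof of Proposition~\ref{prop:cotensor-product}, treating it as the formal dual of Proposition~\ref{prop:tensor-product}, and your dualization via $\widecheck{\cC}$, $\mathbbm{1}'_\mi$, $\zeta'_\rF$, Proposition~\ref{coalgebra lifts to 2cat}, and Corollary~\ref{comodulelift} carries out precisely that translation. Your identification of $\varphi$ via the equalizer property of $\lambda_\rN$ and of the inverse $\psi=(\widecheck{l}'_{\mj,\mi,\rN})^{-1}\circ_{\rv}(\zeta'_\rF\id_\rN)\circ_{\rv}\iota$, together with the dual of diagram~\eqref{eq:08}, matches the paper's argument step for step.
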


\begin{rem}\label{rem1}
Note that (co)algebras, (co)modules, bi(co)modules and the respective homomorphisms in the bilax-unital $2$-category $\overline{\cC}$ (or $\underline{\cC}$)
are defined just as in $\cC$.
Moreover, all statements in this subsection also hold with respect to $\overline{\cC}$ (resp. $\underline{\cC}$) and $\widehat{\cC}$ (resp. $\widecheck{\cC}$).
\end{rem}

\subsection{Internal Hom}\label{s3.6}

In this subsection, we will switch to the set-up of the injective abelianization
$\underline{\cC}$ of a fiax category $\cC$ and use the language of coalgebras.
Note that all statements for coalgebra $1$-morphisms associated to oplax units
and unitors have the obvious dual statements for algebra $1$-morphisms associated
the lax units and unitors.

Let $\bM$ be a finitary bilax $2$-representation of a fiax category $\cC$. Denote by $\mathrm{vect}_{\Bbbk}$ the category of finite dimensional $\Bbbk$-vector
spaces. Note that each $\cC(\mj,\mi)$ is equivalent to the full subcategory of injective objects in $\underline{\cC}(\mj,\mi)$, see Subsection~\ref{sec:abelianization}. For any
$X\in\bM(\mi)$ and $Y\in\bM(\mj)$, consider the unique (up to isomorphism)
left exact functor from $\underline{\cC}(\mj,\mi)$ to $\mathrm{vect}_{\Bbbk}$ given by
\[\rF\mapsto \Hom_{\bM(\mi)}(X,\bM(\rF)Y),\qquad \text{ where }\quad\rF\in\cC(\mj,\mi).\]
This left exact functor is representable. This means that there exists a unique (up to isomorphism)
$1$-morphism $\ihom(Y,X)\in\underline{\cC}(\mj,\mi)$, called the {\em internal hom} from $Y$ to $X$, such that there is a natural isomorphism
$$\Hom_{\bM(\mi)}(X,\bM(\rF)Y)\cong \Hom_{\underline{\ccC}(\mj,\mi)}(\ihom(Y,X),\rF),\quad
\text{ for }\quad\rF\in\cC(\mj,\mi).$$ Similarly to \cite[Lemma~4.2]{MMMT}, the above natural isomorphism can be extended to any $\rF\in\underline{\cC}(\mj,\mi)$, namely,
\begin{equation}\label{eq:09}
\Hom_{\underline{\bM}(\mi)}(X,\underline{\bM}(\rF)Y)\cong \Hom_{\underline{\ccC}(\mj,\mi)}(\ihom(Y,X),\rF), \quad\text{ for }\quad\rF\in\underline{\cC}(\mj,\mi)
\end{equation}
In fact, the internal hom assignment
\[\ihom({}_-,{}_-):\bM(\mj)^{\operatorname{op}} \times \bM(\mi) \to \underline{\cC}(\mj,\mi)\]
is, naturally, a bifunctor.

\begin{prop}\label{end is coalg}
For any object $X\in\bM(\mi)$, the $1$-morphism $\rC_X:=\ihom(X,X)$ has the structure
of a coalgebra in $\underline{\cC}(\mi,\mi)$.
\end{prop}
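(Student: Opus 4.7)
The plan is to define the coalgebra structure on $\rC_X=\ihom(X,X)$ using the Yoneda-type isomorphism \eqref{eq:09} and the natural transformation $u'_\mi:\id_{\bM(\mi)}\to\bM(\rI'_\mi)$ built into the bilax $2$-representation $\bM$. Denote by
\[
\mathrm{coev}_X\colon X\to \underline{\bM}(\rC_X)X
\]
the morphism in $\underline{\bM}(\mi)$ corresponding to $\id_{\rC_X}$ under the isomorphism $\Hom_{\underline{\ccC}(\mi,\mi)}(\rC_X,\rC_X)\cong\Hom_{\underline{\bM}(\mi)}(X,\underline{\bM}(\rC_X)X)$. The comultiplication $\Delta_{\rC_X}\colon \rC_X\to \rC_X\rC_X$ is defined to be the $2$-morphism corresponding under \eqref{eq:09} (with $\rF=\rC_X\rC_X$) to the composite
\[
X\xrightarrow{\mathrm{coev}_X}\underline{\bM}(\rC_X)X\xrightarrow{\underline{\bM}(\rC_X)(\mathrm{coev}_X)}\underline{\bM}(\rC_X)\underline{\bM}(\rC_X)X=\underline{\bM}(\rC_X\rC_X)X.
\]
The counit $\epsilon_{\rC_X}\colon \rC_X\to \rI'_{\mi}$ is defined to be the $2$-morphism corresponding under \eqref{eq:09} (with $\rF=\rI'_{\mi}$) to $(u'_\mi)_X\colon X\to \bM(\rI'_\mi)X$.

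Coassociativity amounts to showing that the two composites $(\Delta_{\rC_X}\id_{\rC_X})\circ_\rv\Delta_{\rC_X}$ and $(\id_{\rC_X}\Delta_{\rC_X})\circ_\rv\Delta_{\rC_X}$ agree as $2$-morphisms $\rC_X\to\rC_X\rC_X\rC_X$. I will transport both sides through \eqref{eq:09} and use the functoriality of $\underline{\bM}$ together with strict associativity of horizontal composition; both will be identified with the triply-iterated coevaluation
\[
X\xrightarrow{\mathrm{coev}_X}\underline{\bM}(\rC_X)X\xrightarrow{\underline{\bM}(\rC_X)(\mathrm{coev}_X)}\underline{\bM}(\rC_X^2)X\xrightarrow{\underline{\bM}(\rC_X^2)(\mathrm{coev}_X)}\underline{\bM}(\rC_X^3)X,
\]
so coassociativity will reduce to a routine diagram chase using naturality of the Yoneda correspondence in the second slot.

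The main obstacle will be the counitality axiom, since $\rI'_{\mi}$ is not represented by the identity functor on $\bM(\mi)$: the relevant oplax unitors translate into the natural transformation $u'_\mi$ via \eqref{eq:bilax-2-rep}. I will verify $(\epsilon_{\rC_X}\id_{\rC_X})\circ_\rv\Delta_{\rC_X}=l'_{\mi,\mi,\rC_X}$ by applying the Yoneda isomorphism \eqref{eq:09} with $\rF=\rI'_{\mi}\rC_X$; the left hand side transports to $(u'_\mi)_{\underline{\bM}(\rC_X)X}\circ \mathrm{coev}_X$ (using naturality of the Yoneda bijection in $\rF$ to split off $\epsilon_{\rC_X}\id_{\rC_X}$), while $l'_{\mi,\mi,\rC_X}\in\Hom_{\underline{\ccC}(\mi,\mi)}(\rC_X,\rI'_{\mi}\rC_X)$ corresponds, by~\eqref{eq:bilax-2-rep} applied to $\rF=\rC_X$, precisely to $(u'_\mi)_{\underline{\bM}(\rC_X)X}\circ\mathrm{coev}_X$, so the two agree. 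The identity $(\id_{\rC_X}\epsilon_{\rC_X})\circ_\rv\Delta_{\rC_X}=r'_{\mi,\mi,\rC_X}$ is handled by the symmetric argument using $r'$ in \eqref{eq:bilax-2-rep}; here one needs to exchange the order in which $\mathrm{coev}_X$ and $(u'_\mi)$ are applied, which is legitimate by naturality of $u'_\mi$ applied to the morphism $\mathrm{coev}_X$. This finishes the verification and exhibits $(\rC_X,\Delta_{\rC_X},\epsilon_{\rC_X})$ as a coalgebra in $\underline{\cC}(\mi,\mi)$ in the sense of Definition~\ref{alg-coalg}.
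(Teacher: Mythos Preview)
Your proposal is correct and follows essentially the same approach as the paper: both define $\Delta_{\rC_X}$ and $\epsilon_{\rC_X}$ via the Yoneda-type isomorphism \eqref{eq:09} applied to the iterated coevaluation and to $(u'_\mi)_X$, and then verify the coalgebra axioms by transporting through \eqref{eq:09} using naturality in $\rF$ together with \eqref{eq:bilax-2-rep}. The paper defers coassociativity and right counitality to a direct computation and spells out only the left counitality via a large commutative diagram; your more compressed verification of left counitality implicitly uses the same ingredients (naturality of $\underline{\bM}(\epsilon_{\rC_X})$ and of $u'_\mi$, plus the triangle identifying $\underline{\bM}(\epsilon_{\rC_X})_X\circ\mathrm{coev}_X$ with $(u'_\mi)_X$), though you attribute the ``exchange of order'' step to the right counitality when in fact it is the left one that requires naturality of $u'_\mi$ applied to $\mathrm{coev}_X$.
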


\begin{proof}
We follow \cite[Lemma~4.3]{MMMT} and \cite[Section~7.9]{EGNO}.
We have the coevaluation map:
\[\mathrm{coev}_{X,X}:X\to\underline{\bM}(\rC_X)X,\]
given by the pre-image of $\id_{\rC_X}$ under the isomorphism~\eqref{eq:09}, by setting $\rF=\rC_X$ and $X=Y$.
The comultiplication map
\[\Delta_{\rC_X}: \rC_X\to\rC_X\rC_X\]
is given by the image of $(\underline{\bM}(\id_{\rC_X})\mathrm{coev}_{X,X})\circ_{\rv}\mathrm{coev}_{X,X}$ under the isomorphism
\[\Hom_{\underline{\bM}(\mj)}(X,\underline{\bM}(\rC_X\rC_X)X)\cong \Hom_{\underline{\ccC}(\mi,\mj)}(\rC_X,\rC_X\rC_X)\]
since $\underline{\bM}(\rF)\underline{\bM}(\rG)=\underline{\bM}(\rF\rG)$, for any $\rF\in\underline{\cC}(\mi,\mj)$ and $\rG\in\underline{\cC}(\mk,\mi)$.
The counit map
\[\epsilon_{\rC_X}: \rC_X\to\rI'_{\mi}\]
is given by the image of $(u'_{\mi})_X:X\to\bM(\rI'_{\mi})X=\underline{\bM}(\rI'_{\mi})X$ under the isomorphism \eqref{eq:09}, by setting $\rF=\rI'_{\mi}$ and $X=Y$.
Note that, by the naturality of the isomorphism~\eqref{eq:09}, we have the following two commutative diagrams:
\begin{equation}\label{eq:10}
\xymatrix@C=3.5pc@R=2.5pc{X\ar[r]^{\mathrm{coev}_{X,X}\qquad}\ar[d]_{\mathrm{coev}_{X,X}}&\underline{\bM}(\rC_X)X\ar[r]^{\underline{\bM}(\Delta_{\rC_X})_X\ }
&\underline{\bM}(\rC_X\rC_X)X\ar@{=}[d]\\
\underline{\bM}(\rC_X)X\ar[rr]^{\underline{\bM}(\id_{\rC_X})\mathrm{coev}_{X,X}\ }&&\underline{\bM}(\rC_X)\underline{\bM}(\rC_X)X}\quad\text{and}\quad
\xymatrix@C=2.5pc@R=2.5pc{X\ar[rr]^{\mathrm{coev}_{X,X}\qquad}\ar[drr]_{(u'_{\mi})_X}&&
\underline{\bM}(\rC_X)X\ar[d]^{\underline{\bM}(\epsilon_{\rC_X})_X}\\
&&\underline{\bM}(\rI'_{\mi})X.}
\end{equation}
The coassociativity and the right counitality, as in \cite[Section~7.9]{EGNO}, can be checked by a direct computation
using the natural isomorphism \eqref{eq:09} and \eqref{eq:bilax-2-rep}. Consider the commutative diagram
\begin{equation}\label{eq:11}
\xymatrix@C=3pc@R=2.5pc{&&\underline{\bM}(\rC_X)X\ar[rr]^{\underline{\bM}(\Delta_{\rC_X})_X}&&\underline{\bM}(\rC_X\rC_X)X\ar@{=}[d]
\ar@/^5pc/[dddd]^{\underline{\bM}(\epsilon_{\rC_X}\id_{\rC_X})_X}\\
X\ar[drr]_{(u'_{\mi})_X}\ar[rr]^{\mathrm{coev}_{X,X}}\ar[dddrr]_{\mathrm{coev}_{X,X}}\ar[urr]^{\mathrm{coev}_{X,X}}&&\underline{\bM}(\rC_X)X
\ar[rr]^{\underline{\bM}(\id_{\rC_X})\mathrm{coev}_{X,X}\quad }\ar[d]_{\underline{\bM}(\epsilon_{\rC_X})_X}&&\underline{\bM}(\rC_X)\underline{\bM}(\rC_X)X
\ar[d]_{\underline{\bM}(\epsilon_{\rC_X})\id_{\underline{\bM}(\rC_X)_X}}\\
&&\underline{\bM}(\rI'_{\mi})X\ar[rr]_{\id_{\underline{\bM}(\rI'_{\mi})}\mathrm{coev}_{X,X}\quad}&&\underline{\bM}(\rI'_{\mi})\underline{\bM}(\rC_X)X\ar@{=}[dd]\\
&&&&\\
&&\underline{\bM}(\rC_X)X\ar[uurr]|-{\ (u'_{\mi})_{\underline{\bM}(\rC_X)X}}\ar[rr]^{\underline{\bM}(l'_{\mi,\mi,\rC_X})_X}&&\underline{\bM}(\rI'_{\mi}\rC_X)X.}
\end{equation}
Here the top pentagon and the left middle triangle commute thanks to \eqref{eq:10}. The middle square and the bottom left square commute by the
naturalities of $\underline{\bM}(\epsilon_{\rC_X})$ and $u'_{\mi}$ respectively.
The rightmost square commutes by definition and the bottom middle triangle commutes due to~\eqref{def:bilax-2-rep}.
Therefore the two paths along the boundary of the whole diagram~\eqref{eq:11} coincide with each other which implies that their images under the the isomorphism
\eqref{eq:09} by setting $\rF=\rI'_{\mi}\rC_X$ and $X=Y$ also coincide. Hence the left counitality follows.
\end{proof}

\subsection{Categories of comodules}\label{sec:coalgcomod}

For any object $X\in\bM(\mi)$, we have the bilax $2$-representation $\comod_{\underline{\ccC}}\text{-}\rC_X$  of $\cC$ (see the end of Subsection~\ref{s2.5} for definitions). Then we have the functor
\[\begin{split}
    \Theta_\mj: \bM(\mj)&\to (\comod_{\underline{\ccC}}\text{-}\rC_X)(\mj),\\
     Y&\mapsto \ihom (X,Y),
\end{split}
\]
for each $\mj\in\ob\cC$ with the obvious assignment on morphisms.

We have a result similar to \cite[Theorem 4.7]{MMMT} in the following general setting:

\begin{thm}\label{thm4.7}
Let $\bM$ be a finitary bilax $2$-representation of a fiax category $\cC$ generated by some non-zero object $X\in\bM(\mi)$.
Then the functors $\Theta_\mj$ give rise to an equivalence of bilax $2$-representations of $\cC$ between $\underline{\bM}$
and $\comod_{\underline{\ccC}}\text{-}\rC_X$. This equivalence can be restricted to an equivalence of finitary bilax $2$-representations of $\cC$
between $\bM$ and $\operatorname{inj}_{\underline{\ccC}}\text{-}\rC_X$.
\end{thm}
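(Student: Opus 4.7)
The plan is to follow the classical strategy used in \cite[Theorem~4.7]{MMMT} and \cite[Section~7.9]{EGNO}, adapting it to our bilax-unital setup by carefully keeping track of (op)lax unitors. The proof naturally splits into four tasks: (i)~endowing $\Theta_\mj(Y)$ with a right $\rC_X$-comodule structure, (ii)~promoting $\Theta=(\Theta_\mj)_\mj$ to a bilax $2$-natural transformation, (iii)~showing $\Theta$ induces an equivalence after passing to the abelianization $\underline{\bM}$, and (iv)~restricting to the finitary subcategories of injective comodules.

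First I would use the universal property \eqref{eq:09} to define the comodule structure on $\Theta_\mj(Y)=\ihom(X,Y)$. There is a coevaluation $\mathrm{coev}_{X,Y}:Y\to \underline{\bM}(\ihom(X,Y))X$ obtained as the preimage of $\id_{\ihom(X,Y)}$ under \eqref{eq:09}. Then the composite
\[\xymatrix@C=1.4pc{Y\ar[r]^-{\mathrm{coev}_{X,Y}}&\underline{\bM}(\ihom(X,Y))X\ar[rr]^-{\underline{\bM}(\id)\mathrm{coev}_{X,X}}&&\underline{\bM}(\ihom(X,Y)\rC_X)X}\]
corresponds, via \eqref{eq:09}, to a $2$-morphism $\rho_{\ihom(X,Y)}:\ihom(X,Y)\to\ihom(X,Y)\rC_X$. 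The coassociativity and counitality of $\rho$ follow by pasting diagrams analogous to~\eqref{eq:11} and exploiting the uniqueness part of \eqref{eq:09}; in particular, the counitality uses the identity $\underline{\bM}(\epsilon_{\rC_X})_Y\circ_{\rv}\mathrm{coev}_{X,Y}=(u'_{\mi})_Y$ from Proposition~\ref{end is coalg}, together with \eqref{eq:bilax-2-rep}.

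Next I would construct the structural natural isomorphisms $\varphi_{\mi,\mj}(\rF):\rF\cdot\ihom(X,Y)\to\ihom(X,\underline{\bM}(\rF)Y)$ required by the definition of a bilax $2$-natural transformation. For each $\rF\in\cC(\mi,\mj)$, the isomorphism $\varphi_{\mi,\mj}(\rF)$ is obtained from the composite $\underline{\bM}(\id_\rF)\mathrm{coev}_{X,Y}:\underline{\bM}(\rF)Y\to\underline{\bM}(\rF\cdot\ihom(X,Y))X$ via \eqref{eq:09}; that this is an isomorphism of right $\rC_X$-comodules, and is natural in $\rF$, both reduce to straightforward applications of \eqref{eq:09}. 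The coherence diagrams \eqref{eq:0001}--\eqref{eq:0003} (with $\bN=\comod_{\underline{\ccC}}\text{-}\rC_X$) are then verified by invoking \eqref{eq:bilax-2-rep} for $\bM$ and the definition of $\Delta_{\rC_X},\epsilon_{\rC_X}$ from Proposition~\ref{end is coalg}; here Remark~\ref{rem1} allows us to treat all computations inside $\underline{\cC}$.

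The heart of the argument is to show that, after extending $\Theta_\mj$ to a functor $\underline{\Theta}_\mj:\underline{\bM}(\mj)\to(\comod_{\underline{\ccC}}\text{-}\rC_X)(\mj)$ in the unique way compatible with the abelianization (using Corollary~\ref{kernel} and that comodule categories are closed under kernels/cokernels, cf.\ Proposition~\ref{prop:Fmodabel}), $\underline{\Theta}_\mj$ is an equivalence. Fully-faithfulness on objects of $\bM(\mj)$ amounts to bijections $\Hom_{\bM(\mj)}(\underline{\bM}(\rF)X,Y)\cong\Hom_{\rC_X}(\rF,\ihom(X,Y))$, which follow from \eqref{eq:09} combined with $\ihom(X,\underline{\bM}(\rF)X)\cong \rF\cdot\rC_X$ and the cofree--forgetful adjunction between $\underline{\cC}(\mj,\mi)$ and $\comod_{\underline{\ccC}}\text{-}\rC_X(\mj)$; this latter adjunction is the dual of the free--forgetful adjunction used in Corollary~\ref{modulelift} and it works in our setting by Proposition~\ref{prop:cotensor-product}. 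Essential surjectivity is the main obstacle and is where the hypothesis that $X$ generates $\bM$ is essential: every right $\rC_X$-comodule admits a (co)presentation by cofree objects $\rF\cdot\rC_X$ with $\rF\in\cC(\mj,\mi)$, hence by objects of the form $\ihom(X,\underline{\bM}(\rF)X)$, and fullness then allows us to realize it as $\ihom(X,K)$ for a suitable kernel/cokernel $K\in\underline{\bM}(\mj)$. Finally, for the finitary statement, one observes that $\Theta_\mj$ sends $\bM(\mj)$ into $\operatorname{inj}_{\underline{\ccC}}\text{-}\rC_X(\mj)$ (the image of $\cC(\mj,\mi)$ under the embedding $\cC(\mj,\mi)\hookrightarrow\underline{\cC}(\mj,\mi)$ consists of injective objects, cf.\ Subsection~\ref{sec:abelianization}), and uses the generation of $\bM$ by $X$ together with $\cC\,\bM=\bM$ to conclude that this restriction is essentially surjective onto injective comodules.
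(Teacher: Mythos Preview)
Your approach is essentially the paper's: both follow \cite[Theorem~4.7]{MMMT} by establishing the comodule structure on $\ihom(X,Y)$, the bilax $2$-natural transformation structure (the paper's Lemma~\ref{Crepmorph}), the cofree--forgetful adjunction (Lemma~\ref{lem:removingA}), and injectivity of $\Theta_\mj(Y)$ (Lemma~\ref{imageinjective}), then invoking the classical argument for equivalence.

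There is, however, one point where your sketch is too quick, and it is precisely the place where the bilax setup requires genuine work. You assert that the cofree--forgetful adjunction $\Hom_{\rC_X}(\rM,\rF\rC_X)\cong\Hom_{\underline{\ccC}}(\rM,\rF)$ ``works in our setting by Proposition~\ref{prop:cotensor-product}'' and is ``the dual of the free--forgetful adjunction used in Corollary~\ref{modulelift}''. Neither reference establishes this: Proposition~\ref{prop:cotensor-product} concerns $\rF\Box_\rF\rN\cong\rN$, and Corollary~\ref{modulelift} lifts modules but states no adjunction. The issue is that the counit $\epsilon_{\rC_X}$ lands in $\rI'_{\mi}$, and the oplax unitor $r':\rF\to\rF\rI'_{\mi}$ goes the wrong way to recover $\rF$. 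The paper resolves this in Lemma~\ref{lem:removingA} by first lifting $\rC_X$ to a coalgebra in $\widecheck{\cC}$ via Proposition~\ref{coalgebra lifts to 2cat}, so that the new counit $\zeta'_{\rC_X}:\rC_X\to\mathbbm{1}'_{\mi}$ lands in a weak unit and the invertible $\widecheck{r}':\rF\cong\rF\mathbbm{1}'_{\mi}$ is available. Your parenthetical justification of injectivity---that objects of $\cC(\mj,\mi)$ are injective in $\underline{\cC}(\mj,\mi)$---is likewise insufficient on its own: injectivity of $\rF\rC_X$ in the \emph{comodule} category follows only after this adjunction is in hand (this is exactly how the paper proves Lemma~\ref{imageinjective}).
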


The rest of the subsection is devoted to the proof of Theorem~\ref{thm4.7}.

\begin{lem}\label{Crepmorph}
For any $1$-morphism $\rF\in \cC(\mj,\mk)$ and any object $Y\in\bM(\mj)$, we have a natural isomorphism
\begin{equation}\label{eq:003}
\ihom(X,\bM(\rF)Y)\cong \rF\ihom(X,Y)
\end{equation}
in $\underline{\cC}$. Furthermore, the functors $\Theta_\mj$ form a bilax $2$-natural transformation $\Theta$ of bilax $2$-rep\-re\-sen\-ta\-ti\-ons.
\end{lem}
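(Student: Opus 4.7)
The plan is to establish the isomorphism by Yoneda, chaining two applications of the defining universal property of $\ihom$ with the two adjunctions afforded by the fiax structure. The adjoint pair $(\rF,\rF^\star)$ of Definition~\ref{deffiax} extends along the embedding $\cC\hookrightarrow\underline{\cC}$; applying Proposition~\ref{propadj} to the bilax $2$-representation $\underline{\bP}_\mi=\underline{\ccC}(\mi,{}_-)$ of $\underline{\cC}$ produces an adjunction between the horizontal-composition functors $\rF\circ{}_-$ and $\rF^\star\circ{}_-$; applying the same proposition to $\underline{\bM}$ produces an adjunction between $\underline{\bM}(\rF)$ and $\underline{\bM}(\rF^\star)$.

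For an arbitrary $\rG\in\underline{\cC}(\mi,\mk)$, the plan is to compute
\begin{align*}
\Hom_{\underline{\ccC}(\mi,\mk)}\!\bigl(\rF\ihom(X,Y),\rG\bigr)
&\cong \Hom_{\underline{\ccC}(\mi,\mj)}\!\bigl(\ihom(X,Y),\rF^\star \rG\bigr) \\
&\cong \Hom_{\underline{\bM}(\mj)}\!\bigl(Y,\underline{\bM}(\rF^\star\rG)X\bigr) \\
&=\ \Hom_{\underline{\bM}(\mj)}\!\bigl(Y,\underline{\bM}(\rF^\star)\underline{\bM}(\rG)X\bigr) \\
&\cong \Hom_{\underline{\bM}(\mk)}\!\bigl(\bM(\rF)Y,\underline{\bM}(\rG)X\bigr) \\
&\cong \Hom_{\underline{\ccC}(\mi,\mk)}\!\bigl(\ihom(X,\bM(\rF)Y),\rG\bigr),
\end{align*}
using, in order, the adjunction on $\underline{\cC}$, the extended representability \eqref{eq:09}, strict functoriality of $\underline{\bM}$, the adjunction on $\underline{\bM}$, and \eqref{eq:09} once more. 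Yoneda then delivers the claimed isomorphism $\rF\ihom(X,Y)\cong\ihom(X,\bM(\rF)Y)$ in $\underline{\cC}(\mi,\mk)$; naturality in $\rG$ is automatic from the chain, and naturality in $\rF$ and $Y$ is inherited from the functoriality of each constituent isomorphism.

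For the second assertion, the natural isomorphisms just constructed will serve as the components $\varphi_{\mj,\mk}(\rF)$ of the bilax $2$-natural transformation $\Theta$, and it remains to verify the coherence diagrams \eqref{eq:0001}, \eqref{eq:0002} and \eqref{eq:0003}. Diagram \eqref{eq:0001}, the compatibility with horizontal composition of $1$-morphisms, follows by performing the Yoneda chain with $\rG\rF$ in place of $\rF$ and comparing against the two-step procedure; this rests on the standard fact that the adjunction $(\rG\rF,\rF^\star\rG^\star)$ factors through $(\rF,\rF^\star)$ and $(\rG,\rG^\star)$ (cf.\ Proposition~\ref{adjunction and horizontal}).

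The main obstacle is the verification of the unit diagrams \eqref{eq:0002} and \eqref{eq:0003}. Here one must reconcile the unitors $u_\mi,u_\mi'$ of $\bM$ with the unitors of $\comod_{\underline{\ccC}}\text{-}\rC_X$, which, as noted after Corollary~\ref{modulelift}, are given by the left (op)lax unitors $l_\mi,l_\mi'$ of $\underline{\cC}$ acting on comodules. The subtlety is that the coalgebra structure maps on $\rC_X=\ihom(X,X)$ were themselves built from $(u_\mi')_X$ and the coevaluation via \eqref{eq:10}, so the same data appears on both sides of the diagrams, but in slightly different guises. I expect the check to reduce, by tracing $\varphi_{\mi,\mi}(\rI_\mi)$ (respectively $\varphi_{\mi,\mi}(\rI_\mi')$) through the Yoneda chain, to the unitality axioms \eqref{eq:bilax-2-rep} for $\bM$ applied at $X$, combined with the defining commutative square \eqref{eq:10} for $\epsilon_{\rC_X}$ (and its analogue for the lax side); this is the main work, but no essentially new ingredient enters.
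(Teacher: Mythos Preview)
Your Yoneda chain for the first assertion is exactly the paper's argument (just read in the opposite direction), and your outline for \eqref{eq:0001} matches as well.

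For \eqref{eq:0002} and \eqref{eq:0003} your high-level plan is correct---trace $\varphi_{\mj,\mj}(\rI_\mj)$ through the chain and invoke \eqref{eq:bilax-2-rep}---but you have slightly misidentified the key ingredient. The commutative square \eqref{eq:10} for $\epsilon_{\rC_X}$ does not enter: the coalgebra structure on $\rC_X$ plays no role in verifying \eqref{eq:0002}. What actually does the work is the explicit form of the adjunction isomorphism for the pair $(\rI_\mj,\rI'_\mj)$ (via its adjunction morphisms $\alpha:\rI'_\mj\to\rI'_\mj\rI_\mj$ and $\beta:\rI_\mj\rI'_\mj\to\rI_\mj$, as in Proposition~\ref{propadj}), together with the naturality of $u_\mj$ and $u'_\mj$ applied at $Y$ and at $\underline{\bM}(\ihom(X,Y))X$, not at $X$. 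The paper carries this out by building a commutative ladder from $Y$ to $\underline{\bM}(\rI'_\mj)\underline{\bM}(\ihom(X,Y))X$ along the coevaluation $\mathrm{coev}_{X,Y}$, then comparing both sides of the claimed equality after applying $\varphi_4^{-1}\circ\varphi_5^{-1}$ (the internal-hom and adjunction isomorphisms). No essentially new idea is needed, but the actual computation is longer and more delicate than your sketch suggests, and you should drop the reference to \eqref{eq:10}.
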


\begin{proof} The first statement is proved similarly to \cite[Lemma~4.4]{MMMT}. We have the sequence of natural isomorphisms:
\begin{equation}
    \begin{split}
        \Hom_{\underline{\ccC}}(\ihom(X,\bM(\rF)Y),\,\rG)&\cong \Hom_{\underline{\bM}(\mk)}(\underline{\bM}(\rF)Y,\,\underline{\bM}(\rG)X)\\
        &\cong\Hom_{\underline{\bM}(\mj)}(Y,\,\underline{\bM}(\rF^\star)\underline{\bM}(\rG)X)\\
        &=\Hom_{\underline{\bM}(\mj)}(Y,\,\underline{\bM}(\rF^\star\rG)X)\\
        &\cong\Hom_{\underline{\ccC}}(\ihom(X,Y),\,\rF^\star\rG)\\
        &\cong\Hom_{\underline{\ccC}}(\rF\ihom(X,Y),\,\rG),
    \end{split}
\end{equation}
for any $1$-morphism $\rG\in\underline{\cC}(\mi,\mk)$.
To obtain the second statement, one needs to prove that~\eqref{eq:003} satisfies the coherence conditions~\eqref{eq:0001}, \eqref{eq:0002} and~\eqref{eq:0003}.
The proof of~\eqref{eq:0001} is similar to that of \cite[Lemma~4.4]{MMMT}. We only
prove~\eqref{eq:0002} since~\eqref{eq:0003} is proved using a similar computation.
It suffices to prove that, for any $Y\in\bM(\mj)$ and $\rG\in\underline{\cC}(\mi,\mj)$,
the map
\begin{equation*}
{}_-\circ_{\rv}\,l_{\mi,\mj,\,\ihom(X,Y)}:\Hom_{\underline{\ccC}}(\ihom(X,Y),\rG)\to\Hom_{\underline{\ccC}}(\rI_{\mj}\ihom(X,Y),\rG)
\end{equation*}
is equal the composite
\begin{equation*}
\begin{split}
\xymatrix@C=1.6pc{\Hom_{\underline{\ccC}}(\ihom(X,Y),\,\rG)\xrightarrow{\varphi_1}
\Hom_{\underline{\bM}(\mj)}(Y,\,\underline{\bM}(\rG)X)\ar[rr]^{\qquad\qquad\ {}_-\circ_{\rv}\,(u_{\mj})_Y }&&\Hom_{\underline{\bM}(\mj)}(\underline{\bM}(\rI_{\mj})Y,\,\underline{\bM}(\rG)X)}\\
\xrightarrow{\varphi_2}\Hom_{\underline{\ccC}}(\ihom(X,\underline{\bM}(\rI_{\mj})Y),\,\rG)
\xrightarrow{\varphi_{2}^{\mone}}\Hom_{\underline{\bM}(\mj)}(\underline{\bM}(\rI_{\mj})Y,\,\underline{\bM}(\rG)X)\\
\xrightarrow{\varphi_{3}}\Hom_{\underline{\bM}(\mj)}(Y,\,\underline{\bM}(\rI'_{\mj})\underline{\bM}(\rG)X)=
\Hom_{\underline{\bM}(\mj)}(Y,\,\underline{\bM}(\rI'_{\mj}\rG)X)\\
\xrightarrow{\varphi_{4}}\Hom_{\underline{\ccC}}(\ihom(X,Y),\,\rI'_{\mj}\rG)
\xrightarrow{\varphi_{5}}\Hom_{\underline{\ccC}}(\rI_{\mj}\ihom(X,Y),\,\rG),
\end{split}
\end{equation*}
where we note that $(\rI_{\mj})^\star=\rI'_{\mj}$ and each $\varphi_{i}$ is a natural isomorphism.
Therefore, the equality we need to prove is the following:
\begin{equation}\label{eq:0004}
(\varphi_4^{\mone}\circ_{\rv}\varphi_5^{\mone})(\gamma\circ_{\rv}\,l_{\mi,\mj,\,\ihom(X,Y)})=\varphi_3(\varphi_1(\gamma)\circ_{\rv}\,(u_{\mj})_Y),
\end{equation}
for any $2$-morphism $\gamma:\ihom(X,Y)\to\rG$ in $\underline{\cC}$. Denote by
\[\alpha:\rI'_{\mj}\to \rI'_{\mj}\rI_{\mj}\quad\text{and}\quad
    \beta:\rI_{\mj}\rI'_{\mj} \to \rI_{\mj}\]
the adjunction morphisms for the adjoint pair $(\rI_{\mj},\rI'_{\mj})$. Consider the diagram
\begin{equation}\label{eq:0005}
\xymatrix@C=3.5pc@R=2.5pc{Y\ar[d]_{(u'_{\mj})_Y}\ar[rr]^{\mathrm{coev}_{X,Y}\qquad\quad}&&\bM(\ihom(X,Y))X\ar[d]^{(u'_{\mj})_{\bM(\ihom(X,Y))X}}\\
\bM(\rI'_{\mj})Y\ar[d]_{\bM(\alpha)_Y}\ar[rr]^{\id_{\bM(\rI'_{\mj})}\mathrm{coev}_{X,Y}\qquad\quad}
&&\bM(\rI'_{\mj})\bM(\ihom(X,Y))X\ar[d]^{\bM(\alpha)_{\bM(\ihom(X,Y))X}}\\
\bM(\rI'_{\mj}\rI_{\mj})Y\ar@{=}[d]\ar[rr]^{\id_{\bM(\rI'_{\mj}\rI_{\mj})}\mathrm{coev}_{X,Y}\qquad\quad}&&\bM(\rI'_{\mj}\rI_{\mj})\bM(\ihom(X,Y))X\ar@{=}[d]\\
\bM(\rI'_{\mj})\bM(\rI_{\mj})Y\ar[d]_{\id_{\bM(\rI'_{\mj})}(u_{\mj})_Y}\ar[rr]^{\id_{\bM(\rI'_{\mj})\bM(\rI_{\mj})}\mathrm{coev}_{X,Y}\qquad\quad}
&&\bM(\rI'_{\mj})\bM(\rI_{\mj})\bM(\ihom(X,Y))X\ar[d]^{\id_{\bM(\rI'_{\mj})}(u_{\mj})_{\bM(\ihom(X,Y))X}}\\
\bM(\rI'_{\mj})Y\ar[rr]^{\id_{\bM(\rI'_{\mj})}\mathrm{coev}_{X,Y}\qquad\quad}&&\bM(\rI'_{\mj})\bM(\ihom(X,Y))X,}
\end{equation}
where $\mathrm{coev}_{X,Y}:Y\to\bM(\ihom(X,Y))X$ is the pre-image of $\id_{\ihom(X,Y)}$ under the isomorphism \[\Hom_{\underline{\bM}(\mj)}(Y,\,\underline{\bM}(\ihom(X,Y))X)\cong \Hom_{\underline{\ccC}(\mi,\mj)}(\ihom(X,Y),\,\ihom(X,Y)).\]
In~\eqref{eq:0005}, the top and the second squares commute due to the naturalities of $u'_{\mj}$ and $\bM(\alpha)$ respectively;
the third square commutes by definition and the bottom square commutes due to the naturality of $u_{\mj}$.
Therefore the whole diagram~\eqref{eq:0005} commutes.
Computing the left hand-side of \eqref{eq:0004}, we have
\[\begin{split}
(\varphi_4^{\mone}\circ_{\rv}\varphi_5^{\mone})(\gamma\circ_{\rv}\,l_{\mi,\mj,\,\ihom(X,Y)})&=
\varphi_4^{\mone}\Big((\id_{\rI'_{\mj}}\big(\gamma\circ_{\rv}\,l_{\mi,\mj,\,\ihom(X,Y)})\big)\circ_{\rv}
\big(\alpha\id_{\ihom(X,Y)}\big)\circ_{\rv}l'_{\mi,\mj,\,\ihom(X,Y)}\Big)\\
&=
\varphi_4^{\mone}\big((\id_{\rI'_{\mj}}\gamma)\circ_{\rv}(\id_{\rI'_{\mj}}l_{\mi,\mj,\,\ihom(X,Y)})\circ_{\rv}(\alpha\id_{\ihom(X,Y)})\circ_{\rv}
l'_{\mi,\mj,\,\ihom(X,Y)}\big)\\
&=\bM(\id_{\rI'_{\mj}}\gamma)_X\circ_{\rv}\bM(\id_{\rI'_{\mj}}l_{\mi,\mj,\,\ihom(X,Y)})_X\circ_{\rv}\bM(\alpha\id_{\ihom(X,Y)})_X\\
&\quad\circ_{\rv}\bM(l'_{\mi,\mj,\,\ihom(X,Y)})_X\circ_{\rv}\mathrm{coev}_{X,Y}\\
&=\bM(\id_{\rI'_{\mj}}\gamma)_X\circ_{\rv}(\id_{\bM(\rI'_{\mj})}(u_{\mj})_{\bM(\ihom(X,Y))X})\circ_{\rv}\bM(\alpha\id_{\ihom(X,Y)})_X\\
&\quad\circ_{\rv}(u'_{\mj})_{\bM(\ihom(X,Y))X}\circ_{\rv}\mathrm{coev}_{X,Y}\\
&=\bM(\id_{\rI'_{\mj}}\gamma)_X\circ_{\rv}(\id_{\bM(\rI'_{\mj})}\mathrm{coev}_{X,Y})\circ_{\rv}(\id_{\bM(\rI'_{\mj})}(u_{\mj})_Y)\circ_{\rv}
\bM(\alpha)_Y\circ_{\rv}(u'_{\mj})_Y,
\end{split}\]
where the first equality holds due to the proof of Proposition~\ref{propadj}; the second equality holds by the interchange law;
the third equality holds by the naturality of the isomorphism~\eqref{eq:09}, that is, we have the commutative diagram:
\begin{equation}\label{eq:0006}
\xymatrix@C=3pc@R=2.5pc{\Hom_{\underline{\ccC}(\mi,\mj)}(\ihom(X,Y),\,\ihom(X,Y))\ar[rr]^{\cong}\ar[d]_{\delta\circ{\rv}{}_-}
&&\Hom_{\underline{\bM}(\mj)}(Y,\,\underline{\bM}(\ihom(X,Y))X)\ar[d]^{\bM(\delta)_X\circ{\rv}{}_-}\\
\Hom_{\underline{\ccC}(\mi,\mj)}(\ihom(X,Y),\,\rH)\ar[rr]^{\cong }&&\Hom_{\underline{\bM}(\mj)}(Y,\,\underline{\bM}(\rH)X);}
\end{equation}
for any $\delta\in\Hom_{\underline{\ccC}(\mi,\mj)}(\ihom(X,Y),\,\rH)$;
the fourth equality holds by~\eqref{eq:bilax-2-rep} and the fifth equality holds due to the commutativity of ~\eqref{eq:0005}.
Computing the right hand-side of \eqref{eq:0004}, we have
\[\begin{split}
\varphi_3(\varphi_1(\gamma)\circ_{\rv}\,(u_{\mj})_Y)&=\varphi_3(\bM(\gamma)_X\circ_{\rv}\mathrm{coev}_{X,Y}\circ_{\rv}\,(u_{\mj})_Y)\\
&=\bM(\id_{\rI'_{\mj}}\gamma)_X\circ_{\rv}(\id_{\bM(\rI'_{\mj})}\mathrm{coev}_{X,Y})\circ_{\rv}(\id_{\bM(\rI'_{\mj})}(u_{\mj})_Y)\circ_{\rv}
\bM(\alpha)_Y\circ_{\rv}(u'_{\mj})_Y,
\end{split}\]
where the first equality holds by~\eqref{eq:0006} and the last equality holds due to the proof of Proposition~\ref{propadj}.
This proves the equality~\eqref{eq:0004}.
\end{proof}

\begin{lem}\label{lem:removingA}
Let $\rC$ be a coalgebra 1-morphism in $\underline{\cC}(\mi,\mi)$ for some $\mi\in\ob\cC$. For any $\rM\in (\comod_{\underline{\ccC}}\text{-}\rC)(\mj)$ and any $1$-morphism $\rF\in\cC(\mi,\mj)$,
we have an isomorphism
\begin{equation}\label{eq:12}
\Hom_{\comod_{\underline{\ccC}}\text{-}\rC}(\rM,\rF\rC)\cong\Hom_{\underline{\ccC}}(\rM,\rF).
\end{equation}
\end{lem}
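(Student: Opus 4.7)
The plan is to construct mutually inverse maps realising the standard "cofree comodule" adjunction. The cleanest way to handle the absence of genuine identities is to pass to the fiat $2$-category $\widecheck{\cC}$ with weak units $\mathbbm{1}'_{\mi}$ (see Remark~\ref{rem:oplaxside1}): by Proposition~\ref{coalgebra lifts to 2cat}, the coalgebra $\rC$ acquires a counit $\zeta'_{\rC}:\rC\to\mathbbm{1}'_{\mi}$, and by Corollary~\ref{comodulelift}\,\ref{comodulelift-2}, the coaction $\rho_{\rM}$ makes $\rM$ a right $\rC$-comodule in $\widecheck{\cC}$ with counitality $(\id_{\rM}\,\zeta'_{\rC})\circ_{\rv}\rho_{\rM}=\widecheck{r}'_{\mi,\mj,\rM}$. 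In this setting the right unitor $\widecheck{r}'_{\mi,\mj,\rF}:\rF\xrightarrow{\sim}\rF\mathbbm{1}'_{\mi}$ is a natural isomorphism, hence invertible.

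Concretely, I define
\[
\Phi(g):=(\widecheck{r}'_{\mi,\mj,\rF})^{-1}\circ_{\rv}(\id_{\rF}\,\zeta'_{\rC})\circ_{\rv}g,
\qquad
\Psi(f):=(f\,\id_{\rC})\circ_{\rv}\rho_{\rM},
\]
for $g\in\Hom_{\comod_{\underline{\ccC}}\text{-}\rC}(\rM,\rF\rC)$ and $f\in\Hom_{\underline{\ccC}}(\rM,\rF)$. First I verify that $\Psi(f)$ is a $\rC$-comodule morphism: the condition $(\Psi(f)\,\id_{\rC})\circ_{\rv}\rho_{\rM}=(\id_{\rF}\,\Delta_{\rC})\circ_{\rv}\Psi(f)$ reduces, after expanding both sides by interchange, to the coassociativity of $\rho_{\rM}$.

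For $\Phi\Psi=\id$, the interchange law allows me to pull $f$ past $\id_{\rF}\,\zeta'_{\rC}$ in $\Phi(\Psi(f))$, obtaining $(\widecheck{r}'_{\mi,\mj,\rF})^{-1}\circ_{\rv}(f\,\id_{\mathbbm{1}'_{\mi}})\circ_{\rv}(\id_{\rM}\,\zeta'_{\rC})\circ_{\rv}\rho_{\rM}$; the last two factors collapse to $\widecheck{r}'_{\mi,\mj,\rM}$ by counitality of $\rM$, and naturality of $\widecheck{r}'_{\mi,\mj,-}$ then leaves exactly $f$. For $\Psi\Phi=\id$, applying the comodule condition $(g\,\id_{\rC})\circ_{\rv}\rho_{\rM}=(\id_{\rF}\,\Delta_{\rC})\circ_{\rv}g$ inside $\Psi(\Phi(g))$ rewrites that expression as $((\widecheck{r}'_{\mi,\mj,\rF})^{-1}\id_{\rC})\circ_{\rv}(\id_{\rF}\,((\zeta'_{\rC}\id_{\rC})\circ_{\rv}\Delta_{\rC}))\circ_{\rv}g$; the left counitality of the lifted coalgebra replaces $(\zeta'_{\rC}\id_{\rC})\circ_{\rv}\Delta_{\rC}$ by $\widecheck{l}'_{\mi,\mi,\rC}$, after which the triangle identity $\id_{\rF}\,\widecheck{l}'_{\mi,\mi,\rC}=\widecheck{r}'_{\mi,\mj,\rF}\,\id_{\rC}$, a consequence of the monoidal unit coherence for $\mathbbm{1}'_{\mi}$ (cf.\ the dual of~\eqref{r1=l1} and \cite{Kel64}), forces the whole composite to equal $g$.

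The main obstacle is that in $\underline{\cC}$ viewed as a bilax-unital $2$-category, the unitor $r'_{\mi,\mj,\rF}$ is only split monic, so the naive definition of $\Phi$ using $\epsilon_{\rC}$ and $r'_{\mi,\mj,\rF}$ directly is not well defined. Passing to $\widecheck{\cC}$ resolves this precisely by replacing the oplax unit $\rI'_{\mi}$ with the weak unit $\mathbbm{1}'_{\mi}$, after which the classical proof of the cofree coalgebra adjunction goes through formally, with the only nontrivial ingredient being the monoidal coherence for weak units that was already established in Subsection~\ref{s3.2}.
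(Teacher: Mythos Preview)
Your proof is correct and follows essentially the same approach as the paper: the maps $\Phi$ and $\Psi$ are defined identically, and the verifications of $\Phi\Psi=\id$ and $\Psi\Phi=\id$ use the same ingredients (the lifted counit $\zeta'_{\rC}$ from Proposition~\ref{coalgebra lifts to 2cat}, counitality of $\rM$ from Corollary~\ref{comodulelift}\,\ref{comodulelift-2}, naturality of $\widecheck{r}'_{\mi,\mj,-}$, and the compatibility of left and right unitors for $\mathbbm{1}'_{\mi}$). The paper invokes the compatibility condition~\eqref{diag:lrcompatible} (in its dual/weak-unit form) directly rather than via Kelly's coherence, but the content is the same.
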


\begin{proof}
By~Proposition~\ref{coalgebra lifts to 2cat}, the coalgebra $1$-morphism $\rC$ is also a coalgebra $1$-morphism in $\widecheck{\cC}(\mi,\mi)\subseteq\underline{\cC}(\mi,\mi)$ with the comultiplication map $\Delta_{\rC}$ and the new counit map $\zeta'_{\rC}$.
Define a $\Bbbk$-linear map $\Phi$ from the left-hand side to the right-hand side of~\eqref{eq:12} by sending any right $\rC$-comodule homomorphism
$\alpha:\rM\to\rF\rC$ to the composite
\[\xymatrix{\rM\ar[rr]^{\alpha}&&\rF\rC\ar[rr]^{\id_{\rF}\zeta'_{\rC}}&&\rF\mathbbm{1}'_{\mi}\ar[rr]^{(\widecheck{r}'_{\mi,\mj,\rF})^{\mone}}_{\cong}&&\rF}.\]
Define a $\Bbbk$-linear map $\Psi$ from the right-hand side to the left-hand side of~\eqref{eq:12} by sending
any $2$-morphism $\beta:\rM\to\rF$ to the composite
\[\xymatrix{\rM\ar[rr]^{\rho_{\rM}}&&\rM\rC\ar[rr]^{\beta\id_{\rC}}&&\rF\rC,}\]
which is, obviously, a right $\rC$-comodule homomorphism.

We now claim that $\Psi\Phi=\id$ and $\Phi\Psi=\id$. On the one hand, consider the diagram
\begin{equation}\label{eq:13}
\xymatrix@R=2.5pc{\rM\ar[rr]^{\rho_{\rM}}\ar[drr]^{\alpha}&&\rM\rC\ar[rr]^{\alpha\id_{\rC}}&&\rF\rC\rC\ar[rr]^{\id_{\rF}\zeta'_{\rC}\id_{\rC}}
&&\rF\mathbbm{1}'_{\mi}\rC\ar[rr]^{(\widecheck{r}'_{\mi,\mj,\rF})^{\mone}\id_{\rC}}\ar[drr]|-{\id_{\rF}(\widecheck{l}'_{\mi,\mi,\rC})^{\mone}}&&\rF\rC\ar@{=}[d]\\
&&\rF\rC\ar[urr]_{\id_{\rF}\Delta_{\rC}}\ar@{=}[rrrrrr]&&&&&&\rF\rC,}
\end{equation}
where the left square commutes if $\alpha$ is a right $\rC$-comodule homomorphism;
the middle square commutes thanks to the left counitality of the coalgebra $1$-morphism $(\rC,\,\Delta_{\rC},\,\zeta'_{\rC})$ in $\widecheck{\cC}(\mi,\mi)$; and
the right triangle commutes by the condition~\eqref{diag:lrcompatible}.
The commutativity of~\eqref{eq:13} implies that $\Psi\Phi(\alpha)=\alpha$,
for any right $\rC$-comodule homomorphism
$\alpha:\rM\to\rF\rC$. On the other hand, consider the diagram
\begin{equation}\label{eq:14}
\xymatrix@R=2.5pc{\rM\ar[rr]^{\rho_{\rM}}\ar[drr]_{\widecheck{r}'_{\mi,\mj,\rM}}&&\rM\rC\ar[rr]^{\beta\id_{\rC}}\ar[d]^{\id_{\rM}\zeta'_{\rC}}&&
\rF\rC\ar[rr]^{\id_{\rF}\zeta'_{\rC}}
&&\rF\mathbbm{1}'_{\mi}\ar[rr]^{(\widecheck{r}'_{\mi,\mj,\rF})^{\mone}}&&\rF\\
&&\rM\mathbbm{1}'_{\mi}\ar[urrrr]|-{\beta\id_{\mathbbm{1}'_{\mi}}}\ar[rrrrrr]^{(\widecheck{r}'_{\mi,\mj,\rM})^{\mone}}&&&&&&\rM\ar[u]_{\beta},}
\end{equation}
where the left triangle commutes thanks to Corollary~\ref{comodulelift}~\ref{comodulelift-2}; the top middle square commutes by the interchange law; and
the right square commutes due to the naturality of $\widecheck{r}'_{\mi,\mj, {}_-}$. The commutativity of~\eqref{eq:14} implies that $\Phi\Psi(\beta)=\beta$, for any
$2$-morphism $\beta:\rM\to\rF$. The claim follows.
\end{proof}

\begin{lem}\label{imageinjective} For any $Y\in\bM(\mj)$, the object $\Theta_\mj(Y)=\ihom(X,Y)$ is injective in $(\comod_{\underline{\ccC}}\text{-}\rC_X)(\mj)$.
That is, the functor $\Theta_\mj$ factors through $(\operatorname{inj}_{\underline{\ccC}}\text{-}\rC_X)(\mj)$, for each $\mj\in\ob\cC$.
\end{lem}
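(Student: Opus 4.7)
The plan is to use generation to exhibit $\ihom(X,Y)$ as a direct summand of a right $\rC_X$-comodule of the special form $\rF\rC_X$ for some $\rF\in\cC(\mi,\mj)$, and then to use Lemma~\ref{lem:removingA} together with the injectivity of $1$-morphisms of $\cC$ inside the abelian category $\underline{\cC}$ to conclude.

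First, since $\bM$ is generated by $X\in\bM(\mi)$, for the given $Y\in\bM(\mj)$ we may pick a $1$-morphism $\rF\in\cC(\mi,\mj)$ and an object $Z\in\bM(\mj)$ such that $\bM(\rF)X\cong Y\oplus Z$. Applying the bifunctor $\ihom(X,{}_-)$ and using the isomorphism $\ihom(X,\bM(\rF)X)\cong \rF\rC_X$ from Lemma~\ref{Crepmorph}, we obtain a direct sum decomposition
\[
\rF\rC_X\;\cong\;\ihom(X,Y)\oplus \ihom(X,Z)
\]
in $\underline{\cC}(\mi,\mj)$. Because $\Theta$ is a bilax $2$-natural transformation (the last assertion of Lemma~\ref{Crepmorph}), this decomposition is compatible with the right $\rC_X$-comodule structures, so $\ihom(X,Y)$ is a direct summand of $\rF\rC_X$ in $(\comod_{\underline{\ccC}}\text{-}\rC_X)(\mj)$. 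Since a direct summand of an injective object is injective, it remains to verify that $\rF\rC_X$ itself is injective in $(\comod_{\underline{\ccC}}\text{-}\rC_X)(\mj)$.

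For the latter, I would invoke Lemma~\ref{lem:removingA}, which provides a natural isomorphism
\[
\Hom_{\comod_{\underline{\ccC}}\text{-}\rC_X}(\rM,\rF\rC_X)\;\cong\;\Hom_{\underline{\ccC}}(\rM,\rF),
\]
for every $\rM\in(\comod_{\underline{\ccC}}\text{-}\rC_X)(\mj)$. Thus the functor on the left is exact if and only if the functor $\Hom_{\underline{\ccC}(\mi,\mj)}({}_-,\rF)$ is exact on the image of the forgetful functor from $(\comod_{\underline{\ccC}}\text{-}\rC_X)(\mj)$ to $\underline{\cC}(\mi,\mj)$. This forgetful functor is exact by the analogue of Proposition~\ref{prop:Fmodabel} for coalgebras in $\underline{\cC}$ (horizontal composition with $\rC_X$ is exact because $\cC$ is fiax, so kernels and cokernels of comodule maps coincide with those computed in $\underline{\cC}(\mi,\mj)$). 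Moreover, $\rF\in\cC(\mi,\mj)$ is, by construction, an injective object of $\underline{\cC}(\mi,\mj)$, as recalled at the beginning of Subsection~\ref{s3.6}. Hence $\Hom_{\underline{\ccC}(\mi,\mj)}({}_-,\rF)$ is exact, which yields the injectivity of $\rF\rC_X$ and therefore of its direct summand $\ihom(X,Y)$.

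The main obstacle I anticipate is not a deep one but a bookkeeping matter: checking that the direct sum decomposition obtained by applying $\ihom(X,{}_-)$ does lift from $\underline{\cC}(\mi,\mj)$ to the comodule category, i.e., that the splitting of the decomposition is compatible with the comodule structures induced via the $2$-natural transformation $\Theta$ of Lemma~\ref{Crepmorph}. Once this naturality is extracted from~\eqref{eq:003} together with the coherences of a bilax $2$-natural transformation, the rest of the argument reduces to combining the two previous lemmas in the indicated way.
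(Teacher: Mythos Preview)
Your proof is correct and follows essentially the same approach as the paper's: both reduce to showing that $\rF\rC_X$ is injective using Lemma~\ref{lem:removingA} together with the injectivity of $\rF$ in $\underline{\cC}(\mi,\mj)$, after noting that any $Y$ is a direct summand of some $\bM(\rF)X$ by generation. The only remark is that your concluding worry is unnecessary: $\Theta_\mj$ is \emph{by definition} a functor from $\bM(\mj)$ into $(\comod_{\underline{\ccC}}\text{-}\rC_X)(\mj)$, so applying it to the decomposition $\bM(\rF)X\cong Y\oplus Z$ in $\bM(\mj)$ immediately yields a direct sum decomposition of comodules, with no separate compatibility check required.
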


\begin{proof}
By Lemma~\ref{prop:fiaxsplit} and the fact that the object $X$ generates $\bM$,
it is enough to prove the statement for $Y=\bM(\rF)X$, where $\rF\in\cC(\mi,\mj)$.
By Lemma~\ref{Crepmorph}, we have the isomorphism
\[\ihom(X,\bM(\rF)X)\cong \rF\ihom(X,X)=\rF\rC_X.\]
By Lemma~\ref{lem:removingA}, we obtain
\[\Hom_{\comod_{\underline{\ccC}}\text{-}\rC_X}({}_-,\rF\rC_X)\cong\Hom_{\underline{\ccC}}({}_-,\rF).\]
Therefore the injectivity of $\rF\rC_X$ follows from the injectivity of $\rF$ in $\underline{\cC}$. The proof is complete.
\end{proof}

\begin{proof}[Proof of Theorem~\ref{thm4.7}]
Using Lemma~\ref{Crepmorph}, Lemma~\ref{lem:removingA} (by setting $\rC:=\rC_X$), Lemma~\ref{imageinjective},
Theorem~\ref{thm4.7} is proved mutatis mutandis \cite[Theorem~4.7]{MMMT}.
\end{proof}

\subsection{Morita-Takeuchi theory}\label{s3.8}

Let $\cC=(\cC,\,\rI=\{\rI_{\mi}|\,\mi\in\ob\cC\},\,\rI'=\{\rI'_{\mi}|\,\mi\in\ob\cC\})$
be a fiax category, and $\rF=(\rF,\,\Delta_{\rF},\epsilon_{\rF})\in\underline{\cC}(\mi,\mi)$
and $\rG=(\rG,\,\Delta_{\rG},\epsilon_{\rG})\in\underline{\cC}(\mj,\mj)$ two coalgebra
$1$-morphisms.

\begin{prop}\label{moritathm}
As bilax $2$-representations of $\underline{\cC}$, $\comod_{\underline{\ccC}}\text{-}\rF$ and $\comod_{\underline{\ccC}}\text{-}\rG$ are equivalent if and only if there exist bi-injective bicomodule 1-morphisms $\rM={}_{\rF}\rM_{\rG}$ and $\rN={}_{\rG}\rN_{\rF}$ in $\underline{\cC}$ and bicomodule $2$-isomorphisms
\[f:\xymatrix{\rF\ar[r]^{\cong\ \ \ }&\rM\Box_{\rG} \rN}\quad\text{and}\quad g:\xymatrix{\rG\ar[r]^{\cong\ \ \ }&\rN\Box_{\rF}\rM,}\]
such that the following diagrams commute:
\begin{equation*}
\xymatrix{\rM\ar[rr]^{\cong}\ar[d]_{\cong}&&\rM\Box_{\rG}\rG\ar[d]^{\id_{\rM}\Box\, g}\\
\rF\Box_{\rF}\rM\ar[rr]^{f\Box\id_{\rM}\quad}&&\rM\Box_{\rG}\rN\Box_{\rF}\rM}\quad\text{and}\quad
   \xymatrix{\rN\ar[rr]^{\cong}\ar[d]_{\cong}&&\rN\Box_{\rF}\rF\ar[d]^{\id_{\rN}\Box\, f}\\
\rG\Box_{\rG}\rN\ar[rr]^{g\Box\id_{\rM}\quad}&&\rN\Box_{\rF}\rM\Box_{\rG}\rN,}
\end{equation*}
where the isomorphism are from
Proposition~\ref{prop:cotensor-product}.
\end{prop}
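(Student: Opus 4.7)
The plan is to prove both implications of this Morita–Takeuchi type equivalence, adapted to the bilax–unital setting by systematically using the lifts $\widecheck{\cC}$, $\mathbbm{1}'_{\mi}$, $\xi'_{\mi}$ from Remark~\ref{rem:oplaxside1}, Proposition~\ref{coalgebra lifts to 2cat} and Corollary~\ref{comodulelift}, so that $\rF$ and $\rG$ may be treated as coalgebras over weak units.

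For the sufficiency direction, I would define a family of functors
\[
\Phi_{\mk}:(\comod_{\underline{\ccC}}\text{-}\rF)(\mk)\longrightarrow(\comod_{\underline{\ccC}}\text{-}\rG)(\mk),\qquad \rX\mapsto \rX\Box_{\rF}\rM,
\]
using that the right $\rG$-coaction on $\rM={}_{\rF}\rM_{\rG}$ transports through the equalizer defining $\Box_{\rF}$ to give $\rX\Box_{\rF}\rM$ the structure of a right $\rG$-comodule; similarly set $\Psi_{\mk}(\rY)=\rY\Box_{\rG}\rN$. Bi-injectivity of $\rM,\rN$ ensures that cotensoring is exact and thus preserves the relevant equalizers. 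Then I would verify that $\{\Phi_{\mk}\}$ and $\{\Psi_{\mk}\}$ are bilax $2$-natural transformations by checking \eqref{eq:0001}--\eqref{eq:0003}: compatibility with $\cC$-action follows from the natural isomorphism $(\rH\rX)\Box_{\rF}\rM\cong \rH(\rX\Box_{\rF}\rM)$, valid because every $\rH\in\cC$ has a two-sided adjoint by Definition~\ref{deffiax} and hence commutes with equalizers; compatibility with the oplax unit $u'$ reduces to Proposition~\ref{prop:cotensor-product}~\ref{lem:cotensor-product1} applied inside $\widecheck{\cC}$. Finally, the given coherence square together with Proposition~\ref{prop:cotensor-product}~\ref{lem:cotensor-product2} provides a chain of natural bicomodule isomorphisms
\[
(\Psi_{\mk}\circ\Phi_{\mk})(\rX)\cong\rX\Box_{\rF}(\rM\Box_{\rG}\rN)\xrightarrow{\id_{\rX}\Box\, f^{-1}}\rX\Box_{\rF}\rF\cong\rX,
\]
and symmetrically for $\Phi_{\mk}\circ\Psi_{\mk}$, with the commuting pentagons in the hypothesis upgrading these to isomorphisms of bilax $2$-natural transformations.

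For the necessity direction, suppose $\Phi:\comod_{\underline{\ccC}}\text{-}\rF\xrightarrow{\sim}\comod_{\underline{\ccC}}\text{-}\rG$ is an equivalence of bilax $2$-representations with quasi-inverse $\Psi$. The coalgebra $\rF$, viewed as a right $\rF$-comodule at $\mi$ via $\Delta_{\rF}$, also carries a commuting left $\rF$-coaction, and $\Delta_{\rF}$ itself is a morphism of right $\rF$-comodules $\rF\to\rF\cdot\rF$ where the target is the $\cC$-action of $\rF\in\cC(\mi,\mi)$ on the right $\rF$-comodule $\rF$. Setting $\rM:=\Phi_{\mi}(\rF)$ and $\rN:=\Psi_{\mj}(\rG)$ and applying $\Phi$ to $\Delta_{\rF}$, the bilax $2$-naturality isomorphism $\Phi_{\mi}(\rF\cdot\rF)\cong\rF\cdot\Phi_{\mi}(\rF)=\rF\rM$ produces a morphism $\lambda_{\rM}:\rM\to\rF\rM$ whose coassociativity, counitality, and commutativity with the right $\rG$-coaction follow from the corresponding properties of $\Delta_{\rF}$ and the coherence axioms on $\Phi$; hence $\rM$ is an $\rF$-$\rG$-bicomodule, and likewise for $\rN$. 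Bi-injectivity of $\rM$ and $\rN$ follows from the bi-injectivity of $\rF$ and $\rG$ over themselves together with the fact that equivalences of bilax $2$-representations preserve injectivity, viewed through Theorem~\ref{thm4.7}. The isomorphism $f:\rF\xrightarrow{\sim}\rM\Box_{\rG}\rN$ is obtained by observing that $\Psi_{\mk}$ is naturally isomorphic to $(-)\Box_{\rG}\rN$: both $\cC$-equivariant functors send the generator $\rG$ to $\rN$ (by definition of $\rN$ and Proposition~\ref{prop:cotensor-product}~\ref{lem:cotensor-product2}), and by the generation property they agree on all of $\comod_{\underline{\ccC}}\text{-}\rG$; then $\rF\cong\Psi_{\mi}(\rM)\cong\rM\Box_{\rG}\rN$, and symmetrically for $g$. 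The required triangular coherence diagrams follow from the triangle identities of the adjoint equivalence $(\Phi,\Psi)$.

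The hard part will be verifying carefully that the assignment $(-)\Box_{\rF}\rM$ defines a bilax $2$-natural transformation, in particular the coherence square \eqref{eq:0003} involving the oplax unitors. The subtlety is that the unit $(u'_{\mi})^{\bM}$ for $\bM=\comod_{\underline{\ccC}}\text{-}\rF$ is induced by $l'$, $r'$ of $\underline{\cC}$ and not by any genuine identity functor, so matching it across $\Box_{\rF}$ forces one to pass through $\mathbbm{1}'_{\mi}$ and the canonical map $\xi'_{\mi}$, and to repeatedly invoke identities such as \eqref{eq:001'}--\eqref{eq:001'dual}. A secondary technical point is the associativity isomorphism $(\rX\Box_{\rF}\rM)\Box_{\rG}\rN\cong\rX\Box_{\rF}(\rM\Box_{\rG}\rN)$ used in the composition $\Psi\circ\Phi$; since cotensor products live in $\underline{\cC}$ rather than in $\cC$, its verification requires that composition with bi-injective $1$-morphisms be exact from both sides, which is exactly where the bi-injectivity hypothesis enters decisively.
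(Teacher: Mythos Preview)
Your proposal is correct and follows essentially the same approach as the paper: the paper's proof reads in its entirety ``Mutatis mutandis \cite[Theorem~5.1]{MMMT}'', and what you have written is precisely a careful unfolding of that reference adapted to the bilax-unital framework, using the cotensor functors $(-)\Box_{\rF}\rM$ and $(-)\Box_{\rG}\rN$ as mutually quasi-inverse equivalences and, in the converse direction, recovering $\rM$ and $\rN$ as the images of the regular comodules under the given equivalence. Your identification of the technical pressure points (coherence of $(-)\Box_{\rF}\rM$ with the oplax unitors via passage to $\widecheck{\cC}$ and $\mathbbm{1}'_{\mi}$, and associativity of iterated cotensor requiring bi-injectivity for exactness) is exactly where the ``mutatis'' enters.
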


In this case, $\rF$ and $\rG$ are said to be {\em Morita-Takeuchi equivalent}.

\begin{proof}
Mutatis mutandis \cite[Theorem~5.1]{MMMT}.
\end{proof}

\subsection{Correspondence between coalgebras and 2-representations}\label{s3.new}

The two previous subsections say that bilax 2-representations of $\cC$
with certain conditions (e.g. finitary and having a generator) correspond to coalgebra 1-morphisms in $\underline{\cC}$, and the equivalence of bilax 2-representations gives Morita-Tacheuchi equivalence of the corresponding coalgebras.
If $\cC$ is a fiat 2-category, then all coalgebra 1-morphisms in $\underline{\cC}$ arise in this way since $\operatorname{inj}_{\underline{\ccC}}\dash\rC$ is finitary for any coalgebra $\rC$ in $\cC$ (we do not know a reference for this exact statement, so we prove it in
Corollary~\ref{fiatfinitary}, another proof is expected to appear in \cite{MMMTZ3}),
so that we have a correspondence between finitary 2-represntations of $\cC$ and all coalgebras in $\cC$.
For a fiax category $\cC$, however, it seems that we need to restrict to certain coalgebras in $\underline{\cC}$ to have such a correspondence.
To characterize such coalgebras, let $\rC$ be a coalgebra in $\underline{\cC}(\mi,\mi)$ and consider the bilax 2-representation \[\cC\rC:=\add_{\rC}\{\rF\rC\ |\ \rF\in\cC(\mi,\mj),\mj\in\ob\cC\}\] of $\cC$. That is, each $\cC\rC(\mj)$ is the additive idempotent split closure of  $\{\rF\rC\ |\,\rF\in\cC(\mi,\mj)\}$ in $(\comod_{\underline{\ccC}}\dash \rC)(\mj)$, for each $\mj\in\ob\cC$.

\begin{lem}\label{lemIAI}
Let $\rC=(\rC,\Delta_\rC,\epsilon_\rC)$ be a coalgebra in $\underline{\cC}(\mi,\mi)$. Then for any 1-morphism $\rF$ in $\cC(\mi,\mj)$ for some $\mj\in\ob\cC$, the 1-morphism $\rF\rC \rF^\star$ has the canonical structure of a coalgebra in $\underline{\cC}(\mj,\mj)$.
\end{lem}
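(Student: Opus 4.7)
My plan is to realize $\rF\rC\rF^\star$ as an internal hom in a suitable bilax $2$-representation and then transport the canonical coalgebra structure of Proposition~\ref{end is coalg}.

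First, I would consider the bilax $2$-representation $\bM := \comod_{\underline{\ccC}}\text{-}\rC$ of $\cC$ together with the object $X := \rC \in \bM(\mi)$, viewed as the right $\rC$-comodule with coaction $\Delta_\rC$, and set $Y := \bM(\rF)X = \rF\rC \in \bM(\mj)$. The key claim is a natural isomorphism $\ihom(Y,Y)\cong \rF\rC\rF^\star$ in $\underline{\cC}(\mj,\mj)$, obtained by chaining
\[
\Hom_{\bM(\mj)}(\rF\rC,\bM(\rG)(\rF\rC)) \stackrel{(a)}{\cong} \Hom_{\bM(\mi)}(\rC,\rF^\star\rG\rF\rC) \stackrel{(b)}{\cong} \Hom_{\underline{\ccC}(\mi,\mi)}(\rC,\rF^\star\rG\rF) \stackrel{(c)}{\cong} \Hom_{\underline{\ccC}(\mj,\mj)}(\rF\rC\rF^\star,\rG),
\]
naturally in $\rG\in\underline{\cC}(\mj,\mj)$, and invoking Yoneda. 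Here $(a)$ is Proposition~\ref{propadj} for the adjoint pair $(\rF,\rF^\star)$; $(b)$ is a mild extension of Lemma~\ref{lem:removingA} to $\rF^\star\rG\rF \in \underline{\cC}$, which goes through verbatim because the proof only invokes $\zeta'_\rC$ and the weak unit $\mathbbm{1}'_\mi$; and $(c)$ is the right-whiskering version of the adjunction, which follows from the left-whiskering isomorphism by the involution $({}_-)^\star$ together with $\rF^{\star\star}\cong\rF$, or equivalently from the biadjointness of $\rF$ and $\rF^\star$ guaranteed by Definition~\ref{deffiax}.

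Then, by Proposition~\ref{end is coalg}, $\ihom(Y,Y)$ carries a canonical coalgebra structure in $\underline{\cC}(\mj,\mj)$: the comultiplication comes from $\mathrm{coev}_{Y,Y}$ and the counit from $(u'_\mj)_Y = l'_{\mi,\mj,\rF\rC}$. Transporting along the isomorphism $\ihom(Y,Y)\cong \rF\rC\rF^\star$ proves the lemma. Unwinding the chain $(a)$--$(c)$, the comultiplication is explicitly
\[
\rF\rC\rF^\star \xrightarrow{\id_\rF\Delta_\rC\id_{\rF^\star}} \rF\rC\rC\rF^\star \xrightarrow{\id_{\rF\rC}l'_{\mi,\mi,\rC}\id_{\rF^\star}} \rF\rC\rI'_\mi\rC\rF^\star \xrightarrow{\id_{\rF\rC}\alpha_\rF\id_{\rC\rF^\star}} \rF\rC\rF^\star\rF\rC\rF^\star,
\]
and the counit is the 2-morphism $\rF\rC\rF^\star \to \rI'_\mj$ adjoint under $(a)$--$(c)$ to the composite $\rC \xrightarrow{\epsilon_\rC} \rI'_\mi \xrightarrow{\alpha_\rF} \rF^\star\rF \xrightarrow{\id_{\rF^\star}\, l'_{\mi,\mj,\rF}} \rF^\star\rI'_\mj\rF$.

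The main obstacle is the careful bookkeeping across $(a)$--$(c)$: the bilax setting distinguishes the lax unit $\rI_\mj$ from the oplax unit $\rI'_\mj$, and a naïve fiat-style recipe for the counit through $\beta_\rF: \rF\rF^\star\to\rI_\mj$ would land in the wrong unit. The internal-hom route sidesteps this because Proposition~\ref{end is coalg} manufactures the counit from $(u'_\mj)_{\rF\rC}$, which by construction targets $\rI'_\mj$. Verifying that the transported structure really satisfies the coalgebra axioms in $\underline{\cC}(\mj,\mj)$ then reduces to naturality of the unitors combined with Definition~\ref{deffiax}(1)--(3); in particular, the identifications $(\rI_\mi)^\star=\rI'_\mi$ and $(l_{\mj,\mi,\rF})^\star=r'_{\mi,\mj,\rF^{\star}}$ are what make the $\star$-duality in $(c)$ work cleanly.
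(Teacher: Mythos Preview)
Your approach is correct and genuinely different from the paper's. The paper proceeds by direct construction: it writes down the comultiplication (identical to yours) and an explicit counit that routes through the weak units,
\[
\rF\rC\rF^\star \xrightarrow{\id_\rF\zeta'_\rC\id_{\rF^\star}} \rF\mathbbm{1}'_\mi\rF^\star \xrightarrow{\cong} \rF\rF^\star \xrightarrow{\beta_\rF} \rI_\mj \xrightarrow{(\xi'_\mj)^\star} \mathbbm{1}'_\mj \xrightarrow{\xi'_\mj} \rI'_\mj,
\]
and then defers the verification of the coalgebra axioms to \cite[Lemma~9]{MMMTZ}. The detour through $\mathbbm{1}'_\mi$ and $\mathbbm{1}'_\mj$ is precisely the paper's resolution of the lax/oplax unit mismatch you identified (the target of $\beta_\rF$ is $\rI_\mj$, not $\rI'_\mj$). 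Your internal-hom route buys the coalgebra axioms for free from Proposition~\ref{end is coalg}, at the price of more machinery and of applying that proposition slightly outside its stated scope: $\comod_{\underline{\ccC}}\text{-}\rC$ is not finitary in general, so you should remark that the proof of Proposition~\ref{end is coalg} only needs the representability isomorphism~\eqref{eq:09} and the bilax structure, both of which your chain (a)--(c) supplies directly. Note also that your argument essentially establishes the special case $X=\rC$ of the subsequent Lemma~\ref{prop11}; in the paper's logical order the two results go the other way---the explicit coalgebra structure on $\rF\rC\rF^\star$ is set up first, and only then is it matched with the internal End.
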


\begin{proof}
Assume that the $2$-morphisms $\alpha_{\rF}:\rI'_{\mi}\to \rF^{\star}\rF$ and $\beta_{\rF}:\rF\rF^{\star} \to \rI_{\mj}$
are the adjunction morphisms for the adjoint pair $(\rF,\rF^{\star})$. Then the comultiplication is given by the composition
\[\xymatrix@C=2.5pc{\rF\rC\rF^\star\ar[rr]^{\id_\rF\Delta_\rC\id_{\rF^\star}\ }&&\rF\rC\rC\rF^\star\ar[rr]^{\id_\rF r'_{\mi,\mi,\rC}\id_{\rC\rF^\star}\ }&&\rF\rC\rI'_{\mi}\rC\rF^\star\ar[rr]^{\id_{\rF\rC}\alpha_\rF \id_{\rC\rF^\star}\ }&&\rF\rC\rF^\star\rF\rC\rF^\star}\]
which is by \eqref{diag:oplaxlrcompatible} the same as
\[\xymatrix@C=2.5pc{\rF\rC\rF^\star\ar[rr]^{\id_\rF\Delta_\rC\id_{\rF^\star}\ }&&\rF\rC\rC\rF^\star\ar[rr]^{\id_{\rF\rC} l'_{\mi,\mi,\rC}\id_{\rF^\star}\ }&&\rF\rC\rI'_{\mi}\rC\rF^\star\ar[rr]^{\id_{\rF\rC}\alpha_\rF \id_{\rC\rF^\star}\ }&&\rF\rC\rF^\star\rF\rC\rF^\star}\]
To define the counit map, recall that $\rC$ is also a coalgebra with respect to $\mathbbm{1}'_{\mi}$ by the proof of Proposition \ref{coalgebra lifts to 2cat} (see the proof Proposition \ref{algebra lifts to 2cat}) where the new counit is denoted by $\zeta'_\rC$.
Now, the counit for $\rF\rC\rF^\star$ is given by
\[\xymatrix@C=2.5pc{\rF\rC\rF^\star\ar[rr]^{\id_\rF\zeta'_\rC\id_{\rF^\star}\ }&&\rF\mathbbm{1}'_{\mi}\rF^\star\ar[rr]^{\ (\check{r}'_{\mi,\mj,\rF})^{\mone} \id_{\rF^\star}\ }&&\rF\rF^\star\ar[r]^{\beta_\rF}&\rI_{\mj}\ar[r]^{(\xi'_{\mj})^\star}&\mathbbm{1}'_{\mj}\ar[r]^{\xi_{\mj}}&\rI'_{\mj},}\]
noting that we have $(\check{r}'_{\mi,\mj,\rF})^{\mone} \id_{\rF^\star}=\id_{\rF}(\check{l}'_{\mj,\mi,\rF^\star})^{\mone}$ by \eqref{diag:oplaxlrcompatible} for the second map.

Both associativity and unitality follow, for example, similarly to \cite[Lemma~9]{MMMTZ}.
\end{proof}

In particular, from Lemma \ref{lemIAI} it follows that the $1$-morphism $\rI_{\mi}\rC\rI'_{\mi}$ has the canonical
structure of a coalgebra.

\begin{lem}\label{prop11}
Let $0\neq X\in\bM(\mi)$ and $\rF\in\cC(\mi,\mj)$ for some $\mj\in\ob\cC$ be such that $\bM(\rF)X\neq 0$.
Then we have \[\ihom(\bM (\rF)X,\bM (\rF)X)\cong \rF\rC_{X} \rF^\star\]
and the coalgebra structure in Proposition \ref{end is coalg} agrees with the coalgebra structure in Lemma \ref{lemIAI}.
\end{lem}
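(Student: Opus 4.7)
The plan is to first establish the isomorphism $\ihom(\bM(\rF)X,\bM(\rF)X)\cong\rF\rC_X\rF^\star$ by Yoneda, and then verify that the coalgebra structures match by tracing the coevaluation through this isomorphism.

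For the isomorphism, I would build, for any $\rG\in\underline{\cC}(\mj,\mj)$, a chain of natural isomorphisms
\begin{align*}
\Hom_{\underline{\ccC}}(\rF\rC_X\rF^\star,\rG)
&\cong \Hom_{\underline{\ccC}}(\rC_X,\rF^\star\rG\rF)\\
&\cong \Hom_{\underline{\bM}(\mi)}(X,\underline{\bM}(\rF^\star\rG\rF)X)\\
&=   \Hom_{\underline{\bM}(\mi)}(X,\underline{\bM}(\rF^\star)\underline{\bM}(\rG)\underline{\bM}(\rF)X)\\
&\cong \Hom_{\underline{\bM}(\mj)}(\bM(\rF)X,\underline{\bM}(\rG)\bM(\rF)X)\\
&\cong \Hom_{\underline{\ccC}}(\ihom(\bM(\rF)X,\bM(\rF)X),\rG).
\end{align*}
The first isomorphism uses the $(\rF,\rF^\star)$-adjunction applied twice in $\underline{\cC}$ (via Proposition~\ref{propadj} applied to the principal bilax $2$-representations, one on each side); the second is the defining property~\eqref{eq:09} of $\rC_X=\ihom(X,X)$; the fourth is Proposition~\ref{propadj} applied to $\bM$ for the pair $(\rF,\rF^\star)$; and the last is again~\eqref{eq:09}. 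Yoneda then yields the required isomorphism $\varphi\colon \ihom(\bM(\rF)X,\bM(\rF)X)\iso\rF\rC_X\rF^\star$ in $\underline{\cC}$.

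For the compatibility of coalgebra structures, the strategy is that both coalgebra structures are entirely determined by their coevaluations, so it suffices to compute $\operatorname{coev}_{\bM(\rF)X,\bM(\rF)X}$, i.e., the image of $\id_{\rF\rC_X\rF^\star}$ under the chain above, and match it with the construction in Lemma~\ref{lemIAI}. A direct unwinding of the chain identifies it as the ``mate'' of $\operatorname{coev}_{X,X}$ under the adjunction $(\bM(\rF),\bM(\rF^\star))$: up to appropriate unitor corrections it is the composite
\[\bM(\rF)X\xrightarrow{\bM(\id_{\rF})\operatorname{coev}_{X,X}}\bM(\rF\rC_X)X\xrightarrow{\bM(\id_{\rF\rC_X}\alpha_{\rF})_X}\bM(\rF\rC_X\rF^\star\rF)X=\bM(\rF\rC_X\rF^\star)\bM(\rF)X.\]
Feeding this formula into the definitions in Proposition~\ref{end is coalg} and comparing with the explicit comultiplication of $\rF\rC_X\rF^\star$ from Lemma~\ref{lemIAI} then yields the required agreement of comultiplications, while a parallel argument using $\beta_{\rF}$ and $(u'_\mj)_{\bM(\rF)X}$ handles the counits.

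The main obstacle is the careful bookkeeping of the lax and oplax unitors appearing in the counit and comultiplication of Lemma~\ref{lemIAI} (through $\zeta'_{\rC_X}$, $\xi_\mj$, $\xi'_\mj$, $\widecheck{r}'_{\mi,\mj,\rF}$ and the triangle identities) and their correspondents on the $\ihom$ side (through $(u'_\mj)_{\bM(\rF)X}$, the coherence~\eqref{eq:bilax-2-rep} for bilax $2$-representations, and the comparison diagrams~\eqref{eq:06},~\eqref{eq:06'},~\eqref{eq:001'}). Repeated applications of~\eqref{diag:lrcompatible} and~\eqref{diag:oplaxlrcompatible}, together with the fiax compatibility~\eqref{diag:laxoplaxcompatible} established in Proposition~\ref{propblue}, should be enough to reduce both constructions to the same canonical form.
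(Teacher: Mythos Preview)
Your approach is correct and is precisely the standard argument: the paper's own proof is simply ``Mutatis mutandis the proof of \cite[Proposition~11]{MMMTZ}'', and what you have written is exactly the adaptation of that proof to the bilax setting. The Yoneda chain you give is the same one used there (and is essentially a two-sided version of Lemma~\ref{Crepmorph}), and the verification of coalgebra structures via coevaluation mates is likewise the expected route.
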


\begin{proof}
Mutatis mutandis the proof of \cite[Proposition 11]{MMMTZ}.
\end{proof}

\begin{prop}\label{CA}
The bilax 2-representation $\cC\rC$ is finitary and is equivalent to $\operatorname{inj}_{\underline{\ccC}}\dash \rI_{\mi}\rC\rI'_{\mi}$.
\end{prop}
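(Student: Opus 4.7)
The plan is to apply Theorem~\ref{thm4.7} with generator $X:=\rI_{\mi}\rC\in(\cC\rC)(\mi)$ and show $\ihom(X,X)\cong\rI_{\mi}\rC\rI'_{\mi}$; the equivalence and finitariness of $\cC\rC$ will then follow.

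I would first check that $X$ generates $\cC\rC$. Any object of $(\cC\rC)(\mj)$ is a direct summand of some $\rF\rC$ with $\rF\in\cC(\mi,\mj)$ (using that $\cC(\mi,\mj)$ is closed under finite direct sums). By Lemma~\ref{prop:fiaxsplit}, the right lax unitor $r_{\mi,\mj,\rF}:\rF\rI_{\mi}\to\rF$ is split epi, so $\rF$, and hence $\rF\rC$, is a direct summand of $\rF\rI_{\mi}\rC=(\cC\rC)(\rF)(X)$. The degenerate case $X=0$ forces $\rF\rC=0$ for every $\rF$, so that $\cC\rC$ and $\operatorname{inj}_{\underline{\ccC}}\dash(\rI_{\mi}\rC\rI'_{\mi})$ are both trivial; otherwise $X$ is a nonzero generator.

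Finitariness of $\cC\rC$ is checked next. Additivity, $\Bbbk$-linearity, and idempotent-splitness are built into the definition. Finite-dimensionality of morphism spaces follows from
\begin{equation*}
\Hom_{(\cC\rC)(\mj)}(\rF\rC,\rG\rC)\hookrightarrow \Hom_{\comod_{\underline{\ccC}}\dash\rC}(\rF\rC,\rG\rC)\stackrel{\mathrm{Lem.~\ref{lem:removingA}}}{\cong}\Hom_{\underline{\cC}}(\rF\rC,\rG)\cong\Hom_{\underline{\cC}}(\rC,\rF^{\star}\rG),
\end{equation*}
where the last isomorphism uses the adjoint pair $(\rF,\rF^{\star})$; since $\rF^{\star}\rG\in\cC(\mi,\mi)$ and $\rC\in\underline{\cC(\mi,\mi)}$ admits a finite coresolution by objects of $\cC(\mi,\mi)$, the right-hand space embeds into a finite-dimensional hom-space in $\cC$. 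Finite-dimensionality of the endomorphism ring of each $\rF\rC$, combined with the finite supply of indecomposables in $\cC(\mi,\mj)$, gives only finitely many isomorphism classes of indecomposables in $(\cC\rC)(\mj)$.

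The key computation is $\ihom(X,X)\cong\rI_{\mi}\rC\rI'_{\mi}$. For $\rF\in\cC(\mi,\mi)$ I would set up the natural chain
\begin{align*}
\Hom_{(\cC\rC)(\mi)}(X,(\cC\rC)(\rF)X)
&=\Hom_{\comod_{\underline{\ccC}}\dash\rC}(\rI_{\mi}\rC,\rF\rI_{\mi}\rC)\\
&\cong\Hom_{\underline{\cC}}(\rI_{\mi}\rC,\rF\rI_{\mi})\\
&\cong\Hom_{\underline{\cC}}(\rI_{\mi}\rC\rI'_{\mi},\rF),
\end{align*}
where the middle line applies Lemma~\ref{lem:removingA} with $\rF\rI_{\mi}\in\cC(\mi,\mi)$ as the outer $1$-morphism, and the last line is the right-action adjunction $\Hom(A\rI'_{\mi},B)\cong\Hom(A,B\rI_{\mi})$ attached to the adjoint pair $(\rI_{\mi},\rI'_{\mi})$ from Definition~\ref{deffiax}\eqref{deffiax-2}. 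Naturality in $\rF$, combined with the extension of the internal hom isomorphism from $\cC(\mi,\mi)$ to $\underline{\cC}(\mi,\mi)$ as in~\eqref{eq:09}, yields $\ihom(X,X)\cong\rI_{\mi}\rC\rI'_{\mi}$ by Yoneda; this is consistent with Lemma~\ref{prop11}. Theorem~\ref{thm4.7} then delivers the desired equivalence of finitary bilax $2$-representations $\cC\rC\simeq\operatorname{inj}_{\underline{\ccC}}\dash(\rI_{\mi}\rC\rI'_{\mi})$.

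The main obstacle I anticipate is the right-action form of the adjunction, as Proposition~\ref{propadj} is only formulated for left actions on $2$-representations. I would handle it by mirroring the proof of that proposition: given $f:A\rI'_{\mi}\to B$, the transpose $\tilde{f}:A\to B\rI_{\mi}$ is built from the oplax right unitor $r'$, the adjunction $2$-morphism $\alpha_{\rI_{\mi}}$, the map $f$, and the lax right unitor $r$; the two zigzag identities are then verified diagrammatically using~\eqref{diag:adjunction conditions} together with the (op)lax unit coherence axioms.
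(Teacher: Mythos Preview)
Your approach is essentially the same as the paper's: both establish finitariness directly, take $X=\rI_{\mi}\rC$ as a generator (using that the lax unit is split), and then apply Theorem~\ref{thm4.7} together with the identification $\ihom(X,X)\cong\rI_{\mi}\rC\rI'_{\mi}$. The paper simply cites Lemma~\ref{prop11} for this identification, whereas you unpack the Yoneda argument explicitly; your right-action adjunction computation is precisely the content hidden behind the ``mutatis mutandis'' in the proof of Lemma~\ref{prop11}.

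One point to tighten: your chain of isomorphisms only identifies $\ihom(X,X)$ with $\rI_{\mi}\rC\rI'_{\mi}$ as a $1$-morphism, but Theorem~\ref{thm4.7} produces an equivalence with $\operatorname{inj}_{\underline{\ccC}}\dash\rC_X$, where $\rC_X$ carries the specific coalgebra structure of Proposition~\ref{end is coalg}. To conclude the stated equivalence with $\operatorname{inj}_{\underline{\ccC}}\dash(\rI_{\mi}\rC\rI'_{\mi})$, you need that this structure matches the canonical one from Lemma~\ref{lemIAI}. That agreement is exactly the second assertion of Lemma~\ref{prop11}; your parenthetical ``this is consistent with Lemma~\ref{prop11}'' should therefore be upgraded to an actual invocation of that lemma (or you should verify the compatibility of comultiplication and counit under your Yoneda isomorphism, which is the more substantial part of Lemma~\ref{prop11}).
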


\begin{proof}
The first claim follows from $\cC$ being finitary. Indeed,
to see that $\cC\rC$ has finitely many indecomposable 1-morphisms, it is enough to note that $\rF\rC$ decomposes into finitely many summands in $\underline{\cC}$ and has less summands in $\comod_{\underline{\ccC}}\dash \rC$.
Similarly, the morphism spaces in $(\comod_{\underline{\ccC}}\dash \rC)(\mj)$, for any $\mj\in\ob\cC$, are smaller than those in $\cC(\mi,\mj)$ and thus finite dimensional.

For the second claim, we apply Theorem~\ref{thm4.7} to $\cC\rC$ by setting $X=\rI_{\mi}\rC\in\cC\rC$. The assumptions of Theorem~\ref{thm4.7} are satisfied: $\cC\rC=\cC X$
(because the lax unit $\rI_{\mi}$ is split) and $\cC \rC$ is finitary by the above.
By Lemma~\ref{prop11}, the internal End agrees with $\rI_{\mi}\rC\rI'_{\mi}$ as coalgebras.
Thus Theorem~\ref{thm4.7} implies $\cC \rC\simeq \operatorname{inj}_{\underline{\ccC}}\dash \rI_{\mi}\rC\rI'_{\mi}$ as bilax $2$-representations of $\cC$.
\end{proof}

\begin{prop}\label{injA finite}
Let $\rC$ be a coalgebra $1$-morphism in $\underline{\cC}(\mi,\mi)$. Then the following are equivalent.
\begin{enumerate}[$($i$)$]
\item\label{cond-inja-1} $\operatorname{inj}_{\underline{\ccC}}\dash \rC\simeq \cC\rC$ (as bilax $2$-representations of $\cC$);
\item\label{cond-inja-2} $\rC$ is Morita-Takeuchi equivalent to $\rI_{\mi}\rC\rI'_{\mi}$;
\item\label{cond-inja-3} $\operatorname{inj}_{\underline{\ccC}}\dash \rC$ is finitary and $\cC \operatorname{inj}_{\underline{\ccC}}\dash \rC=\operatorname{inj}_{\underline{\ccC}}\dash \rC$.
\end{enumerate}
We call such a coalgebra {\em finitary}.
\end{prop}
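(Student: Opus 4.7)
The plan is to establish the cycle of implications $(i)\Rightarrow(iii)\Rightarrow(i)$ together with $(i)\Leftrightarrow(ii)$. For $(i)\Rightarrow(iii)$, I would transport structure along the given equivalence. Proposition~\ref{CA} says $\cC\rC$ is finitary, and the condition $\cC(\cC\rC)=\cC\rC$ holds tautologically, since every object in $\cC\rC(\mj)$ is by construction a summand of $\rF\rC=\bM(\rF)\rC$ with $\rF\in\cC(\mi,\mj)$ and $\rC\in\cC\rC(\mi)$. Both properties are preserved by any equivalence of bilax $2$-representations of $\cC$ and so pass to $\operatorname{inj}_{\underline{\ccC}}\dash\rC$.

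For $(i)\Leftrightarrow(ii)$, I would invoke Proposition~\ref{CA} once more to identify $\cC\rC\simeq\operatorname{inj}_{\underline{\ccC}}\dash\rD$, where $\rD:=\rI_{\mi}\rC\rI'_{\mi}$, so that (i) becomes $\operatorname{inj}_{\underline{\ccC}}\dash\rC\simeq\operatorname{inj}_{\underline{\ccC}}\dash\rD$. Proposition~\ref{moritathm} characterizes Morita--Takeuchi equivalence in terms of equivalence of $\comod$-categories; but in the finitary setting the full subcategory of injectives determines $\comod$ uniquely via left-exact extension into the abelianization, so an equivalence of the former lifts uniquely to an equivalence of the latter, and conversely. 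Under this translation (i) becomes Morita--Takeuchi equivalence of $\rC$ and $\rD$, which is (ii).

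For $(iii)\Rightarrow(i)$, the main step, I plan to apply Theorem~\ref{thm4.7} to $\bM:=\operatorname{inj}_{\underline{\ccC}}\dash\rC$, which is finitary by hypothesis, taking as distinguished object $X:=\rI_{\mi}\rC\in\bM(\mi)$. A direct Yoneda argument from Lemma~\ref{lem:removingA} identifies $\ihom(\rC,\rC)\cong\rC$, so $\rC_\rC=\rC$, and then Lemma~\ref{prop11} (applied with $\rF=\rI_{\mi}$, using $\rI_{\mi}^\star=\rI'_{\mi}$ from Definition~\ref{deffiax}) gives $\ihom(X,X)\cong\rI_{\mi}\rC\rI'_{\mi}=\rD$. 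Provided $X$ generates $\bM$, Theorem~\ref{thm4.7} yields $\bM\simeq\operatorname{inj}_{\underline{\ccC}}\dash\rD\simeq\cC\rC$, the second equivalence by Proposition~\ref{CA}, which is (i).

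The main technical task, and the step I expect to be the main obstacle, is establishing the generator property: every indecomposable injective $Y\in\bM(\mj)$ is a summand of $\bM(\rF)X=\rF\rI_{\mi}\rC$ for some $\rF\in\cC(\mi,\mj)$. Starting from the hypothesis $\cC\bM=\bM$, each such $Y$ is a summand of $\rG Z$ with $Z\in\bM(\mk)$ injective and $\rG\in\cC(\mk,\mj)$. I would iterate this, exploiting the finitariness of $\bM$ (only finitely many indecomposable injectives) to stabilize the resulting chain, and combine it with the cofree embedding $Z\hookrightarrow Z\rC$ (which splits when $Z$ is injective) together with the fact that $\cC$ contains $\rI_{\mi}$, to conclude that $Y$ is ultimately a summand of some $\rF\rI_{\mi}\rC$. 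The delicacy lies in the fact that the splittings provided by Lemma~\ref{prop:fiaxsplit} are a priori not comodule maps; navigating this via the adjunction $(\rF,\rF^\star)$ from fiaxness, applied within the $2$-representation $\bM$, is the crux of the argument.
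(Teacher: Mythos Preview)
Your plan follows the paper's proof exactly: it too derives $(i)\Leftrightarrow(ii)\Rightarrow(iii)$ from Proposition~\ref{CA} (together with Proposition~\ref{moritathm}), and for $(iii)\Rightarrow(ii)$ it too applies Theorem~\ref{thm4.7} to $\bM=\operatorname{inj}_{\underline{\ccC}}\dash\rC$ with $X=\rI_{\mi}\rC$ and identifies $\ihom(X,X)\cong\rI_{\mi}\rC\rI'_{\mi}$ via Lemma~\ref{prop11}. You correctly single out the one point the paper leaves implicit, namely that $X$ generates $\bM$ under hypothesis~(iii).

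However, your proposed argument for this generator property does not close the gap. The iteration using $\cC\bM=\bM$ and finiteness only shows that every indecomposable $Y$ is a summand of $\rH Z$ for some $\rH\in\cC$ and some injective comodule $Z$; the cofree embedding then gives $Z$ as a summand of $Z\rC$, but $Z$ lies in $\underline{\cC}$, not in $\cC$, so you have not produced a summand of any $\rF\rC$ with $\rF\in\cC$. Neither the finiteness of indecomposables nor the presence of $\rI_{\mi}$ in $\cC$ bridges this, and the adjunction $(\rF,\rF^\star)$ you allude to does not obviously help either. A cleaner route: observe that the proof of Lemma~\ref{lem:removingA} uses only that $\mathbbm{1}'_{\mi}$ is a weak unit on $\underline{\cC}$ (Remark~\ref{rem:oplaxside1}), so it holds for arbitrary $\rF\in\underline{\cC}$; this exhibits $(-)\rC$ as right adjoint to the forgetful functor $\comod_{\underline{\ccC}}\dash\rC\to\underline{\cC}$, hence it preserves monomorphisms. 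Taking an injective hull $\iota:Z\hookrightarrow\rN$ in $\underline{\cC}$ with $\rN\in\cC$, the composite $(\iota\,\id_\rC)\circ\rho_Z:Z\to\rN\rC$ is then a comodule monomorphism into an injective, which splits since $Z$ is injective. This places $Z$, and hence $Y$, in $\cC\rC$ as required. (In fact this argument does not use~(iii), which may be relevant to the open Question following Proposition~\ref{if A inj}.)
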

\begin{proof}
\ref{cond-inja-1}$\Leftrightarrow$\ref{cond-inja-2}$\Rightarrow $\ref{cond-inja-3} follows from Proposition~\ref{CA}.

For \ref{cond-inja-3}$\Rightarrow $\ref{cond-inja-2}, we apply Theorem $\ref{thm4.7}$ to $\operatorname{inj}_{\underline{\ccC}}\dash \rC$ by setting $X=\rI_{\mi}\rC$ (note that $\rI_{\mi}\rC\in \operatorname{inj}_{\underline{\ccC}}\dash \rC$ by Lemma \ref{lem:removingA}).
Then by in Lemma~\ref{prop11}, we have $\ihom(X,X)\cong\rI_{\mi}\rC\rI'_{\mi}$ and so \ref{cond-inja-2} follows.
\end{proof}

\begin{prop}\label{if A inj}
Let $\rC$ be a coalgebra $1$-morphism in $\underline{\cC}(\mi,\mi)$.
If $\rC\in(\operatorname{inj}_{\underline{\ccC}}\dash \rC)(\mi)$, then $\rC$ is finitary.
\end{prop}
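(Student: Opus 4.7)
The plan is to verify condition~\ref{cond-inja-1} of Proposition~\ref{injA finite}, namely the equivalence $\operatorname{inj}_{\underline{\ccC}}\dash\rC\simeq\cC\rC$ of bilax $2$-representations of $\cC$. Since each $\rF\rC$ with $\rF\in\cC$ is an injective $\rC$-comodule (as in the proof of Lemma~\ref{imageinjective}), the inclusion $\cC\rC\subseteq\operatorname{inj}_{\underline{\ccC}}\dash\rC$ holds automatically. For the reverse equivalence I will apply Theorem~\ref{thm4.7} to the bilax $2$-representation $\cC\rC$ using $\rC$ itself as the generator; the only point at which the hypothesis $\rC\in(\operatorname{inj}_{\underline{\ccC}}\dash\rC)(\mi)$ is needed is to first show that $\rC$ indeed lies in $\cC\rC(\mi)$.

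To establish $\rC\in\cC\rC(\mi)$, I would pick a monomorphism $\iota\colon\rC\hookrightarrow\rF$ in $\underline{\cC}(\mi,\mi)$ with $\rF\in\cC(\mi,\mi)$, which exists because $\underline{\cC}$ is the injective abelianization of $\cC$. Following the formula in the proof of Lemma~\ref{lem:removingA}, I form the comodule homomorphism $\Psi(\iota)=(\iota\id_{\rC})\circ\Delta_{\rC}\colon\rC\to\rF\rC$. A short computation using the interchange law, the counitality $(\id_{\rC}\epsilon_{\rC})\circ\Delta_{\rC}=r'_{\mi,\mi,\rC}$, and the naturality of $r'$ gives
\[(\id_{\rF}\epsilon_{\rC})\circ\Psi(\iota)=r'_{\mi,\mi,\rF}\circ\iota,\]
which is monic as the composition of the monomorphism $\iota$ with the split monomorphism $r'_{\mi,\mi,\rF}$ (recall from Lemma~\ref{prop:fiaxsplit} that the oplax unit is split). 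Hence $\Psi(\iota)$ is monic in $\underline{\cC}$, and consequently in $\comod_{\underline{\ccC}}\dash\rC$ since the forgetful functor to $\underline{\cC}$ is faithful. The hypothesis that $\rC$ is injective as a $\rC$-comodule then produces a comodule retraction of $\Psi(\iota)$, exhibiting $\rC$ as a direct summand of $\rF\rC$, so $\rC\in\cC\rC(\mi)$.

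Once $\rC\in\cC\rC(\mi)$, the bilax $2$-representation $\cC\rC$ is finitary by Proposition~\ref{CA}, is non-zero, and is plainly generated by $\rC$. By Lemma~\ref{lem:removingA}, the functor $\rF\mapsto\Hom_{\comod_{\underline{\ccC}}\dash\rC}(\rC,\rF\rC)\cong\Hom_{\underline{\cC}}(\rC,\rF)$ on $\cC(\mi,\mi)$ is corepresented by $\rC$, so $\rC_{\rC}=\ihom(\rC,\rC)\cong\rC$; tracing through the coevaluation map of Proposition~\ref{end is coalg} one sees that this is an isomorphism of coalgebras, analogously to Lemma~\ref{prop11}. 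Theorem~\ref{thm4.7} then delivers an equivalence $\cC\rC\simeq\operatorname{inj}_{\underline{\ccC}}\dash\rC_{\rC}\simeq\operatorname{inj}_{\underline{\ccC}}\dash\rC$, which is precisely condition~\ref{cond-inja-1} of Proposition~\ref{injA finite}, so $\rC$ is finitary. The main obstacle will be the monicity of $\Psi(\iota)$: the trick is to post-compose with $\id_{\rF}\epsilon_{\rC}$ and thereby reduce to the already-monic $r'_{\mi,\mi,\rF}\circ\iota$; the secondary check that $\ihom(\rC,\rC)\cong\rC$ is an isomorphism of coalgebras is formal.
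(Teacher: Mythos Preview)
Your proof is correct and follows the same overall strategy as the paper: show $\rC\in\cC\rC(\mi)$, then apply Theorem~\ref{thm4.7} with $X=\rC$ and identify $\ihom(\rC,\rC)\cong\rC$ as coalgebras to obtain $\cC\rC\simeq\operatorname{inj}_{\underline{\ccC}}\dash\rC$.

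The only difference is in how you establish $\rC\in\cC\rC(\mi)$. You choose an arbitrary injective hull $\iota\colon\rC\hookrightarrow\rF$ in $\underline{\cC}$ and then manufacture a monic comodule map $\Psi(\iota)\colon\rC\to\rF\rC$. The paper instead uses the canonical map that is already available: the left oplax unitor $l'_{\mi,\mi,\rC}\colon\rC\to\rI'_{\mi}\rC$ is a right $\rC$-comodule homomorphism by naturality of $l'$, and it is (split) monic in $\underline{\cC}$ by Lemma~\ref{prop:fiaxsplit}; injectivity of $\rC$ then gives a comodule retraction, so $\rC$ is a summand of $\rI'_{\mi}\rC\in\cC\rC(\mi)$. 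This avoids the auxiliary choice of $\rF$ and the computation with $\Psi(\iota)$ altogether. Your argument is fine, but you are essentially reproving, for a general $\rF$, what the oplax unitor already hands you for free when $\rF=\rI'_{\mi}$.
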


\begin{proof}
Note that $\rC\in(\operatorname{inj}_{\underline{\ccC}}\dash \rC)(\mi)$ implies that $l'_{\mi,\mi,\rC}:\rC\to \rI'_{\mi}\rC$ is split as a $\rC$-comodule map, thus we have $\rC\in\cC \rC$.
Since $\cC\rC$ is finitary, as proved in Proposition \ref{CA}, we can apply Theorem \ref{thm4.7} to $X=\rC$. Then similarly to Lemma~\ref{prop11}, we can show that $\ihom(\rC,\rC)\cong \rC$ and we obtain the desired equivalence $\operatorname{inj}_{\underline{\ccC}}\dash \rC\simeq \cC\rC$.
\end{proof}

\begin{cor}\label{fiatfinitary}
If $\cC$ is a fiat 2-category, then any coalgebra $\rC\in\underline{\cC}(\mi,\mi)$ for any $\mi\in\ob\cC$ is finitary.
\end{cor}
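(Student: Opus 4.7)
The plan is to reduce to Proposition~\ref{if A inj}: it suffices to show that $\rC$ is injective in $\comod_{\underline{\ccC}}\dash\rC$, i.e., that $\rC\in(\operatorname{inj}_{\underline{\ccC}}\dash\rC)(\mi)$. The key observation driving the proof is that when $\cC$ is fiat, viewed as a fiax category with $\rI_\mi=\rI'_\mi=\mathbbm{1}_\mi$, the weak identity $\mathbbm{1}_\mi$ already lies in $\cC(\mi,\mi)$ (not merely in $\widehat{\cC}$), and in particular is injective in $\underline{\cC}(\mi,\mi)$.

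Given this, I would apply Lemma~\ref{lem:removingA} with $\rF=\mathbbm{1}_\mi\in\cC(\mi,\mi)$: for any right $\rC$-comodule $\rM$,
\[
\Hom_{\comod_{\underline{\ccC}}\dash\rC}(\rM,\mathbbm{1}_\mi\rC)\ \cong\ \Hom_{\underline{\ccC}}(\rM,\mathbbm{1}_\mi).
\]
Since $\mathbbm{1}_\mi$ is injective in $\underline{\cC}(\mi,\mi)$, the right-hand side is exact in $\rM$, so $\mathbbm{1}_\mi\rC$ is injective in $\comod_{\underline{\ccC}}\dash\rC$.

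Next, I would promote this to injectivity of $\rC$ itself using the left lax unitor $l_{\mi,\mi,\rC}:\mathbbm{1}_\mi\rC\to\rC$, which is an isomorphism in the fiat case. This is a right $\rC$-comodule map: the coaction on $\mathbbm{1}_\mi\rC$ is $\id_{\mathbbm{1}_\mi}\Delta_\rC$, and using Definition~\ref{def:bilax-unit}(1)(b) one has $l_{\mi,\mi,\rC\rC}=l_{\mi,\mi,\rC}\id_\rC$, so naturality of $l_{\mi,\mi,-}$ applied to $\Delta_\rC$ gives $\Delta_\rC\circ_{\rv} l_{\mi,\mi,\rC}=(l_{\mi,\mi,\rC}\id_\rC)\circ_{\rv}(\id_{\mathbbm{1}_\mi}\Delta_\rC)$, as required. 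Thus $\rC\cong\mathbbm{1}_\mi\rC$ as right $\rC$-comodules, so $\rC$ is injective, and Proposition~\ref{if A inj} finishes the proof.

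I do not anticipate a real obstacle here; the only thing to be careful about is verifying that the unitor is a comodule morphism, and this is a direct naturality argument as above.
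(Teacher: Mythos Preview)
Your proof is correct and follows essentially the same route as the paper: both arguments reduce to Proposition~\ref{if A inj} by observing that in the fiat case $\rC\cong\mathbbm{1}_\mi\rC$ is an injective $\rC$-comodule. The only difference is packaging: the paper invokes Proposition~\ref{CA} (which, with $\rI_\mi=\rI'_\mi=\mathbbm{1}_\mi$, gives $\cC\rC\simeq\operatorname{inj}_{\underline{\ccC}}\dash\rC$ directly) and then notes $\rC=\mathbbm{1}_\mi\rC\in\cC\rC$, whereas you unpack the relevant ingredient---Lemma~\ref{lem:removingA} applied to $\rF=\mathbbm{1}_\mi$---to show injectivity of $\mathbbm{1}_\mi\rC$ by hand (this is exactly the mechanism behind Lemma~\ref{imageinjective}). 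Your explicit verification that the unitor is a comodule map is a nice touch the paper leaves implicit.
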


\begin{proof}
Since $\rI_\mi=\rI'_\mi$ is the (strict) identity 1-morphism $\mathbbm{1}_{\mi}$ in this case, Proposition~\ref{CA} implies $\cC\rC\simeq\operatorname{inj}_{\underline{\ccC}}\dash \rC$. The claim now follows from Proposition~\ref{if A inj} since $\rC=\mathbbm{1}_\mi\rC\in\cC\rC$.
\end{proof}

The above motivates the following questions, for which we have no answers at the moment.

\begin{ques}
Is there a coalgebra 1-morphism that is not finitary? Is there a finitary coalgebra 1-morphism that is not injective? Is there a relation between the condition on $\operatorname{inj}_{\underline{\ccC}}\dash \rC$ to be finitary and the condition $\cC (\operatorname{inj}_{\underline{\ccC}}\dash \rC)=\operatorname{inj}_{\underline{\ccC}}\dash \rC$?
\end{ques}

%Note that for a bilax $2$-representation $\bM$ of $\cC$, the condition $\cC\,\bM=\bM$ is equivalent to that $\bM$ has a generator.
%It is clear that the latter condition deduces the former condition by definition, where the other direction follows by taking {\color{red}the generator by
%the direct sum of all non-isomorphic indecomposable object in $\bM$.}
Recall from  Subsection~\ref{sec:lifted-2-rep} and Remark~\ref{rem:oplaxside2} that a bilax $2$-representation $\bM$ of $\cC$ with the condition $\cC\,\bM=\bM$
can be lifted to a $2$-representation of $\widecheck{\cC}$.
On the other hand, by Theorem~\ref{thm4.7}, such a bilax $2$-representation, if finitary, is equivalent to
$\operatorname{inj}_{\underline{\ccC}}\dash \rC_X$, for some coalgebra $1$-morphism $\rC_X$ in $\underline{\cC}$.
The same is true for $\widecheck{\cC}$. Moreover, in $\widecheck{\cC}$, any coalgebra is finitary by Corollary \ref{fiatfinitary}.
To put all these in one picture, we generalize the notion of a coalgebra 1-morphism to a collection of coalgebra 1-morphisms $(\rC_\mi)_{\mi\in\ob\ccC}$, that is,
we say that $(\rC_\mi)_{\mi\in\ob\ccC}$ is finitary if each $\rC_\mi$ is finitary and so on.
Then we have the following picture.

\[
\begin{tikzpicture}
  \matrix (m) [matrix of math nodes,row sep=5em,column sep=-1em,minimum width=2em]
  {\left\{\begin{array}{cc} \mbox{finitary bilax $2$-representations}\\ \mbox{ of $\cC$ with $\cC\bM=\bM$}
\end{array}\right\}/\simeq &&   \left\{\begin{array}{cc} \mbox{finitary $2$-representations}\\ \mbox{ of $\widecheck{\cC}$}
\end{array}\right\}/\simeq\\
      \{\text{finitary coalgebras in $\underline{\cC}$}\}/ \sim &\ \ \ \ \ \ \ \ \ \ \ \ \ \ \ \ \ \ \ \ \ \ \ \ \ \ \ \ \ \ \ & \{\text{coalgebras in $\underline{(\widecheck{\cC})}$}\}/ \sim\\};
  \path[-stealth]
    (m-1-1) edge node [above] {$\bM\mapsto \widecheck{\bM}$} (m-1-3)
    (m-1-3) edge node [below] {$\cC(\bM|_{\ccC})\mapsfrom \bM$} (m-1-1)
    (m-1-1) edge node [left] {Theorem~\ref{thm4.7}} (m-2-1)
    (m-1-3) edge node [right] {} (m-2-3)
 %   (m-2-1) edge node [above] {} (m-2-3)
    (m-2-1) edge node [above right] {Proposition~\ref{moritathm}} (m-1-1)
    (m-2-1) edge node [below right] {Proposition~\ref{injA finite}} (m-1-1)
    (m-2-3) edge node [below right] {\cite[Section~7]{EGNO}} (m-1-3)
    (m-2-3) edge node [above right] {\cite{MMMT}} (m-1-3)
    (m-2-3) edge node [left]{Corollary \ref{fiatfinitary}} (m-1-3)
%    (m-3-2) edge node [below,sloped] {Lemma~\ref{algebraisalgebra}} (m-2-1)
%    (m-3-2) edge node [above,sloped] {Remark~\ref{forthepicture}} (m-2-3)
%    (m-2-3) edge node [below,sloped] {} (m-2-1)
    ;
\end{tikzpicture}
\]

Here ``$\simeq$'' denotes the equivalence of bilax $2$-representations and ``$\sim$'' the Morita-Takeuchi equivalence of coalgebras.
The bijection in the top row is a consequence of Theorem~\ref{thm:bijection}.%; the arrow in the bottom is induced from the embedding $\underline{\cC}\to\underline{(\check{\cC})}$.

\subsection{Cells and Duflo 1-morphisms} \label{s3.9}

We complete this section with some results on the
combinatorial structures of bilax-unital $2$-categories,
following \cite[Section~4]{MM1} and \cite[Section~3]{MM2}.

\begin{defn} For any two indecomposable $1$-morphisms $\rF,\rG$ in a $2$-semicategory $\cC$, we say
$\rF\leq_{L} \rG$ if there exists a $1$-morphism $\rH$ in $\cC$ such that $\rG$ is isomorphic to a direct summand of $\rH\rF$. The preorder $\leq_L$ is called the {\em left preorder}. The equivalence classes with respect to $\leq_{L}$ are called the {\em left cell}.
We denoted by $\sim_L$ the corresponding equivalence relation, in the sense that, for any two $1$-morphism $\rF,\rG$, we have $\rF\sim_L\rG$ if and only if,
both $\rF\leq_{L}\rG$ and $\rG\leq_L\rF$.

Similarly, one can define the {\em right} and the {\em two-sided} preorders,
denoted by $\leq_R$ and $\leq_J$ respectively, and the corresponding {\em right}
and {\em two-sided} cells, denoted by $\mathcal R$ and $\mathcal J$ respectively.

A two-sided cell $\mathcal J$ is called {\em strongly regular}
if the intersection of each left cell in $\mathcal J$ and each right cell in $\mathcal J$ consists of exactly one element.
A $2$-semicategory $\cC$ is called {\em strongly regular} if all its two-sided cells are strongly regular.
\end{defn}

Now assume that $\cC=(\cC,\,\rI=\{\rI_{\mi}|\,\mi\in\ob\cC\},\rI'=\{\rI_{\mi}'|\,\mi\in\ob\cC\})$ is a fiax category.
For each object $\mi$, we have one of  the following two cases:
\begin{itemize}
    \item $\cC$ contains the identity $1$-morphism $\mathbbm{1}_{\mi}$ which implies that $l_{\mi,\mi,\mathbbm{1}_{\mi}}=r_{\mi,\mi,\mathbbm{1}_{\mi}}=\id_{\mathbbm{1}_{\mi}}$. In this case, it is clear that $\cC(\mi,\mi)=\widehat{\cC}(\mi,\mi)$.
    \item $\cC$ does not contain any $1$-morphism isomorphic to the identity $1$-morphism $\mathbbm{1}_{\mi}$. In this case, each category $\widehat{\cC}(\mi,\mi)$ has, up to
    isomorphism, exactly one indecomposable $1$-morphism more than $\cC(\mi,\mi)$, namely $\mathbbm{1}_{\mi}$. The $1$-morphism $\mathbbm{1}_{\mi}$ forms a left, right, and two-sided cell on its own and this cell is minimal with respect to the corresponding preorders.
\end{itemize}

Therefore, in either case, the cell structures in $\cC$ are preserved under the embedding $\cC\hookrightarrow \widehat{\cC}$.
Let $\mathcal L$ be a left cell in $\cC$. Viewing $\mathcal L$ as a left cell in $\widehat{\cC}$ and following \cite[Proposition 17]{MM1}
one can define the {\em Duflo $1$-morphism} $\rD_{\mathcal L}$ in $\mathcal L$
to be the unique $1$-morphism, up to isomorphism, such that the minimal submodule of $P_{\mathbbm{1}_{\mi}}$ in the category $\overline{\bP_{\mi}}(\mi)$, the quotient by which is annihilated by all $1$-morphisms in $\mathcal L$, has simple top $L_{\rD_{\mathcal L}}$.
By construction, we have a $2$-morphism
\begin{equation}\label{dufloto1}
   d_{\mathcal{L}}: \rD_{\mathcal{L}}\to \mathbbm{1}_{\mi}
\end{equation}
such that any non-zero $2$-morphism $\rF\to \mathbbm{1}_{\mi}$ factors through $d_{\mathcal{L}}$, if $\rF\in \add(\mathcal L)$.

An interesting question is that whether one can define Duflo $1$-morphism by only using the bilax-unital structure of $\cC$ itself without
going through $\widehat{\cC}$. So far we didn't find such a definition.

\section{Examples}\label{s4}

\subsection{The category $\ca$}\label{sec:example1}

Let $A$ be a finite dimensional, self-injective, basic $\Bbbk$-algebra and $\{e_1,\ldots e_n\}$ a complete set of primitive orthogonal idempotents of $A$.
For any $1\leq i,j\leq n$, denote by ${}_i\mathtt{A}_j$ a basis of the $\Bbbk$-space $e_iAe_j$
chooses with respect to the socle filtration and which contains the element $e_i$, when $i=j$.
Set $\displaystyle\mathtt{A}:=\bigcup_{1\leq i,j\leq n}{}_i\mathtt{A}_j$. Let $\mathrm{tr}$ be the unique linear map from $A$ to $\Bbbk$ such that, for all $a\in \mathtt{A}$,
we have
\[\mathrm{tr}(a)=\begin{cases}
1, & \text{if }a\in \mathrm{soc}(A)\bigcap \mathtt{A};\\
0, & \text{otherwise}.\end{cases}\]
For $a\in\mathtt{A}$, we denote by $a^*$ the unique element in $A$ which satisfies
\[\mathrm{tr}(ba^*) =
\begin{cases}
1, & \text{if }b=a;\\
0, & \text{if }b\in \mathtt{A}\setminus\{a\}.
\end{cases}\]
Define $\ca$ to be a bilax-unital $2$-category such that
\begin{itemize}
    \item $\ob \ca=\{1,\ldots, n\}$ (each object $i$ is a small category equivalent to $\add(Ae_i)$);
    \item $1$-morphisms in each category $\ca(i,j)$ are functors isomorphic to the ones given by tensoring with the projective $A$-$A$-bimodules in
    $\add (Ae_{i}\otimes_{\Bbbk} e_{j} A) $, that is, the additive closure of $Ae_i\otimes_{\Bbbk} e_j A$ in the category of $A$-$A$-bimodules;
    \item $2$-morphisms are given by natural transformations between those functors which are in one to one correspondence with $A$-$A$-bimodule homomorphisms;
    \item the lax unit $\rI_{i}$ is isomorphic to the functor of tensoring with the $A$-$A$-bimodule $Ae_i\otimes_{\Bbbk} e_iA$ and
     the associated left and right lax unitor maps are given by the natural transformations which correspond to the following $A$-$A$-bimodule homorphisms, respectively:
        \[\begin{split}\begin{array}{rcl}
        Ae_i\otimes_{\Bbbk} e_iA\otimes_{A}Ae_i\otimes_{\Bbbk} e_jA&\to &Ae_i\otimes_{\Bbbk} e_jA\\
          \displaystyle\sum_{s}a_se_i\otimes e_ib_s\otimes   \displaystyle\sum_{t}c_te_i\otimes e_jd_t&\mapsto&
          \displaystyle\sum_{s,t}a_se_ib_sc_te_i\otimes e_jd_t,
        \end{array}\\
        \begin{array}{rcl}
        Ae_i\otimes_{\Bbbk} e_jA\otimes_{A}Ae_j\otimes_{\Bbbk} e_jA&\to &Ae_i\otimes_{\Bbbk} e_jA\\
       \displaystyle\sum_{s}a_se_i\otimes e_jb_s\otimes   \displaystyle\sum_{t}c_te_j\otimes e_jd_t&\mapsto&
       \displaystyle\sum_{s,t}a_se_i\otimes e_jb_sc_te_jd_t;
        \end{array}\end{split}
        \]
        \item the oplax unit $\rI'_{i}$ is isomorphic to the functor of tensoring with the $A$-$A$-bimodule $Ae_i\otimes_{\Bbbk} e_iA$ and
     the associated left and right oplax unitor maps are given by the natural transformations which correspond to the following $A$-$A$-bimodule homorphisms, respectively:
        \[\begin{split}\begin{array}{rcl}
       Ae_i\otimes_{\Bbbk} e_jA&\to & Ae_i\otimes_{\Bbbk} e_iA\otimes_{A}Ae_i\otimes_{\Bbbk} e_jA\\
        \displaystyle\sum_{s}b_se_i\otimes e_jc_s&\mapsto&\displaystyle\sum_{s}\sum_{a\in\mathtt{A}}b_sae_i\otimes e_ia^*\otimes e_i\otimes e_jc_s,
        \end{array}\\
        \begin{array}{rcl}
       Ae_i\otimes_{\Bbbk} e_jA&\to & Ae_i\otimes_{\Bbbk} e_jA\otimes_{A}Ae_j\otimes_{\Bbbk} e_jA \\
       \displaystyle\sum_{s}b_se_i\otimes e_jc_s&\mapsto&\displaystyle\sum_{s}\sum_{a\in\mathtt{A}}b_se_i\otimes e_j\otimes ae_j\otimes e_ja^*c_s.
        \end{array}\end{split}
        \]
\end{itemize}
We emphasize the major difference between $\ca$ and  the $2$-category $\cC_A$ from \cite[Subsection~7.3]{MM1}: in $\ca$ we do not take any functors isomorphic to the
regular $A$-$A$-bimodule $_{A}A_{A}$ or any variation of it.

Since $A$ is finite dimensional, $\ca$ is a finitary bilax-unital $2$-category. Denote by $\rF_{ij}$ the functor isomorphic to tensoring with the $A$-$A$-bimodule $Ae_i\otimes_{\Bbbk} e_jA$.
Note that $\ca$ has only one two-sided cell which is strongly regular. Indeed, we have $n$ left cells, that is,
$\mathcal L_i=\{\rF_{ji}| 1\leq j\leq n\}, 1\leq i\leq n$, and $n$ right cells, that is, $\mathcal R_i=\{\rF_{ij}| 1\leq j\leq n\}, 1\leq i\leq n$.

\begin{prop}\label{prop:cafiax}
If $A$ is weakly symmetric, then $\ca$ is a fiax category.
\end{prop}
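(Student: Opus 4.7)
The plan is to define the weak involution $({}_-)^\star\colon \ca\to\ca^{\operatorname{\,op,\,op}}$ to be the identity on objects and to send the indecomposable $1$-morphism $\rF_{ij}$ (tensoring with $Ae_i\otimes_\Bbbk e_jA$) to $\rF_{ji}$, extended additively. Since $\rI_i=\rI'_i=\rF_{ii}$ holds as $1$-morphisms, the equality $(\rI_i)^\star=\rI'_i$ holds on the nose. The action of $({}_-)^\star$ on $2$-morphisms will then be prescribed by formula \eqref{composit}, once adjunction morphisms $\alpha_\rF,\beta_\rF$ have been chosen, so that Definition~\ref{deffiax}\eqref{deffiax-3} becomes tautological.

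Next I would build the adjunction morphisms from the Frobenius structure on $A$ supplied by the trace $\operatorname{tr}$. Weak symmetry of $A$ makes the bilinear form $(x,y)\mapsto \operatorname{tr}(xy)$ non-degenerate on the blocks, so the dual basis $\{a^*\}_{a\in\mathtt{A}}$ induces a non-degenerate pairing between $e_iAe_j$ and $e_jAe_i$. Writing $\rF\rF^\star$ with $\rF=\rF_{ij}$ as tensoring with $Ae_i\otimes_\Bbbk e_jAe_j\otimes_\Bbbk e_iA$, I would take $\beta_\rF$ to be the bimodule map induced by the trace $e_jAe_j\to\Bbbk$, namely $ae_i\otimes e_jxe_j\otimes e_ic\mapsto \operatorname{tr}(e_jxe_j)\cdot ae_i\otimes e_ic$. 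Dually, writing $\rF^\star\rF$ as tensoring with $Ae_j\otimes_\Bbbk e_iAe_i\otimes_\Bbbk e_jA$, I would take $\alpha_\rF$ to be the bimodule map sending $e_j\otimes e_j$ to $\sum_{b\in {}_j\mathtt{A}_i}b^*\otimes e_i\otimes b$, i.e.\ the coevaluation element for the trace pairing between $e_jAe_i$ and $e_iAe_j$, inserted via $e_i\in e_iAe_i$.

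The zigzag identities \eqref{diag:adjunction conditions} would then reduce, after expanding the lax and oplax unitor formulas listed in Subsection~\ref{sec:example1}, to the standard dual basis identity $x=\sum_{b\in \mathtt{A}}\operatorname{tr}(xb)\,b^*$ and its left analogue. To check condition \eqref{deffiax-1} of Definition~\ref{deffiax} on unitors, note that the lax unitor $l$ is built from multiplication in $A$, whereas the oplax unitor $l'$ is built from exactly the same coevaluation element that defines $\alpha_\rF$; this symmetry should make the formula \eqref{composit} exchange $l$ with $r'$ and $r$ with $l'$ by direct inspection. Involutivity $({}_-)^{\star\star}\cong \Id_\ca$ on $1$-morphisms is immediate from $(\rF_{ji})^\star=\rF_{ij}$, and on $2$-morphisms it follows from a standard double-adjoint argument using the zigzag identities twice.

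The main obstacle, I anticipate, is the bookkeeping required in the zigzag identities. In the fiat $2$-category $\cC_A$ of \cite[Subsection~7.3]{MM1} the unit collapses to ${}_AA_A$ and all unitors are identities, so the zigzags reduce to a single bimodule equality; here, by contrast, one must interleave the four distinct unitors $l,r,l',r'$ (associated with $1$-morphisms that happen to coincide with $\rF_{ii}$ but carry distinct structural maps as listed in Subsection~\ref{sec:example1}) with $\alpha_\rF$ and $\beta_\rF$, and track the resulting $A$-bimodule maps carefully. I expect this to be the most error-prone step, but the fact that $\alpha_\rF$ is built from the same coevaluation element as $l'$ and $r'$ should force the cancellations to fall out along the same lines as in the classical computation for $\cC_A$.
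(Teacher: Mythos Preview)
Your overall strategy matches the paper's: set $(\rF_{ij})^\star=\rF_{ji}$, build adjunction morphisms out of the trace form, and verify the zigzags. Your $\beta_\rF$ agrees verbatim with the paper's. However, your proposed $\alpha_\rF$ is wrong as written. For $b\in{}_j\mathtt{A}_i=e_jAe_i$ one has $b^*\in e_iAe_j$, so the element $\sum_{b\in{}_j\mathtt{A}_i}b^*\otimes e_i\otimes b$ is killed by left multiplication by $e_j$ whenever $i\neq j$; it therefore cannot be the image of $e_j\otimes e_j$ under a bimodule map $\rI'_j\to\rF^\star\rF$, and no zigzag can hold.

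The paper instead takes the much simpler
\[
\alpha_\rF\colon\ b\,e_j\otimes e_j\,c\ \longmapsto\ b\,e_j\otimes e_i\otimes e_i\otimes e_j\,c,
\]
i.e.\ plain insertion of $e_i\otimes e_i$. The point you are overlooking is that the coevaluation element is \emph{already} built into the oplax unitor: by the formulas in Subsection~\ref{sec:example1}, $r'_{j,i,\rF}$ sends $b\,e_i\otimes e_j\,c$ to $\sum_{a\in\mathtt{A}}b\,e_i\otimes e_j\otimes a\,e_j\otimes e_j\,a^*c$. Hence in the zigzag composite $l\circ_{\rv}(\beta\,\id_\rF)\circ_{\rv}(\id_\rF\,\alpha)\circ_{\rv} r'$ the dual-basis sum is supplied by $r'$, the trace by $\beta$, and the multiplication by $l$; the whole thing collapses to the identity $x=\sum_{a}\mathrm{tr}(xa)\,a^*$ exactly as you anticipated. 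Loading a second coevaluation into $\alpha$ would over-count and break the cancellation. Once you replace your $\alpha_\rF$ by the insertion map, the rest of your plan goes through; the paper also refers to the proof of Proposition~\ref{cjfiax} for the remaining axioms in a setting that subsumes $\ca$.
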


\begin{proof}
If $A$ is weakly symmetric, then we have the adjoint pair $(Ae_i\otimes_{\Bbbk}e_jA\otimes_A{}_-, Ae_j\otimes_{\Bbbk} e_iA\otimes_A{}_-)$ in the $\ca$ with adjunction morphisms given by
\[\begin{array}{rcl}
Ae_j\otimes_{\Bbbk}e_jA&\to&Ae_j\otimes_{\Bbbk} e_iA\otimes_A Ae_i\otimes_{\Bbbk}e_jA\\
\displaystyle\sum_{s}b_se_j\otimes e_jc_s&\mapsto& \displaystyle\sum_{s}b_se_j\otimes e_i\otimes e_i\otimes e_jc_s
\end{array}\]
and
\[\begin{array}{rcl}
Ae_i\otimes_{\Bbbk}e_jA\otimes_A Ae_j\otimes_{\Bbbk} e_iA&\to&Ae_i\otimes_{\Bbbk}e_iA\\
 \displaystyle\sum_{s}a_se_i\otimes e_jb_s\otimes   \displaystyle\sum_{t}c_te_j\otimes e_id_t&\mapsto&
\displaystyle\sum_{s,t}\mathrm{tr}(b_sc_t)a_se_i\otimes e_id_t.
\end{array}\]
This allows one to define a weak involution $({}_-)^\star$ in the obvious way.
Alternatively, one can refer to the proof of Proposition~\ref{cjfiax} for details in a more general case. The claim follows.
\end{proof}

For the converse of Proposition~\ref{prop:cafiax}, note that $\ca$, in addition to consisting of one two-sided cell, has the property that any of its nontrivial $2$-ideal contains some identity $2$-morphism $\id_{\rF}$. Following \cite{MM2}, we call this property being {\em $\mathcal J$-simple}.
Now, if $\cC$ is a fiax category with one two-sided cell which is strongly regular and $\mathcal J$-simple, then we can find a finite dimensional weakly symmetric basic algebra $A$ so that $\cC$ is biequivalent to $\ca$. This follows from a similar result for fiat categories from \cite{MM3}. Note that $\widehat{\cC}$ is $\mathcal J$-simple, has the same strongly regular two-sided cell as $\cC$, and each of the added 1-morphisms forms a two-sided cell by itself.
Such a fiat $2$-category is biequivalent to $\widehat{\ca}$ by \cite[Theorem~13]{MM3}. And the biequivalence is restricted to a biequivalence (of bilax-unital 2-categories) between $\cC$ and $\ca$.
We emphasize that this biequivalence does not take into account the weak involution $({}_-)^\star$, that is, the weak involution $({}_-)^\star$ may not commute with the biequivalence.
Even in the fiat setting of \cite{MM3}, we do not have a complete classification of possible weak involutions.

\subsection{The category $\cj$}\label{sec:example2}

Let $\cC$ be a fiat $2$-category and $\mathcal J$ a two-sided cell in $\cC$.
Let $\mathcal L_1,\cdots \mathcal L_n$, where $n$ is a positive integer, be a complete list of left cells in $\mathcal J$.
Denote by~$\mathcal R_i=\mathcal L_i^\star$.
By \cite[Proposition~17]{MM1}, the intersection $\mathcal H_i=\mathcal L_i \bigcap \mathcal R_i$ contains exactly one Duflo 1-morphism, denoted by $\rD_i$.
Let $d_i:D_i\to\mathbbm{1}_{\mi_i}$ be the corresponding defining $2$-morphism,
cf. Subsection~\ref{s3.9}.
Using \eqref{dufloto1}, we define two natural transformations given by
\[d_i\id_{\rF}:\rD_i\rF\to \rF\quad\text{and}\quad \id_{\rG}d_i:\rG\rD_i\to\rG,\]
where $\rF,\rG$ run through all $1$-morphisms whenever $\rD_i\rF$ and $\rG\rD_i$ make sense. One also have the dual natural transformations given by
$(d_i\id_{\rF})^{\star}=\id_{\rF^{\star}}(d_i)^\star$ and $(\id_{\rG}d_i)^{\star}=(d_i)^\star\id_{\rG^\star}$.

Consider the $2$-semicategory $\cC(\mathcal J)$ consisting of all the objects of sources and targets of $1$-morphisms in $\mathcal J$ and
\begin{equation}\label{C(J)}
    \cC(\mathcal J):= \add(\{\rF\ |\ \rF\geq_{J} \mathcal J\}) / \cI,
\end{equation}
where $\cI$ is the unique maximal $2$-ideal which does not contain any $\id_{\rF}$, for $\rF\in\mathcal{J}$.
Define $\cj$ to be a bilax-unital $2$-category such that
\begin{itemize}
    \item $\ob \cj=\{1,\ldots, n\}$;
    \item $\cj(i,j):=\add(\mathcal R_i\bigcap \mathcal L_j)$ for any $1\leq i,j\leq n$, that is, the $2$-full $2$-semisubcategory of $\cC(\mathcal J)$ consisting of objects in $\add(\mathcal R_i\bigcap \mathcal L_j)$;
    \item the horizontal composition is inherited from $\cC$;
    \item each $\rD_i$ is a lax unit with the lax unitors $l_{j,i,\rF}=d_i\id_{\rF}$ and $r_{i,k,\rG}=\id_{\rG}d_i$ for any $1$-morphisms $\rF\in\cj(j,i)$ and $\rG\in\cj(i,k)$;
    \item each $(\rD_i)^{\star}$ is an oplax unit with the oplax unitors $l'_{j,i,\rF}=(r_{i,j,\rF^{\star}})^{\star}$ and $r'_{i,k,\rG}=(l_{k,i,\rG^{\star}})^{\star}$ for any $1$-morphisms $\rF\in\cj(j,i)$ and $\rG\in\cj(i,k)$.
\end{itemize}
The fact that $\cj$ is a finitary bilax-unital $2$-category due to the definition and the fact that $\cC$ is finitary.

\begin{prop}\label{cjfiax}
The finitary bilax-unital $2$-category $\cj$ is fiax.
\end{prop}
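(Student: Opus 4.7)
The plan is to verify each of the four conditions in Definition~\ref{deffiax}. Starting with the weak involution, I will restrict the involution $({}_-)^\star$ of $\cC$ to $\cj$: since $\mathcal R_i = \mathcal L_i^\star$ and $\mathcal L_j = \mathcal R_j^\star$, this involution sends $\mathcal R_i\cap\mathcal L_j$ to $\mathcal R_j\cap\mathcal L_i$, so it preserves $\mathcal J$ and descends to $\cC(\mathcal J)$; here the ideal $\cI$ is $\star$-stable because of its maximality together with $({}_-)^{\star\star}\cong\Id_{\ccC}$. Restricting further to the $2$-full subcategory $\cj$ gives the required weak involution, and condition~(4) of Definition~\ref{deffiax} is inherited. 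Condition~(1) is built into the definition of $\cj$: the oplax unit is $(\rD_i)^\star$, and the oplax unitors are defined as the $\star$-images of the lax unitors.

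To establish condition~(2), I will construct the adjunction $2$-morphisms $\alpha_\rF\colon \rD_i^\star\to\rF^\star\rF$ and $\beta_\rF\colon\rF\rF^\star\to\rD_j$ from the classical adjunction morphisms $\widehat\alpha_\rF\colon\mathbbm{1}_{\mi}\to\rF^\star\rF$ and $\widehat\beta_\rF\colon\rF\rF^\star\to\mathbbm{1}_{\mj}$ that are available in the fiat $\cC$. In the quotient $\cC(\mathcal J)$, summands of $\rF\rF^\star$ in cells strictly above $\mathcal J$ become isomorphic to zero since their identity $2$-morphisms lie in the maximal ideal $\cI$, and strong regularity of $\mathcal J$ forces what remains to be a direct sum of copies of $\rD_j$. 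The defining property~\eqref{dufloto1} of $d_j$ then yields a $2$-morphism $\beta_\rF'\colon\rF\rF^\star\to\rD_j$ in $\cC$ with $d_j\circ_{\rv}\beta_\rF'=\widehat\beta_\rF$; set $\beta_\rF$ to be the class of $\beta_\rF'$ in $\cj$. Define $\alpha_\rF$ dually, as the $\star$-image of the analogous factorization for $\rF^\star$ (using $\rF^{\star\star}\cong\rF$).

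The main task is to verify the two zigzag identities of Definition~\ref{def:ad-pair}. Unfolding the $\cj$-unitors as $l_{j,i,\rF}=d_i\id_\rF$, $r'_{\mi,\mj,\rF}=(d_i)^\star\id_\rF$ and their $\star$-duals, and substituting $d_j\circ_{\rv}\beta_\rF=\widehat\beta_\rF$ along with the $\star$-dual relation $\alpha_\rF\circ_{\rv}(d_i)^\star=\widehat\alpha_\rF$, the first zigzag composite becomes an expression in $\cC$ in which the interchange law lets one collect $d_j$ next to $\widehat\beta_\rF$ and $(d_i)^\star$ next to $\widehat\alpha_\rF$; the result reduces to the classical zigzag $(\widehat\beta_\rF\id_\rF)\circ_{\rv}(\id_\rF\widehat\alpha_\rF)=\id_\rF$ of $\cC$. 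The same strategy handles the second zigzag. Condition~(3) of Definition~\ref{deffiax} then follows by direct substitution: rewriting the composite~\eqref{composit} with the chosen $\alpha$ and $\beta$, and using the Duflo factorizations, one recovers the formula for $\gamma^\star$ given by the fiat involution on $\cC$ applied to $\gamma$.

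The hard part will be the zigzag verification: one has to orchestrate several layers of factorization at once — the classical zigzag in $\cC$, the Duflo factorizations $\widehat\beta_\rF=d_j\circ_{\rv}\beta_\rF$ and $\widehat\alpha_\rF=\alpha_\rF\circ_{\rv}(d_i)^\star$, the definitions of the $\cj$-unitors via horizontal composition with $d_i$ and $(d_i)^\star$, and the interchange moves — while ensuring that the relevant $2$-morphisms survive the quotient by $\cI$, so that identities proved in $\cC$ descend to valid identities in $\cj$. Care is also required in the choice of the factorization $\beta_\rF'$: non-uniqueness of the lift to $\cC$ (coming from $2$-morphisms whose image lies in the kernel of $d_j$, which is concentrated in cells above $\mathcal J$) disappears in the quotient, making $\beta_\rF$ well-defined in $\cj$.
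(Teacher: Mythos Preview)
Your approach is essentially the same as the paper's: restrict $({}_-)^\star$, factor the classical adjunction morphisms $\widehat\alpha_\rF,\widehat\beta_\rF$ through $(d_i)^\star$ and $d_j$ respectively, and then observe that the resulting $\cj$-zigzag composite unfolds to the classical zigzag in $\cC$. The paper's verification of the zigzag is a single commutative diagram rather than the layered interchange argument you sketch, but the content is identical.

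One point to correct: you invoke strong regularity of $\mathcal J$ to conclude that $\rF\rF^\star$ in $\cC(\mathcal J)$ is a sum of copies of $\rD_j$, but strong regularity is \emph{not assumed} in the setup of $\cj$, and it is not needed. The factorization of $\widehat\beta_\rF$ through $d_j$ follows directly from the defining property~\eqref{dufloto1} of the Duflo $1$-morphism once you know that $\rF\rF^\star\in\add(\mathcal L_j)$ (in the quotient); no decomposition into copies of $\rD_j$ is required. Drop the strong-regularity clause and appeal to~\eqref{dufloto1} straightaway. Your remarks on the well-definedness of $\beta_\rF$ in the quotient and on condition~(3) are more elaborate than necessary: for the zigzag it suffices to pick \emph{some} factorization $\beta'_\rF$, and condition~(3) can always be arranged a posteriori once adjunctions exist (as noted after Definition~\ref{deffiax}).
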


\begin{proof}
The weak involution $({}_-)^\star$ on $\cC$ restricts to a weak involution on $\cj$,
which we still denoted by $({}_-)^{\star}$.
We need to prove that $(\rF,\rF^{\star})$ is an adjoint pair in $\cj$, for any $1$-morphism $\rF\in\cj(i,j)$.
In the fiat $2$-category $\cC$, we have the following adjunction morphisms
\[\alpha:\mathbbm{1}_{\mi_i}\to \rF^{\star}\rF\quad\text{and}\quad\beta:\rF\rF^{\star}\to \mathbbm{1}_{\mi_j},\]
for the adjoint pair $(\rF,\rF^{\star})$. The latter adjunction morphisms also belong to $\cC(\mathcal{J})$.
Since $\rF\rF^\star\in \add(\mathcal{L}_j)$, the $2$-morphism $\beta$ factors through $d_j$, that is, there exists a
$2$-morphism $\beta':\rF\rF^{\star}\to\rD_j$ such that $\beta=d_j\circ_{\rv}\beta'$. By the dual argument, the $2$-morphism $\alpha$
factors through $(d_i)^\star$, that is, there exists a $2$-morphism $\alpha':\rD_i\to\rF^\star\rF$ such that $\alpha=\alpha'\circ_{\rv}(d_i)^\star$.
We claim that $\alpha'$ and $\beta'$ give  adjunction morphisms for the adjoint pair $(\rF,\rF^{\star})$ in $\cj$.
By definition, the diagram
\[\xymatrix@C=2.5pc{\rF\ar@{=}[d]\ar[rr]^{r'_{i,j,\rF}=\id_{\rF}(d_i)^{\star}\ }&& \rF\rD_i\ar[rr]^{\id_{\rF}\alpha'}&&\rF\rF^\star\rF\ar@{=}[d]\ar[rr]^{\beta'\id_{\rF}}&& \rD_j\rF\ar[rr]^{l_{i,j,\rF}=d_j\id_{\rF}}&& \rF\ar@{=}[d]\\
\rF\ar[rrrr]^{\id_{\rF}\alpha}&&&&\rF\rF^\star\rF\ar[rrrr]^{\beta\id_{\rF}}&&&&\rF,}\]
commutes. This implies that the path along the consecutive arrows from $\rF$ to $\rF$ in the top row equals $(\id_{\rF}\alpha)\circ_{\rv}(\beta\id_{\rF})$ and hence equals $\id_{\rF}$ by adjunction. The other condition can be proved similarly and the claim follows.
\end{proof}

\begin{rem}
Note that $\cC(\mathcal J)$ and $\cC_\mathcal{J}$ might have different number of objects. Even though they have the same $1$-
and $2$-morphisms, $\cC(\mathcal J)$ might not be fiax in general.
\end{rem}

\subsection{Comparing the first two examples}\label{sec:example}

Let $A$ be a finite dimensional, basic, connected and weakly symmetric algebra.
Denote by $\cC_A$ the $2$-category whose object set consists of only one object $\clubsuit$ (identified with a small category equivalent to the category of left $A$-modules), $1$-morphisms given by functors isomorphic to those tensoring with $A$-$A$-bimodules in $\add(\{A,\,A\otimes_{\Bbbk} A\})$ and
$2$-morphisms given by natural transformations between those functors. The $2$-category $\cC_A$ is fiat with at most two two-sided cells, that is,
$\mathcal{J}_0$ consisting of the identity $1$-morphism which corresponds to $A$ and the two-sided cell $\mathcal{J}$ consisting of all $1$-morphisms
which correspond to indecomposable projective $A$-$A$-bimodules. Let $\{e_1,\ldots,e_n\}$ be a complete set of primitive orthogonal idempotents of $A$.
Then the associated $(\cC_A)_{\mathcal{J}}$ from Subection~\ref{sec:example2}
coincides with $\ca$ in Subection~\ref{sec:example1}.

\subsection{Soergel bimodules}\label{sec:example4}

Let $\Bbbk=\mathbb C$.
For any finite Coxeter group $W=(W,S)$, one can define the $2$-category $\cS$ of Soergel bimodules over the coinvariant algebra of $W$ as in \cite{KMMZ}.
Note that $\cS$ is a fiat $2$-category with one object. The $1$-morphisms are indexed by the element in $W$ and  left/right/two-sided cells in $\cS$ are given by the Kazhdan-Lusztig left/right/two-sided cells in the Hecke algebra of $(W,S)$; see \cite{So3} and \cite{EW} for details.
The Duflo $1$-morphisms are the so-called Duflo involutions which are self-adjoint. Therefore, in the associated fiax category $\cS_{\mathcal{J}}$ with respect to any two-side cell $\mathcal{J}$, we have that $\rI_{\mi}=\rI_{\mi}^\star=\rI_{\mi}'$, for each object $\mi\in\ob\cS_{\mathcal{J}}$.

\subsection{$G$-symmetric projective bimodules}\label{sec:example5}

Let $G$ be any finite abelian group and $A$ a finite dimensional, weakly symmetric, $\Bbbk$-algebra with $G$-action. Assume that the characteristic of $\Bbbk$ does not divide $|G|$. One can define the $2$-category $\mathscr{G}_A$ of projective symmetric bimodules, cf. \cite{MMZ}. Note that it is a fiat $2$-category with $n+1$ two-sided cells, where $n$ is the number of connected components of $A$. In this case, the Duflo $1$-morphism of a left cell
is not self-dual in general. Therefore, in the associated fiax category $(\mathscr{G}_A)_{\mathcal{J}}$ with respect to the maximal two-sided cell, we have $\rI_{\mi}\not\cong\rI'_{\mi}$, for some object $\mi$. One can also refer to \cite[Subsection~9.3]{KMMZ} for
an example in the case $|G|=2$.

\subsection{Projective modules over group algebras}\label{sec:example6}

Let $G$ be a finite group and $\Bbbk$ an algebraically closed field.
Consider the category $G$-mod of all finite dimensional $G$-modules (over $\Bbbk$).
If the characteristic of $\Bbbk$ divides $|G|$, the category
$G$-mod might have infinitely many isomorphism classes of indecomposable
objects and so be non-finitary. The category $G$-mod has the usual structure of a monoidal category
with respect to the usual tensor product of $G$-modules. The unit object for
this monoidal structure is the trivial $G$-module. The category
$G$-mod has both involution and adjunctions, see \cite[Example~2.10.13]{EGNO}.

Consider the category $G$-proj of projective $G$-modules
(or, more precisely, a small category equivalent to it). This category is always
finitary. If the characteristic of $\Bbbk$ does not divide $|G|$, the categories
$G$-mod and $G$-proj coincide (and are semi-simple), in particular,
$G$-proj is a fiat $2$-category. If the characteristic of $\Bbbk$ divides $|G|$,
the monoidal structure on $G$-mod equips $G$-proj with an associative
tensor product. Further, $G$-proj is a fiax category in which the lax (resp., oplax) unit is the projective cover (resp., injective envelope) of the trivial $G$-module. (Note that the projective cover and injective envelope coincide in $G$-mod). The unitors are given by
the horizontal composition with the canonical projection (resp., injection)
between the trivial module and its projective cover (resp., injective envelope).

We can in fact think of $G$-proj as an instance of the construction from Subsection \ref{sec:example2}. Let $\cC$ be (a small category equivalent to) $G$-mod. Then the indecomposable projectives (which are also injectives) in $\cC$ form a two-sided cell $\mathcal J$.
Note that, to define $\cC_{\mathcal J}$ as a $2$-semicategory, we do not need $\cC$ to be finitary.
If further $\mathcal J$ is finite, we can give a natural bilax-unital structure to $\cC_{\mathcal J}$, since the Duflo $1$-morphisms (which serve as lax units) exist in $\mathcal J$.
In this example, we know that the projective cover of the trivial module is naturally a lax unit in $G$-proj, which makes $\cC_\mathcal{J}$ fiax.

\vspace{7mm}

\noindent
H.~K.: Department of Mathematics, Uppsala University, Box. 480,
SE-75106, Uppsala,\\ SWEDEN, email: {\tt hankyung.ko\symbol{64}math.uu.se}
\vspace{3mm}

\noindent
V.~M.: Department of Mathematics, Uppsala University, Box. 480,
SE-75106, Uppsala,\\ SWEDEN, email: {\tt mazor\symbol{64}math.uu.se}
\vspace{3mm}

\noindent
X.~Z.: Department of Mathematics, Uppsala University, Box. 480,
SE-75106, Uppsala,\\ SWEDEN, email: {\tt xiaoting.zhang09\symbol{64}hotmail.com}

\end{document}